\tikzset{>=latex}
\theoremstyle{plain}
\newtheorem{thm}{Theorem}[section]
\newtheorem{cor}[thm]{Corollary}
\newtheorem{prop}[thm]{Proposition}
\newtheorem{lemma}[thm]{Lemma}
\newtheorem{assumption}[thm]{Assumption}
\theoremstyle{definition}
\newtheorem{defn}[thm]{Definition}
\newtheorem{rem}[thm]{Remark}
\newtheorem{eg}[thm]{Example}
\newtheorem{fact}[thm]{Fact}
\newtheorem{observe}[thm]{Observation}
\numberwithin{equation}{section}
\def\llangle{\langle\!\langle}
\def\rrangle{\rangle\!\rangle}
\newcommand{\rpm}{\sbox0{$1$}\sbox2{$\scriptstyle\pm$}
  \raise\dimexpr(\ht0-\ht2)/2\relax\box2 }
\newcommand{\R}{\mathbb{R}}
\newcommand{\eps}{\varepsilon}
\renewcommand{\rho}{\varrho}
\renewcommand{\phi}{\varphi}
\newcommand*\bigcdot{\mathpalette\bigcdot@{.5}}
\newcommand*\bigcdot@[2]{\mathbin{\vcenter{\hbox{\scalebox{#2}{$\m@th#1\bullet$}}}}}
\tikzstyle{nd} = [anchor=base, inner sep=0pt]
\tikzstyle{ndpic} = [remember picture, baseline, every node/.style={nd}]
\newcommand{\E}{\mathbb E}
\def\beq{\begin{equation}}
\def\eeq{\end{equation}}
\def\ba{\begin{enumerate}[(a)]}
\def\bei{\begin{enumerate}[(i)]}
\def\be{\begin{enumerate}[(1)]}
\def\ee{\end{enumerate}}
\def\bi{\begin{itemize}}
\def\ei{\end{itemize}}
\def\beg{\begin{eg}}
\def\eeg{\end{eg}}
\def\bd{\begin{defn}}
\def\ed{\end{defn}}
\def\bt{\begin{thm}}
\def\et{\end{thm}}
\def\bl{\begin{lemma}}
\def\el{\end{lemma}}
\def\bfac{\begin{fact}}
\def\efac{\end{fact}}
\def\bc{\begin{cor}}
\def\ec{\end{cor}}
\def\bp{\begin{prop}}
\def\ep{\end{prop}}
\def\bo{\begin{observe}}
\def\eo{\end{observe}}
\def\bas{\begin{assumption}}
\def\eas{\end{assumption}}
\def\RR{\mathbb{R}}
\def\CC{\mathbb{C}}
\def\EE{\mathbb{E}}
\def\ZZ{\mathbb{Z}}
\def\NN{\mathbb{N}}
\def\PP{\mathbb{P}}
\def\ii{\item}
\def\beg{\begin{eg}}
\def\eeg{\end{eg}}
\def\p{\thinspace}
\def\CC{\mathcal{C}}
\def\CI{\mathcal{I}}
\def\CK{\mathcal{K}}
\def\CS{\mathcal{S}}
\def\CY{\mathcal{Y}}
\def\CM{\mathcal{M}}
\def\fC{\mathfrak{C}}
\def\SE{\mathscr{E}}
\def\I{\mathfrak{I}}
\def\ff{\mathfrak{f}}
\def\fa{\mathfrak{a}}
\def\T{\mathbb{T}}
\newcommand{\1}[1]{\mathbbm{1}_{\{#1\}}}
\def\nablae{\nabla_{\!\!\eps}}
\def\Deltae{\Delta_\eps}
\def\DDelta{\Delta}
\def\${|\!|\!|}
\numberwithin{equation}{section}
\numberwithin{table}{section}
\begin{document}

\begin{frontmatter}

\title{Stochastic PDE limit of the dynamic ASEP}
\runtitle{SPDE limit of the dynamic ASEP}
\runauthor{Corwin, Ghosal \& Matetski}

\begin{aug}
  \author{\fnms{Ivan}  \snm{Corwin}\textsuperscript{1}\corref{}\ead[label=e1]{corwin@math.columbia.edu}\hspace{0.1in}}
  \author{\fnms{Promit} \snm{Ghosal}\textsuperscript{2}\ead[label=e2]{pg2475@columbia.edu}\hspace{0.1in}}
  \author{\fnms{Konstantin} \snm{Matetski}\textsuperscript{3}\ead[label=e3]{matetski@math.columbia.edu}\vspace{0.1in} \\ {\it Columbia University}}

\address{\textsuperscript{1}Department of Mathematics, 2990 Broadway, New York, NY 10027, USA\\ \printead{e1}}
\address{\textsuperscript{2}Department of Statistics, 1255 Amsterdam, New York, NY 10027, USA\\ \printead{e2}}
\address{\textsuperscript{3}Department of Mathematics, 2990 Broadway, New York, NY 10027, USA\\ \printead{e3}}






\end{aug}

\begin{abstract}
\noindent We study a stochastic PDE limit of the height function of the dynamic asymmetric simple exclusion process (dynamic ASEP). Introduced in \cite{Borodin17a}, the dynamic ASEP has a jump parameter $q\in (0,1)$ and a dynamical parameter $\alpha>0$. It degenerates to the standard ASEP height function when $\alpha$ goes to $0$ or $\infty$. We consider a \emph{very weakly asymmetric regime}, i.e. for $\varepsilon$ tending to zero we set $q=e^{-\varepsilon}$. We show that under the parabolic scaling the height function of the dynamic ASEP converges to the  solution of the space-time Ornstein-Uhlenbeck (OU) process. We also introduce the dynamic ASEP on a ring with generalized rate functions. Under the very weakly asymmetric scaling, we show that the dynamic ASEP (with generalized jump rates) on a ring also converges to the solution of the space-time OU process on $[0,1]$ with periodic boundary conditions.
\end{abstract}



\begin{keyword}
\kwd{Exclusion processes}
\kwd{Stochastic PDE}
\kwd{Interacting particle system}
\end{keyword}

\end{frontmatter}









\tableofcontents

\begin{figure}[t]
\begin{center}
{\scalebox{.65}{\includegraphics{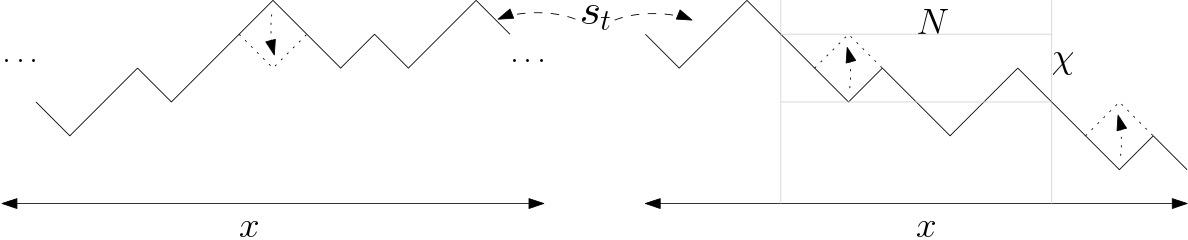}}}  \caption{Dynamic ASEP height function $s_t$ on the full line (left) and periodic domain (right). In the periodic case, the $N$ represents the periodicity length and $\chi$ represents the change in the height function over that length.}
\end{center}
\end{figure}

\section{Introduction}\label{introduction}

The dynamic ASEP, introduced in \cite{Borodin17a}, is a continuous time Markov process defined in terms of a temporally evolving height function $s_t(x)\in \ZZ$ with time $t\in \RR_{\geq 0}$ and space $x\in \ZZ$. (We may also extend the space variable to $x\in \RR$ by linear interpolation between values at $x\in \ZZ$.) The height function satisfies the \emph{solid-on-solid condition} that $s_t(x)-s_t(x + 1)\in \{1,-1\}$ for all $t$ and $x$. The Markovian update rule depends on two parameters: the asymmetry $q\in (0,1)$ and the dynamic parameter $\alpha\in (0,\infty)$.
The values of the height function at each $x\in \ZZ$ are updated according to independent exponential clocks with the following rates (assuming that the change does not violate the solid-on-solid condition):
\begin{equation}\label{eq:JumpRate}
s_t(x)\mapsto s_t(x)-2 \text{ at rate } \frac{q(1+\alpha q^{-s_t(x)})}{1+ \alpha q^{-s_t(x)+1}}, \quad s_t(x)\mapsto s_t(x)+2  \text{ at rate } \frac{1+\alpha q^{-s_t(x)}}{1+\alpha q^{-s_t(x)-1}}.
\end{equation}
The term \emph{dynamic} alludes to the fact that the above rates depend on the height function.
On taking $\alpha$ to $0$ or $\infty$, the rate parameters converge to $q$ and $1$, or $1$ and $q$, thus recovering the standard ASEP height function rates. The dynamic ASEP with $\alpha\in(0,\infty)$ has a preferred height of roughly $\log_q(\alpha)$ --- above this height there is a tendency for the height function to decrease (i.e. the rate for decreasing exceeds that for increasing) and below this height the opposite happens. In light of this markedly different behavior compared to the standard ASEP, it is natural to explore what becomes of the various asymptotic phenomena enjoyed by the standard ASEP. Notice that via a height shift $s\mapsto s-\log_q\alpha$, the rates above reduce to the $\alpha=1$ rates. (The resulting shifted height function now lives on a shift of the lattice $\ZZ$.) Owing to this observation, we may, without loss of generality assume that $\alpha=1$ throughout the rest of this work. We will still use $\alpha$ in stating our main results, but in the proof we will set $\alpha=1$ to simplify notation.

In this paper we prove a stochastic PDE (SPDE) limit for the dynamic ASEP under \emph{very weakly asymmetric scaling}. This is the first SPDE limit result shown for this type of system. The limiting SPDE is a space-time  Ornstein-Uhlenbeck (OU) process. For the standard ASEP under the same scaling, the limiting SPDE is the additive stochastic heat equation (or Edward-Wilkinson equation). See \cite{MPS89, DG91} for reference. The difference between these two equations is the presence of a linear drift in the OU which introduces a preferred height which the process drifts towards. Thus, the effect of the dynamic parameter survives in our limit.

Before going into greater depth about our present contribution, let us recall the previous work on this process.

Besides introducing the model, \cite{Borodin17a} developed a generalization of the method introduced by \cite{BP16a,BP16b} (in studying non-dynamic higher spin vertex models) in order to compute contour integral formulas for expectations of a class of observables for certain initial conditions (in particular for the wedge, where $s_0(x) = |x|$). Taking the limit $q\to 1$ leads to a dynamic version of the SSEP where the corresponding jump rates are 
\begin{align}
s_t(x)\mapsto s_t(x)-2 \text{ at rate } \frac{s_t(x)-\lambda}{s_t(x)-1-\lambda}, \quad s_t(x)\mapsto s_t(x)+2  \text{ at rate } \frac{s_t(x)-\lambda}{s_t(x)+1 -\lambda},
\end{align}
 for some $\lambda<0$. In that limit, the observables and formulas simplify sufficiently so that \cite{Borodin17a} was able to probe some asymptotics (for wedge initial data).

In particular \cite[Theorem 11.2]{Borodin17a} explores the hydrodynamic scaling for the dynamic SSEP. There is some freedom in how to scale the dynamic parameter. For one choice, the hydrodynamic limit is the same as for the standard SSEP (i.e. governed by the heat equation). Surprisingly, for other choices of scaling for the dynamic parameter, there is no deterministic hydrodynamic limit --- the height function remains random under scaling.

The next development regarding dynamic ASEP was the Markov duality derived by \cite{BC17} between it and the standard ASEP (Section \ref{Prelim} herein). The duality function which intertwines these two processes is the same observable which had arisen in the earlier work of \cite{Borodin17a}. \cite{BC17} also derived a translation invariant, stationary measure for the dynamic ASEP (see Definition \ref{Stationary}).

There are a few other works related to the dynamic ASEP (though we will not make use of them herein). \cite{Aggarwal2018} introduced dynamic analogs of the higher spin vertex models from \cite{Corwin2016}, and \cite{BM18} and \cite{AAA18} introduced other types of dynamic analogs of growth models and vertex models (i.e. the growth rates depend on the height through an additional dynamic parameter). For the dynamic stochastic six vertex model, \cite{BG18} used the formulas from \cite{Borodin17a} to derive a law of large numbers and Gaussian central limit theorem under a particular choice of scaling. 

\subsection{Main results}

We consider the scaling limit of the dynamic ASEP defined on the full line $\ZZ$ as well a more general version of it defined on a finite interval of $\ZZ$ with periodic boundary conditions. 
In each of these two settings we provide a different method of proof. In the full line case we employ a remarkable generalization of the microscopic Hopf-Cole (or G\"{a}rtner) transform \cite{GJ88} along with some estimates similar to \cite{Bertini1997}. In the periodic setting we employ a variant of ``da Prato-Debussche trick'' from \cite{DaPrato}. This second approach can be used to prove a scaling limit of a generalization of the dynamic ASEP, defined in Section~\ref{sec:periodic_intro}, to which the discrete Hopf-Cole transform cannot be applied. We consider this generalized model in a periodic setting, because this allows us to avoid some challenging technical points regarding the growth of stochastic processes at infinity that would arise in applying the  method of \cite{DaPrato} on the line. 
Besides technicalities, we believe that both methods present in this paper have value and may find further applications in studying other scaling limits of this or related systems.

\subsubsection{Full line results}\label{sec:FLineintro}

We define the following linear stochastic heat equation with additive space-time white noise (with $A<0$ this is sometimes called a \emph{space-time Ornstein-Uhlenbeck process}) 
 \begin{align}\label{eq:SHEAdditive}
\partial_t \mathcal{Z}_t(x) = \partial^2_x \mathcal{Z}_t(x) + A\mathcal{Z}_t(x) + B_t\,\xi(t,x)
\end{align}
on $[0, +\infty) \times \R$, with the initial state $\mathcal{Z}_0 : \R \to \R$ at time $t = 0$, where $\xi$ is the space-time white noise on $\R^2$ on some probability space $(\Omega, \mathfrak{F}, \mathbb{P})$, $A\leq 0$ is a constant and $B : \R \to \R$ is a differentiable, locally bounded function. The mild formulation of \eqref{eq:SHEAdditive} is
\begin{equation}\label{eq:MildSolSHE}
\mathcal{Z}_t(x) = (e^{t\partial^2_x} \mathcal{Z}_0)(x) + A \int^{t}_0 (e^{(t-s)\partial^2_x}\mathcal{Z}_s)(x)ds +  \int^{t}_{0} B_s (e^{(t-s)\partial^2_x}\xi_s)(x)ds,
\end{equation}
where $(e^{t\partial^2_x} f)(x)$ denotes the action of the heat semigroup $\{e^{t\partial^2_x }\}_{t\geq 0}$ on a function $f:\RR\to \RR$
 via
 \begin{align}
 (e^{t\partial^2_x} f)(x) = \int_{\RR} p_t(x - y)f(y)dy,
 \end{align}
 where $p_{t}(\bigcdot)$ denotes the Green's function of the operator $\partial_t - \partial^2_x$, i.e. $p_t$ is the heat kernel, solving the PDE
 \begin{align}
\partial_t p_t(x) = \partial^2_x p_t(x) , \qquad p_0(x)= \delta_{x},
 \end{align}
 where $\delta_{x}$ is the Dirac delta-function. The last term on the r.h.s. of \eqref{eq:MildSolSHE} is defined as a Wiener integral, see \cite{DZ92} or \cite{Walsh}. For the existence, uniqueness and the continuity of the mild solution \eqref{eq:MildSolSHE}, see \cite[Thm.~3.5]{Walsh}.

We denote the space of all continuous functions on $\RR$ by $\mathcal{C}(\RR)$, and endow it with the topology of uniform convergence on the compact sets of $\RR$. Then $D([0,\infty), \mathcal{C}(\RR))$ denotes the space of all $\mathcal{C}(\RR)$-valued c\`{a}dl\`{a}g functions on $[0,\infty)$, equipped with the Skorokhod topology \cite{Billingsley2}. 
In what follows, we use the notation ``$\Rightarrow$" to denote weak convergence in the respective topology, and $f^{\eps}\Rightarrow f$ means that the random function $f^\eps$ weakly converges to $f$ as $\eps \to 0$. The $L^n$-norm with respect to the underlying probability measure will be denoted  $\|\bigcdot \|_n := \E[|\bigcdot|^n]^{1/n}$. 

We are now ready to state our main theorem for the dynamic ASEP on the full line.

\bt\label{Cor1}
 Consider the dynamic ASEP with $\eps$-dependent asymmetry parameter $q = e^{-\eps}$ and fixed dynamic parameter $\alpha\in (0,\infty)$. Define the $\eps$-rescaled height function as
 \begin{align}\label{eq:ReHeight}
 \hat{s}^{\eps}_t(x): = \eps^{\frac{1}{2}}\big(s_{\eps^{-2}t}(\eps^{-1}x) - \log_{q}\alpha\big).
 \end{align}
 Assume that $\hat{s}^{\eps}_0(\bigcdot)$ is a sequence of near stationary initial condition, i.e. for some $u>0$, $\beta \in (0, \frac{1}{4})$ and all $k\in \NN$, there exists $C_0=C_0(u,\beta,k)$ such that the following bounds hold
 \begin{subequations}\label{eqs:NSofS}
\begin{align}
\| e^{|\hat{s}^{\eps}_0(x)|}\|_{2k}&\leq C_0 e^{u |x|},\label{eq:NSofS1}\\
\|\hat{s}^{\eps}_0(x)-\hat{s}^{\eps}_0(x^\prime)\|_{2k}&\leq C_0 |x-x^\prime|^{2\beta} e^{u (|x| + |x^\prime|)},\label{eq:NSofS2}
\end{align}
\end{subequations}
for all $x, x^{\prime} \in \RR$.
 Assume that $\hat{s}^{\eps}_0(\bigcdot)\Rightarrow\mathcal{Z}_0(\bigcdot)$ in $\mathcal{C}(\RR)$. Then $\hat{s}_{\eps}$ converges weakly as $\eps \to 0$ in
$D([0,\infty), \mathcal{C}(\RR))$ to the unique solution of \eqref{eq:SHEAdditive} with $A=-\frac{1}{4}$ and $B\equiv \sqrt{2}$, and initial data $\mathcal{Z}_0$.
 \et

The proof of Theorem~\ref{Cor1} relies on the remarkable fact that dynamic ASEP admits a microscopic Hopf-Cole (or G\"{a}rtner) transform. Namely, as a function of $t\geq 0$ and $x\in \ZZ$,
\begin{equation}\label{eq:expZ}
e^{(1-\sqrt{q})^2t} \big( \alpha^{\frac{1}{2}}q^{-\frac{s_t(x)}{2}} -\alpha^{-\frac{1}{2}}q^{\frac{s_t(x)}{2}}\big)
\end{equation}
solves a microscopic stochastic heat equation (see Proposition \ref{thm:DSHETheo} for a precise statement). This result is closely related to the one-particle version of  the Markov duality proved in \cite{BC17}. We note that under any minute perturbation of the rates, the duality relation would be lost and consequently the Hopf-Cole transform would fail to satisfy a semi-discrete stochastic heat equation as in Proposition~\ref{thm:DSHETheo} below. Theorem~\ref{MainTheorem} shows that the process in \eqref{eq:expZ} converges to a certain space-time OU process. Since the Hopf-Cole transform of \eqref{eq:expZ} is a bijective continuously differentiable transformation of the height function, the desired convergence claimed in Theorem~\ref{Cor1} follows from Theorem~\ref{MainTheorem} via continuous mapping theorem. Hence, the challenge boils down to proving Theorem~\ref{MainTheorem}. This is done via convergence of martingale problems. The proof of the convergence of a linear martingale problem as well as tightness are standard. Identifying the noise (via the quadratic martingale problem) relies upon a `key estimate' (Lemma~\ref{lem:KeyEstimate}) which is similar to that used in \cite{Bertini1997} (and developed further in subsequent generalizations such as \cite{CSL16}) when proving the KPZ equation limit of the standard ASEP.

It was not immediately clear to us which scalings would produce a meaningful limiting SPDE for dynamic ASEP. In our investigation, we were informed by the analysis of the stationary measure for dynamic ASEP. As we now explain, the stationary measure, defined below, converges to a spatial OU process when $q=e^{-\varepsilon}$, height fluctuations are scaled like $\varepsilon^{-1}$ and space is scaled like $\varepsilon^{-2}$. This suggested that we should apply the same scalings to the dynamic ASEP, though it does not tell us how to scale time. Any candidate SPDE limit for dynamic ASEP should have the spatial OU as its stationary measure. In fact, the space-time OU process has a spatial OU process as its stationary measure (this can be proved by writing the exact formula for the space-time OU process and sending the time variable to infinity, see \cite[Ex.~2.2]{Hai09}).


\bd[Stationary measure]\label{Stationary}
We say that $\{s_0(x)\}_{x\in \ZZ}$ is distributed according to the stationary initial data if it equals in law the trajectory of a Markov process in $x$ with the following transition probability from $s_0(x)$ to $s_0(x-1)$:
\begin{align}
s_0(x-1) &=s_0(x)+1 \quad \text{with probability }\frac{q^{s_0(x)}}{\alpha + q^{s_0(x)}},\\
\qquad s_0(x-1) &=s_0(x)-1 \quad \text{with probability }\frac{\alpha}{\alpha+q^{s_0(x)}},
\end{align}
and with the one-point marginal at any $x\in 2\ZZ$ given by
\begin{align}\label{eq:stationary}
\mathbb{P}\big(s_0(x) =2n\big) = \frac{\alpha^{-2n}q^{n(2n-1)}(1+\alpha^{-1}q^{2n})}{(-\alpha^{-1}, -q\alpha, q;q)_{\infty}}, \quad n\in \ZZ,
\end{align}
and at any $x\in 2\ZZ+1$ given by
\begin{align}\label{eq:stationary1}
\mathbb{P}\big(s_0(x) = 2n+1\big) = \frac{\alpha^{-1}q^{n(2n+1)}(1+\alpha^{-1}q^{2n+1})}{(-q\alpha^{-1}, -\alpha, q; q)_{\infty}}, \quad n \in \ZZ.
\end{align}
Here, $(a_1, a_2, a_3; q)_{\infty} := (a_1; q)_{\infty} (a_2; q)_{\infty} (a_3; q)_{\infty}$, where $(a; q)_{\infty} := \prod_{i = 0}^\infty (1 - q^{i} a)$ is the $q$-Pochhammer symbol, defined for the values $a$ and $q$, for which the infinite product converges.
Theorem~2.15 of \cite{BC17} shows that this initial data is stationary for the dynamic ASEP, meaning that for any fixed $t>0$, $\{s_t(x)\}_{x\in \ZZ}$ has the same distributions as $\{s_0(x)\}_{x\in \ZZ}$.
\ed

 Note that the distribution of $\{s_0(x)\}_{x\in \ZZ}$ is spatially stationary up to parity, i.e. one point distributions $s_0(0)$ (resp. $s_0(1)$) is same as $s_{0}(2k)$ (resp. $s_0(2k+1)$) for all $k\in \ZZ$. It is natural, therefore, to look for scaling limits in which the stationary measure has a non-trivial limit, and then to use that non-trivial limit to help guess what the limit of the entire space-time process could be. The following lemma and corollary are proved in Appendix \ref{sec:stationary}.

\bl\label{Lem1}
Let $\{s_0(x)\}_{x\in \ZZ}$ be distributed according to the stationary initial data (Definition~\ref{Stationary}),  and let $\hat{s}^{\eps}_0(x)= \eps^{\frac{1}{2}}(s_0(\eps^{-1}x)-\log_q \alpha)$ be extended piece-wise linearly to $x \in \R$. Then
\be
\ii $\hat{s}^{\eps}_0$ satisfies \eqref{eqs:NSofS};
\ii $\hat{s}^{\eps}_0 \Rightarrow \mathcal{Z}_0$ in $\mathcal{C}(\RR)$, as $\eps \to 0$, where $\mathcal{Z}_0$ is a stationary solution of the SDE
\begin{align}\label{eq:OU}
d\mathcal{Z}_0(x) = -\frac{1}{2}\mathcal{Z}_0(x)dx + \frac{1}{2}d\mathcal{W}(x),
\end{align}
where $x \in \R$ plays a role of a time variable, and $\mathcal{W}$ is a two sided Brownian motion. 
\ee
In particular, $\mathcal{Z}_0$ is a stationary Ornstein-Uhlenbeck process, and $\mathcal{Z}_0(0)$ is a standard Gaussian random variable.
\el

Applying Theorem \ref{Cor1}, we may now show that the stationary dynamic ASEP converges to the stationary solution to the space-time OU process.

\bc\label{Cor2}
Using the notation of Theorem~\ref{Cor1}, consider dynamic ASEP started from the stationary initial data (Definition~\ref{Stationary}). Then the scaled height function $\hat{s}^{\eps}$ converges to the solution of \eqref{eq:SHEAdditive} with $A=-\frac{1}{4}$, $B\equiv \sqrt{2}$ and started from the initial data $\mathcal{Z}_0$, given by the stationary solution of \eqref{eq:OU}. 
 \ec

\subsubsection{Periodic results}
\label{sec:periodic_intro}

The proof of Theorem~\ref{Cor1} relies heavily on the microscopic Hopf-Cole transform for the dynamic ASEP. However, this transformation is intimately connected with how the rate functions \eqref{eq:JumpRate} are defined. In particular, it may fail to hold after a slight change in the definition of the rate functions. We will define such a more general model now.

For $\chi \in \ZZ$ and $N \in \NN$ with $\chi \equiv N\, \mathrm{mod}\, 2$ we consider the space $\Omega^N_\chi$ of functions $s : \ZZ \to \RR$ such that $s(x) - s(x+1) \in \{-1, 1\}$ and $s(x + m N) = s(x) + \chi m$ for any integer $m$. For a function $\mathfrak{f}:\RR\to \RR$, we define a generalization of the dynamic ASEP on a finite interval $x \in [0, N) \cap \ZZ$ with periodic boundary conditions and with the following update rates (assuming that they do not violate the condition on the $\pm 1$ slopes):
\begin{align}\label{eq:ModJumpRate_intro}
\begin{split}
s_t(x) &\mapsto s_t(x)-2 ~\text{ at rate }~ \frac{q(1+ q^{-\ff(s_t( x)  - \chi x / N)})}{1+  q^{-\ff(s_t( x) - \chi x / N)+1}}, \\
 s_t(x) &\mapsto s_t( x)+2 ~\text{ at rate }~ \frac{1+ q^{-\ff(s_t(x) - \chi x / N)}}{1+ q^{-\ff(s_t( x) - \chi x / N)-1}}.
 \end{split}
\end{align}
Each change of the height function is extended to all $x \in \ZZ$, so that $s_t(\bigcdot) \in \Omega_\chi^N$.
We assume the function $\mathfrak{f}:\RR\to \RR$ to satisfy the following assumption.

\begin{assumption}\label{a:f}
There exist $\mathfrak{a} \geq 0$, $\gamma\in [0, \frac{1}{2})$ and $c \geq 0$ such that $|\mathfrak{f}(z)- \mathfrak{f}(0)- \mathfrak{a}z|\leq c|z|^{\gamma}$ for all $z \in \RR$.
\end{assumption}
The bound $\gamma < \frac{1}{2}$ guarantees quick convergence of $\mathfrak{f}(z)$ to $\mathfrak{a}z$ at infinity. In particular, this assumption is crucial in the estimate in Lemma~\ref{lem:term2}. Note that if $\mathfrak{f}(z)=z$ and $\chi=0$, then we get the rates \eqref{eq:JumpRate}. It is unknown if a Hopf-Cole transform exists for this general dynamic ASEP. However, using the method of \cite{DaPrato}, we can show the rescaled height function of the periodic dynamic ASEP with the rate functions \eqref{eq:ModJumpRate_intro} converges to the solution of \eqref{eq:SHEAdditive} with suitably chosen constants. In the statement of the following theorem, we use the standard H\"{o}lder spaces $\CC^\eta$ on $\RR$, for $\eta > 0$, with the norm $\| \cdot \|_{\CC^{\eta}}$.

\begin{thm}\label{thm:tight_intro}
Let $s_t(x)$ be the height function of the periodic generalized dynamic ASEP with the rates \eqref{eq:ModJumpRate_intro}, with a function $\mathfrak{f}$ satisfying Assumption~\ref{a:f}, with period $N \in \NN$ and with $\chi \in \ZZ$ such that  $\chi \equiv N\, \mathrm{mod}\, 2$, and let $s_0(\bigcdot) \in \Omega^N_\chi$. Let us denote $\eps = \frac{1}{N}$, and let $q = e^{-\eps}$ and
$$\hat s^\eps_t(x) := \eps^{\frac12} (s_{\eps^{-2} t}(\eps^{-1} x) - \chi x)$$
be the rescaled height function, extended piece-wise linearly to $x \in \R$. Note that $\hat s^\eps_t(x)$ is $1$-periodic in the variable $x$.

For some $\eta \in (\frac{1}{3}, \frac{1}{2})$ assume that there exists a $1$-periodic $\eta$-H\"{o}lder continuous function $\mathcal{Z}_0$ (defined on the same probability space as the initial data $\hat{s}^\eps_0$), such that
\begin{align}
 \limsup_{\eps \to 0} \E \|\hat s^\eps_0\|_{\CC^{\eta}} < \infty, \qquad \lim_{\eps \to 0} \E \|\hat s^\eps_0 - \mathcal{Z}_0\|_{\CC^{\eta}} = 0.
\end{align}
Then, for every $T > 0$ the following bound holds 
\begin{align}\label{eq:s_bound}
 \limsup_{\eps \to 0} \E \Bigl[ \sup_{t \in [0, T]} \|\hat s^\eps_t\|_{\CC^{\eta}}\Bigr] < \infty,
\end{align}
and $\hat{s}^{\eps}$ converges weakly in  $D([0,\infty), \CC(\RR))$ to the $1$-periodic solution of \eqref{eq:SHEAdditive} with $A = -\frac{\fa}{4}$, $B \equiv  \sqrt{2}$, the initial data $\mathcal{Z}_0$, and a spatially $1$-periodic driving white noise $\xi$.
\end{thm}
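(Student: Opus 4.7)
The plan is to adapt the da Prato--Debussche decomposition to the martingale problem for $\hat s^\eps$ on the torus. First I would write Dynkin's formula for the rescaled height function,
$$\hat s^\eps_T(X) = \hat s^\eps_0(X) + \int_0^T \mathcal{L}^\eps \hat s^\eps_S(X)\,dS + M^\eps_T(X),$$
where $\mathcal{L}^\eps$ is the generator of the $\eps$-rescaled dynamics and $M^\eps$ is the compensated jump martingale. Expanding the rates \eqref{eq:ModJumpRate_intro} in $\eps = -\log q$ and using Assumption~\ref{a:f} to replace $\ff$ by its linear approximation $\ff(0) + \fa z$ should give a decomposition of the drift as
$$\mathcal{L}^\eps \hat s^\eps_S(X) = \Delta^\eps \hat s^\eps_S(X) - \tfrac{\fa}{4}\,\hat s^\eps_S(X) + \mathcal{E}^\eps_S(X),$$
where $\Delta^\eps$ is the discrete Laplacian on $\eps \ZZ$ and $\mathcal{E}^\eps$ is a nonlinear error whose size is controlled by the $\gamma < \frac{1}{2}$ bound in Assumption~\ref{a:f}; in parallel, the predictable quadratic variation of $M^\eps$ tested against a smooth $\varphi$ should converge to $T\|\varphi\|_{L^2}^2$, which identifies $B \equiv 1$.

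Next, following the da Prato--Debussche idea, I would decompose $\hat s^\eps = Y^\eps + R^\eps$, where $Y^\eps$ solves the linear equation
$$\partial_T Y^\eps_T = \Delta^\eps Y^\eps_T - \tfrac{\fa}{4} Y^\eps_T + \dot M^\eps_T,\qquad Y^\eps_0 = \hat s^\eps_0,$$
driven by the same martingale $M^\eps$, and $R^\eps$ captures only the nonlinear error $\mathcal{E}^\eps$. Using the mild form together with heat-kernel estimates for the semigroup $e^{S(\Delta - \fa/4)}$ on $[0,1]$, second-moment bounds on the discrete stochastic convolution, and a Kolmogorov--Chentsov argument, I would show that $Y^\eps$ is tight in $D([0,T], \CC^\eta)$ for any $\eta < \frac{1}{2}$ and converges weakly to the $1$-periodic mild solution of \eqref{eq:SHEAdditive} with the prescribed constants and initial data $\mathcal{Z}_0$.

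The remainder $R^\eps$ solves a pathwise equation $\partial_T R^\eps = (\Delta^\eps - \fa/4) R^\eps + \mathcal{E}^\eps(\hat s^\eps)$, and I would estimate it in mild form. Bounding $\mathcal{E}^\eps$ in $\CC^\eta$ by $\|\hat s^\eps\|_{\CC^\eta}$ times a small power of $\eps$ (which is exactly what the $\gamma<\frac12$ bound buys, and which should appear in the proof as the content of Lemma~\ref{lem:term2}), combining this with the already known regularity of $Y^\eps$, and iterating a Gronwall-type argument, should yield both the uniform moment bound \eqref{eq:s_bound} and the fact that $R^\eps \to 0$ in $\CC^\eta$ in probability. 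Together with the convergence of $Y^\eps$ this gives the weak convergence of $\hat s^\eps$ in $D([0,\infty), \CC(\RR))$.

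The main obstacle is controlling $\mathcal{E}^\eps$ in a topology strong enough to close the Gronwall loop while only assuming $\eta$-H\"older control on $\hat s^\eps$. The subtlety is that $\mathcal{E}^\eps$ depends on $\hat s^\eps$ through $q^{-\ff(\,\cdot\,)}$, so large values of the height produce exponentially large contributions; the $\gamma<\frac12$ hypothesis is what tames these contributions after rescaling. To avoid a blow-up from the non-Lipschitz dependence one would likely perform the decomposition on short time intervals, repeatedly using the output of each interval as the input for the next, with the exponential factor absorbed into the semigroup via the drift $-\fa/4$. The periodicity of the lattice together with the shift $\chi X$ is handled by working from the start with the $1$-periodic function $\hat s^\eps_T(X)$, which removes the slope $\chi$ and makes all the above analysis take place on the torus.
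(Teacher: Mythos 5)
Your general plan -- Dynkin's formula, mild form, and a da Prato--Debussche splitting into a linear stochastic convolution plus a smoother remainder -- is broadly what the paper does in Section~\ref{sec:GenDASEP}. (One cosmetic difference: the paper uses the undamped semigroup $S^\eps_t = e^{t\Deltae}$ and lets the linear drift $-\tfrac{\fa}{4}\hat s^\eps$ emerge from the non-linearity via $\hat\CI^\eps_3$ in Lemma~\ref{lem:term3}; you instead put the drift into the linear part from the start. Both are legitimate and essentially equivalent.)

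However, there is a genuine gap: you misidentify the main obstacle. You locate the difficulty in the ``exponentially large contributions'' of $q^{-\ff(\cdot)}$, and you claim that the error term $\mathcal{E}^\eps$ can be bounded in $\CC^\eta$ by $\|\hat s^\eps\|_{\CC^\eta}$ times a positive power of $\eps$, with $\gamma < \tfrac12$ doing the taming. This is not true, and if it were, the theorem would be easy. Since $q = e^{-\eps}$ and $\hat s^\eps = \sqrt\eps\,(s - \chi X)$, the factor $q^{-\ff(s)}$ is $\exp\bigl(\eps\,\ff(\eps^{-1/2}\hat s^\eps)\bigr) \approx \exp\bigl(\fa\sqrt\eps\,\hat s^\eps\bigr)$, which is perfectly bounded on any set where $\hat s^\eps$ is bounded; that is why $\lambda_\eps$ in Lemma~\ref{lem:rho_bound} converges nicely to the linear function $-\tfrac{\fa}{4}\hat s^\eps$. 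The assumption $\gamma < \tfrac12$ only controls how fast $\ff(z) - \fa z$ grows at infinity; it plays a real but comparatively minor role in Lemma~\ref{lem:term2}, and it does not make the dominant part of $\mathcal{E}^\eps$ small.

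The actual difficulty, which your proposal never engages with, is the quadratic term $\lambda_\eps(\hat s^\eps)\,\eps\,\nablae^- \hat s^\eps\, \nablae^+ \hat s^\eps$ hidden in the drift, coming from the indicator functions in the jump rates \eqref{eq:ModJumpRate_intro}. Since each step of the height function is $\pm 1$, one has $|\nablae^\pm \hat s^\eps| = \eps^{-1/2}$ identically, so $\eps\,\nablae^- \hat s^\eps\,\nablae^+\hat s^\eps$ is of order one pointwise -- not small -- and the H\"older estimate $|\nablae^\pm \hat s^\eps| \lesssim \eps^{\eta-1}\|\hat s^\eps\|_{\CC^\eta}$ gives $\eps^{2\eta - 1}$ which \emph{diverges} for $\eta < \tfrac12$. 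Thus there is no bound of the form $\|\mathcal{E}^\eps\| \lesssim \eps^{\text{positive}}\|\hat s^\eps\|_{\CC^\eta}$, and a naive Gronwall loop cannot close. Showing that this term vanishes requires a cancellation in a negative-regularity space, and this is exactly where the ``renormalization'' part of da Prato--Debussche is indispensable: the paper introduces the Wick-type object $Z^\eps = \nablae^-\hat X^\eps\,\nablae^+\hat X^\eps - \fC^\eps$, where $\fC^\eps$ is the predictable covariation \eqref{eq:C_curly}, proves the distributional bound of Lemma~\ref{lem:Z_bound} on $Z^\eps$ via iterated-stochastic-integral estimates (Appendices~\ref{sec:integrals} and \ref{sec:singular}), and only then closes the fixed-point argument in Lemmas~\ref{lem:term1}--\ref{lem:term3} by splitting $\nablae^-\hat s^\eps\,\nablae^+\hat s^\eps$ into $Z^\eps$ plus products involving the regular remainder $u^\eps$. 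Without that subtraction-and-moment-bound step your proposal has no mechanism to control the nonlinearity.
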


It is unknown if the dynamic ASEP with the general rates \eqref{eq:ModJumpRate_intro} has any ``integrable'' structure, in contrast to the model considered in Theorem~\ref{Cor1}. In particular, invariant measures for this model are unknown and not used in our approach. Also since there is no apparent microscopic Hopf-Cole transform, we have to work `directly' with the stochastic PDEs and their approximations, in the spirit of \cite{DaPrato}. We provide heuristics for our argument in Section~\ref{sec:heuristic}. Note that we have restricted ourselves to the periodic model, in order to avoid significant difficulties with growth of processes at infinity. There are a few instances where this sort of difficulty (in the context of regularity structures \cite{HL18} and paracontrolled distributions \cite{2018arXiv180800354P}) has been surmounted by use of suitable weighted function spaces, though it still requires case-by-case analysis. Although we expect Theorem~\ref{thm:tight_intro} to hold also on the whole line, it is not clear whether one can use such weighted spaces to prove it. Our full-line analysis used to prove Theorem \ref{Cor1} does not require such methods since we have the microscopic Hopf-Cole transform at our disposal.


The assumption $\eta < \frac{1}{2}$ in Theorem~\ref{thm:tight_intro} is natural, since this is the spatial regularity of the solution to the linear stochastic PDE \eqref{eq:SHEAdditive}, see for example \cite{DZ92}. However, the restriction $\eta > \frac{1}{3}$ is a consequence of the method we are using to analyze the discrete stochastic PDE, governing the evolution of $\hat s^\eps$. More precisely, for regularities below $\frac{1}{3}$ we lose control on the non-linearity in this stochastic PDE (see Lemma~\ref{lem:term2}).

\subsection{Further directions}

For the standard ASEP, there are several PDE and SPDE results that may have generalizations to the dynamic setting. For instance, the standard ASEP enjoys a hydrodynamic limit (i.e. a law of large numbers for its height function) which is determined by the integrated inviscid Burgers equation (or Hamilton-Jacobi equation) with quadratic flux \cite{AV87,FKS91,BGRS}. Under very weak asymmetry scaling, the limit inviscid equation is replaced by its viscous analog \cite{MPS89,GJ88,DG91}. When the scaling of the asymmetry decreases from being weak and very weak, the limiting density field of ASEP does not feel the strength of the asymmetry and solves an Ornstein-Uhlenbeck equation with a linear drift \cite{GJ12K}. At the weakly asymmetric scaling, the limiting density field of ASEP solves the KPZ equation \cite{Bertini1997,GJ12K}. We expect that one can show similar results for the dynamic ASEP by making necessary changes in the model and suitable modifications of our arguments. It is presently unclear how the dynamic parameter (and preferred height) influences these hydrodynamic limit results. However, we expect that, at least under very weak asymmetry scaling, it should be possible to use the methods from this paper to answer this question.

The KPZ equation arises as a scaling limit of the height function fluctuations of the weak asymmetry standard ASEP \cite{Bertini1997} (weak asymmetry means $q= e^{-\sqrt{\eps}}$ versus our very weak asymmetry where $q=e^{-\eps}$). It is presently unclear what becomes of the dynamic ASEP under this weak asymmetry. Part of the challenge is that the stationary initial data simply converges to 0. This may suggest that there should be an SPDE limit of the weak asymmetry ($q=e^{-\sqrt{\eps}}$) dynamic ASEP, its solution will tend over time to 0. The exact form of this limiting SPDE is not yet clear. Another possibility is to tune the value of $\alpha$ in a time-dependent manner. This may enable us to access a KPZ equation-type limit. We leave this for future work.

\subsection{Notation}
\label{sec:notation}

We define here several objects which are used throughout the article. We use the standard notation $\vee$ and $\wedge$ for the maximum and minimum functions respectively. As before, the $L^n$-norm with respect to the underlying probability measure will be denoted by $\|\bigcdot \|_n := \E[|\bigcdot|^n]^{1/n}$.

For $\eta \in (0,1)$, we denote by $\CC^\eta$ the standard space of $\eta$-H\"{o}lder functions on $\RR$, equipped with the norm $\|\bigcdot\|_{\CC^\eta}$. The H\"{o}lder space $\mathcal{C}^\eta$ of non-integer regularity $\eta \geq 1$ consists of $\lfloor \eta \rfloor$ times continuously differentiable functions whose $\lfloor \eta \rfloor$-th derivative is $(\eta - \lfloor \eta \rfloor)$-H\"{o}lder continuous. When we apply all these norms to functions or distributions on the circle $\T := \RR \slash \ZZ$, we identify them with their periodic extensions. The $\eps$-discretization of $\RR$ will be denoted by $\eps \ZZ$, and we denote by  $\llangle \bigcdot, \bigcdot \rrangle$ and  $\llangle \bigcdot, \bigcdot \rrangle_\eps$ the standard and discretized pairings:
\begin{align}
  \llangle \zeta, \phi \rrangle := \int_{\RR} \zeta(x) \phi(x) dx, \qquad
 \llangle \zeta, \phi \rrangle_\eps := \eps \sum_{x \in \eps\ZZ} \zeta(x) \phi(x).
\end{align}
We prefer to use this non-standard notation for the pairing, to avoid confusions with the bracket processes of martingale.

For a test function $\phi : \RR \to \RR$, we define its $\lambda$-scaled and $x$-centered version $\phi^\lambda_x(y) := \lambda^{-1} \phi(\lambda^{-1}(y-x))$, where $\lambda \in (0,1]$ and $x \in \RR$.
Then for $\eta < 0$ and we define the space $\mathcal{C}^\eta$, which is the Besov space $\mathcal{B}^\eta_{\infty, \infty}$ of distributions $\zeta$ (see \cite{BCD11} for a definition), characterized by the bound
\begin{equation}\label{eq:norm}
|\llangle \zeta, \phi^\lambda_x\rrangle| \leq C \lambda^{\eta},
\end{equation}
uniformly in $\lambda \in (0,1]$, $x \in \RR$ and $\lceil -\eta \rceil$-H\"{o}lder continuous functions $\phi$, with the $\CC^{\lceil -\eta \rceil}$ norm bounded by $1$ and compactly supported in the unit ball, centered at the origin. For a discrete function $\zeta^\eps : \eps \ZZ \to \RR$, we define the norm $\|\zeta^\eps\|_{\CC^\eta_\eps}$ as the smallest constant $C \geq 0$, independent of $\eps$, such that the bound
\begin{equation}
|\llangle \zeta^\eps, \phi^\lambda_x\rrangle_\eps| \leq C (\lambda \vee \eps)^{\eta},\label{eq:norm_eps}
\end{equation}
with the same quantities as in \eqref{eq:norm}. Obviously, a piece-wise linearly extended function $\zeta^\eps$ satisfying $\|\zeta^\eps\|_{\CC^\eta_\eps} \leq C$ also belongs to $\mathcal{C}^\eta$.

  By $\Vert \bigcdot \Vert_{V \rightarrow W}$ we denote the operator norm of a linear map acting from the space $V$ to $W$. For time-dependent functions or distributions $\zeta_t$ we define the following norms
\begin{equation}
 \|\zeta\|_{\CC^\eta_{T, \eps}} := \sup_{t \in [0, T]} \|\zeta_t\|_{\CC^\eta_\eps}, \qquad \|\zeta\|_{L^\infty_T} := \sup_{t \in [0, T]} \| \zeta_t \|_{L^\infty}.\label{eq:norm_time}
\end{equation}

To make our notation lighter, sometimes we will write ``$\lesssim$" for a bound ``$\leq$" up to a multiplier, independent of relevant quantities. We will also use $\mathcal{O}(\eps)$ to denote a function which is bounded in absolute value by $C \eps$ as $\eps\to 0$ for some constant $C$ which does not depend on $\eps$ or any other varying parameters. The function $\mathcal{O}(\eps)$ can be random, in which case the bound holds almost surely for a non-random constant $C$.


We define the discrete derivatives in the usual way: $\nabla^\pm f(x) := \pm (f(x \pm 1) - f(x))$, and we write for brevity $\nabla := \nabla^+$. The discrete Laplacian is defined as $\DDelta := \nabla^+ - \nabla^-$. The rescaled versions of these operators act on functions $\phi: \R \to \R$ as $\nablae^\pm \phi(x) := \pm (\phi(x \pm \eps) - \phi(x)) / \eps$ and $\Deltae := (\nablae^+ - \nablae^-) / \eps$. As before we sometimes write $\nablae$ in place of $\nablae^+$.

Since we are going to work with c\`{a}dl\`{a}g martingales, we will use the two bracket processes associated to them. More precisely, the predictable quadratic covariation $\langle M, N\rangle_t$ of two martingales $(M_t)_{t \geq 0}$ and $(N_t)_{t \geq 0}$ is the unique adapted process with bounded total variation, such that $M_t N_t - \langle M, N\rangle_t$ is a martingale. Furthermore, the quadratic covariation $[M, N]_t$ is defined by
\begin{equation}
[M, N]_t := M_t N_t - M_0 N_0 - \int_{0}^t M_{s-} d N_s - \int_{0}^t N_{s-} d M_s,
\end{equation}
where $M_{s-} := \lim_{r \uparrow s} M_{r}$ is the left limit of $M$ at time $s$. We refer to \cite[Ch.~I.4]{JS03} for properties of these two bracket processes. In particular the difference $[M, N]_t - \langle M, N \rangle_t$ is always a martingale.

\subsection*{Outline}
The rest of paper is organized as follows. In Section~\ref{Prelim}, we show that a modified Hopf-Cole transform of the height function of the dynamic ASEP on the full line satisfies a microscopic stochastic heat equation (SHE). The main result of Section~\ref{Prelim} is Theorem~\ref{MainTheorem}. The section also contains the proof of Theorem~\ref{Cor1} which follows fairly easily from Theorem~\ref{MainTheorem}. Sections \ref{Tightness} and \ref{MartingaleProb} are devoted to the proof of Theorem~\ref{MainTheorem}: Section  \ref{Tightness} provides moment bounds on the microscopic SHE which imply tightness of the space-time process, Section \ref{MartingaleProb} uses martingale problems to identify all limit solutions as the unique one from Theorem~\ref{MainTheorem}.  In Section~\ref{sec:GenDASEP}, we prove Theorem~\ref{thm:tight_intro}. Appendix~\ref{sec:heat} contains some of the important properties of various heat kernels. In Appendix~\ref{sec:integrals} and Appendix~\ref{sec:singular}, we include few other important inputs which are mainly needed for the proof of Theorem~\ref{thm:tight_intro}. Finally, Appendix \ref{sec:stationary} contains a proof of Lemma~\ref{Lem1} and Corollary~\ref{Cor2}.

\subsection*{Acknowledgments}

The authors would like to thank Alexei Borodin for early conversations related to this work as well as an anonymous referee for many useful and substantive comments. Ivan Corwin was partially supported by the Packard Fellowship for Science and Engineering, and by the NSF through DMS-1811143 and DMS-1664650.

\section{Microscopic SHE and proof of Theorem~\ref{Cor1}}\label{Prelim}


The ultimate goal of this section is to prove Theorem~\ref{Cor1} (this comes at the end of the section). 
In Proposition~\ref{thm:DSHETheo} we derive a microscopic stochastic heat equation (SHE) satisfied by a modified Hopf-Cole transform of the dynamic ASEP and compute the quadratic variation of the associated martingale. Under very weakly asymmetric scaling and some assumption on the initial data (Definition~\ref{NStDef}), we take a continuum limit of the discrete SHE in Theorem~\ref{MainTheorem}. 

Dynamic ASEP depends on two parameters $q$ and $\alpha$, however, up to a shift in the height function by $\log_q \alpha$, the parameter $\alpha$ can always be set to $1$. Under this shift, the height function will live on a shift of the integer lattice. We will not labor this point further since in our scaling limits, such a shift is inconsequential. The state space of the dynamic ASEP is
\begin{align}\label{eq:StateSpace}
\mathcal{S}:=\big\{s=\{s(x)\}_{x\in \ZZ}: |s(x+1)-s(x)|=1, \forall x\in \ZZ\big\}.
\end{align}
Each site $x \in \ZZ$ has independent ``up'' and ``down'' step exponential clocks with rates given below. When the clock at $x$ rings, the height $s(x)$ is updated according to the following rule, assuming that the update does not result in $s$ exiting  $\mathcal{S}$:
\begin{align}\label{eq:rates}
\begin{split}
s(x) &\mapsto s(x)-2 \text{ at rate } a^{\downarrow}(s(x)):=\frac{q(1+ q^{-s(x)})}{1+ q^{-s(x)+1}} ,\\
s(x) &\mapsto s(x)+2 \text{ at rate } a^{\uparrow}(s(x)):=\frac{1+ q^{-s(x)}}{1+ q^{-s(x)-1}}.
\end{split}
\end{align}
The infinitesimal generator of the dynamic ASEP is denoted by $\mathcal{L}$ and acts on functions $f: \mathcal{S}\to \RR$ as
\begin{equation}\label{eq:generator1}
(\mathcal{L} f)(s) = \sum_{x\in \ZZ}\Big[\eta^{\downarrow}_x(s)\, a^{\downarrow}(s(x))\big(f(s^{\,x,-2})-f(s)\big) + \eta^{\uparrow}_x(s) \, a^{\uparrow}(s(x))\big(f(s^{\,x,+2})-f(s)\big)\Big] ,
\end{equation}
where $\eta^{\downarrow}_x(s) := \1{s(x)>s(x-1) \vee s(x+1)}$ and $\eta^{\uparrow}_x(s) := \1{s(x)<s(x-1)\wedge s(x+1)}$, and where the height functions $s^{\,x,\pm2}$ are obtained from $s$ by replacing $s(x)$ with $s(x) \pm 2$.

The following proposition associates a microscopic SHE to the dynamic ASEP.

\bp\label{thm:DSHETheo}
Consider the dynamic ASEP process $s_t(x)$ with $q = e^{-\eps}$ (as explained above, we have taken $\alpha=1$ without lost of generality). Let us define the constant $\theta := (1-\sqrt{q})^{2}$ and the function
\begin{align}\label{eq:ZCal}
Z_t(x) := e^{\theta t}\big(q^{-\frac{ s_t(x)}{2}} - q^{\frac{ s_t(x)}{2}}\big) = 2 e^{\theta t} \sinh \Bigl(\frac{\eps s_t(x)}{2}\Bigr),
\end{align}
for $t\in \RR_{\geq 0}$ and $x\in \ZZ$. Let furthermore 
\begin{align}\label{eq:Martingale}
M_t(x) &:=  Z_t(x) - \int^{t}_0\big(\mathcal{L}Z_r(x) + \theta Z_r(x)\big)dr,
\end{align}
where the generator $\mathcal{L}$, defined in \eqref{eq:generator1}, acts on $s_r(x)$ in $Z_r(x)$.

Then, for each $x$, the process $t \mapsto M_t(x)$ is a martingale with respect to the natural filtration of $\{s_t\}_{t\geq 0}$ and
   $Z$ satisfies the following microscopic SHE\vspace{-0.1in}
 \begin{align}\label{eq:sdSHE}
dZ_t(x) = \sqrt q\DDelta Z_t(x)dt + dM_t(x),
\end{align}
where the nearest-neighbour discrete Laplacian $\DDelta$, defined in Section~\ref{sec:notation}, acts on the variable $x$. Moreover, the martingales $M_t(x)$ have the following properties:
\begin{enumerate}
\item[(1)] the predictable quadratic covariation of the martingale \eqref{eq:Martingale} is given by
\begin{align}\label{eq:1}
\frac{d}{dt} \big\langle M(y), M(x)\big\rangle_t = \1{x=y}\frac{\eps^2 e^{2 \theta t}}{2}&\big(1+ q^{-s_t(x)}\big)\big(1 + q^{\frac{s_t(x+1)+s_t(x-1)}{2}}\big)\\
&\times \Big(1 - \nabla^{+}s_t(x) \nabla^{-}s_t(x) +\mathcal{E}^{\eps}_t(x) \Big), &&
\end{align}
where $\nabla^{\pm}$ are the discrete derivatives, defined in Section~\ref{sec:notation}, and the function $\mathcal{E}^{\eps}$ satisfies $|\mathcal{E}^{\eps}_t(x)| \leq C \eps$ uniformly in $t$ and $x$, for a non-random constant $C$;
\item[(2)] for some non-random constant $C > 0$, the following bound holds
\begin{equation}\label{eq:2}
\frac{d}{dt}\langle M(x), M(x)\rangle_t \leq C \eps^2 \big(Z_t(x)^2 + e^{2t \theta}\big).
\end{equation}
\end{enumerate}
\ep

\begin{rem}
Taking expectation in \eqref{eq:sdSHE} kills the martingale and shows that $\EE[Z_t(x)]$ solves a semi-discrete heat equation. This is essentially the one-particle case of the duality for dynamic ASEP, proved in \cite[Thm.~2.3]{BC17}.
\end{rem}

\begin{proof}[Proof of Proposition~\ref{thm:DSHETheo}]
We will first demonstrate \eqref{eq:sdSHE}, and explicitly construct the martingale therein.  In this proof it will be convenient for us to overload the notation for $Z$ and write $Z_t(x;s_t)$ instead of $Z_t(x)$. This mean the same thing, but explicitly emphasizes the dependence of $Z$ on $s_t$. Owing to the definition of the dynamic ASEP \eqref{eq:rates}, we have
\begin{align}
dZ_t(x;s_t)& = \eta^{\downarrow}_x(s_t)(Z_t(x;s^{x,-2}_t) - Z_t(x;s_t))a^{\downarrow}(s_t(x))dP^{\downarrow}_t(x) \\
&\qquad + \eta^{\uparrow}_{x}(s_t)(Z_t(x;s^{x,+2}_t) -  Z_t(x;s_t))a^{\uparrow}(s_t(x))dP^{\uparrow}_t(x) + \theta Z_t(x;s_t)dt,\label{eq:dZ}
\end{align}
where $\{P^{\downarrow}_t(x)\}_{x\in \ZZ}$ and $\{P^{\uparrow}_t(x)\}_{x\in \ZZ}$ are independent Poisson processes of intensities $1$, and the functions $\eta^{\downarrow}$ and $\eta^{\uparrow}$ are defined below \eqref{eq:generator1}. We introduce the compensated Poisson processes, which are martingales, by setting
\begin{equation}\label{eq:Marrow}
M_t^{\downarrow}(x) := \int^{t}_{0}\eta^{\downarrow}_x(s_r)a^{\downarrow}(s_r(x))d (P^{\downarrow}_r(x) - r),
\end{equation}
and likewise with $\uparrow$ in place of $\downarrow$. Furthermore, we define the martingales $\{M_t(x)\}_{x\in \ZZ}$ via
\begin{equation}
M_t(x):= \int^{t}_0\!\big( Z_r(x;s^{x,-2}_r) - Z_r(x;s_r)\big) d M^{\downarrow}_r(x) + \int^{t}_{0}\!\big(Z_r(x;s^{x,+2}_r) -  Z_r(x;s_r)\big) dM^{\uparrow}_r(x).\quad \label{eq:BarM}
\end{equation}
With these martingales at hand, we can rewrite \eqref{eq:dZ} as
\begin{align}\label{eq:Derivative}
dZ_t(x; s_t) =\mathcal{L}Z_t(x;s_t)dt + \theta Z_t(x;s_t)dt + dM_t(x),
\end{align}
where the generator $\mathcal{L}$ is defined in \eqref{eq:generator1}. To complete the proof of \eqref{eq:sdSHE}, it remains to show
\begin{align}
\mathcal{L} Z_t(x; s_t) + \theta  Z_t(x; s_t) = \sqrt q\DDelta Z_t(x;s_t),
\end{align}
where $\DDelta$ acts on the variable $x$ in $Z_t(x;s_t)$.  This follows from the one-particle duality in \cite[Thm.~2.3]{BC17}, though it can also be easily deduced (in the spirit of the analogous transform for ASEP in \cite{Bertini1997}) by expressing both sides in terms of $Z_t(x;s_t)$, $\eta^{\downarrow}_x(s_t)$ and $\eta^{\uparrow}_x(s_t)$, and then checking that they match over all possible values of the pair $\eta^{\downarrow}_x(s_t)$ and $\eta^{\uparrow}_x(s_t)$. 
Now, we will prove properties (1) and (2) of the bracket processes of the martingales $M$. Owing to the independence of the exponential clocks, $\tfrac{d}{dt}\langle M(y), M(x)\rangle_t = 0$ for $y\neq x \in \ZZ$. When $y=x$, \eqref{eq:BarM} yields
\begin{equation}\label{e:bracket}
\frac{d}{dt}\langle M(x), M(x)\rangle_t = A_t(x) + B_t(x),
\end{equation}
where the two terms are
\begin{align}\label{eq:II1}
\begin{split}
A_t(x)&:= \big(Z_t(x;s^{x,-2}_t)- Z_t(x;s_t)\big)^2 \frac{d}{dt}\langle M^{\downarrow}(x), M^{\downarrow}(x)\rangle_t, \\  
B_t(x) &:=\big(Z_t(x;s^{x,+2}_t)- Z_t(x;s_t)\big)^2 \frac{d}{dt}\langle M^{\uparrow}(x), M^{\uparrow}(x)\rangle_t.
\end{split}
\end{align}
%
%
%
%
%
From their definitions as compensated Poisson processes,  $\frac{d}{dt}\langle M^{\downarrow}(x), M^{\downarrow}(x)\rangle_t = \eta^{\downarrow}_x(s_t)a^{\downarrow}(s_t(x))$ and likewise for $\uparrow$. Moreover, we have the following readily checked identities:
\begin{align}
 \eta^{\downarrow}_x(s_t)=\1{s_t(x)> s_t(x-1)=s_t(x+1)} &= \frac{1}{(q-1)^2}\big(q^{\frac{1 - \nabla^+ s_t(x)}{2}} -1\big)\big(q^{\frac{1+\nabla^- s_t(x)}{2}} -1\big),&&\\
\eta^{\uparrow}_x(s_t)=\1{s_t(x)< s_t(x-1)=s_t(x+1)} &= \frac{1}{(q-1)^2}\big(q^{\frac{1+\nabla^+ s_t(x)}{2}} -1\big)\big(q^{\frac{1 - \nabla^- s_t(x)}{2}} -1\big),&&
\end{align}
where $\nabla^{\pm}$ are the discrete derivatives, defined in Section~\ref{sec:notation}. Substituting this into \eqref{eq:II1} and using \eqref{eq:ZCal} and \eqref{eq:rates} yields
\begin{equation}\label{eq:A_B_def}
\begin{aligned}
A_t(x) &=  e^{2 \theta t}\big(q^{\frac{1 - \nabla^+ s_t(x)}{2}} -1\big)\big(q^{\frac{1+\nabla^- s_t(x)}{2}} -1\big)\big(1+ q^{ s_t(x)-1}\big)\big(1+ q^{- s_t(x)}\big),\\
B_t(x) &= q^{-1}e^{2 \theta t}\big(q^{\frac{1+\nabla^+ s_t(x)}{2}} -1\big)\big(q^{\frac{1 - \nabla^- s_t(x)}{2}} -1\big)\big(1+ q^{ s_t(x)+1}\big)\big(1+ q^{- s_t(x)}\big).
\end{aligned}
\end{equation}
We seek to establish that the sum of these two terms can be written as \eqref{eq:1}. 

First, we observer that the values of $s_t$, contributing to $A_t(x)$, satisfy $s_t(x) > s_t(x-1) \vee s_t(x+1)$, and we can write $1+ q^{ s_t(x)-1} = 1+ q^{(s_t(x+1)+ s_t(x-1)) / 2}$. Similarly, for $B_t(x)$ we have $ s_t(x)< s_t(x-1) \wedge s_t(x+1)$ and we can write $1+ q^{ s_t(x)+1} = 1+ q^{(s_t(x+1)+ s_t(x-1)) / 2}$.

Second, we will rewrite the terms in \eqref{eq:A_B_def} which involve gradients. For this, we recall $q=e^{-\eps} \in (0,1]$ and observe that $(1\pm \nabla^{\mp} s_t(x)) / 2 \in \{0,1\}$. Then, using the bound $|e^{a}-1 - a| \leq a^2 e / 2$, which holds for $|a| \leq 1$, we obtain
\begin{align}
\big(q^{\frac{1 - \nabla^+ s_t(x)}{2}} -1\big)\big(q^{\frac{1+\nabla^- s_t(x)}{2}} -1\big) &= \frac{\eps^2}{4} \bigl(1-\nabla^{+}s_t(x)\bigr) \bigl(1+\nabla^{-}s_t(x)\bigr) + \mathcal{E}^{\eps,1}_t(x)\\
&= \frac{\eps^2}{4}\bigl(1 - \Delta s_t(x) - \nabla^{+}s_t(x) \nabla^{-}s_t(x)\bigr) + \mathcal{E}^{\eps,1}_t(x),
\end{align}
where $|\mathcal{E}^{\eps,1}_t(x)| \leq C \eps^3$, uniformly in $t$ and $x$, and for some constant $C > 0$. Similarly, we can write 
\begin{align}
\big(q^{\frac{1+\nabla^+ s_t(x)}{2}} -1\big)\big(q^{\frac{1 - \nabla^- s_t(x)}{2}} -1\big) &= \frac{\eps^2}{4}\big(1+\nabla^+ s_t(x)\big)\big(1 - \nabla^- s_t(x)\big) + \mathcal{E}^{\eps,2}_t(x)\\
&= \frac{\eps^2}{4}\bigl(1 + \Delta s_t(x) - \nabla^{+}s_t(x) \nabla^{-}s_t(x)\bigr) + \mathcal{E}^{\eps,2}_t(x),
\end{align}
where the function $\mathcal{E}^{\eps,2}$ is bounded as $\mathcal{E}^{\eps,1}$. 

Substituting these identities into $A$ and $B$, we arrive at
\begin{equation}
A_t(x)+B_t(x) = e^{2 \theta t}\big(1+q^{-s_t(x)}\big) \bigl(1+q^{\frac{s_t(x+1)+s_t(x-1)}{2}}\bigr) D_t(x),
\end{equation}
where 
\begin{align}
D_t(x) := \frac{\eps^2}{4} (1 + q^{-1})\bigl(1 - \nabla^{+}s_t(x) \nabla^{-}s_t(x)\bigr) - \frac{\eps^2}{4} (1 - q^{-1}) \Delta s_t(x) + \mathcal{E}^{\eps,1}_t(x) + q^{-1} \mathcal{E}^{\eps,2}_t(x).
\end{align}
Using $|\nabla^{\pm}s_t(x)| = 1$ and $q^{-1} = 1 + \mathcal{O}(\eps)$, this gives the required identity \eqref{eq:1}.

Now, we turn to the bound \eqref{eq:2}, for which we need to bound the r.h.s. of \eqref{e:bracket}. Bounding the indicator functions $\eta^{\downarrow \uparrow}$ and the rates $a^{\downarrow \uparrow}$ in \eqref{eq:Marrow} by $1$, and using the definition \eqref{eq:ZCal}, we get
\begin{align}
\frac{d}{dt}\langle M(x), M(x)\rangle_t &\leq \big(Z_t(x;s^{x,-2}_t)- Z_t(x;s_t)\big)^2 + \big(Z_t(x;s^{x,+2}_t)- Z_t(x;s_t)\big)^2\\
&= 4 e^{2 \theta t} \big(\sinh \bigl(\eps s_t(x) / 2 - \eps\bigr) - \sinh \bigl(\eps s_t(x) / 2\bigr)\big)^2\\
&\qquad + 4 e^{2 \theta t} \big(\sinh \bigl(\eps s_t(x) / 2 + \eps\bigr) - \sinh \bigl(\eps s_t(x) / 2\bigr)\big)^2.\label{eq:M_bound_intermediate}
\end{align}
From differentiability of $\sinh(x)$ one can derive $|\sinh(x + y) - \sinh(x)|^2 \leq c y^2 (\sinh(x)^2 + 1)$, which holds uniformly in $x$ and $|y| \leq 1$, where the constant $c$ is independent of $x$ and $y$. Applying this bound to \eqref{eq:M_bound_intermediate}, we obtain \eqref{eq:2}.
\end{proof}

The following property of the process $Z$ will be used in Section~\ref{MartingaleProb}.


\bp\label{DSHEcompute}
For $Z$ defined in Proposition~\ref{thm:DSHETheo}, we have the identity
\begin{equation}\label{eq:Approx2}
 \eps^{-2}e^{-2t\theta}\nabla^{+} Z_t(x) \nabla^{-}Z_t(x) = \nabla^{+} s_t(x) \nabla^{-}s_t(x) +  \mathfrak{B}^{\eps}_t(x),
 \end{equation}
 for a function  $\mathfrak{B}^{\eps}_t(x)$, satisfying $|\mathfrak{B}^{\eps}_t(x)|\leq C(e^{-2 \theta t}Z_t(x)^2 + \eps)$ uniformly in $t$ and $x$.
\ep
\begin{proof}
 Using the definition of $Z$ from \eqref{eq:ZCal} and the discrete derivative $\nabla^-$, we can write 
  \begin{equation}
  e^{-t\theta}\nabla^{-} Z_{t}(x) = q^{-\frac{s_t(x)}{2}} \big(1-q^{\frac{\nabla^- s_t(x)}{2}} \big) - q^{\frac{s_t(x)}{2}} \big(1-q^{- \frac{\nabla^- s_t(x)}{2}}\big).
  \end{equation} 
  Furthermore, recalling that $q = e^{-\eps}$ and using the bound $|e^{a}-1 - a| \leq a^2 e / 2$, for $|a| \leq 1$, the last expression equals 
  \begin{equation}
  \frac{\eps}{2}\big(q^{-\frac{s_t(x)}{2}} + q^{\frac{s_t(x)}{2}}\big)\big(\nabla^{-} s_t(x) + \mathfrak{B}^{\eps, -}_t(x)\big) = \eps \cosh\Bigl(\frac{s_t(x)}{2}\Bigr) \big(\nabla^{-} s_t(x) + \mathfrak{B}^{\eps, -}_t(x)\big),
  \end{equation}
    where $|\mathfrak{B}^{\eps, -}_t(x)| \leq C \eps$, uniformly in $t$ and $x$. Recalling the formula \eqref{eq:ZCal} and using $\cosh^2(x) - \sinh^2(x) = 1$, we obtain
\begin{equation}\label{eq:Nabla-Ct}
  e^{-t\theta}\nabla^{-} Z_{t}(x) = \eps \Big(1+\frac{1}{4} e^{-2 \theta t} Z_{t}(x)^2\Big)^{1/2} \big(\nabla^{-} s_t(x) + \mathfrak{B}^{\eps, -}_t(x)\big).
\end{equation}
Similarly, we can write 
  \begin{equation}\label{eq:Nabla+Ct}
  e^{-t\theta}\nabla^{+} Z_{t}(x) = \eps \Big(1+\frac{1}{4} e^{-2 \theta t} Z_{t}(x)^2\Big)^{1/2} \big(\nabla^{+} s_t(x) + \mathfrak{B}^{\eps, +}_t(x)\big),
\end{equation}
where $\mathfrak{B}^{\eps, +}$ is bounded in the same way as $\mathfrak{B}^{\eps, -}$. Multiplying \eqref{eq:Nabla-Ct} and \eqref{eq:Nabla+Ct}, and using $|\nabla^{\pm}s(x)| = 1$, we arrive at \eqref{eq:Approx2}.
\end{proof}

We aim now to state a convergence result for $Z$. This requires certain assumptions on the initial states.

 \bd\label{NStDef}
 Consider a sequence of random functions $\mathcal{Z}_0^{\eps}:\RR\to \RR$ indexed by $\eps$. We call $\{\mathcal{Z}_0^{\eps}\}_{\eps>0}$  \emph{near stationary initial data} with parameters $u>0$ and $\beta \in \big(0,\frac{1}{4}\big)$, if for all $k\in \NN$, there exist $C = C(u,\beta,k)>0$, such that for all $x, x_1, x_2\in \R$ one has the bounds
  \begin{subequations}\label{eqs:NStDef}
 \begin{align}
 \|\mathcal{Z}_0^{\eps}(x)\|_{2k}& \leq C e^{u|x|},\label{eq:NStDef1}\\
 \|\mathcal{Z}_0^{\eps}(x_1) - \mathcal{Z}_0^{\eps}(x_2)\|_{2k}&\leq C |x_1-x_2|^{2\beta} e^{u (|x_1|+|x_2|)}.\label{eq:NSteDef2}
 \end{align}
 \end{subequations}
 \ed

 With this definition we are ready to state our convergence result for $Z$.

 \bt\label{MainTheorem} 
 Consider the dynamic ASEP $s_t(x)$ with $q=e^{-\eps}$, where without loss of generality we have taken $\alpha=1$ (see the explanation at the beginning of this section). Extend $Z_t(x)$, defined in \eqref{eq:ZCal}, to non-integer $x$ by linear interpolation and define $\mathcal{Z}^{\eps}_t(x) := \eps^{-\frac{1}{2}}Z_{\eps^{-2} t}(\eps^{-1}x)$. Assume that  $\mathcal{Z}^\eps$ starts from an $\eps$-dependent sequence of \emph{near stationary initial data} $\mathcal{Z}_0^{\eps}$ (see Definition~\ref{NStDef}). If $\mathcal{Z}^{\eps}_0\Rightarrow\mathcal{Z}_0$ in $\mathcal{C}(\RR)$ as $\eps\to 0$, then 
   $\mathcal{Z}^{\eps} \Rightarrow \mathcal{Z}$ in $D([0,\infty), \mathcal{C}(\RR))$, as $\eps\to 0$,
where $\mathcal{Z}$ is the unique solution of \eqref{eq:SHEAdditive}, with $A=0$ and $B_t=\sqrt{2}e^{t/4}$, started from the initial state $\mathcal{Z}_0$. 
 \et

 \begin{proof}
 The moment bounds in Proposition~\ref{thm:Tightnes} and an argument similar to \cite[Thm.~3.3]{Bertini1997} (see also \cite[Prop.~1.4]{Dembo2016}), readily yield tightness of $\{\mathcal{Z}^{\eps}\}_{\eps > 0}$ with respect to the weak topology of $D([0,\infty), \mathcal{C}(\RR))$. Identification of the limit with the solution of \eqref{eq:SHEAdditive} is proved in Proposition~\ref{ppn:MgProb}.
\end{proof}



With this result at hand, we close this section by proving Theorem~\ref{Cor1}. For this, it will be convenient to write a formula for $\mathcal{Z}^{\eps}$, which follows from \eqref{eq:ZCal}:
\begin{equation}\label{e:Z_eps_formula}
\mathcal{Z}^{\eps}_t(x) = 2 \eps^{-\frac{1}{2}} e^{\eps^{-2} \theta t} \sinh \Bigl( \frac{\sqrt \eps \hat{s}^{\eps}_t(x)}{2}\Bigr), 
\end{equation}
where $\hat{s}^{\eps}$ is defined in \eqref{eq:ReHeight}.

\begin{proof}[Proof of Theorem~\ref{Cor1}]
Owing to \eqref{e:Z_eps_formula}, we write $\hat{s}^{\eps}_0(x)= 2\eps^{-\frac{1}{2}}\sinh^{-1}\big(\sqrt{\eps}\mathcal{Z}^{\eps}_0(x)/2\big)$. By using the fact that $\sinh^{-1}$ is a bijective continuously differentiable function, we prove Theorem~\ref{Cor1} when $\{\mathcal{Z}^{\eps}_t(\cdot)\}_{t\geq 0}$ weakly converges as $\eps$ goes to $0$. For showing the weak convergence of $\{\mathcal{Z}^{\eps}_t(\cdot)\}_{t\geq 0}$, we apply Theorem~\ref{MainTheorem}. To this end, we first show that $\mathcal{Z}^{\eps}_0(\cdot)$ satisfies \eqref{eqs:NStDef} under \eqref{eqs:NSofS}. 

Using \eqref{e:Z_eps_formula} and the bound $|\sinh (x)| \leq |x| e^{|x|}$, we obtain
\begin{equation}\label{eq:Quad}
|\mathcal{Z}^{\eps}_0(x)| \leq |\hat{s}^{\eps}_0(x)|e^{\sqrt{\eps}|\hat{s}^{\eps}_0(x)| / 2}.
\end{equation}
Taking the $L^{2k}$-norm, applying the Cauchy-Schwarz inequality and \eqref{eq:NSofS1}, we obtain
\begin{align}
 \|\mathcal{Z}_0^{\eps}(x)\|_{2k} \leq \|\hat{s}^{\eps}_0(x)\|_{4k} \| e^{\sqrt{\eps}|\hat{s}^{\eps}_0(x)| / 2}\|_{4k} \leq C e^{u(1 + \sqrt \eps / 2)|x|} \leq C e^{3 u |x| / 2},
\end{align}
for a constant $C$, depending on $k$. The second inequality follows by combining 
\begin{align}
|\hat{s}^{\eps}_0(x)|\leq e^{u|\hat{s}^{\eps}_0(x)|}, \quad \| e^{\sqrt{\eps}|\hat{s}^{\eps}_0(x)| / 2}\|_{4k}\leq \| e^{|\hat{s}^{\eps}_0(x)|}\|^{\sqrt{\epsilon}/2}_{4k}, \quad \forall x\in \ZZ \label{eq:ReqIneq}
\end{align}
with an upper bound on $ \| e^{|\hat{s}^{\eps}_0(x)|}\|$ from \eqref{eq:NSofS1} for $\eps$ small and $x$ tends $\infty$. The first bound in \eqref{eq:ReqIneq} holds since $|x|\leq e^{|x|}$ for all $x\in \ZZ$ and the second bound is obtained via H\"older's inequality. We use $|\sinh(x) - \sinh(y)| \leq |x - y| e^{|x| + |y|}$ to get
\begin{equation}\label{eq:ResBd}
|\mathcal{Z}^{\eps}_0(x_1)- \mathcal{Z}^{\eps}_0(x_2)|\leq |\hat{s}^{\eps}_0(x_1) - \hat{s}^{\eps}_0(x_2)|e^{\sqrt{\eps} (|\hat{s}^{\eps}_0(x_1)|+|\hat{s}^{\eps}_0(x_2)| ) / 2}.
\end{equation}
Combining this with \eqref{eqs:NSofS}, we obtain \eqref{eq:NSteDef2}. Then Theorem~\ref{MainTheorem} implies that $\mathcal{Z}^{\eps}$ converges weakly to the unique solution of \eqref{eq:SHEAdditive} with $A=0$ and $B_t = \sqrt{2}e^{t/4}$ in $D([0,\infty), \mathcal{C}(\RR))$ as $\eps\to 0$.

To show the weak convergence of $\hat{s}^{\eps}$, we use the formula \eqref{e:Z_eps_formula}. The weak convergence $\lim_{\eps \to 0}\mathcal{Z}^{\eps}_t(x) = \mathcal{Z}_t(x)$ in $D([0,\infty),\mathcal{C}(\RR))$ and $\lim_{\eps \to 0} \eps^{-2}t\theta = \frac{t}{4}$ imply that 
\begin{equation}
\eps^{-\frac{1}{2}}\sinh \Bigl( \frac{\sqrt \eps \hat{s}^{\eps}_t(x)}{2}\Bigr) = \frac{1}{2} e^{- \eps^{-2} \theta t} \mathcal{Z}^{\eps}_t(x)
\end{equation}
weakly converges to $\frac{1}{2} e^{- t/4} \mathcal{Z}_t(x)$ in the same topology. Hence, continuous differentiability of the function $\sinh^{-1}(x)$ yields the weak convergence in $D([0,\infty),\mathcal{C}(\RR))$:
\begin{align}
\lim_{\eps \to 0}\hat{s}^{\eps}_t(x) &= \lim_{\eps \to 0} 2 \eps^{-\frac{1}{2}} \sinh^{-1} \Bigl(\frac{\sqrt \eps}{2} e^{- \eps^{-2} \theta t} \mathcal{Z}^{\eps}_t(x)\Bigr)\\
&= \lim_{\eps \to 0} 2 \eps^{-\frac{1}{2}} \sinh^{-1} \Bigl(\frac{\sqrt \eps}{2} e^{- \frac{t}{4}} \mathcal{Z}_t(x)\Bigr) = e^{- \frac{t}{4}} \mathcal{Z}_t(x),
\end{align}
where in the last identity we made use of $(\sinh^{-1})'(0) = 1$. Theorem \ref{MainTheorem} and the chain rule imply that $e^{-t/4}\mathcal{Z}$ solves \eqref{eq:SHEAdditive} with $A=-\frac{1}{4}$ and $B\equiv \sqrt{2}$, which completes the proof.
\end{proof}

\section{Moment bounds for solutions of the microscopic SHE}\label{Tightness}
   The following moment bounds are the main result of this section.
   
   \bp\label{thm:Tightnes}
Consider the space time process $\mathcal{Z}^{\eps}$, defined in Theorem~\ref{MainTheorem}, starting from a sequence of near stationary initial data $\mathcal{Z}^{\eps}_0$ (see Definition~\ref{NStDef}) with parameters $u\in \RR_{>0}$ and $\beta \in (0,\frac{1}{4})$. Then, for any $t > 0$ and $k\in \NN$, there exists $C= C(u,\beta, k, t)$, such that
  \begin{subequations}
  \label{e:z_boundsA}
  \begin{align}
  \|\mathcal{Z}^{\eps}_t(x)\|_{2k} &\leq  C e^{u|x|},\label{eq:Tightness1A}\\
  \|\mathcal{Z}^{\eps}_t(x_1)- \mathcal{Z}^{\eps}_t(x_2)\|_{2k} &\leq  C |x_1-x_2|^{2\beta} e^{u(|x_1|+|x_2|)},\label{eq:Tightness2A}\\
  \|\mathcal{Z}^{\eps}_{t_1}(x)-\mathcal{Z}^{\eps}_{t_2}(x)\|_{2k} &\leq  C (\eps^2\vee |t_1-t_2|)^{\beta} e^{u|x|},\label{eq:Tightness3A}
  \end{align}
  \end{subequations}
  for all $x,x_1,x_2\in \RR$ and $t, t_1, t_2\in [0,t]$.
  \ep
  
  In our proof we find it easier to work with microscopic variables. For this, we define the process
  \begin{equation}\label{eq:ModDuality}
  \widetilde{\mathcal{Z}}^{\eps}_t(x) = \eps^{-\frac{1}{2}}Z_t(x),
\end{equation}
so that $\mathcal{Z}^{\eps}_T(X)=\widetilde{\mathcal{Z}}^{\eps}_{\eps^{-2}T}(\eps^{-1}X)$, where  $\mathcal{Z}^{\eps}$ is defined in Theorem \ref{MainTheorem}. Owing to \eqref{eq:sdSHE}, $\widetilde{\mathcal{Z}}^{\eps}_t(x)$ satisfies a microscopic SHE:
 \begin{equation}\label{eq:sdSHE2}
 d\widetilde{\mathcal{Z}}^{\eps}_t(x) = \sqrt q \DDelta \widetilde{\mathcal{Z}}^{\eps}_t(x) dt+ d \widetilde{\mathcal{M}}^{\eps}_t(x),
\end{equation}
where $\widetilde{\mathcal{M}}^{\eps} := \eps^{-\frac{1}{2}}M$. The mild solution to \eqref{eq:sdSHE2} is given by
   \begin{align}\label{eq:ZWeakSol}
   \widetilde{\mathcal{Z}}^{\eps}_t(x) = \sum_{y\in \ZZ}\mathfrak{p}^{\eps}_{t}(x-y)\widetilde{\mathcal{Z}}^{\eps}_0(y) + \sum_{y\in \ZZ} \int^{t}_{0} \mathfrak{p}^{ \eps}_{t-r}(x-y) d \widetilde{\mathcal{M}}^{\eps}_r(y)
   \end{align}
 (in all stochastic integrals we always integrate with respect to the time variable), where $\mathfrak{p}^{\eps}_{t}(x)$ solves the following semi-discrete PDE:\vspace{-0.1in}
  \begin{align}\label{eq:ODE}
  \partial_t \mathfrak{p}^{\eps}_{t}(x) = \sqrt q\DDelta \mathfrak{p}^{\eps}_{t}(x), \quad \mathfrak{p}^{\eps}_0(x) = \1{x=0}.
  \end{align}
  Note that $\mathfrak{p}^{\eps}_t$ is the transition kernel for a continuous time random walk  starting from $x=0$ which jumps symmetrically $\pm 1$ with rate $\sqrt q$.

In terms of that, we can rewrite the bounds in Proposition \ref{thm:Tightnes} as follows:
  \begin{subequations}
  \label{e:z_bounds}
  \begin{align}
  \|\widetilde{\mathcal{Z}}^{\eps}_t(x)\|_{2k} &\leq  C e^{u \eps |x|},\label{eq:Tightness1}\\
  \|\widetilde{\mathcal{Z}}^{\eps}_t(x_1)- \widetilde{\mathcal{Z}}^{\eps}_t(x_2)\|_{2k} &\leq  C (\eps|x_1-x_2|)^{2\beta} e^{u\eps(|x_1|+|x_2|)},\label{eq:Tightness2}\\
  \|\widetilde{\mathcal{Z}}^{\eps}_{t_1}(x)- \widetilde{\mathcal{Z}}^{\eps}_{t_2}(x)\|_{2k} &\leq  C \eps^{2 \beta} (1\vee |t_1-t_2|)^{\beta} e^{u\eps|x|},\label{eq:Tightness3}
  \end{align}
  \end{subequations}
  for all $x,x_1,x_2\in \ZZ$ and $t, t_1, t_2\in [0, \eps^{-2}T]$. Proving these bounds immediately implies that analogous bounds in Proposition \ref{thm:Tightnes}.

  To prove \eqref{e:z_bounds}, we first focus on bounding the second component of \eqref{eq:ZWeakSol}. For fixed $0\leq t_1<t_2$, and for any $t\in [t_1, t_2]$, let us define the processes
  \begin{align}
  \widetilde{M}^{\eps;t_1,t_2}_t(x) &:=  \sum_{y\in \ZZ} \int^{t}_{t_1} \mathfrak{p}^{ \eps}_{t_2-r}(x-y) d \widetilde{\mathcal{M}}^{\eps}_r(y),\label{eq:nabla1}\\
  \widetilde{M}^{\eps;\nabla, t_1,t_2}_t(x_1,x_2) &:= \sum_{y\in \ZZ} \int^{t}_{t_1} \bigl(\mathfrak{p}^{\eps}_{t_2-r}(x_1-y)-\mathfrak{p}^{\eps}_{t_2-r}(x_2-y)\bigr) d \widetilde  {\mathcal{M}}^{\eps}_r(y).\label{eq:nabla2}
  \end{align}
Furthermore, for any $r\in \RR_{>0}$ and $y,x_1, x_2\in \RR$, we define the modified kernels
  \begin{align}
  \bar{\mathfrak{p}}^{\eps}_{r}(y) := (1\wedge r^{-\frac{1}{2}})\mathfrak{p}^{\eps}_{r}(y), \qquad  \bar{\mathfrak{p}}^{\eps;\nabla}_r(x_1,x_2;y) := (1\wedge r^{-\frac{1}{2}})\mathfrak{p}^{\eps;\nabla}_r(x_1,x_2;y),
  \end{align}
  where we have set $\mathfrak{p}^{\eps;\nabla}_r(x_1,x_2;y):=\mathfrak{p}^{\eps}_{r}(x_1-y)-\mathfrak{p}^{\eps}_{r}(x_2-y)$.

    In the following lemma we will derive necessary estimates on the norms of the processes $\widetilde{M}^{\eps;t_1,t_2}$ and $\widetilde{M}^{\eps;\nabla,t_1,t_2}$ which we will use in Section~\ref{TightProof}.
    
 \bl\label{lem:NormBd}
For any $\beta\in (0,\frac{1}{4})$ and any $k\in \NN$, there exist positive constants $C_1=C_1(k)$ and $C_2=C_2(k,\beta)$, such that
 \begin{subequations}
   \label{e:z_lemma1}
  \begin{align}
  \|\widetilde{M}^{\eps;t_1,t_2}_t(x)\|^{2}_{2k} &\leq C_1 \sum_{y\in \ZZ} \int^{t}_{t_1} \bar{\mathfrak{p}}^{ \eps}_{t_2-r}(x-y) \widetilde{\mathcal{A}}^{\eps;k}_r(y) dr,  \label{eq:Iteration1}\\
  \|\widetilde{M}^{\eps;\nabla, t_1, t_2}_t(x_1,x_2)\|^{2}_{2k} &\leq  C_2 (\eps|x_1 -x_2|)^{4\beta} \sum_{y\in \ZZ} \int^{t}_{t_1} \big| \bar{\mathfrak{p}}^{\eps;\nabla}_{t_2-r}(x_1,x_2;y)\big|\, \widetilde{\mathcal{A}}^{\eps;k}_r(y) dr, \label{eq:Iteration2}
  \end{align}
  \end{subequations}
uniformly over $x,x_1,x_2\in \ZZ$ and $t\in [t_1,t_2]$, where 
\begin{align}\label{eq:Aeps}
    \widetilde{\mathcal{A}}^{\eps;k}_r(y) := \eps^2\|\widetilde{\mathcal{Z}}^{\eps}_r(y)\|^2_{2k}+  \eps e^{2r\theta}.
\end{align}
\el

\begin{proof}
 Using Burkholder-Davis-Gundy's inequality (Lemma \ref{lem:BDGone}), we can bound
\begin{subequations}  \label{e:z_BDG1}
\begin{align}
\|\widetilde{M}^{\eps;t_1,t_2}_t(x)\|^{2}_{2k}&\leq C \Bigl(\mathbb{E}\Big[\big[\widetilde{M}^{\eps;t_1, t_2}(x), \widetilde{M}^{\eps;t_1, t_2}(x)\big]_t^{k}\Big]\Bigr)^{\frac{1}{k}},\label{eq:BDG1}\\
\|\widetilde{M}^{\eps;\nabla, t_1, t_2}_t(x_1, x_2)\|^{2}_{2k}& \leq C \Bigl(\mathbb{E}\Big[\big[\widetilde{M}^{\eps;\nabla, t_1, t_2} (x_1, x_2), \widetilde{M}^{\eps;\nabla, t_1, t_2} (x_1, x_2)\big]_t^{k}\Big]\Bigr)^{\frac{1}{k}},\label{eq:BDG2}
\end{align}
\end{subequations}
where $[\bigcdot, \bigcdot]_t$ is the quadratic variation of the martingales with respect to time $t$ (suppressed in the notation for the martingales) and $C>0$ is a constant which depends only on $k$. We will bound the r.h.s.'s of \eqref{eq:BDG1} and \eqref{eq:BDG2} by the respective r.h.s.'s of \eqref{e:z_lemma1}.


Expanding the quadratic variations yields 
\begin{subequations} 
\begin{align}
\big[\widetilde{M}^{\eps;t_1, t_2}(x), \widetilde{M}^{\eps;t_1, t_2}(x)\big]_t &= \sum_{y\in \ZZ} \int^{t}_{t_1} (\mathfrak{p}^{\eps}_{t_2-r}(x-y))^2 d \big[\widetilde{\CM}^{\eps}(y), \widetilde{\CM}^{\eps}(y)\big]_r,\label{eq:qVExp1} \\
\big[\widetilde{M}^{\eps;\nabla, t_1, t_2}(x_1, x_2), \widetilde{M}^{\eps;\nabla, t_1, t_2}(x_1,x_2)\big]_t &= \sum_{y\in \ZZ} \int^{t}_{t_1} (\mathfrak{p}^{\eps;\nabla}_{t_2-r}(x_1,x_2;y))^2 d \big[\widetilde{\CM}^{\eps}(y), \widetilde{\CM}^{\eps}(y)\big]_r.\qquad\label{eq:qVExp2}
\end{align}
\end{subequations}
We will focus our analysis on \eqref{eq:qVExp1}, and  for \eqref{eq:qVExp2} we give only the key steps, since the bounds follow from similar arguments.

To start with, we separate out the predictable quadratic covariation by writing
\begin{equation}\label{eq:BreakIntoTwo}
\text{r.h.s. of \eqref{eq:qVExp1}} = \widetilde{\mathcal{R}}^{\eps;t_1,t_2}_r(x) +\sum_{y\in \ZZ} \int^{t}_{t_1} (\mathfrak{p}^{ \eps}_{t_2-r}(x-y))^2 d \langle \widetilde{\CM}^{\eps}(y), \widetilde{\CM}^{\eps}(y)\rangle_r,
\end{equation}
where 
\begin{equation}
\widetilde{\mathcal{R}}^{\eps;t_1,t_2}_t(x) :=\sum_{y\in \ZZ} \int^{t}_{t_1} \big(\mathfrak{p}^{\eps}_{t_2-r}(x-y)\big)^2 d \Big([\widetilde{\CM}^{\eps}(y), \widetilde{\CM}^{\eps}(y)]_r -\langle \widetilde{\CM}^{\eps}(y), \widetilde{\CM}^{\eps}(y)\rangle_r\Big).\qquad\label{eq:Reps}
\end{equation}
 For any $y\in \ZZ$, the process $r \mapsto [\widetilde{\CM}^{\eps}(y), \widetilde{\CM}^{\eps}(y)]_r -\langle \widetilde{\CM}^{\eps}(y), \widetilde{\CM}^{\eps}(y)\rangle_r$ is a martingale with respect to the natural filtration of $\{s_r\}_{r\geq 0}$ \cite[Ch.~I.4]{JS03}, and hence so is $t \mapsto \widetilde{\mathcal{R}}^{\eps;t_1,t_2}_t(y)$.
 Combining \eqref{eq:qVExp1} and \eqref{eq:BreakIntoTwo}, and applying the triangle inequality for the $L^k$-norm and the Burkholder-Davis-Gundy inequality (Lemma \ref{lem:BDGone}) for the martingale $\widetilde{\mathcal{R}}^{\eps;t_1,t_2}_t(y)$, we obtain
\begin{align}\label{eq:BDGBd}
\big\| \big[\widetilde{M}^{\eps;t_1, t_2}(x), \widetilde{M}^{\eps;t_1, t_2}(x)\big]_t\big\|_k &\leq C \big\|\big[\widetilde{\mathcal{R}}^{\eps;t_1,t_2}(x), \widetilde{\mathcal{R}}^{\eps;t_1,t_2}(x)\big]_t^{\frac{1}{2}}\big\|_k\\ 
&\qquad + \Big\|\sum_{y\in \ZZ} \int^{t}_{t_1} (\mathfrak{p}^{ \eps}_{t_2-r}(x-y))^2 d\langle \widetilde{\mathcal{M}}^{\eps}(y), \widetilde{\mathcal{M}}^{\eps}(y)\rangle_r\Big\|_k.
\end{align}
We first bound the second term in the r.h.s. of \eqref{eq:BDGBd}, and then later the first one. 

From \eqref{eq:2}, we can deduce an upper bound on the derivative of the bracket process $\langle \widetilde{\CM}^{\eps}(y), \widetilde{\CM}^{\eps}(y)\rangle_r$ yielding
\begin{align}
\sum_{y\in \ZZ} \int^{t}_{t_1} &(\mathfrak{p}^{ \eps}_{t_2-r}(x-y))^2 d\langle \widetilde{\CM}^{\eps}(y), \widetilde{\CM}^{\eps}(y)\rangle_r \leq C \sum_{y\in \ZZ} \int^{t}_{t_1} \bar{\mathfrak{p}}^{\eps}_{t_2-r}(x-y) 2 \eps ( |\widetilde{\mathcal{Z}}^{\eps}_r(y)|^2+ 2 e^{2r\theta}) dr,
\end{align}
for some absolute constant $C$. Here, we made use of the first estimate in \eqref{eq:Est52} for the kernel $\mathfrak{p}^{\eps}$. Taking $L^k$-norm on both sides of the above inequality and using the triangle inequality for the $L^k$-norm yields
\begin{equation}
 \Big\|\int^{t}_{t_1}\sum_{y\in \ZZ} \bar{\mathfrak{p}}^{\eps}_{t_2-r}(x-y)d\langle \widetilde{\mathcal{M}}^{\eps}(y), \widetilde{\mathcal{M}}^{\eps}(y)\rangle_r \Big\|_k \leq C \sum_{y\in \ZZ}\int^{t}_{t_1} \bar{\mathfrak{p}}^{\eps}_{t_2-r}(x-y)\widetilde{\mathcal{A}}^{\eps;k}_r(y) dr, \label{eq:BracketBd}
\end{equation}
where the function $\widetilde{\mathcal{A}}^{\eps;k}$ is defined in \eqref{eq:Aeps}.

Turning to the first term in the r.h.s. of \eqref{eq:BDGBd}, we expand the quadratic variation
 \begin{align}\label{eq:DQuarVar}
\big[\widetilde{\mathcal{R}}^{\eps;t_1,t_2}(x), \widetilde{\mathcal{R}}^{\eps;t_1,t_2}(x)\big]_t = \sum_{y\in \ZZ} \sum_{t_1\leq \tau\leq t} \big(\mathfrak{p}^{\eps}_{t_2-\tau}(x-y)\big)^4\big(\widetilde{\mathcal{Z}}^{\eps}_{\tau}(y) - \widetilde{\mathcal{Z}}^{\eps}_{\tau-}(y)\big)^4,
 \end{align}
where for each $y$, the inner sum is over all $\tau\in [t_1,t]$ which are the random times when transitions $s_{\tau}(y)\to s_{\tau}(y)-2$ or $s_{\tau}(y) \to s_{\tau}(y)+2$ occur, and the number of which is almost surely finite. In the notation above, $f(\tau-)$ refers to the limit of $f(r)$ as $r \uparrow \tau$. Denote the number of such transitions at site $y$ during the time interval $(r_1,r_2]$ by $N_y(r_1,r_2)$. Divide $[t_1, t]$ into $\ell=\lceil t-t_1\rceil$ sub-intervals $\CI_1 =(r_0, r_1]$, through $\CI_{\ell}=(r_{\ell-1}, r_{\ell}]$  where $r_0=t_1$, $r_1=r_0+1, \ldots , r_{\ell-1} =r_{0}+\lfloor t-t_1\rfloor$ and  $r_{\ell} = t$, so each interval has length at most one. Denote $N^{(i)}_y:=  N_y(\CI_{i})$. By direct computations
 \begin{align}\label{eq:OneDiffBd}
 \big(\widetilde{\mathcal{Z}}^{\eps}_{\tau}(y) - \widetilde{\mathcal{Z}}^{\eps}_{\tau-}(y)\big)^2\leq 2 \big(\eps^2\widetilde{\mathcal{Z}}^{\eps}_{\tau}(y)^2 + 2\eps e^{2\theta\tau}\big).
\end{align}
Next, we show
\begin{align}\label{eq:rCI}
\max_{r\in \CI_i} \big|\widetilde{\mathcal{Z}}^{\eps}_{r}(y)\big|\leq 2\big(e^{\theta} \big|\widetilde{\mathcal{Z}}^{\eps}_{r_{i-1}}(y)\big|+ e^{\theta r_{i}}\big)\exp(2\sqrt{\eps} N^{(i)}_{y}).
\end{align}
 For any $r\in \CI_{i}$ and $y\in \ZZ$, $|s_{r}(y)-s_{r_{i-1}}(y)|$ is bounded above by $2N^{(i)}_y$. Recall from \eqref{eq:ZCal} that $\widetilde{\mathcal{Z}}^{\eps}_{r}(y)= 2 \eps^{-\frac{1}{2}}e^{\theta r}\sinh(\eps s_r(y)/2)$, which can be bounded by $2 \eps^{-\frac{1}{2}}e^{\theta r}\sinh(\eps |s_{r_{i-1}}(y)|/2+ \eps N^{(i)}_y)$. Hence, using the identity $\sinh(a+b) = \sinh(a)\cosh(b)+ \sinh(a)\cosh(b)$, for $a=\eps |s_{r_{i-1}}(y)|/2 $ and $b=\eps N^{(i)}_y$, we can bound  
\begin{align}
\big|\widetilde{\mathcal{Z}}^{\eps}_{r}(y)\big| & \leq 2 \eps^{-\frac{1}{2}}e^{\theta r}\big(\sinh(a)\cosh(b) + \cosh(a)\sinh(b)\big)  
\\& \leq 2 \Big(e^{\theta (r - r_{i-1})}\big|\widetilde{\mathcal{Z}}^{\eps}_{r_{i-1}}(y)\big|\cosh(b)+ \eps^{-\frac{1}{2}} e^{\theta r} \sinh(b)\Big).\label{eq:tildeZBd}
\end{align}
The last line follows from $|\sinh(x)| = \sinh(|x|)$ and the inequalities $\cosh(a)\leq 1+\sinh(a)$ and $\sinh(b)\leq \cosh(b)$. Using furthermore $\cosh(b)\leq \exp(2\sqrt{\eps}N^{(i)}_{y})$, $\eps^{-\frac{1}{2}}\sinh(b)\leq \exp\big(2\sqrt{\eps} N^{(i)}_{y}\big)$ and $|r-r_{i-1}|\leq 1$ for all $r\in \CI_i$, we arrive at \eqref{eq:rCI}.  

Combining \eqref{eq:OneDiffBd} and \eqref{eq:rCI} we get
\begin{equation}
\max_{r\in \CI_i}  \big(\widetilde{\mathcal{Z}}^{\eps}_{r}(y) - \widetilde{\mathcal{Z}}^{\eps}_{r-}(y)\big)^2 \leq C \big(\eps^2 \big|\widetilde{\mathcal{Z}}^{\eps}_{r_{i-1}}(y)\big|^2+ \eps e^{2 \theta r_{i}}\big)\exp(4\sqrt{\eps} N^{(i)}_{y}),
\end{equation} 
for a constant $C \geq 0$. Using this bound in \eqref{eq:DQuarVar} yields
  \begin{align}
 \text{r.h.s. of \eqref{eq:DQuarVar}}\leq \sum_{i=1}^{\ell}\sum_{y\in \ZZ} \max_{r\in \CI_{i}}\big(\mathfrak{p}^{\eps}_{t_2-r}(x-y)\big)^4  \,\,N^{(i)}_y \exp\big(8\sqrt{\eps}N^{(i)}_{y}\big) \widetilde{B}^{\eps}_i(y)^2,\label{eq:QVBd}
\end{align}
where $\widetilde{B}^{\eps}_i(y):=C (\eps^2 \widetilde{\mathcal{Z}}^{\eps}_{r_{i-1}}(y)^2+ \eps e^{2\theta r_i})$.
 We may bound the term $(\mathfrak{p}^{\eps}_{t_2-r})^4\leq C(\bar{\mathfrak{p}}^{ \eps}_{t_2-r})^2$ using the first inequality of \eqref{eq:Est52}. Taking the square root of both sides of \eqref{eq:QVBd} and using Minkowski's inequality $(a+b)^{1/2}\leq a^{1/2}+ b^{1/2}$ for any $a,b\geq 0$, we arrive at
  \begin{align}
  (\text{r.h.s. of \eqref{eq:DQuarVar}})^{\frac{1}{2}}\leq \sum_{i=1}^{\ell}\sum_{y\in \ZZ} \max_{r\in \CI_{i}}\bar{\mathfrak{p}}^{\eps}_{t_2-r}(x-y)\,(N^{(i)}_y)^{\frac{1}{2}} \exp\big(4\sqrt{\eps}N^{(i)}_{y}\big)\widetilde{B}^{\eps}_i(y).\label{eq:QVBd3}
  \end{align}
Taking $L^{k}$-norm of the both sides of \eqref{eq:QVBd3} and using the triangle and H\"{o}lder inequalities for that norm yields
\begin{align}
\big\|\big[\widetilde{\mathcal{R}}^{\eps;t_1,t_2}(x)&, \widetilde{\mathcal{R}}^{\eps;t_1,t_2}(x)\big]_t^{\frac{1}{2}}\big\|_k \leq \sum_{i=1}^{\ell}\sum_{y \in \ZZ} \max_{r\in \CI_{i}} \bar{\mathfrak{p}}^{\eps}_{t_2-r}(x-y) \big\|(N^{(i)}_y)^{\frac{1}{2}} \exp\big(4\sqrt{\eps} N^{(i)}_{y}\big) \widetilde{B}^{\eps}_i(y) \big\|_k\\
&\qquad \leq \sum_{i=1}^{\ell}\sum_{y \in \ZZ} \max_{r\in \CI_{i}} \bar{\mathfrak{p}}^{\eps}_{t_2-r}(x-y) \big\|(N^{(i)}_y)^{\frac{1}{2}} \exp\big(4\sqrt{\eps} N^{(i)}_{y}\big) \big\|_{2k}  \big\|\widetilde{B}^{\eps}_i(y) \big\|_{2k}.\label{eq:ExpandR}
\end{align}
Owing to the first inequality of \eqref{eq:Est12}, we have
   \begin{align}\label{eq:pmax}
    \max_{r\in \CI_{i}} \bar{\mathfrak{p}}^{\eps}_{t_2-r}(x-y)\leq C \bar{\mathfrak{p}}^{\eps}_{t_2-r_{i-1}}(x-y),
    \end{align}
     for all $x,y\in \ZZ$, $i=1,\ldots , \ell$ and for some absolute constant $C>0$, when $\eps$ is sufficiently small. Since $N^{(i)}_{y}$ is a Poisson random variable, whose mean is bounded uniformly in the variables $i$ and $y$ (more precisely, the mean is at most $2$), we can bound
  \begin{align}\label{eq:PoissonBd}
\big\| (N^{(i)}_y)^{\frac{1}{2}}\exp\big(4\sqrt{\eps} N^{(i)}_{y}\big)\big\|_{2k} \leq C.
  \end{align}
  Using the triangle inequality for the $L^{2 k}$-norm, we may also bound
  \begin{align}
 \|\widetilde{B}^{\eps}_i(y) \|_{2k} \leq C \Big(\eps^2 \big\|\widetilde{\mathcal{Z}}^{\eps}_{r_{i-1}}(y)\|^2_{2k}+\eps e^{2\theta r_i}\Big). \label{eq:LknormBd}
  \end{align}
  Since $\widetilde{\mathcal{Z}}^{\eps}_{r_{i-1}}(y)$ is measurable w.r.t. the $\sigma$-algebra generated by $\{s_r\}_{r\leq r_{i-1}}$, $N^{(i)}_{y}$ and  $\widetilde{\mathcal{Z}}^{\eps}_{r_{i-1}}(y)$ are independent. Applying the bounds in \eqref{eq:pmax}, \eqref{eq:PoissonBd} and \eqref{eq:LknormBd} to the r.h.s. of \eqref{eq:ExpandR} yields
  \begin{align}
  \text{r.h.s. of \eqref{eq:ExpandR}}\leq Ce^{2\theta}\sum_{i=1}^{\ell}\sum_{y\in \ZZ} \bar{\mathfrak{p}}^{\eps}_{t_2-r_{i-1}}(x-y) \Big(\eps^2\big\|\widetilde{\mathcal{Z}}^{\eps}_{r_{i-1}}(y)\|^2_{2k}+ \eps e^{2\theta r_{i-1}}\Big).
\end{align}
 Approximating the above sum over the integer values of $i$ by the corresponding integral, we arrive at
  \begin{equation}\label{eq:2ndQuadVar}
\big\|\big[\widetilde{\mathcal{R}}^{\eps;t_1,t_2}(x), \widetilde{\mathcal{R}}^{\eps;t_1,t_2}(x)\big]_t^{\frac{1}{2}}\big\|_k \leq C \sum_{y\in \ZZ} \int^{t}_{t_1} \bar{\mathfrak{p}}^{\eps}_{t_2-r}(x-y) \widetilde{\mathcal{A}}^{\eps;k}_{r}(y) dr,
\end{equation}
for a constant $C=C(k)$, where we use the function \eqref{eq:Aeps}. Applying the bounds \eqref{eq:BracketBd} and \eqref{eq:2ndQuadVar} to the r.h.s. of \eqref{eq:BDGBd} finishes the proof of \eqref{eq:Iteration1}.

Now, we show the main steps in the proof of \eqref{eq:Iteration2}, though leave off the details which are similar to those described above in proving \eqref{eq:Iteration1}.
As in \eqref{eq:BreakIntoTwo}, we can decompose the r.h.s. of \eqref{eq:qVExp2} into two parts and bound then in a similar way as in \eqref{eq:BDGBd}:
\begin{align}
\Big\|\big[\widetilde{\CM}^{\eps;\nabla, t_1,t_2}(x_1,x_2), \widetilde{\CM}^{\eps;\nabla, t_1,t_2}&(x_1,x_2)\big]_t\Big\|_k \leq C\Big\|\big[\widetilde{\mathcal{R}}^{\eps;\nabla,t_1,t_2}(x_1,x_2),\widetilde{\mathcal{R}}^{\eps;\nabla,t_1,t_2}(x_1,x_2)\big]^{\frac{1}{2}}_{t}\Big\|_k \\
&+ \Big\|\sum_{y\in \ZZ} \int^{t}_{t_1} (\mathfrak{p}^{\eps;\nabla}_{t_2-s}(x_1,x_2;y))^2 d \langle \widetilde{\CM}^{\eps}(y), \widetilde{\CM}^{\eps}(y)\rangle_r\Big\|_k,\label{eq:somebound}
\end{align}
where the definition of $\widetilde{\mathcal{R}}^{\eps;\nabla, t_1,t_2}$ is similar to that of $\widetilde{\mathcal{R}}^{\eps;t_1,t_2}$ except that $\mathfrak{p}^\eps_{t_2-r}(x-y)$ is replaced by $\mathfrak{p}^{\eps; \nabla}_{t_2-r}(x_1,x_2;y)$ in \eqref{eq:Reps}.
 In order to obtain a similar bound to \eqref{eq:BracketBd}, we combine the H\"older-type estimate for the kernel $\mathfrak{p}^{\eps}_{t_2-r}$ in the variable $x$ (see the second inequality of \eqref{eq:Est22}) 
with \eqref{eq:2} and the triangle inequality for the $L^k$-norm. This yields
\begin{align}
\Big\|\sum_{y\in \ZZ} \int^{t}_{t_1} &(\mathfrak{p}^{\eps;\nabla}_{t_2-s}(x_1,x_2;y))^2 d \langle \widetilde{\CM}^{\eps}(y), \widetilde{\CM}^{\eps}(y)\rangle_r\Big\|_k \\
&\leq C (\eps|x_1-x_2|)^{4\beta} \sum_{y\in \ZZ} \int^{t}_{t_1} |\bar{\mathfrak{p}}^{\eps;\nabla}_{t_2-r}(x_1,x_2;y)| \widetilde{\mathcal{A}}^{\eps;k}_{r}(y) dr.  \label{eq:BracketBd2}
\end{align}
An argument, similar to the one which we used to prove \eqref{eq:2ndQuadVar}, yields
\begin{align}
\Big\|\big[\widetilde{\mathcal{R}}^{\eps;\nabla,t_1,t_2}(x_1,x_2)&,\widetilde{\mathcal{R}}^{\eps;\nabla,t_1,t_2}(x_1,x_2)\big]^{\frac{1}{2}}_{t}\Big\|_k\\
&\leq C(\eps|x_1-x_2|)^{4\beta} \sum_{y\in \ZZ} \int^{t}_{t_1} |\bar{\mathfrak{p}}^{\eps;\nabla}_{t_2-r}(x_1,x_2;y)| \widetilde{\mathcal{A}}^{\eps;k}_{r}(y) dr.\label{eq:2ndQuadVar2}
\end{align}
Finally, \eqref{eq:BDG2} follows by combining \eqref{eq:somebound}, \eqref{eq:BracketBd2} and \eqref{eq:2ndQuadVar2}.
\end{proof}

The following lemma develops a microscopic version of a chaos series for $\widetilde{\mathcal{Z}}^{\eps}$.

\bl\label{ChaosSeries}
For any $k\geq 1$ there exists a constant $C=C(k)>0$, such that 
 \begin{subequations}
   \label{e:ChaosSeries}
\begin{align}\label{eq:SquareBdlemma}
\|\widetilde{\mathcal{Z}}^{\eps}_t(x)\|_{2k}^2 &\leq 2\sum_{y\in \ZZ}  \mathfrak{p}^{\eps}_{t}(x-y)\|\widetilde{\mathcal{Z}}^{\eps}_0(y)\|^2_{2k} + 2C\int^{t}_{0}   \sum_{y\in \ZZ}\bar{\mathfrak{p}}^{\eps}_{t-r}(x-y)\widetilde{\mathcal{A}}^{\eps;k}_r(y) dr,\qquad\\
\|\widetilde{\mathcal{Z}}^{\eps}_t(x)\|^2_{2k} &\leq 2\sum_{y\in \ZZ} \mathfrak{p}^{\eps}_{t}(x-y)\|\widetilde{\mathcal{Z}}^{\eps}_0(y)\|^2_{2k}+2\sum_{\ell=1}^{\infty} (C\eps^2)^{\ell} \int\limits_{\vec{r}\in \Delta^{(\ell)}_t} \sum_{\vec{y}\in \ZZ^{\ell}} \mathcal{K}^{\eps; \ell}_{\vec{r}}(\vec{y})\mathcal{D}^{\eps;k}_{r_1,0}(y_1) d\vec{r}, \qquad\label{eq:ChaosSeries}	
\end{align}
\end{subequations}
uniformly in $x\in \ZZ$ and $t \geq 0$, where $\Delta^{(\ell)}_t := \{(r_1,  \ldots, r_{\ell})\in \RR^{\ell}_{\geq 0}:0\leq r_{1}\leq \ldots \leq r_{\ell}\leq t\}$, the function $\widetilde{\mathcal{A}}^{\eps;k}$ is defined in \eqref{eq:Aeps}, and 
\begin{equation}\label{eq:FunctionsKD}
\begin{aligned}
\mathcal{K}^{\eps; \ell}_{\vec{r}}(\vec{y}) &:= \bar{\mathfrak{p}}^{\eps}_{t -r_{\ell}} (x-y_{\ell})\prod_{i=1}^{\ell-1}  \bar{\mathfrak{p}}^{\eps}_{r_{i+1} -r_{i}} (y_{i+1}-y_{i}), \\
 \mathcal{D}^{\eps;k}_{r_1,r_0}(y_1) &:= \sum_{y_0\in \ZZ}\mathfrak{p}^{\eps}_{r_1-r_0}(y_1-y_0)\|\widetilde{\mathcal{Z}}^{\eps}_{r_0}(y_0)\|^2_{2k}+\eps^{-1}e^{2 \theta t},
\end{aligned}
\end{equation}
 for any $\vec{y}\in \ZZ^{\ell}$, $\vec{r}\in \Delta^{(\ell)}_t$ and $r_0 \in \R$, such that $r_0 \leq r_1$.
\el

\begin{proof}
Applying the $L^{2k}$-norm  triangle inequality to the  decomposition of $\widetilde{\mathcal{Z}}^{\eps}$ in \eqref{eq:ZWeakSol} yields
\begin{align}\label{eq:TIneq}
\|\widetilde{\mathcal{Z}}^{\eps}_t(x)\|_{2k} \leq &\sum_{y\in \ZZ} \mathfrak{p}^{\eps}_{t}(x-y)\big\|\widetilde{\mathcal{Z}}^{\eps}_0(y)\big\|_{2k} + \Big\|\int^{t}_{0}\sum_{y\in \ZZ} \mathfrak{p}^{\eps}_{t-r}(x-y) d \widetilde{\mathcal{M}}^{\eps}_r(y)\Big\|_{2k}.
\end{align}
Since $\mathfrak{p}^{\eps}_{t}(x-y)$ is a probability measure in $y$, applying Cauchy-Schwarz inequality yields
\begin{align}\label{eq:1stCom}
\Big(\sum_{y\in \ZZ} \mathfrak{p}^{\eps}_{t}(x-y)\big\|\widetilde{\mathcal{Z}}^{\eps}_0(y)\big\|_{2k}\Big)^2\leq \sum_{y\in \ZZ}  \mathfrak{p}^{\eps}_{t}(x-y)\|\widetilde{\mathcal{Z}}^{\eps}_0(y)\|^2_{2k},
\end{align}
which bounds the first term on the r.h.s. of \eqref{eq:TIneq}. Applying \eqref{eq:Iteration1}, we bound the second term
\begin{equation}\label{eq:2ndCom}
\Big\|\int^{t}_{0}\sum_{y\in \ZZ} \mathfrak{p}^{\eps}_{t-r}(x-y) d \widetilde{\mathcal{M}}^{\eps}_r(y)\Big\|^2_{2k}\leq  C \int^{t}_{0}   \sum_{y\in \ZZ}\bar{\mathfrak{p}}^{\eps}_{t-r}(x-y)\widetilde{\mathcal{A}}^{\eps;k}_r(y)  dr,
\end{equation}
where $\widetilde{\mathcal{A}}^{\eps;k}$ is defined in \eqref{eq:Aeps}.
Bounding the square of the sum of the two terms on the r.h.s. of \eqref{eq:TIneq} by twice the sum of their squares and applying \eqref{eq:1stCom} and \eqref{eq:2ndCom} gives \eqref{eq:SquareBdlemma}.
Since $\widetilde{\mathcal{A}}^{\eps;k}_r(y)$ involves $\|\widetilde{\mathcal{Z}}^{\eps}_t(x)\|_{2k}^2$, the above equation establishes a recursion which produces the series \eqref{eq:ChaosSeries}. To complete the proof we must control the tail of the series \eqref{eq:ChaosSeries}. This follows from the same bounds used in the proof of \eqref{eq:Tightness1} below, so we do not reproduce it here.
\end{proof}

\subsection{Proof of Proposition~\ref{thm:Tightnes}}\label{TightProof}

\noindent {\bf Proof of \eqref{eq:Tightness1}.} Starting with the first term on the r.h.s. of \eqref{eq:ChaosSeries}, we claim that there exists $C=C(k,\beta, u)>0$ (which may change values between lines below) such that
   \begin{align}
   \sum_{y\in \ZZ} \mathfrak{p}^{\eps}_{t} (x-y) \|\widetilde{\mathcal{Z}}^{\eps}_0(y)\|^2_{2k}  &\leq C \sum_{y\in \ZZ} \mathfrak{p}^{\eps}_{t}(x-y)  e^{2\eps u(|x-y|+|x|)}\leq C e^{2\eps u|x|}.\label{eq:T1stComp}
   \end{align}
 The first inequality follows from the  bound \eqref{eq:NStDef1} and the triangle inequality. The second bound follows from the second inequality of \eqref{eq:Est52}, with $\alpha=0$.

 Now, we turn to bound the second term on the r.h.s. of  \eqref{eq:ChaosSeries}. Due to the semigroup property of $\mathfrak{p}^{\eps}_{t}(x-\bigcdot)$, for any $\vec{r}\in \Delta^{(\ell)}_t$ we have
 \begin{align}
 \sum_{(y_1,y_2,\ldots, y_{\ell})\in \ZZ^{\ell}} \mathfrak{p}^{\eps}_{t -r_{\ell}} (x-y_{\ell})\prod_{i=1}^{\ell}  \mathfrak{p}^{\eps}_{r_{i+1} -r_{i}} (y_{i+1}-y_{i}) = \mathfrak{p}^{\eps}_{t}(x-y_0),\label{eq:SemiG2}
\end{align}
 which shows (since $\bar{\mathfrak{p}}^{\eps}(\cdot)= (1\wedge r^{-\frac{1}{2}})\mathfrak{p}^{\eps}(\cdot)$)
  \begin{align}
  \int_{\vec{r}\in \Delta^{(\ell)}_t} &\sum_{(y_0,y_1,\ldots, y_{\ell})\in \ZZ^{\ell+1}} \bar{\mathfrak{p}}^{\eps}_{t -r_{\ell}} (x-y_{\ell})\prod_{i=1}^{\ell-1}  \bar{\mathfrak{p}}^{\eps}_{r_{i+1} -r_{i}} (y_{i+1}-y_{i})\cdot \mathfrak{p}^{\eps}_{r_1-r_0}(y_1-y_0)\|\widetilde{\mathcal{Z}}^{\eps}_{r_0}(y_0)\|^2_{2k} \\&\leq \sum_{y_0\in \ZZ} \mathfrak{p}^{\eps}_{t}(x-y_0)\|\widetilde{\mathcal{Z}}^{\eps}_{0}(y)\|^2 \int\limits_{\vec{r}\in \Delta^{(\ell)}_t} \frac{1}{\sqrt{t-r_{\ell}}}\prod_{i=1}^{\ell-1}\frac{1}{\sqrt{r_{i+1}-r_{i}}} d\vec{r}.\label{eq:SemiG}
  \end{align}   
 Using this inequality and the fact that $e^{2\theta r_1} \leq e^{2\theta t}$ for any $r_1\in (0,t)$, we now show that
 \begin{align}
\int\limits_{\vec{r}\in \Delta^{(\ell)}_t}  \sum_{\vec{y}\in \ZZ^{\ell}} \mathcal{K}^{\eps; \ell}_{\vec{r}}(\vec{y})\,
\mathcal{D}^{\eps;k}_{r_1,0}(y_1)
 d\vec{r} \leq \mathcal{D}^{\eps;k}_{t,0}(x) \int\limits_{\vec{r}\in \Delta^{(\ell)}_t} \frac{1}{\sqrt{t-r_{\ell}}}\prod_{i=1}^{\ell-1}\frac{1}{\sqrt{r_{i+1}-r_{i}}} d\vec{r}, \label{eq:SeriesTerm}
\end{align}
where we use the functions $\mathcal{K}^{\eps; \ell}_{\vec{r}}(\vec{y})$ and $\mathcal{D}^{\eps;k}_{r_1,0}(y_1)$, defined in \eqref{eq:FunctionsKD}. To prove \eqref{eq:SeriesTerm}, we first express the integral on the left hand side as the sum two terms by distributing the integrals and the sum (inside the integrals) over the two summands of $\mathcal{D}^{\eps;k}_{r_1,0}(y_1)$, namely, $\sum_{y_0\in \ZZ} \mathfrak{p}^{\eps}_{r_1}(x-y)\|\widetilde{Z}^{\eps}_0(y)\|^2_{2k}$ and $\eps^{-1} e^{2\theta r_1}$. After splitting, we note that the first term on the left side of \eqref{eq:SeriesTerm} is same as the left hand side of \eqref{eq:SemiG}. Moreover, we bound the first term using \eqref{eq:SemiG}. By \eqref{eq:SemiG2} and the inequality $e^{2\theta r_1} \leq e^{2\theta t}$, the second term is bounded above by $e^{2\theta t}$ times the integral on the right hand side of \eqref{eq:SeriesTerm}. This proves \eqref{eq:SeriesTerm}.
     
The integral on the r.h.s. of \eqref{eq:SeriesTerm} equals $t^{\frac{\ell}{2}}\Gamma(\frac{1}{2})^{\ell}/\Gamma(\frac{\ell+1}{2})$.
Substituting this value and  applying \eqref{eq:T1stComp} (to bound the term $\sum_{y_0\in \ZZ} \mathfrak{p}^{\eps}_{t}(x-y_0)\|\widetilde{\mathcal{Z}}^{\eps}_0(y_0)\|^{2}_{2k}$ inside $\mathcal{D}^{\eps;k}_{t,0}(x)$) yields
\begin{align}
\int\limits_{\vec{r}\in \Delta^{(\ell)}_t}  \sum_{\vec{y}\in \ZZ^{\ell}} \mathcal{K}^{\eps; \ell}_{\vec{r}}(\vec{y})\, \mathcal{D}^{\eps;k}_{r_1,0}(y_1) d\vec{r}
\leq
\big(C e^{2\eps u|x|}  + \eps^{-1}  e^{2 \theta t}\big)\frac{\Gamma(\frac{1}{2})^\ell}{\Gamma(\frac{\ell+1}{2})}t^{\frac{\ell}{2}}.\label{eq:kTermBd}
\end{align}
Combining this with bounds of the form $\sum_{\ell=1}^{\infty} x^\ell/\Gamma(\ell/2) \leq e^{C x}$, for sufficiently large $C$, yields
 \begin{align}\label{eq:ThirdBd}
 \sum_{\ell=1}^{\infty} (C\eps^2)^{\ell} \int\limits_{\vec{r}\in \Delta^{(\ell)}_t} \sum_{\vec{y}\in \ZZ^{\ell}} \mathcal{K}^{\eps; \ell}_{\vec{r}}(\vec{y})\mathcal{D}^{\eps;k}_{r_1,0}(y_1) d\vec{r}\leq C \big(e^{2\eps u|x|}+ \sqrt{\eps^2 t}e^{2 \theta t}\big)e^{C\eps^2\sqrt{t}}.
 \end{align}
Recalling that $t\leq \eps^{-2}T$ and $\theta= (1-\sqrt{q})^{2}$ for $q=e^{-\eps}$, we have $\sqrt{\eps^2 t}\leq \sqrt{T}$, $t\theta\leq T/4$ and $\eps^2 \sqrt{t}\leq \eps \sqrt{T}$. Combining this bound with that on the first term in \eqref{eq:T1stComp} readily yields \eqref{eq:Tightness1}.

\smallskip
\noindent {\bf Proof of  \eqref{eq:Tightness2}.} Recalling \eqref{eq:ZWeakSol} and using the triangle inequality for the $L^{2k}$-norm, we write
 \begin{align}
 \|\widetilde{\mathcal{Z}}^{\eps}_t(x_1) - \widetilde{\mathcal{Z}}^{\eps}_t(x_2)\|_{2k} &\leq (\mathbf{I}) + (\mathbf{II}) . \label{eq:Tht21}
\end{align}
where
\begin{align}
(\mathbf{I}) &:= \Big\|\sum_{y\in \ZZ}\big(\mathfrak{p}^{\eps}_{t}(x_1-y) - \mathfrak{p}^{\eps}_{t}(x_2-y)\big)\widetilde{\mathcal{Z}}^{\eps}_0(y)\Big\|_{2k}, \nonumber\\
(\mathbf{II}) &:= \Big\|\sum_{y \in \ZZ} \int^{t}_{0} (\mathfrak{p}^{\eps}_{t-r}(x_1-y)- \mathfrak{p}^{\eps}_{t-r}(x_2-y)) d \widetilde{\CM}^{\eps}_r(y) \Big\|_{2k}.\nonumber
\end{align}

We start with bounding $(\mathbf{I})$. Due to a priori bound of $\|\widetilde{\mathcal{Z}}^{\eps}_0(\bigcdot)\|_{2k}$ from \eqref{eq:NStDef1} and the heat kernel estimate of the second inequality in \eqref{eq:Est52}, $\sum_{y \in \ZZ} \mathfrak{p}^{\eps}_t(x-y)\widetilde{\mathcal{Z}}^{\eps}_{0}(y)$ is absolutely convergent for all $x\in \ZZ$. Rearranging (as is justified by the absolute convergence ) the sum yields
 \begin{equation}\label{eq:KerDifBd}
 \sum_{y\in \ZZ} \big( \mathfrak{p}^{\eps}_{t}(x_1-y) - \mathfrak{p}^{\eps}_{t}(x_2-y)\big)\widetilde{\mathcal{Z}}^{\eps}_0( y)  = \sum_{y\in \ZZ}\mathfrak{p}^{\eps}_{t}(x_1-y)\big(\widetilde{\mathcal{Z}}^{\eps}_0(y)- \widetilde{\mathcal{Z}}_0(x_2-x_1+ y)\big).
\end{equation}

 Taking $L^{2k}$-norm on both sides of \eqref{eq:KerDifBd} and using subadditivity, we find that
 \begin{align}
 (\mathbf{I})  \leq \sum_{y\in \ZZ} \mathfrak{p}^{\eps}_{t}(x_1-y)\big\|\widetilde{\mathcal{Z}}^{\eps}_0( y)- \widetilde{\mathcal{Z}}^{\eps}_0(x_2-x_1+ y)\big\|_{2k}.\label{eq:T2Ineq}
\end{align}
 As $\mathcal{Z}^{\eps}_0$ satisfies \eqref{eq:NSteDef2}, we have
 \begin{align}
 \big\|\widetilde{\mathcal{Z}}^{\eps}_0(y)- \widetilde{\mathcal{Z}}^{\eps}_0(x_2-x_1+ y)\big\|_{2k}\leq (\eps|x_1-x_2|)^{2\beta} e^{\eps u(|x_1-x_2+y|+|y|)}.\label{eq:DiffBd}
 \end{align}
Using \eqref{eq:DiffBd} and the triangle inequality in \eqref{eq:T2Ineq} yields
 \begin{align}
 (\mathbf{I})\leq e^{\eps u |x_1-x_2|} \sum_{y\in \ZZ}\mathfrak{p}^{\eps}_{t}(x_1-y) e^{2\eps u|y|}. 
 \end{align}
Bounding the sum on the right hand side using the second inequality of \eqref{eq:Est52} with $\alpha=0$ shows the desired bound
\begin{align}
(\mathbf{I})  \leq C (\eps|x_1-x_2|)^{4\beta} e^{\eps u|x_1-x_2|}. \label{eq:T21stTermFB}
\end{align}

Now, we turn to $(\mathbf{II})$. Using \eqref{eq:Iteration2} yields
\begin{align}
(\mathbf{II})^2 \leq C (\eps|x_1- x_2|)^{4\beta} \sum_{y\in \ZZ}\int^{t}_{0} | \bar{\mathfrak{p}}^{\eps;\nabla}_{t-r}(x_1,x_2;y)|\big(\eps^2\|\widetilde{\mathcal{Z}}^{\eps}_r(y)\|^2_{2k} + \eps e^{2r\theta}\big) dr. \label{eq:T22ndTerm}
\end{align}
Applying \eqref{eq:SquareBdlemma} recursively to the r.h.s. we may, in a similar way as in the proof of Lemma~\ref{ChaosSeries}, develop an infinite series bound
\begin{align}
(\mathbf{II})^2 \leq 2(\eps|x_1-x_2|)^{4\beta} &\sum_{\ell=1}^{\infty}  (C \eps^{2})^{\ell} \int\limits_{\vec{r}\in \Delta^{(\ell)}} \sum_{\vec{y}^{(\ell)}\in \ZZ^{\ell}}\mathcal{K}^{\eps; \ell;\nabla}_{\vec{r}}(\vec{y})\mathcal{D}^{\eps;k}_{r_1,0}(y_1)d\vec{r}
  \label{eq:T22ndTerm2}
\end{align}
where $\mathcal{D}^{\eps;k}_{r_1,0}(y_1)$ is defined in Lemma~\ref{ChaosSeries} and
\begin{align}
\mathcal{K}^{\eps; \ell;\nabla}_{\vec{r}}(\vec{y}) := |\bar{\mathfrak{p}}^{\eps;\nabla}_{t-r_{\ell}}(x_1,x_2;y_{\ell})| \prod_{i=1}^{\ell-1} \bar{\mathfrak{p}}^{\eps}_{r_{i+1} -r_{i}}(y_{i+1}-y_{i}). \label{eq:CalK}
\end{align}
By use of the triangle inequality $|\bar{\mathfrak{p}}^{\eps;\nabla}_{ t-r_{\ell}}(x_1,x_2;y_{\ell})| \leq \bar{\mathfrak{p}}^{\eps}_{t-r_{\ell}}(x_1-y_{\ell}) + \bar{\mathfrak{p}}^{\eps}_{t-r_{\ell}}(x_2-y_{\ell})$ we can bound the series on the r.h.s. of \eqref{eq:T22ndTerm2} in the same manner as in the proof of  \eqref{eq:Tightness2}. This eventually produces the first inequality below (second inequality uses  $\sqrt{\eps^2 t}\leq \sqrt{T}$  and  $\eps^2\sqrt{t}\leq \eps \sqrt{T}$)
%
\begin{align}\label{eq:T22ndTermFB}
(\mathbf{II})^2\leq C\sqrt{\eps^2 t}(\eps|x_1-x_2|)^{4\beta} e^{2u\eps(|x_1|+|x_2|)} e^{C \eps^2\sqrt{t}} \leq  C e^{ C\eps \sqrt{T}} (\mathbf{I})^2.
\end{align}
This and \eqref{eq:T21stTermFB} imply $(\mathbf{I})+(\mathbf{II}) \leq C (\eps|x_1-x_2|)^{2\beta} e^{u\eps(|x_1|+|x_2|)}$, finishing the proof of \eqref{eq:Tightness2}.

\smallskip
\noindent {\bf Proof of \eqref{eq:Tightness3}.} Without loss of generality, we can assume $t_2>t_1$. Using \eqref{eq:ZWeakSol} and $L^{2k}$-norm triangle inequality yields
\begin{align}\label{eq:Tht31}
\big\|\widetilde{\mathcal{Z}}^{\eps}_{t_1}(x)  - \widetilde{\mathcal{Z}}^{\eps}_{t_2}(x)\big\|_{2k} \leq (\hat{\mathbf{I}})+(\hat{\mathbf{II}}),
\end{align}
where
\begin{align}
(\hat{\mathbf{I}}) := \Big\|\sum_{y\in \ZZ} \mathfrak{p}^{\eps}_{t_2-t_1}(x-y)(\widetilde{\mathcal{Z}}^{\eps}_{t_1}(y) -\widetilde{\mathcal{Z}}^{\eps}_{t_1}( x) )\Big\|_{2k},\quad
(\hat{\mathbf{II}}) := \Big\| \sum_{y\in \ZZ} \int^{t_2}_{t_1} \mathfrak{p}^{\eps}_{t_2-r}(x-y) d \widetilde{\CM}^{\eps}_r(y) \Big\|_{2k}.
\end{align}

First, we bound the term $(\hat{\mathbf{I}})$.
By the $L^{2k}$-norm triangle inequality $$(\hat{\mathbf{I}}) \leq  \sum_{ y\in \ZZ} \mathfrak{p}^{\eps}_{t_2-t_1}(x-y)\big\|\widetilde{\mathcal{Z}}^{\eps}_{t_1}(y)- \widetilde{\mathcal{Z}}^{\eps}_{t_1}(x)\big\|_{2k}.$$ Using \eqref{eq:Tightness2} and the second inequality of \eqref{eq:Est52}, with $\alpha=2\beta$, yields
\begin{align}
(\mathbf{I})\leq (1\vee |t_2-t_1|)^{\beta}\eps^{2\beta} e^{2\eps u|x|}.\label{eq:MyRightSide}
\end{align}

Now, we turn to $(\hat{\mathbf{II}})$. Applying \eqref{eq:Iteration1} and then recursively applying \eqref{eq:SquareBdlemma} to $(\hat{\mathbf{II}})$ yields (in the same way as in \eqref{eq:T22ndTerm2})
\begin{align}
(\hat{\mathbf{II}})^2 \leq 2\sum_{\ell=1}^{\infty} (C\eps^{2})^{\ell} \int\limits_{\vec{r}\in \Delta^{(\ell)}_{t_1,t_2}} \sum_{\vec{y}\in \ZZ^{\ell}}\mathcal{K}^{\eps; \ell}_{\vec{r}}(\vec{y}) \mathcal{D}^{\eps;k}_{r_1,t_1}(y_1) d\vec{r}
\label{eq:T32ndTermB}
\end{align}
where $\vec{r}\in \Delta^{(\ell)}_{t_1,t_2}:=\{t_1\leq r_1\ldots \leq r_{\ell}\leq t_2\}$ and the functions are defined in Lemma~\ref{ChaosSeries}.
In a similar way as used to derive  \eqref{eq:kTermBd}, and using \eqref{eq:Tightness1} to bound $\|\widetilde{\mathcal{Z}}^\eps_{t_1}(y_0)\|^2_{2k}$, we arrive at
\begin{align}
\int_{\vec{r}\in \Delta^{(\ell)}_{t_1,t_2}} \sum_{\vec{y}\in \ZZ^{\ell}}\mathcal{K}^{\eps; \ell}_{\vec{r}}(\vec{y})\mathcal{D}^{\eps;k}_{r_1,t_1}(y_1)\leq
\big(C e^{2\eps u|x|}  + \eps^{-1}  e^{2(t_2-t_1)\theta}\big)\frac{\Gamma(\frac{1}{2})^\ell}{\Gamma(\frac{\ell+1}{2})}(t_2-t_1)^{\frac{\ell}{2}}.\nonumber
\end{align}
Substituting above inequality into the r.h.s. of \eqref{eq:T32ndTermB} and summing over all $\ell\in \ZZ_{\geq 1}$ yields
\begin{align}\label{eq:T32ndTermFB}
(\hat{\mathbf{II}})^2\leq 	C (1\vee|t_2-t_1|)^{2\beta}\eps^{4\beta} e^{\eps^2|t_2-t_1|^{1 / 2}} e^{2u\eps|x|}.
\end{align}
In deriving this we used the bound $\eps (t_2-t_1)^{1/2}\leq C (1\vee|t_2-t_1|)^{2\beta}\eps^{4\beta}$, which is valid for $\beta\in (0,1/4)$.
Finally, substituting \eqref{eq:T32ndTermFB} and \eqref{eq:MyRightSide} into \eqref{eq:Tht31}, we arrive at \eqref{eq:Tightness3}.

 \section{Identification of the limit for solutions of the microscopic SHE}\label{MartingaleProb}


The following proposition is the main result of this section

 \bp\label{ppn:MgProb}
 In the setting of Theorem~\ref{MainTheorem}, let $\mathcal{Z}^{\eps}_0\Rightarrow\mathcal{Z}_0$ in $\mathcal{C}(\RR)$ as $\eps\to 0$. Then, every convergent subsequence of $\{\mathcal{Z}^{\eps}\}_{\eps > 0}$ in $D([0,\infty), \mathcal{C}(\RR))$ has the same limit, which is the unique solution of \eqref{eq:SHEAdditive}, with $A=0$ and $B_t=\sqrt{2}e^{t/4}$, started from the initial state $\mathcal{Z}_0$.
 \ep




We will use the following martingale problem to uniquely identify the limiting SPDE \eqref{eq:SHEAdditive}.

 \bd\label{MartingaleProblem}
 Consider a stochastic process $\mathcal{Z}$ in $D([0,\infty), \mathcal{C}(\RR))$ such that for any $t>0$, $u>0$ and $k\in \NN$, there exists $C=C(t,u,k)>0$ satisfying
 \begin{align}
 \sup_{r\in [0,t]} \sup_{x\in \RR} e^{-u|x|} \|\mathcal{Z}_r(x)\|_{2k}\leq C.
 \end{align}
Let $C^{\infty}_{b}(\RR)$ be the set all infinitely differentiable bounded functions. Then, $\mathcal{Z}$ is the solution of the martingale problem for the SPDE \eqref{eq:SHEAdditive} with $A=0$ and $B_t= \sqrt{2}e^{ct}$ started from $\mathcal{Z}_0$, if for any $\phi\in \mathcal{C}^{\infty}_{b}(\RR)\cap L^2(\RR)$ the processes
 \begin{align}\label{eq:Mart}
 \mathfrak{M}_t(\phi):=\mathcal{Z}_{t}(\phi) - \mathcal{Z}_{0}(\phi) - 2\int^{t}_0\mathcal{Z}_r(\phi^{\prime\prime}) dr, \quad \mathfrak{N}_{t}(\phi) := \big(\mathfrak{M}_t(\phi)\big)^2 - \frac{e^{2ct}-1}{c} \| \phi \|_{L^2}^2 \quad
 \end{align}
are local martingales, where $\mathcal{Z}_t(\phi) := \int_{-\infty}^{\infty} \mathcal{Z}_t(y)\phi(y)dy$.
 \ed

The martingale problem uniquely identities the law of the solution to the  SPDE \eqref{eq:SHEAdditive}. Therefore, in order to show Proposition \ref{ppn:MgProb} we will demonstrate a microscopic martingale problem and show that on convergent subsequences, limit laws satisfy the martingale problem in Definition \ref{MartingaleProblem}.
   Fix $\phi \in C^{\infty}_{b}(\RR)\cap L^2(\RR)$ and a subsequence $\mathcal{Z}^{\eps_n}$ which weakly converges to a limit $\mathcal{Z}$ in $D([0,\infty), \mathcal{C}(\RR))$. For the convenience, in this section we will drop the subscript $n$ from $\eps_n$ (though at this point we have not ruled out different limits along different subsequences) and always assume that $\mathcal{Z}^{\eps}$ converges to a limit $\mathcal{Z}$. Recalling that $\mathcal{Z}^{\eps}_t(x) = \widetilde{\mathcal{Z}}^{\eps}_{\eps^{-2}t}(\eps^{-1}x)$, let us denote $\mathcal{Z}^\eps_t(\phi) := \llangle \mathcal{Z}^{\eps}_t, \phi\rrangle_{\eps}$, where the pairing is defined in Section \ref{sec:notation}. Define furthermore 
  \begin{subequations}
   \label{e:martingales}
  \begin{align}
 \mathfrak{M}^{\eps}_t(\phi) &:=\mathcal{Z}^\eps_t(\phi) - \mathcal{Z}^\eps_0(\phi)- \sqrt q\int^{t}_{0} (\Delta_{\eps} \mathcal{Z}^{\eps})_r(\phi) dr ,\label{eq:DefM}\\
 \mathfrak{N}^{\eps}_t(\phi) &:= \big(\mathfrak{M}^{\eps}_t(\phi)\big)^2 - \int^{\eps^{-2}t}_{0} \sum_{y\in \ZZ}(\eps\phi(\eps y))^2 d\big\langle \widetilde{\CM}^{\eps}(y), \widetilde{\CM}^{\eps}(y) \big\rangle_{r}, &&\label{eq:DefN}
\end{align}
\end{subequations}
where the discrete Laplacian $\Delta_{\eps}$ acts on the variable $x$. We could have defined $\mathcal{M}^{\eps}_t(x) = \widetilde{\mathcal{M}}^{\eps}_{\eps^{-2}t}(\eps^{-1}x)$ in which case the second term in \eqref{eq:DefN} would take a slightly more appealing form. However, we find it simpler to work with this more microscopic expression below.

   Owing to Lemma~\ref{thm:DSHETheo}, it is straightforward to see that $\mathfrak{M}^{\eps}_t(\phi)$ is a local martingale with respect to the natural filtration of $\{s_{\eps^{-2}t}\}_{t\geq 0}$. To see that $\mathfrak{N}^{\eps}_t(\phi)$ is also a local martingale, we note that the second term on the r.h.s. of \eqref{eq:DefN} is the bracket process $\langle \mathfrak{M}^{\eps}(\phi), \mathfrak{M}^{\eps}(\phi)\rangle_t$. 

Since $\mathcal{Z}^{\eps}_t(\phi)$ converges weakly to $\mathcal{Z}_t(\phi)$, by Skorokhod's representation theorem \cite[p.70]{Billingsley2} we can embed these processes onto a common probability space on which they converge almost surely. In the following lemma we provide convergence of various terms from \eqref{e:martingales} with respect to this common probability space.

\bl\label{lem:IdLemma}
 For every fixed $t \geq 0$, we have the following limits
  \begin{subequations}
   \label{e:martingales_limits}
    \begin{align}
      \lim_{\eps \to 0} \mathbb{E}&\Big[\big| \mathcal{Z}^{\eps}_t(\phi) - \mathcal{Z}_t(\phi)\big|\Big] = 0,\label{eq:Req1}\\
    \lim_{\eps \to 0} \mathbb{E}&\left[\Big|2 \sqrt q \int^{t}_{0} (\Delta_{\eps} \mathcal{Z}^{\eps})_r(\phi) dr - 4\int^{t}_{0} \mathcal{Z}_r(\phi'') dr \Big|\right] = 0, \label{eq:Req2}\\
    \lim_{\eps \to 0} \mathbb{E}&\Bigg[\bigg|\int^{\eps^{-2}t}_{0} \sum_{y\in \ZZ}\big(\eps \phi(\eps y)\big)^2 d \big\langle \widetilde{\CM}^{\eps}(y), \widetilde{\CM}^{\eps}(y) \big\rangle_{r}- 4(e^{t/2}-1) \| \phi \|_{L^2}^2 \bigg|\Bigg] = 0. &\label{eq:Req3}
\end{align}
\end{subequations}
 \el

\begin{proof}
   By Fatou's lemma and the almost sure convergence of $\mathcal{Z}^{\eps}_t(\phi)$ to $\mathcal{Z}_t(\phi)$ (by Skorokhod)
  \begin{align}\label{eq:Fatou}
  \mathbb{E}\big[|\mathcal{Z}_t(\phi)|\big]\leq \liminf_{\eps \to 0}\mathbb{E}\big[|\mathcal{Z}^{\eps}_t(\phi)|\big].
\end{align}
  Owing to \eqref{eq:Tightness1} and the decay of $\phi(x)$ as $x\to \infty$ (since $\phi\in C^{\infty}_b(\RR)\cap L^2(\RR)$), 
  the $L^{1}$-norm of $\{\mathcal{Z}^{\eps}_t(\phi)\}_{\eps\geq 0}$ is uniformly bounded as $\eps\to 0$, and thus $\mathbb{E}\big[|\mathcal{Z}_t(\phi)|\big]<\infty$. Since $\{\mathcal{Z}^{\eps}_t(\phi) - \mathcal{Z}_t(\phi)\}_{\eps}$ converges almost surely to $0$ and is uniformly bounded in $L^{1}$-norm as $\eps\to 0$, we obtain, by dominated convergence, \eqref{eq:Req1}.


Turning to \eqref{eq:Req2}, observe that $\sqrt q \to 1$ as $\eps\to 0$ and that, via
summation by parts,
\begin{align}
(\Delta_{\eps} \mathcal{Z}^{\eps})_r(\phi) &= \eps\sum_{x\in \ZZ} \mathcal{Z}^{\eps}_r(\eps x) \Delta_{\eps} \phi(\eps x) = 2 \mathcal{Z}^{\eps}_r(\phi'') + \eps \llangle  \mathcal{Z}^{\eps}_r, \psi^\eps \rrangle_{\eps}, \label{eq:Taylor}
\end{align}
where $\psi^\eps(x) := \Delta_{\eps} \phi(\eps x) - \phi''(x)$. Theorem~\ref{thm:Tightnes} shows that $\mathbb{E}[\| \mathcal{Z}^{\eps}_r\|_{2k}]$ is uniformly bounded in $\eps$ and $r \in [0,t]$. This along with the $\eps$ prefactor implies that the last term in \eqref{eq:Taylor} vanishes in distribution. Since $\mathcal{Z}^{\eps}$ converges weakly to $\mathcal{Z}$ in $D([0,t), \mathcal{C}(\RR))$, we get from \eqref{eq:Taylor} the convergence in distribution
 \begin{align}\label{eq:DConv}
 2 \sqrt q \int^{t}_{0} (\Delta_{\eps} \mathcal{Z}^{\eps})_r(\phi) dr \quad \Rightarrow\quad 4\int^{t}_{0} \mathcal{Z}_r(\phi'') dr.
  \end{align}
 Due to \eqref{eq:Tightness1}, rapid decay of $\phi$ and uniform bounds on the $L^{2k}$-norm of $ \mathcal{Z}^{\eps}$, the l.h.s. of \eqref{eq:DConv} is uniformly bounded in $L^1$ as $\eps \to 0$. Combining this with the Skorokhod almost sure convergence representation and the dominated convergence theorem, as in proving \eqref{eq:Req1}, yields  \eqref{eq:Req2}.


The proof of \eqref{eq:Req3} is the most involved and ultimately relies on some self-averaging, whose idea goes back to \cite{Bertini1997}. Applying the change of variable $\eps^2 r \mapsto r$ inside the integral of \eqref{eq:Req3} and then, computing the bracket process (inside that integral) by applying \eqref{eq:1} and \eqref{eq:Approx2}, we arrive at 
 \begin{align}\label{eq:quadMMs}
 &\int^{\eps^{-2}t}_{0} \sum_{y\in \ZZ}(\eps \phi(\eps y))^2 d \big\langle \widetilde{\CM}^{\eps}\big(y\big), \widetilde{\CM}^{\eps}(y) \big\rangle_{r} \\ 
 &\qquad = \int^{t}_{0} \eps \sum_{y\in \ZZ} \phi(\eps y)^2  \left(2 e^{2\eps^{-2}r\theta} - 2 \eps^{-1}\nabla^{+}\widetilde{\mathcal{Z}}^{\eps}_{\eps^{-2}r}(y)\nabla^{-}\widetilde{\mathcal{Z}}^{\eps}_{\eps^{-2}r}(y) + U^{\eps}_{\eps^{-2}r}(y)\right) dr,\nonumber
 \end{align}
 where $\widetilde{\mathcal{Z}}^{\eps}$ is defined in \eqref{eq:ModDuality} and 
 \begin{align}
 U^{\eps}_r(x) := \frac{1}{2} e^{2r\theta}&\big((1+q^{-s_r(x)})(1+q^{\frac{s_r(x+1)+s_r(x-1)}{2}})-4\big) \label{eq:Ueps}\\
 &\qquad \times \big(1 - \nabla^{+}s_r(x)\nabla^{-}s_r(x) + \mathcal{E}^{\eps}_r(x)\big) + 2 e^{2r\theta} \bigl( \mathcal{E}^{\eps}_r(x) + \mathfrak{B}^{\eps}_r(x) \bigr). 
 \end{align}
 Here, we use the functions $\mathcal{E}^{\eps}$ and $\mathfrak{B}^{\eps}$ which are defined in \eqref{eq:1} and \eqref{eq:Approx2} respectively.


There are three terms inside the parenthesis on the r.h.s. of \eqref{eq:quadMMs}. We will address the third term, then the first and then the (much harder) second term. Starting with the third term $U^{\eps}$, a direct computation shows that
 \begin{equation}
 q^{\mp s_r(x)/2} =  \sqrt{1+\frac{1}{4}\eps e^{-2r\theta}(\widetilde{\mathcal{Z}}^{\eps}_r(x))^2} \pm \frac{1}{2}\eps^{\frac{1}{2}} e^{-r\theta}\widetilde{\mathcal{Z}}^{\eps}_r(x).\label{eq:qToZ}
 \end{equation}
 Moreover, the simple bound $\sqrt{1 + x^2} \leq 1 + |x|$ yields 
  \begin{equation}
\left| q^{\mp s_r(x)/2} - 1 \right| \leq \eps^{\frac{1}{2}} e^{-r\theta}\bigl|\widetilde{\mathcal{Z}}^{\eps}_r(x)\bigr|.
 \end{equation}
 Combining this with the fact that $|\nabla^{\pm}s_r(x)|=1$ and with the upper bounds on $\mathcal{E}^{\eps}$ and $\mathfrak{B}^{\eps}$, provided in Propositions~\ref{thm:DSHETheo} and \ref{DSHEcompute} respectively, yields
 $$|U^{\eps}_r(x)| \leq \eps^{\frac{1}{2}} F(r,x),$$
where $F(r,x)$ is a polynomial of third degree in $|\widetilde{\mathcal{Z}}^{\eps}_r(x-1)|$, $|\widetilde{\mathcal{Z}}^{\eps}_r(x)|$ and $|\widetilde{\mathcal{Z}}^{\eps}_r(x+1)|$, whose coefficients are bounded uniformly in $r\in [0,\eps^{-2}t]$ and $x\in \ZZ$.
 From this and the growth bound on $\widetilde{\mathcal{Z}}^{\eps}$  in \eqref{eq:Tightness1}, it follows that for any $k\geq 1$, the expression
 $\sup_{x\in \ZZ}e^{-2u\eps|x|}\|U^{\eps}_{r}(x)\|_{2k}$ is uniformly bounded over all $r\in [0,\eps^{-2} t]$.
 By this uniform boundedness of $U^{\eps}$, the fact that $\phi$ has bounded support and the dominated convergence theorem,
$\int^{t}_{0} \eps \sum_{y\in \ZZ} \phi(\eps y)^2 U^{\eps}_{\eps^{-2}r}(y)dr$ vanishes as $\eps\to 0$.

Turning to the first term in  \eqref{eq:quadMMs}, due to the continuity of $\phi$, $\eps\sum_{y\in \ZZ}\phi(\eps y)^2\to \|\phi\|^2_{L^2}$ as $\eps\to 0$. Combining this with $2\eps^{-2} r\theta\to r/2$ and the fact that $\int_0^t e^{r/2}dr = 2(e^{t/2}-1)$ yields
\[\int^{t}_0\eps\sum_{y\in \ZZ}\phi(\eps y)^2 2 e^{2\eps^{-2}r\theta} dr \quad \to \quad 4(e^{t/2}-1) \| \phi \|_{L^2}^2, \]
as $\eps\to 0$. This limiting term is precisely what is subtracted off in \eqref{eq:Req3}.
Therefore, to complete the proof of \eqref{eq:Req3} we must show $ \lim_{\eps \to 0} \mathbb{E}[(\mathcal{R}^{\eps})^2] = 0$, where 
 \begin{align}\label{eq:Remain}
  \mathcal{R}^{\eps} :=  \Big|\int^{t}_0\sum_{y\in \ZZ} \phi(\eps y)^2\nabla^{+}\widetilde{\mathcal{Z}}^{\eps}_{\eps^{-2}r}(y)\nabla^{-}\widetilde{\mathcal{Z}}^{\eps}_{\eps^{-2}r}(y) dr \Big|.
 \end{align}
The rest of this proof is devoted to showing this.


Expanding $(\mathcal{R}^{\eps})^2$ in terms of a double-integral in $r$ and $r'$ time variables, and introducing a conditional expectation with respect to the natural filtration $\mathcal{F}_{\eps^{-2}r'}$ up to time $\eps^{-2}r'$, yields
 \begin{align}
 \mathbb{E}[(\mathcal{R}^{\eps})^2] &= 2\mathbb{E}\bigg[	\int^{t}_{0}  \sum_{y_1\in \ZZ} \phi(\eps y_1)^2   \nabla^{+}\widetilde{\mathcal{Z}}^{\eps}_{\eps^{-2}r'}(y_1)\nabla^{-}\widetilde{\mathcal{Z}}^{\eps}_{\eps^{-2}r'}(y_1)\nonumber\\
 &\quad\qquad \times  \int^{t}_{r'}\mathbb{E}\Big[\sum_{y_2\in \ZZ} \phi(\eps y_2)^2\nabla^{+}\widetilde{\mathcal{Z}}^{\eps}_{\eps^{-2}r}(y_2)\nabla^{-}\widetilde{\mathcal{Z}}^{\eps}_{\eps^{-2}r}(y_2)\Big|\mathcal{F}_{\eps^{-2}r'}\Big] dr dr'\bigg].\nonumber
 \end{align}
Owing to this expression along with the rapid decay of $\phi$ and uniform bounds on the norms of $\widetilde{\mathcal{Z}}^{\eps}_{\eps^{-2}r'}$ for all $r'\in [0,t]$ as $\eps\to 0$, we can establish the bound
\begin{align}
\mathbb{E}[(\mathcal{R}^{\eps})^2] &\leq 2 \sum_{y_1,y_2\in \ZZ}(\phi(\eps y_1)\phi(\eps y_2))^2 \int^{t}_0 \Big\|\nabla^{+}\widetilde{\mathcal{Z}}^{\eps}_{\eps^{-2}r'}(y_1)\nabla^{-}\widetilde{\mathcal{Z}}^{\eps}_{\eps^{-2}r'}(y_1)\Big\|_{2}\\
&\qquad \times \int^{t}_{r'}\Big\|\mathbb{E}\Big[ \nabla^{+}\widetilde{\mathcal{Z}}^{\eps}_{\eps^{-2}r}(y_2)\nabla^{-}\widetilde{\mathcal{Z}}^{\eps}_{\eps^{-2}r}(y_2)\Big| \mathcal{F}_{\eps^{-2}r'}\Big]\Big\|_{2} dr dr',\label{eq:Rsquare}
\end{align}
 by interchanging the summation and expectation, as well as the expectation and integral and then applying the Cauchy-Schwarz inequality. For the first integral in \eqref{eq:Rsquare}, from \eqref{eq:Tightness1}, $q= e^{-\eps}$ and $|\nabla^{\pm} s_{r'}(x)|=1$, we obtain
\begin{align}\label{eq:KeyRes2}
\sup_{x\in \ZZ} e^{-2u\eps|x|}\big\|\nabla^{+}\widetilde{\mathcal{Z}}^{\eps}_{\eps^{-2}r'}(x)\nabla^{-}\widetilde{\mathcal{Z}}^{\eps}_{\eps^{-2}r'}(x)\big\|_{2}\leq C\eps,
\end{align}
for all $r'\in [0,t]$ where $C=C(t, u)$. For the second integral in \eqref{eq:Rsquare} we have
\begin{align}
\int^{t}_{r'} \Big\|\mathbb{E}\Big[ \nabla^{+}&\widetilde{\mathcal{Z}}^{\eps}_{\eps^{-2}r}(x)\nabla^{-}\widetilde{\mathcal{Z}}^{\eps}_{\eps^{-2}r}(x)\Big| \mathcal{F}_{\eps^{-2}r'}\Big]\Big\|_{2} dr \nonumber\\
&\leq \int^{r'+\sqrt{\eps}}_{r'}\Big\|\mathbb{E}\Big[ \nabla^{+}\widetilde{\mathcal{Z}}^{\eps}_{\eps^{-2}r}(x)\nabla^{-}\widetilde{\mathcal{Z}}^{\eps}_{\eps^{-2}r}(x)\Big| \mathcal{F}_{\eps^{-2}r'}\Big]\Big\|_{2} dr\\
&\qquad +C \int^{t}_{r'+\sqrt{\eps}}e^{2(u+1)\eps|x|}\big(\eps^{2} (r-r')^{-\frac{3}{2}} +\eps^{\frac{3}{2}}\big) dr\nonumber\\
&\leq C\eps^{\frac{3}{2}}e^{2u\eps|x|} + C\eps^{\frac{5}{4}}e^{2(u+1)\eps|x|}.\label{eq:FinalIneq}
\end{align}
The first inequality comes from splitting the integral into two parts ($r'$ to $r'+\sqrt{\eps}$ and $r'+\sqrt{\eps}$ to $t$) and applying the bound of Lemma \ref{lem:KeyEstimate} into the second part by taking $a=\eps^{-2}r$ and $b=\eps^{-2}r'$. The second inequality come from applying Jensen's inequality 
\begin{align}
\Big\|\mathbb{E}\Big[ \nabla^{+}\widetilde{\mathcal{Z}}^{\eps}_{\eps^{-2}r}(x)\nabla^{-}\widetilde{\mathcal{Z}}^{\eps}_{\eps^{-2}r}(x)\Big| \mathcal{F}_{\eps^{-2}r'}\Big]\Big\|_{2}\leq \big\|\nabla^{+}\widetilde{\mathcal{Z}}^{\eps}_{\eps^{-2}r'}(x)\nabla^{-}\widetilde{\mathcal{Z}}^{\eps}_{\eps^{-2}r'}(x)\big\|_{2}, \nonumber
\end{align}
and then using \eqref{eq:KeyRes2}.
Combining \eqref{eq:FinalIneq} and \eqref{eq:KeyRes2} to control the r.h.s. of \eqref{eq:Rsquare} yields
\begin{align}
\mathbb{E}[(\mathcal{R}^{\eps})^2]\leq C\sum_{y_1,y_2\in \ZZ} (\phi(\eps y_1)\phi(\eps y_2))^2e^{2(u+1)\eps(|y_1|+|y_2|)} \Big(\eps^{\frac{5}{2}} + \eps^{ \frac{13}{4}}\Big).\nonumber
\end{align}
Due to the rapid decay of $\phi$, the r.h.s. of the above inequality vanishes as $\eps\to 0$, which completes the proof of \eqref{eq:Req3}.
\end{proof}



 \bl\label{lem:KeyEstimate}
There exists a constant $C=C(k, u,T)>0$ such that
 \begin{align}
  \sup_{x\in \ZZ} e^{-2(u+1)\eps|x|}\Big\|\mathbb{E}\Big[ \nabla^{+}\widetilde{\mathcal{Z}}^{\eps}_{a}(x)\nabla^{-}\widetilde{\mathcal{Z}}^{\eps}_{a}(x)\Big| \mathcal{F}_{b}\Big]\Big\|_{2k}\leq C\Big(\frac{1}{\eps(a-b)^{3 / 2}} + \eps^{\frac{3}{2}}\Big),\label{eq:KeyREstimate}
 \end{align}
 for any $k\in \NN$ and $0<b<b+\eps^{-\frac{3}{2}}\leq a<\eps^{-2}T$,
 where $\mathcal{F}_{b}$ is the $\sigma$-algebra generated by $\{\vec{s}_{t}\}_{t\in [0,b]}$.
 \el

 \begin{proof}
Using \eqref{eq:ZWeakSol}, we can write
 \begin{align}
 \widetilde{\mathcal{Z}}^{\eps}_{a}(x) = \sum_{y\in \ZZ} \mathfrak{p}^{\eps}_{a-b}(x-y) \widetilde{\mathcal{Z}}^{\eps}_{b}(y) + \int^{a}_{b} \sum_{y\in \ZZ}\mathfrak{p}^{\eps}_{a-c}(x-y)d\widetilde{\mathcal{M}}^{\eps}_{c}(y). &&& \label{eq:MildSol}
 \end{align}
It follows from \eqref{eq:MildSol} that $\mathbb{E}[\nabla^{+} \widetilde{\mathcal{Z}}^{\eps}_{a} (x)\nabla^{-} \widetilde{\mathcal{Z}}^{\eps}_{a}(x)| \mathcal{F}_{b}] = (\mathbf{I}) + (\mathbf{II})$,
 where
  \begin{align}
  (\mathbf{I}) & := \sum_{y_1,y_2\in \ZZ}\nabla^{+}\mathfrak{p}^{\eps}_{a-b}(x-y_1) \nabla^{-} \mathfrak{p}^{\eps}_{a-b}(x-y_2)\widetilde{\mathcal{Z}}^{\eps}_{b}(y_1)\widetilde{\mathcal{Z}}^{\eps}_{b}(y_2),\\
  (\mathbf{II}) & := \mathbb{E}\Big[\int^{a}_{b} \sum_{y\in \ZZ}\nabla^{+}\mathfrak{p}^{\eps}_{a-c}(x-y)\nabla^{-}\mathfrak{p}^{\eps}_{a-c}(x-y) d\big\langle \widetilde{\CM}^{\eps}(y), \widetilde{\CM}^{\eps}(y)\big\rangle_{c}\Big| \mathcal{F}_{b}\Big] .
  \end{align}
For any $c\in [b,a]$, $k\in \NN$ and $\kappa \in \RR_{\geq 0}$, define
\begin{align}
f^{(k)}_{\kappa}(c;b) := \sup_{x\in \ZZ}e^{-2\kappa\eps|x|} &\left\|\mathbb{E}\Big[\nabla^{+}\widetilde{\mathcal{Z}}^{\eps}_c(x)\nabla^{-}\widetilde{\mathcal{Z}}^{\eps}_c(x)\Big|\mathcal{F}_{b}\Big]\right\|_{2k}. \nonumber\\
\alpha(c):= \eps(1\wedge (c-b)^{-\frac{1}{2}})+\eps^{\frac{3}{2}}, \quad \beta_1(c):=&  (1\wedge (c-b)^{-\frac{3}{2}}), \quad \beta_2(c):=  (1\wedge (a-c)^{-\frac{3}{2}}).
\end{align}
  In what follows, we will show that
\begin{align}\label{eq:BdI}
\sup_{x\in \ZZ}e^{-2(u+1)\eps|x|}\|(\mathbf{I})\|_{2k}\leq C_1\eps^{-1}\beta_1(a)
\end{align}
  and
  \begin{align}\label{eq:BdII}
 \sup_{x\in \ZZ} e^{-2(u+1)\eps|x|}\|(\mathbf{II})\|_{2k}\leq C_2 \alpha(a) +  C_3\int^{a}_{b} \beta_2(c) f^{(k)}_{u+1}(c;b) dc, &&
\end{align}
 for some $C_1= C_1(k,u,T)>0$, $C_2= C_2(k,u,T)>0$ and $C_3= C_3(k,u, T)>0$. Assuming \eqref{eq:BdI} and \eqref{eq:BdII}, we first complete the proof of \eqref{eq:KeyREstimate}. Since $f^{(k)}_{u+1}(a;b)$ is less than the sum of l.h.s. of \eqref{eq:BdI} and \eqref{eq:BdII}, summing both sides of the inequalities in \eqref{eq:BdI} and \eqref{eq:BdII} yields
 \begin{align}
f^{(k)}_{u+1}(a;b)\leq & C_1\eps^{-1}\beta_1(a) +C_2 \alpha(a)+ C_3\int^{a}_{b} \beta_2(c) f^{(k)}_{u+1}(c;b)dc. &&\label{eq:MyIteration}
\end{align}
 Note that \eqref{eq:MyIteration} verifies the condition of the Gronwall's inequality\footnote{Gronwall's inequality says that for any interval $I$ of the form $[b,\infty)$, or $[b,a]$, or $[b,a)$ with $b<a$ and any real valued functions $f, g$ and $h$ with the negative part of $f$ being integrable on every closed and bounded subinterval $I$, if $h$ satisfies $h(c)\leq f(c) + \int^{c}_{b}g (r) h(r) dr$ for all $c\in I$, then, one has $h(a)\leq f(a) + \int^{a}_{b}f(r)g (r) \exp(\int^{r}_{b} g(w)dw) dr$.} inside the interval $[a,b]$. Applying Gronwall's inequality, we write
\begin{align}\label{eq:Gron}
f^{(k)}_{u+1}(a;b)&\leq \Pi_{\alpha, \beta_1}(a)+C_3\int^{a}_{b}\Pi_{\alpha, \beta_1}(c)\beta_2 (c) \exp\Big(C_3\int^{c}_{b} \beta_2(w)dw\Big) dc,
\end{align}
 where $\Pi_{\alpha, \beta_1}(c)$ is the shorthand for $C_1\eps^{-1}\beta_1(c) + C_2\alpha(c)$. Since $\eps^{-1}(a-b)^{-\frac{3}{2}}\geq T^{-1}\eps(a-b)^{-\frac{1}{2}}$ for all $0<b<a<\eps^{-2}T$,  $\alpha(a)$ is less than $(1\wedge \eps^{-1}(a-b)^{-\frac{3}{2}})+ \eps^{\frac{3}{2}}$. Therefore, $\Pi_{\alpha, \beta_1}(a)$ is bounded above by $C^{\prime}_1(\eps^{-1}(a-b)^{-\frac{3}{2}}+ \eps^{\frac{3}{2}})$ for some $C^{\prime}_1= C^{\prime}_1(T)>0$. By a direct computation, $\int^{c}_{b}\beta_2(w)dw\leq \big(1-(c-b)^{-\frac{1}{2}}\big)\leq 1$.  Substituting the upper bounds of $\Pi_{\alpha, \beta_1}(a)$ and $\exp(\int^{c}_{b}\beta_2(w)dw)$ into the r.h.s. of \eqref{eq:Gron}, we get
 \begin{align}\label{eq:Gron2}
 f^{(k)}_{u+1}(a;b)\leq C^{\prime}_1(\eps^{-1}(a-b)^{-\frac{3}{2}} + \eps^{\frac{3}{2}}) + C^{\prime}_2 &\int^{a}_{b} \Pi_{\alpha, \beta_1}(c)\beta_2(c) dc,
\end{align}
 for some constant $C^{\prime}_2= C^{\prime}_2(T)>0$. Note that owing to \eqref{eq:Gron2}, \eqref{eq:KeyREstimate} will be proved once we show the following
 \begin{align}\label{eq:Last}
 \int^{a}_b \Pi_{\alpha, \beta_1}(c)\beta_2(c) dc \leq C\Big(\eps^{-1} (a-b)^{-\frac{3}{2}} + \eps^{\frac{3}{2}}\Big).
 \end{align}
 Thus we now proof \eqref{eq:Last}. Since $\max\{(a-c),(c-b)\}\geq (a-b)/2$ for any $c\in [a,b]$, we have
 \begin{align}
 \big(1\wedge (c-b)^{-\frac{3}{2}}\big)\big(1\wedge (a-c)^{-\frac{3}{2}}\big) \leq \big(1\wedge 2^{\frac{3}{2}} (a-b)^{-\frac{3}{2}}\big)\Big(\big(1\wedge (a-c)^{-\frac{3}{2}}\big)+ \big(1\wedge (c-b)^{-\frac{3}{2}}\big)\Big).
\end{align}
Integrating both sides of the above inequality w.r.t. $c$ yields
\begin{align}
\int^{a}_{b}\big(1\wedge (c-b)^{-\frac{3}{2}}\big)\big(1\wedge (a-c)^{-\frac{3}{2}}\big)dc\leq C (a-b)^{-\frac{3}{2}},  \label{eq:TwoWedge1}
\end{align}
for some constant $C=C(T)>0$. By using the fact that $(c-b)\leq \eps^{-2}T$ for ant $c\in [a,b]$, we have $\eps(1\wedge (c-b)^{-\frac{1}{2}}) \leq T \eps^{-1}(1\wedge (c-b)^{-\frac{3}{2}})$. Therefore, in a same way as in \eqref{eq:TwoWedge1}, we get
\begin{align}
\int^{a}_{b}\eps\big(1\wedge (c-b)^{-\frac{1}{2}}\big)\big(1\wedge (a-c)^{-\frac{3}{2}}\big)dc\leq C (a-b)^{-\frac{3}{2}}.  \label{eq:TwoWedge2}
\end{align}
Combining \eqref{eq:TwoWedge1} and \eqref{eq:TwoWedge2} with the fact that $\int^{a}_{b}(1\wedge (a-c)^{-3/2})dc\leq 1$ shows \eqref{eq:Last}.


  To complete the proof of this lemma, it boils down to proving \eqref{eq:BdI} and \eqref{eq:BdII} which do as follows.
 We first show \eqref{eq:BdI}. Observe that \begin{align}
 \|(\mathbf{I})\|_{2k} &\leq \sum_{y_1, y_2\in \ZZ} |\nabla^{+} \mathfrak{p}^{\eps}_{a-b}(x-y_1)\nabla^{-} \mathfrak{p}^{\eps}_{a-b}(x-y_2) |\|\widetilde{\mathcal{Z}}^{\eps}_{b}(y_1)\|_{4k}\|\widetilde{\mathcal{Z}}^{\eps}_{b}(y_2)\|_{4k}\\
 &\leq C\Big(\sum_{y\in \ZZ}\nabla^{+} \mathfrak{p}^{\eps}_{a-b}(x-y)e^{u\eps(|y|+1)}\Big)^2
\\& \leq  C \sum_{y\in \ZZ}(\nabla^{+} \mathfrak{p}^{\eps}_{a-b}(x-y))^2 e^{2(u+1)\eps|y|}\sum_{y\in \ZZ} e^{-2\eps |y|}\\
&\leq Ce^{2(u+1)\eps|x|}\eps^{-1} (1\wedge (a-b)^{-\frac{3}{2}}),
 \label{eq:IFormBound}
 \end{align}
 for some $C=C(k,u,T)>0$.
  The inequality in the first line follows from the triangle inequality of the $L^{2k}$-norm and H\"older's inequality (which bounds $\|\widetilde{\mathcal{Z}}^{\eps}_{b}(y_1)\widetilde{\mathcal{Z}}^{\eps}_{b}(y_2)\|_{2k}$ by the product of $\|\widetilde{\mathcal{Z}}^{\eps}_{b}(y_1)\|_{4k}$ and $\|\widetilde{\mathcal{Z}}^{\eps}_{b}(y_2)\|_{4k}$). The  inequality in the second line is obtained by substituting the upper bound of $\|\widetilde{\mathcal{Z}}^{\eps}_{b}( \bigcdot)\|$ from \eqref{eq:Tightness1}. Applying Cauchy-Schwarz inequality, we obtain the inequality of the third line. The last inequality follows by applying the first inequality of \eqref{eq:Est22} to bound $|\nabla^{+} \mathfrak{p}^{\eps}_{a-b}(x-y)|$ by $C (1\wedge(a-b)^{-\frac{3}{2}})$ and the second inequality of \eqref{eq:Est52} to bound the sum $\sum_{y\in \ZZ}\mathfrak{p}^{\eps}_{a-b}(x-y)\exp(2(u+1)\eps|x-y|)$ by a constant. Furthermore, the geometric sum $\sum_{y \in \ZZ}e^{-\eps|y|}$ is bounded above by $C\eps^{-1}$ for some constant $C>0$. From \eqref{eq:IFormBound}, \eqref{eq:BdI} follows by noting that the r.h.s. of \eqref{eq:IFormBound} does not depend on $x$ after dividing both sides by $\exp(2(u+1)\eps|x|)$.

  Now, we show \eqref{eq:BdII}. In a similar way as used to derive  \eqref{eq:quadMMs},  we may use \eqref{eq:1} and \eqref{eq:Approx2} to show that
 \begin{align}
 (\mathbf{II}) = 2\eps\int^{a}_{b}  \sum_{y\in \ZZ} \nabla^{+} \mathfrak{p}^{\eps}_{a-c}(x-y) \nabla^{-} \mathfrak{p}^{\eps}_{a-c}(x-y)\Big(e^{2c\theta}\!-\! \mathbb{E}\big[\eps^{-1}\nabla^{+} \widetilde{\mathcal{Z}}^{\eps}_c(y) \nabla^{-} \widetilde{\mathcal{Z}}^{\eps}_c(y)\\
 -\tfrac{1}{2}\eps^{\frac{1}{2}} U^{\eps}_{c}(x)|\mathcal{F}_{b}\big] \Big) dc, &&\nonumber
 \end{align}
 where $U^{\eps}_t$ is defined in \eqref{eq:Ueps}. We can write the expression above as a sum of three terms which we will denote by $\mathcal{W}_1$, $\mathcal{W}_2$ and $\mathcal{W}_3$ and those are defined as follows:
 \begin{align}
 \mathcal{W}_1  & := 2\eps\int^{a}_{b} e^{2c\theta}\sum_{y\in \ZZ} \nabla^{+} \mathfrak{p}^{\eps}_{a-c}(x-y) \nabla^{-} \mathfrak{p}^{\eps}_{a-c}(x-y)dc,\\
 \mathcal{W}_2 &:= 2\int^{a}_{b} \sum_{y \in \ZZ} \nabla^{+} \mathfrak{p}^{\eps}_{a-c}(x-y)\nabla^{-}\mathfrak{p}^{\eps}_{a-c}(x-y)\mathbb{E}\big[\nabla^{+}\widetilde{\mathcal{Z}}^{\eps}_{c}(y)\nabla^{-}\widetilde{\mathcal{Z}}^{\eps}_{c}(y)\big| \mathcal{F}_{b}\big] dc,\\
 \mathcal{W}_{3} &:= \eps^{\frac{3}{2}} \int^{a}_{b} \sum_{y \in \ZZ} \nabla^{+}\mathfrak{p}^{\eps}_{a-c}(x-y) \nabla^{-}\mathfrak{p}^{\eps}_{a-c}(x-y)\mathbb{E}\big[U^{\eps}_{c}(x)\big| \mathcal{F}_{b}\big] dc.
 \end{align}
Next, we show that the following holds:
\begin{align}
\sup_{x\in \ZZ} e^{-2(u+1)|x|}|\mathcal{W}_1|\leq C\eps(1\wedge(a-b)^{-\frac{1}{2}}),\quad & \quad   \sup_{x\in \ZZ} e^{-2(u+1)\eps|x|} \|\mathcal{W}_3\|_{2k}\leq C \eps^{\frac{3}{2}},&\label{eq:W_12Bd}\\
\sup_{x\in \ZZ} e^{-2(u+1)\eps|x|}\|\mathcal{W}_{2}\|_{2k}\leq & \int^{a}_{b}(1\wedge (a-c)^{-\frac{3}{2}}) f^{(k)}_{u+1}(c;b)dc.\label{eq:W_3Bd}
\end{align}
Since $\|(\mathbf{II})\|_{2k}$ is bounded above by $\|\mathcal{W}_1\|_{2k}+ \|\mathcal{W}_2\|_{2k}+ \|\mathcal{W}_3\|_{2k}$ via triangle inequality of the $L^{2k}$-norm, combining \eqref{eq:W_12Bd} and \eqref{eq:W_3Bd} yields \eqref{eq:BdII}.

Throughout the rest, we will prove the inequalities in \eqref{eq:W_12Bd} and \eqref{eq:W_3Bd}. We start with proving the first inequality of \eqref{eq:W_12Bd}.
To prove this, we use the following `key identity' of \cite{Bertini1997} (see also \cite[Lemma~4.2]{CSL16} or \eqref{eq:K_kernel_identity} herein)
\begin{align}\label{eq:KeyIneq}
 \int^{\infty}_{0}\sum_{y\in \ZZ}\nabla^{+} \mathfrak{p}^{\eps}_{r}(x-y) \nabla^{-} \mathfrak{p}^{\eps}_{r}(x-y) dr=0.
 \end{align}
 Via the change of variable $c\mapsto a-c$ we may rewrite
 \begin{align}
 \mathcal{W}_1 &= 2\eps e^{-2a\theta}\int^{a-b}_{0} e^{2c\theta}\sum_{y\in \ZZ} \nabla^{+} \mathfrak{p}^{\eps}_{c}(x-y) \nabla^{-} \mathfrak{p}^{\eps}_{c}(x-y) dc\nonumber\\
 & =2\eps  e^{-2a\theta}\int^{a-b}_{0}(e^{2c\theta}-1)\sum_{y\in \ZZ} \nabla^{+} \mathfrak{p}^{\eps}_{c}(x-y) \nabla^{-} \mathfrak{p}^{\eps}_{c}(x-y) dc\nonumber\\
 & \qquad - 2\eps e^{-2a\theta}\int^{\infty}_{a-b}\sum_{y\in \ZZ} \nabla^{+} \mathfrak{p}^{\eps}_{c}(x-y) \nabla^{-} \mathfrak{p}^{\eps}_{c}(x-y) dc,\label{eq:W1}
 \end{align}
 where the second equality follows by first splitting the integral into two parts by writing $e^{2c\theta}$ as $(e^{2c\theta}-1)+1$ and then, using \eqref{eq:KeyIneq} for the second part. We claim that
 \begin{align}
 \eps e^{-2a\theta}\int^{a-b}_{0}|e^{2c\theta}-1|\sum_{y\in \ZZ} \nabla^{+} \big|\mathfrak{p}^{\eps}_{c}(x-y) \nabla^{-} \mathfrak{p}^{\eps}_{c}(x-y)\big| dc\leq C\eps^3 (a-b)^{\frac{1}{2}} ,&&\label{eq:IntegralBd1}
\end{align}
while
 \begin{align}\label{eq:IntegralBd2}
\eps e^{-2a\theta}\int^{\infty}_{a-b}\sum_{y\in \ZZ} \big|\nabla^{+} \mathfrak{p}^{\eps}_{c}(x-y) \nabla^{-} \mathfrak{p}^{\eps}_{c}(x-y)\big| dc\leq C \eps \big(1\wedge (a-b)^{-\frac{1}{2}}\big).
\end{align}

Note that the l.h.s. of \eqref{eq:IntegralBd1} and \eqref{eq:IntegralBd2} bound the two term on the r.h.s.  \eqref{eq:W1} respectively (displayed in the last two lines of \eqref{eq:W1}). The first inequality of \eqref{eq:W_12Bd} follows immediately from this after recalling that $b<a\in [0,\eps^{-2}T]$ and hence $\eps^{3}(a-b)^{\frac{1}{2}} + \eps(a-b)^{-\frac{1}{2}} <C \eps(a-b)^{-\frac{1}{2}}$ for a constant only depending on $T$. To show \eqref{eq:IntegralBd2} we use that for any $T>0$ there exists a constant $C$ such that the bound $|e^{\omega}-1|\leq C\omega$ holds for all $\omega\in [0,T]$ to control $|e^{2c\theta}-1|\leq C \eps^2 c$; we also  apply the first inequality of \eqref{eq:Est22}
\begin{align}\label{eq:nablaBd}
\big|\nabla^{+} \mathfrak{p}^{\eps}_{c}(x-y) \nabla^{-} \mathfrak{p}^{\eps}_{c}(x-y)\big|\leq  \min\{1, c^{-\frac{3}{2}}\} \big(\mathfrak{p}^{\eps}_{c}(x-y)+\mathfrak{p}^{\eps}_{c}(x-y+1)\big).  &&&
\end{align}
Substituting \eqref{eq:nablaBd} and the inequality $|e^{2c\theta}-1|\leq C\eps^2c$ into the l.h.s. of \eqref{eq:IntegralBd1}, summing over $y$ and integrating w.r.t. $c$, we get \eqref{eq:IntegralBd1}. In a similar way, we get \eqref{eq:IntegralBd2} by using \eqref{eq:nablaBd}. 
\smallskip

Starting with $\mathcal{W}_3$, by using triangle inequality of the $L^{2k}$-norm, we write
\begin{align}
\|\mathcal{W}_3\|_{2k}\leq \eps^{\frac{3}{2}}\int^{a}_{b} \sum_{y\in \ZZ} |\nabla^{+} \mathfrak{p}^{\eps}_{a-c}(x-y) \nabla^{-} \mathfrak{p}^{\eps}_{a-c}(x-y) |\big\|\mathbb{E}\big[U^\eps_{c}(y)\big| \mathcal{F}_{b}\big]\big\|_{2k} dc. \label{eq:W2}
\end{align}
Owing to the Jensen's inequality, we may bound $\big\|\mathbb{E}\big[U^\eps_{c}(y)\big| \mathcal{F}_{b}\big]\big\|_{2k}$ by $\big\|U^\eps_{c}(y)\big\|_{2k}$. Now, we can use the fact (shown soon after \eqref{eq:Ueps}) that for any $k\geq 1$, the expectation $\mathbb{E}[\|U^{\eps}_{t}(y)\|_{2k}]$ after scaling by $ e^{-2(u+1)\eps|y|}$ is uniformly bounded in $y$ as $\eps\to 0$. Combining this last fact with the bound on $|\nabla^{+} \mathfrak{p}^{\eps}_{a-c} \nabla^{-}\mathfrak{p}^{\eps}_{a-c}|$ from \eqref{eq:nablaBd} yields that $e^{-2(u+1)\eps|x|}\|\mathcal{W}_{3}\|_{2k}$ is bounded by
\begin{align}
C \eps^{\frac{3}{2}} \int^{a}_{b}\sum_{y\in \ZZ}(1\wedge (a-c)^{-\frac{3}{2}})\bigl(\mathfrak{p}^{\eps}_{a-c}(x-y)+ \mathfrak{p}^{\eps}_{a-c}(x-y+1)\bigr)e^{2(u+1)\eps|x-y|} dc. \nonumber
\end{align}
for some constant $C$ which does not depend on $\eps,T$ or $x$.  By the second inequality of \eqref{eq:Est52}, the sum $\sum_{y\in \ZZ}\mathfrak{p}^{\eps}_{a-c}(x-y)\exp(2(u+1)\eps|x-y|)$ and $\sum_{y\in \ZZ}\mathfrak{p}^{\eps}_{a-c}(x-y+1)\exp(2(u+1)\eps|x-y|)$ is bounded above by some positive constant which only depends on $T$. Moreover, $\int^{a}_{b} (1\wedge (a-c)^{-3/2})dc$ is bounded above by  $1$. As a consequence the r.h.s. of the last display is bounded  above by $C\eps^{3/2}$ where $C$ only depends on $T$ and $u$. This proves the second inequality of \eqref{eq:W_12Bd}.

We are left to show \eqref{eq:W_3Bd} which we prove as follow.
 Via the triangle inequality of the $L^{2k}$-norm, we may write
\begin{equation}
\|\mathcal{W}_2\|_{2k} \leq 2\int^{a}_{b}\sum_{y\in \ZZ} \big|\nabla^{+} \mathfrak{p}^{\eps}_{a-c}(x-y) \nabla^{-}\mathfrak{p}^{\eps}_{a-c}(x-y)\big|\times \Big\|\mathbb{E}\Big[\nabla^{+}\widetilde{\mathcal{Z}}^{\eps}_c(y)\nabla^{-}\widetilde{\mathcal{Z}}^{\eps}_c(y)\Big|\mathcal{F}_{b}\Big]\Big\|_{2k}dc.\label{eq:JensenRes}
\end{equation}
 From the definition of $f^{(k)}_{\kappa}(\bigcdot; b)$,
$$\Big\|\mathbb{E}\Big[\nabla^{+}\widetilde{\mathcal{Z}}^{\eps}_c(y)\nabla^{-}\widetilde{\mathcal{Z}}^{\eps}_c(y)\Big|\mathcal{F}_{b}\Big]\Big\|_{2k} \leq e^{2(u+1)\eps|y|} f^{(k)}_{u+1}(c;b).$$
Combining this last inequality with \eqref{eq:nablaBd} (to bound $|\nabla^{+}\mathfrak{p}^{\eps}_{a-c}(x-y)\nabla^{-}\mathfrak{p}^{\eps}_{a-c}(x-y)|$) and applying in the r.h.s. of \eqref{eq:JensenRes} yields that the r.h.s. of \eqref{eq:JensenRes} is bounded by
\begin{align}
C\int^{a}_{b} (1\wedge (a- c)^{-\frac{3}{2}})f^{(k)}_{u+1}(c,b) \sum_{y\in \ZZ}e^{2(u+1)\eps|y|}\big(\mathfrak{p}^{\eps}_{a-c}(x-y)+\mathfrak{p}^{\eps}_{a-c}(x-y+1)\big)dc.\nonumber
\end{align}
Via triangle inequality, we bound $\exp(2(u+1)\eps|y|)$ by $\exp(2(u+1)\eps(|x-y|+|x|))$ in the r.h.s. of the above inequality. Owing to the second inequality of \eqref{eq:Est52}, one can bound $\sum_{y\in \ZZ}e^{2(u+1)\eps|x-y|}\mathfrak{p}^{\eps}_{a-c}(x-y)$ and $\sum_{y\in \ZZ}e^{2(u+1)\eps|x-y|}\mathfrak{p}^{\eps}_{a-c}(x-y+1)$ by some constant $C= C(u,T)>0$. Combining these estimates and substituting those into the last inequality in the above display we arrive at \eqref{eq:W_3Bd}. This completes the proof.
 \end{proof}

%

\subsection{Proof of Proposition~\ref{ppn:MgProb}}

Let $\mathcal{Z}$ be a limit of a subsequence $\{\mathcal{Z}^{\eps}\}_{\eps}$. Then, for a $\phi\in \mathcal{C}^{\infty}_{b}(\RR)\cap L^{2}(\RR)$, the random variables $\mathfrak{M}^{\eps}_T(\phi)$ and $\mathfrak{N}^{\eps}_{T}(\phi)$, defined in \eqref{eq:DefM}-\eqref{eq:DefN}, converge to $\mathfrak{M}_{T}(\phi)$ and $\mathfrak{N}_{T}(\phi)$ from \eqref{eq:Mart} in $L^{1}$ as $\eps \to 0$. This implies that $\mathfrak{M}(\phi)$ and $\mathfrak{N}(\phi)$ are two local martingales, and hence $\mathcal{Z}$ solves the martingale problem associated to \eqref{eq:SHEAdditive} with $A=0$ and $B_T= e^{T/4}$. By \cite[Ch.~8, Thm.~8.1.5]{SV06}, $\mathcal{Z}$ is the unique solution of \eqref{eq:SHEAdditive} started from the initial data $\mathcal{Z}_0$, which is the weak limit of the sequence $\{\mathcal{Z}^{\eps}_0\}_{\eps}$.


\section{Convergence of a generalized dynamic ASEP}\label{sec:GenDASEP}

In this section we prove Theorem~\ref{thm:tight_intro}, without relying on the duality relation of \cite[Theorem~2.3]{BC17}. To this end, we write a system of SDEs governing the evolution of the rescaled height function $\hat s^\eps$, and use the Da Prato-Debussche trick \cite{DaPrato} to prove convergence of solutions. Since this method is quite robust, we can consider a more general evolution of the height functions \eqref{eq:ModJumpRate_intro}, with a function $\ff$ satisfying Assumption~\ref{a:f}.

\subsection{Heuristics of the argument}
\label{sec:heuristic}

We start with describing heuristics of our argument. For a height function $s \in \Omega^N_\chi$, we denote by $a^{\downarrow}(s, x)$ and $a^{\uparrow}(s, x)$ the down and up jumps rates in \eqref{eq:ModJumpRate_intro} respectively. (Recall, that the set $\Omega^N_\chi$ contains periodic height functions $s$.) Let $\zeta^{\downarrow}_t(x)$ and $\zeta^{\uparrow}_t(x)$ be the processes describing down and up jumps of the height function $s_t(x)$. 
Then they are solutions of the system of SDEs
\begin{equation}\label{e:jumps}
 d \zeta^{\downarrow}_t(x) = 2\, \mathbbm{1}_{\{s_t(x)>s_t(x-1)=s_t(x+1)\}} d Q^{\downarrow}_t(x), \quad d \zeta^{\uparrow}_t(x) = 2\, \mathbbm{1}_{\{s_t(x)<s_t(x-1)=s_t(x+1)\}} d Q^{\uparrow}_t(x),
\end{equation}
with the initial states $\zeta^{\downarrow}_0(x) = \zeta^{\uparrow}_0(x) = 0$, where $Q^{\downarrow}_t(x)$ and $Q^{\uparrow}_t(x)$ are Poisson processes with rates $a^{\downarrow}(s_t, x)$ and $a^{\uparrow}(s_t, x)$ respectively. To be more precise, we should use the left limits of $s$ at time $t$ on the r.h.s. of both equations in \eqref{e:jumps}. However, we prefer not to indicate it, to make our notation less cumbersome. The evolution of the height function $s$ is described by
\begin{align}\label{eq:s_growth}
 d s_t(x) = d \zeta^{\uparrow}_t(x) - d \zeta^{\downarrow}_t(x).
\end{align}
To make the SDEs martingale-driven, we define the compensated Poisson processes to be the solutions of $d \tilde Q^{\downarrow}_t(x) = d Q^{\downarrow}_t(x) - a^{\downarrow}(s_t, x) dt$ and $d \tilde Q^{\uparrow}_t(x) = d Q^{\uparrow}_t(x) - a^{\uparrow}(s_t, x) dt$, starting from zeros at time $t=0$. These new processes are c\`{a}dl\`{a}g martingales with the predictable quadratic covariations given by $\langle \tilde Q^{\downarrow}(x), \tilde Q^{\uparrow}(y)\rangle_t \equiv 0$ and
\begin{align}
\langle \tilde Q^{\downarrow}(x), \tilde Q^{\downarrow}(y)\rangle_t = \mathbbm{1}_{x=y} \int_0^t a^{\downarrow}(s_r, x) dr, \qquad \langle \tilde Q^{\uparrow}(x), \tilde Q^{\uparrow}(y)\rangle_t = \mathbbm{1}_{x=y} \int_0^t a^{\uparrow}(s_r, x) dr.
\end{align}
It is easy to check that the following two identities hold:
\begin{align}
 \mathbbm{1}_{\{s(x)<s(x-1)=s(x+1)\}} &= \frac{1}{4}(1 + \nabla^- s(x)) (1 - \nabla^+ s(x)), \\
 \mathbbm{1}_{\{s(x)>s(x-1)=s(x+1)\}} &= \frac{1}{4}(1 - \nabla^- s(x)) (1 + \nabla^+ s(x)),
\end{align}
where $\nabla^\pm$ are discrete derivatives, defined in Section~\ref{sec:notation}. We will often also use the following two functions
\begin{align}\label{e:rho_lambda}
\rho(s, x) := \frac{a^{\uparrow}(s, x) + a^{\downarrow}(s, x)}{2}, \qquad \lambda(s, x) := \frac{a^{\uparrow}(s, x) - a^{\downarrow}(s, x)}{2}.
\end{align}
Combining these identities with \eqref{eq:s_growth}, we obtain the systems of SDEs describing the evolution of the height function $s$:
\begin{align}\label{eq:growth}
 d s_t(x) = \rho(s_t(x)) \DDelta s_t(x) dt + F(s_t, x) dt + d M_t(x),
\end{align}
where $\DDelta := \nabla^+ - \nabla^-$ is the discrete Laplacian, the function $F$ is given by
\begin{align}
F(s, x) := \lambda(s, x) \bigl( 1 - \nabla^- s(x)\, \nabla^+ s(x)\bigr),
\end{align}
and $t \mapsto M_t(x)$ is a c\`{a}dl\`{a}g martingale, starting at $0$, with jumps of size $1$ and with the
bracket process satisfying $\frac{d}{d t}\langle M(x), M(y)\rangle_t = \mathbbm{1}_{x=y} C(s_t, x)$ where
\begin{align}
C(s, x) := 2 \rho(s, x) \bigl(1 - \nabla^- s(x)\, \nabla^+ s(x) \bigr) + 2 \lambda(s, x) \DDelta s(x).
\end{align}
For two different points $x \neq y$, the martingales $M_t(x)$ and $M_t(y)$ almost surely do not make jumps at the same time. Moreover, the number of jumps of $t \mapsto M_t(x)$ at every finite interval $[0,T]$ has bounded moments uniformly in $x$ and locally uniformly in $T$, which means that the martingale makes a.s. finitely many jumps on every bounded time interval.

We need to tilt the function to make it periodic. More precisely, under the diffusive scaling and after recentering define $\hat s^\eps_t(x) := \sqrt \eps (s(\eps^{-2} t, \eps^{-1} x) - \chi x)$, where $\chi$ is defined in the statement of Theorem~\ref{thm:tight_intro}. Then \eqref{eq:growth} becomes
\begin{align}\label{eq:growth_eps}
\partial_t \hat s^\eps_t(x) = \rho_\eps(\hat s^\eps_t, x) \Deltae \hat s^\eps_t(x) + F_{\!\eps}(\hat s^\eps_t, x) + \xi^\eps(t,x),
\end{align}
where $\nablae^{\pm}$ are the respective discrete derivatives and $\Deltae$ is the discrete Laplacian, defined in Section~\ref{sec:notation}. The rescaled functions in \eqref{eq:growth_eps} are
\begin{subequations}
\begin{align}
 \rho_\eps(\hat s^\eps, x) &:= \rho \bigl(\eps^{-1 / 2} \hat s^\eps(x) + \chi x, \eps^{-1}x\bigr), \label{e:rho_def}\\
 \lambda_\eps(\hat s^\eps, x) &:= \eps^{-3/2} \lambda \bigl(\eps^{-1 / 2} \hat s^\eps(x) + \chi x, \eps^{-1}x\bigr),\label{e:lambda_def}\\
F_{\!\eps}(\hat s^\eps, x) &:= \lambda_\eps(\hat s^\eps, x) \big(1 - \eps \nablae^- \hat s^\eps(x)\, \nablae^+ \hat s^\eps(x)\big).\label{e:Feps}
\end{align}
\end{subequations}
The noise in  \eqref{eq:growth_eps} is given by $\xi^\eps(t,x) := d M^\eps_t(x)$, where the rescaled martingales are $M^\eps_t(x) := \sqrt \eps M(\eps^{-2}t, \eps^{-1}x)$, have jumps of size $\sqrt \eps$ and have the predictable quadratic covariation $\frac{d}{d t} \langle M^\eps(x), M^\eps(y)\rangle_t = \eps^{-1} \mathbbm{1}_{x=y} C_{\!\eps} (\hat s^\eps_t, x)$, where
\begin{align}\label{e:Ceps}
C_{\!\eps} (\hat s^\eps, x) := 2 \rho_\eps(\hat s^\eps, x) \bigl(1 - \eps \nablae^- \hat s^\eps(x)\, \nablae^+ \hat s^\eps(x) \bigr) + 2 \eps^{3} \lambda_\eps(\hat s^\eps, x) \Deltae \hat s^\eps(x).
\end{align}
Furthermore, properties of the martingales $M$ imply that, for $x \neq y$, $M^\eps_t(x)$ and $M^\eps_t(y)$ a.s. do not jump together, and on every time interval $[0, \eps^2 T]$ the martingale $M^\eps_t(x)$ makes a.s. finitely many jumps.



Let us now take the asymmetry to be $q = e^{-\eps}$, as in the statement of Theorem~\ref{thm:tight_intro}. Then, we have the following results for the functions $\rho_\eps$ and $\lambda_\eps$.

\begin{lemma}\label{lem:rho_bound}
The functions $\rho_\eps$ and $\lambda_\eps$, defined in \eqref{e:rho_def} and \eqref{e:lambda_def}, have the properties:

\begin{enumerate}[(1)]
\item\label{it:rho_bound1} There is a constant $c_0$ such that $|\rho_\eps(\hat s, x) - 1 + \eps / 2 | \leq  c_0 \eps^2$, uniformly in $x \in \RR$, where $\gamma$ is from Assumption~\ref{a:f}.
\item\label{it:rho_bound2} Recall the constants  $\fa$ and $\gamma$, defined in Assumption~\ref{a:f}. Then one has the bounds $|\lambda_\eps(\hat s, x)| \leq c_1 \eps^{-1/2}$ and $|\lambda_\eps(\hat s, x) + \frac{\fa \hat s(x)}{4}|  \leq c_1 \eps^{1 - \gamma}(1 + \sqrt \eps |\hat s(x)|)^\gamma$ uniformly in $x \in \R$, for some constant $c_1$.
\end{enumerate}
The constants $c_0$ and $c_1$ are independent of $\hat s$, $\eps$ and $x$.
\end{lemma}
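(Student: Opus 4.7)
The plan is to derive explicit closed-form expressions for $\rho$ and $\lambda$ via hyperbolic identities, and then to bound each via Taylor expansion with quantitative remainders. Setting $w := \ff(z)$ with $z := \eps^{-1/2}\hat s(x)$ and applying $1 + e^u = 2e^{u/2}\cosh(u/2)$ to the numerators and denominators of the rates in \eqref{eq:ModJumpRate_intro}, one gets
\begin{align*}
a^{\uparrow} = \frac{e^{-\eps/2}\cosh(\eps w/2)}{\cosh(\eps(w+1)/2)}, \qquad a^{\downarrow} = \frac{e^{-\eps/2}\cosh(\eps w/2)}{\cosh(\eps(w-1)/2)}.
\end{align*}
Product-to-sum identities applied to the resulting combinations $\cosh(\eps(w+1)/2)\cosh(\eps(w-1)/2)$ and $\cosh(\eps(w+1)/2)\pm\cosh(\eps(w-1)/2)$ then yield
\begin{align*}
\rho = \frac{(1+e^{-\eps})(1+\cosh(\eps w))}{2[\cosh(\eps w)+\cosh(\eps)]}, \qquad \lambda = -\frac{(1-e^{-\eps})\sinh(\eps w)}{2[\cosh(\eps w)+\cosh(\eps)]}.
\end{align*}
These are the starting points for all subsequent estimates, and they depend on $\hat s$ and $x$ only through the single variable $w$.

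For assertion~\eqref{it:rho_bound1}, I would rewrite $\rho = \tfrac{1+e^{-\eps}}{2}\,\bigl[1 - \tfrac{\cosh(\eps)-1}{\cosh(\eps w)+\cosh(\eps)}\bigr]$ and observe that the bracket equals $1 + O(\eps^2)$ uniformly in $w$, because $\cosh(\eps w)+\cosh(\eps)\geq 2$ while $\cosh(\eps)-1 \leq \eps^2$. Multiplying by the scalar expansion $(1+e^{-\eps})/2 = 1 - \eps/2 + O(\eps^2)$ yields $|\rho - 1 + \eps/2| \leq c_0\eps^2$, which is \eqref{it:rho_bound1}. The first, crude bound in \eqref{it:rho_bound2} is immediate from $|\sinh(\eps w)| \leq \cosh(\eps w) \leq \cosh(\eps w)+\cosh(\eps)$ and $1-e^{-\eps} \leq \eps$, giving $|\lambda| \leq \eps/2$ and hence $|\lambda_\eps| = \eps^{-3/2}|\lambda| \leq \tfrac{1}{2}\eps^{-1/2}$.

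The main obstacle is the finer second bound in \eqref{it:rho_bound2}. The strategy is to rewrite
\begin{align*}
\lambda_\eps + \tfrac{\fa \hat s(x)}{4} = \tfrac{\eps^{1/2}}{4}\bigl[\fa z - G(\eps, w)\bigr], \qquad G(\eps, w) := \frac{2(1-e^{-\eps})\sinh(\eps w)}{\eps^2[\cosh(\eps w)+\cosh(\eps)]},
\end{align*}
and to split the bracket as $[\fa z - \ff(z)] + [\ff(z) - G(\eps, w)]$. The first piece is directly controlled by Assumption~\ref{a:f}, contributing a term of size at most $c|z|^\gamma = c\eps^{-\gamma/2}|\hat s(x)|^\gamma$. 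The second piece is expanded through three simultaneous quantitative Taylor estimates, $(1-e^{-\eps})/\eps = 1 + O(\eps)$, $\sinh(\eps w)/(\eps w) = 1 + O((\eps w)^2)$ and $[\cosh(\eps w)+\cosh(\eps)]/2 = 1 + O((\eps w)^2 + \eps^2)$, each valid uniformly in the regime where $\eps |w|$ is bounded (which is the relevant one, thanks to the a priori bound $\sqrt\eps|\hat s| \lesssim 1$ from the $1$-periodicity of $\hat s$ combined with the sub-linear correction from Assumption~\ref{a:f}). Assembling these, multiplying by $\eps^{1/2}/4$, and re-expressing everything in terms of $\sqrt{\eps}|\hat s(x)|$ reduces to the target $c_1\eps^{1-\gamma}(1+\sqrt\eps|\hat s(x)|)^\gamma$.

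The core technical challenge lies in this last step: since $\eps w$ is only bounded (not vanishing) in the relevant regime, one has to work with Taylor remainders valid on bounded intervals --- for instance $|\sinh u - u| \leq |u|^3\cosh|u|/6$ for all $u$ --- rather than expansions at the origin. The restriction $\gamma<\tfrac12$ from Assumption~\ref{a:f} is exactly what makes the $|z|^\gamma$ error from replacing $\ff(z)$ by $\fa z$, once weighted by the $\eps^{1/2}$ prefactor, match the target form $\eps^{1-\gamma}(1+\sqrt\eps|\hat s|)^\gamma$ in the same regime.
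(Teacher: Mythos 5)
Your closed-form hyperbolic expressions $\rho = \frac{(1+e^{-\eps})(1+\cosh(\eps w))}{2[\cosh(\eps w)+\cosh(\eps)]}$ and $\lambda = -\frac{(1-e^{-\eps})\sinh(\eps w)}{2[\cosh(\eps w)+\cosh(\eps)]}$ are correct and give a cleaner starting point than the paper, which manipulates the rational expressions $\tfrac{1}{1+e^{\eps\ff(s)\pm\eps}}$ directly. Your proofs of part~(1) and of the crude bound $|\lambda_\eps|\lesssim\eps^{-1/2}$ are complete and match the paper in spirit.

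The gap is in the second bound of part~(2). After multiplying by $\eps^{1/2}/4$, your first piece $[\fa z-\ff(z)]$ contributes $\eps^{1/2}|z|^\gamma=\eps^{(1-\gamma)/2}|\hat s(x)|^\gamma$ (plus a harmless $\eps^{1/2}|\ff(0)|$). This does \emph{not} reduce to the target $\eps^{1-\gamma}(1+\sqrt\eps|\hat s|)^\gamma$: the ratio of the two equals $\eps^{-(1-\gamma)/2}\cdot(\sqrt\eps|\hat s|)^\gamma(1+\sqrt\eps|\hat s|)^{-\gamma}$, which at $|\hat s|=1$ behaves like $\eps^{-(1-\gamma)/2}\to\infty$. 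The exponents are off by a clean factor of $\eps^{1/2}$: what you actually prove is the bound with $\eps^{1/2-\gamma}$ in place of $\eps^{1-\gamma}$. You should be aware that the paper's proof has exactly the same arithmetic issue: it bounds $|1-e^{\eps(\ff(s)-\fa s)}|$ by $C\eps^{1-\gamma}(\sqrt\eps|\hat s|+1)^\gamma$, which is correct, but this error still has to be multiplied by the prefactor $\tfrac{1}{2\sqrt\eps}$ in front of the parenthesis, which should make the bound on $\lambda_\eps^{(2)}$ scale like $\eps^{1/2-\gamma}$, not $\eps^{1-\gamma}$ as claimed. So your decomposition and the paper's are morally the same step (replace $\ff(s)$ by $\fa s$, divide by $\sqrt\eps$) and hit the same exponent; the discrepancy is in the lemma statement itself, but your claim that you ``reduce to the target'' as written is not justified.

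A secondary issue: you justify the working assumption $\sqrt\eps|\hat s|\lesssim1$ by appealing to ``the $1$-periodicity of $\hat s$.'' Periodicity alone does not bound a function. What actually gives a bound of this type is that $\hat s$ comes from a height function with $\pm1$ slopes on a period of length $\eps^{-1}$, so $|\hat s(x)-\hat s(0)|\leq\sqrt\eps\cdot\eps^{-1}=\eps^{-1/2}$. However, the lemma claims the constant is independent of $\hat s$ with no such restriction stated, so if you need this a priori bound (and for the remainder estimates on $\sinh(\eps w)$ you clearly do), it should be flagged as an extra hypothesis rather than something that follows for free.
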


\begin{proof}
Let us denote for brevity $s = \eps^{-1 / 2} \hat s(x)$. Then, recalling the jump rates \eqref{eq:ModJumpRate_intro}, the function $\rho_\eps$ can be written as
\begin{align}
  \rho_\eps(\hat s, x) &= 1 - \frac{1 - e^{-\eps}}{2} + \frac{1 - e^{-\eps}}{2} \left(\frac{1}{1+  e^{\eps \ff(s) + \eps}} - \frac{1}{1+  e^{\eps \ff(s) - \eps}}\right)\\
   &= 1 - \frac{1 - e^{-\eps}}{2} + \frac{(1 - e^{-\eps}) (1 - e^{2\eps})}{2} \frac{1}{1+  e^{\eps \ff(s) + \eps}} \frac{ e^{\eps \ff(s) - \eps}}{1+  e^{\eps \ff(s) - \eps}}\;.
\end{align}
The last two factors are bounded uniformly in $s$ and $\eps$, hence the bound in (\ref{it:rho_bound1}) on $\rho_\eps$ follows.

Now, we will prove the bound in (\ref{it:rho_bound2}) on $\lambda_\eps$. To this end, we rewrite it as
\begin{align}
\lambda_\eps(\hat s, x) = \frac{1 - e^{-\eps}}{2 \eps^{3/2}} \left( \frac{1}{1+  e^{\eps \ff(s) + \eps}} + \frac{1}{1+  e^{\eps \ff(s) - \eps}} - 1\right).\label{e:lambda}
\end{align}
The terms in the parenthesis are bounded by a constant, yielding $|\lambda_\eps(\hat s, x)| \leq c_1 \eps^{-1/2}$. Further, using the Taylor expansion for $e^{-\eps}$, we can write
\begin{equation*}
\lambda_\eps(\hat s, x) = \frac{1}{2 \sqrt \eps} \left( \frac{1}{1+  e^{\eps \ff(s) + \eps}} + \frac{1}{1+  e^{\eps \ff(s) - \eps}} - 1\right) +  \lambda_\eps^{(1)}(\hat s, x),
\end{equation*}
where $|\lambda_\eps^{(1)}(\hat s, x)| \leq C \sqrt \eps$. Next, we will replace $\ff(s)$ by $\fa s$:
\begin{equation*}
\frac{1}{1+  e^{\eps \ff(s) \pm \eps}} = \frac{1}{1+ e^{\sqrt \eps \fa \hat s \pm \eps}} + \frac{e^{\sqrt \eps \fa \hat s \pm \eps}}{1 + e^{\sqrt \eps \fa \hat s \pm \eps}} \frac{1 -  e^{ \eps (\ff(s) - \fa s)}}{1+  e^{\eps \ff(s) \pm \eps}},
\end{equation*}
where the last term can be bounded, using Assumption~\ref{a:f}, by a multiple of $|1 -  e^{ \eps (\ff(s) - \fa s)}| \leq C\eps^{1 - \gamma}(\sqrt \eps |\hat s(x)| + 1)^\gamma$. Hence, we obtain
\begin{equation*}
\lambda_\eps(\hat s, x) = \frac{1}{2 \sqrt \eps} \left( \frac{1}{1+ e^{\sqrt \eps \fa \hat s + \eps}} + \frac{1}{1+ e^{\sqrt \eps \fa \hat s - \eps}} - 1\right) +  \lambda_\eps^{(2)}(\hat s, x),
\end{equation*}
where $|\lambda_\eps^{(2)}(\hat s, x)| \leq C\eps^{1 - \gamma}(\sqrt \eps |\hat s(x)| + 1)^\gamma$. From this the required bound on $\lambda_\eps$ follows.
\end{proof}

Now, we will investigate the limit of the functions $F_{\!\eps}$, defined in \eqref{e:Feps}. Lemma~\ref{lem:rho_bound} yields
\begin{align}\label{e:F_bound}
F_{\!\eps}(\hat s^\eps, x) = \left(- \frac{\fa \hat s^\eps(x)}{4} + \hat \lambda_\eps(\hat s^\eps, x) \right) \big( 1 - \eps \nablae^- \hat s^\eps(x)\, \nablae^+ \hat s^\eps(x)\big),
\end{align}
where $\hat \lambda_\eps(\hat s^\eps, x) := \lambda_\eps(\hat s^\eps, x) + \frac{\fa \hat s^\eps(x)}{4}$, vanishing as $\eps \to 0$ as soon as $\hat s^\eps$ is bounded uniformly in $\eps$.
The product $\eps \nablae^- \hat s^\eps\, \nablae^+ \hat s^\eps$ is expected to vanish in the limit $\eps \to 0$ in a space of discretized distributions, which suggests the following limit in a respective topology:
\begin{align}
F_{\!\eps}(\hat s^\eps, x) + \frac{\fa \hat s^\eps(x)}{4}\; \xrightarrow{\eps \to 0}\; 0.
\end{align}
However, this limit is difficult to prove, because the function $F_{\!\eps}$ is non-linear in $s^\eps$. This is one of the main difficulties in the proof of Theorem~\ref{thm:tight_intro}, which is resolved in Lemma~\ref{lem:term3} below.

The martingales $M^\eps$, defining the random noise $\xi^\eps$ in \eqref{eq:growth_eps}, are expected to converge to the cylindric Wiener process, which implies that the limit of $\hat s^\eps$ is the periodic solution of \eqref{eq:SHEAdditive} with $B=1$ and $A=-\frac{\fa}{4}$.

We split the actual proof of Theorem~\ref{Cor1} into several steps: we rewrite the equation \eqref{eq:growth_eps} in mild form, and then bound each term in the expression we get. Derivation of bounds on the non-linear function $F_{\!\eps}$  is the most difficult part in our analysis.

\subsection{Reformulation of the problem}
\label{sec:mild}

The non-linear part of the equation \eqref{eq:growth_eps} makes it non-trivial to bound the solution. More precisely, we expect that $\xi^\eps$ converges to the space-time white noise. Which means that for every $t > 0$ the solution $\hat s^\eps_t$ is expected to have spatial  H\"{o}lder regularity $\frac{1}{2}-\kappa$, for any $\kappa > 0$. On the other hand, the function $F_{\!\eps}$, defined in \eqref{e:F_bound}, contains the term $\nablae^- \hat s^\eps\, \nablae^+ \hat s^\eps$ which needs to be controlled as $\eps \to 0$. We show below, that the factor $\eps$ in front of this term makes it vanish in a suitable topology. This seemingly easy fact is not straightforward to prove, and for this we use the idea of \cite{DaPrato}. 


 We start with rewriting \eqref{eq:growth_eps} in a mild form. To this end, we need to replace the non-constant multiplier $\rho_\eps$ by $1$. Using Lemma~\ref{lem:rho_bound}, we can write $\rho_\eps(\hat s^\eps, x) = 1 + \hat \rho_\eps(\hat s^\eps, x)$, where we have a good control over $\hat \rho_\eps$.
Hence, we rewrite \eqref{eq:growth_eps} as
\begin{align}\label{eq:growth_eps_new}
\partial_t \hat s^\eps_t(x) = \Deltae \hat s^\eps_t(x) + \hat F_{\!\eps}(\hat s^\eps_t, x) + F_{\!\eps}(\hat s^\eps_t, x) + \xi^\eps(t, x),
\end{align}
with a new function
\begin{equation}\label{eq:hatF}
\hat F_{\!\eps}(\hat s^\eps, x) := \hat \rho_\eps(\hat s^\eps, x) \Deltae \hat s^\eps(x).
\end{equation}
Let $S^\eps_t := e^{t \Deltae}$ be the semigroup of the linear operator $\frac{d}{dt} - \Deltae$. Then the last equation can be written in the mild form
\begin{align}\label{eq:growth_eps_mild}
\hat s^\eps_t(x) = (S^\eps_t \hat s^\eps_0)(x) + \int_0^t (S^\eps_{t-r} (\hat F_{\!\eps} + F_{\!\eps})(\hat s^\eps_r))(x) dr + \int_0^t (S^\eps_{t-r} d M^\eps_r)(x).
\end{align}
Here, for a function $F_{\!\eps}(\hat s^\eps, x)$ depending on $\hat s^\eps$ and the spatial variable $x$, we use the notation $F_{\!\eps}(\hat s^\eps)(x) := F_{\!\eps}(\hat s^\eps, x)$. In particular, the semigroup $S^\eps$ in the middle term of \eqref{eq:growth_eps_mild} acts on the function $x \mapsto F_{\!\eps}(\hat s^\eps_r)(x)$. We will use the same notation for functions depending on some other quantity instead of $\hat s^\eps$.

We will write \eqref{eq:growth_eps_mild} in a way which gives a better control on the non-linearity $F_{\!\eps}$.


\subsubsection{Definitions of auxiliary processes}\label{sec:AuxProc}

In this section we define some auxiliary processes, which enable us to obtain good control on the non-linearity in \eqref{eq:growth_eps_mild}. For the martingales $(M^\eps_t(x))_{t \geq 0}$ we define the processes
\begin{align}\label{eq:X_eps_mild}
\hat X^\eps_\tau(t, x) := \int_{0}^\tau (S^\eps_{t-r} d M^\eps_r)(x), \qquad X^\eps_t(x) = \hat X^\eps_t(t, x) + (S^\eps_t \hat s^\eps_0)(x),
\end{align}
where $\hat s^\eps_0$ is the initial data of \eqref{eq:growth_eps}.  We will write for brevity $\hat X^\eps_t(x) = \hat X^\eps_t(t, x)$. The process $\hat X^\eps_\tau(t, x)$ is a martingale for $\tau \in [0, t]$, and we can write explicitly its predictable covariation.
Then we introduce the function $u^\eps := \hat s^\eps - X^\eps$ and derive from \eqref{eq:growth_eps_mild}
\begin{align}\label{eq:u_eps}
u^\eps_t(x) = \int_0^t (S^\eps_{t-r} \hat F_{\!\eps}(X^\eps_r + u^\eps_r)) (x)dr + \int_0^t (S^\eps_{t-r} F_{\!\eps}(X^\eps_r + u^\eps_r))(x) dr.
\end{align}
The {\it ansatz} is that $u^\eps$ can be bounded in a space of higher regularity than $\hat s^\eps$. 
If we solve \eqref{eq:u_eps} for $u^\eps$, then the solution $\hat s^\eps$ of \eqref{eq:growth_eps_mild} is obtained by
\begin{equation}\label{eq:from_u_to_s}
\hat s^\eps = X^\eps + u^\eps.
\end{equation}

A problem with \eqref{eq:u_eps} is that the non-linearity $F_{\!\eps}$ contains the term $\nablae^- \hat X^\eps \nablae^+ \hat X^\eps$ which has a nasty behaviour in the limit $\eps \to 0$. However, we obtain good bounds in Lemma~\ref{lem:Z_bound} on its ``renormalized" version
\begin{align}\label{eq:Z_def}
Z^\eps_t(x) := \nablae^- \hat X^\eps_t(x) \nablae^+ \hat X^\eps_t(x) - \fC^\eps_t(x),
\end{align}
where we write $\fC^\eps_t(x) := \fC^\eps_t(t,x)$ for the predictable quadratic covariation
\begin{align}\label{eq:C_curly}
\fC^\eps_\tau(t, x) := \langle \nablae^- \hat X^\eps_{\bigcdot}(t, x), \nablae^+ \hat X^\eps_{\bigcdot}(t, x)\rangle_\tau.
\end{align}
We will write a Wick-type product of the processes $\hat s^\eps$ in the following way
\begin{align}
 \bigl(\nablae^- \hat{s}^\eps \diamond \nablae^+ \hat{s}^\eps\bigr)_t(x) &:= \nablae^- \hat{s}^\eps_t(x) \nablae^+ \hat{s}^\eps_t(x) - \fC^\eps_t(x),\\
 &= Z^\eps_t(x) + \nablae^- (X^\eps + u^\eps)_t(x) \nablae^+ u^\eps_t(x) + \nablae^- u^\eps_t(x) \nablae^+ X^\eps_t(x),\label{eq:wick}
\end{align}
in which the nasty product $\nablae^- \hat X^\eps \nablae^+ \hat X^\eps$ is replaced by its renormalized version $Z^\eps$. After that we replace the product $\nablae^- \hat{s}^\eps  \nablae^+ \hat{s}^\eps$ in \eqref{e:F_bound} by the Wick-type product $\nablae^- \hat{s}^\eps \diamond \nablae^+ \hat{s}^\eps$ by adding and subtracting the function $\fC^\eps$, so that this renormalization does not change \eqref{eq:u_eps}. More precisely, we define two new functions
\begin{align}\label{eq:F_tilde_def}
F_{\!\eps}(X^\eps, Z^\eps, u^\eps)_t(x) &:= \lambda_\eps(\hat s^\eps_t, x) \big( 1 - \eps \nablae^- \hat s^\eps_t \diamond \nablae^+ \hat s^\eps_t\big), \\
\tilde F_{\!\eps}(X^\eps, \hat X^\eps, u^\eps)_t(x) &:= \eps \lambda_\eps(\hat s^\eps_t, x) \fC^\eps_t(x),
\end{align}
where on the r.h.s. we use the function $\hat s^\eps$, which is defined in terms of $X^\eps$ and $u^\eps$ by \eqref{eq:from_u_to_s}. We note that the new function $F_{\!\eps}$ depends on $Z^\eps$ by \eqref{eq:wick}, and $\tilde F_{\!\eps}$ depends on $\hat X^\eps$ by \eqref{eq:C_curly}.
Then \eqref{eq:u_eps} can be written as
\begin{equation}\label{eq:u_eps_new}
u^\eps_t(x) = \int_0^t (S^\eps_{t-r} \hat F_{\!\eps}(\hat s^\eps_r)) (x)dr + \int_0^t (S^\eps_{t-r} \tilde F_{\!\eps}(X^\eps, \hat X^\eps, u^\eps)_r) (x)dr + \int_0^t (S^\eps_{t-r} F_{\!\eps}(X^\eps, Z^\eps, u^\eps)_r)(x) dr,
\end{equation}

This equation should be understood in the following way: we have defined two auxiliary processes, $X^\eps$ and $Z^\eps$. Fixing them, we solve \eqref{eq:u_eps_new} for $u^\eps$, and after that we recover the solution \eqref{eq:from_u_to_s} for the initial problem \eqref{eq:growth_eps_mild}. Adding these two auxiliary processes into the equation allows to obtain a better control on the non-linear term in \eqref{eq:growth_eps_mild}. More precisely, for $\eta$ as in Theorem~\ref{thm:tight_intro}, we fix any $\kappa_\star \in (0, \frac{1}{2})$ and $\hat \kappa \in (0, \frac{1}{2} - \kappa_\star)$, whose precise values will be specified in the proof of Theorem~\ref{thm:tight_intro}. Then, for any $T > 0$, we expect the following bounds to hold:
\begin{equation}\label{eq:bounds_eps}
\|X^\eps\|_{\CC^{\eta}_{T}} \leq L, \qquad \|\hat X^\eps\|_{\CC^{\eta}_{T}} \leq L, \qquad \|u^\eps\|_{\CC^{2 \eta}_{T}} \leq L, \qquad \| Z^\eps\|_{\CC^{-1/2 + \kappa_\star}_{T, \eps}} \leq \eps^{-1/2 - \kappa_\star - \hat \kappa} L,
\end{equation}
where the constant $L > 0$ is independent of $\eps$ and $T$, and where the norms are defined in Section~\ref{sec:notation}. Moreover, we expect the processes $X^\eps$, $\hat X^\eps$ and $u^\eps$ to converge in these topologies, as $\eps \to 0$. Since we expect the regularity of $\hat X^\eps$ be close to $\frac{1}{2}$, the definition \eqref{eq:Z_def} suggests that the regularity of $Z^\eps$ is close to $-1$. However, the process $Z^\eps$ is a discretization of a space-time distribution, i.e. the limit as $\eps \to 0$ is not a function in the time variable. This explains why the sup-norm of $Z^\eps$ in the time variable is expected to explode in the limit. It will be advantageous to measure regularity of $Z^\eps$ close to $-\frac{1}{2}$, which makes the divergence in \eqref{eq:bounds_eps} stronger. Since the term $Z^\eps$ is multiplied by $\eps$ in \eqref{eq:u_eps_new}, this divergence will be compensated (see the proof of Lemma~\ref{lem:term3}).

In order to derive the claimed bounds and prove limits, we will follow the usual strategy: we first derive the respective result for a local in time solution, and then patch local solutions together to get the required result for the global solution. To this end, for any constant $L > 0$ we define the stopping time
\begin{equation}\label{eq:sigma_L}
 \sigma_{L, \eps} := \inf \bigl\{T \geq 0 : \|X^\eps\|_{\CC^{\eta}_{T}} \geq L\; \text{or}\; \|u^\eps\|_{\CC^{2 \eta}_{T}} \geq L,\; \text{or}\;\| Z^\eps\|_{\CC^{-1/2 + \kappa_\star}_{T, \eps}} \geq \eps^{-1/2 - \kappa_\star - \hat \kappa} L\bigr\},
\end{equation}
which guarantees that on the random time interval $[0, \sigma_{L, \eps}]$ the a priori bounds \eqref{eq:bounds_eps} hold. 

In the following exposition we prefer to use `$\lesssim$', which means a bound `$\leq$' with a constant multiplier, independent of the relevant quantities. By these relevant quantities we will usually mean $\eps$ and the space-time variables.

\subsubsection{Bounds on the auxiliary processes}

In this section we prove bounds on the auxiliary processes $\hat X^{\eps}$, $X^\eps$ and $Z^\eps$, defined in \eqref{eq:X_eps_mild} and \eqref{eq:Z_def} respectively. We start by bounding the processes $\hat X^{\eps}$ and $X^{\eps}$.

\begin{lemma}\label{lem:X_bound}
In the setting of Theorem~\ref{thm:tight_intro}, for every $p \geq 1$ and $T > 0$ there is a constant $C = C(p) > 0$, such that the processes $\hat X^{\eps}$ and $X^{\eps}$, defined in \eqref{eq:X_eps_mild}, are bounded in the following way:
\begin{align}\label{eq:X_bound}
\bigl(\E \bigl[ \|\hat X^\eps\|_{\CC^{\eta}_{T}}^p\bigr]\bigr)^{\frac{1}{p}} \leq C, \qquad \qquad \bigl(\E \bigl[\|X^\eps\|_{\CC^{\eta}_{T}}^p\bigl]\bigr)^{\frac{1}{p}} \leq C + C \bigl(\E\bigl[\|s^\eps_0\|^p_{\CC^\eta}\bigl]\bigr)^{\frac{1}{p}}.
\end{align}
\end{lemma}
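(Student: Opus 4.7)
The plan is to establish the bound on $\hat X^\eps$ first, and then deduce the bound on $X^\eps$ by adding the semigroup action on the initial data. The key simplification compared to Section~\ref{Tightness} is that here the quadratic variation of the driving noise is \emph{uniformly} bounded, rather than depending on the solution itself.

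First I would observe that the function $C_\eps$ in \eqref{e:Ceps} is bounded by an absolute constant, uniformly in $\hat s^\eps$, $x$ and $\eps$. Indeed, since the underlying height function $s$ has slopes in $\{-1, +1\}$, one has $\sqrt\eps |\nabla_\eps^\pm \hat s^\eps(x)| \leq 1 + |\chi|\eps$, so $\eps \nabla^-_\eps \hat s^\eps \nabla^+_\eps \hat s^\eps$ and $\eps^3 \lambda_\eps \Delta_\eps \hat s^\eps$ are both $\mathcal{O}(1)$ (using also the bound $|\lambda_\eps| \lesssim \eps^{-1/2}$ from Lemma~\ref{lem:rho_bound}). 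Combined with $\rho_\eps = 1 + \mathcal{O}(\eps)$ from the same lemma, this yields $|C_\eps(\hat s^\eps, x)| \leq C$.

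Next, for fixed $t \in [0,T]$ and $x$, the process $\tau \mapsto \hat X^\eps_\tau(t,x)$ is a càdlàg martingale, whose predictable quadratic variation can be written explicitly in terms of the (periodic) heat kernel $p^\eps$ of $\Deltae$ and the function $C_\eps$. Applying the Burkholder--Davis--Gundy inequality (as in Lemma~\ref{lem:NormBd}) together with the uniform bound on $C_\eps$ and the standard discrete heat kernel estimate $\eps\sum_{y\in\eps\ZZ\cap[0,1)}(p^\eps_r(x,y))^2 \lesssim (1 \vee r^{-1/2})$ from Appendix~\ref{sec:heat}, one obtains the single-point moment bound $\E[|\hat X^\eps_t(x)|^{2k}] \leq C(k)$ uniformly in $t\in[0,T]$, $x$ and $\eps$. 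The jump contribution in BDG (jumps of $M^\eps$ are of size $\sqrt\eps$ occurring with bounded intensity per unit macroscopic time) contributes only lower order in $\eps$ and is handled analogously to the treatment of the remainder term $\widetilde{\mathcal{R}}^{\eps;t_1,t_2}$ in the proof of Lemma~\ref{lem:NormBd}.

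I would then apply the same strategy to the increment martingales $\tau \mapsto \hat X^\eps_\tau(t,x) - \hat X^\eps_\tau(t,x')$ and $\tau \mapsto \hat X^\eps_\tau(t_1,x) - \hat X^\eps_\tau(t_2,x)$ (splitting the latter into a common-integrand and a terminal-time piece). The relevant heat kernel differences satisfy $\eps \sum_y (p^\eps_r(x,y) - p^\eps_r(x',y))^2 \lesssim |x-x'|^{2\eta'} r^{-1/2 - \eta'}$ and analogous bounds in time from Appendix~\ref{sec:heat}, leading to
\begin{align}
\bigl(\E[|\hat X^\eps_t(x) - \hat X^\eps_t(x')|^{2k}]\bigr)^{1/(2k)} \leq C |x-x'|^{\eta'}, \quad \bigl(\E[|\hat X^\eps_{t_1}(x) - \hat X^\eps_{t_2}(x)|^{2k}]\bigr)^{1/(2k)} \leq C |t_1-t_2|^{\eta'/2},
\end{align}
for any $\eta' \in (\eta, \tfrac12)$, uniformly in $\eps$. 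A Kolmogorov-type continuity argument (or equivalently a Besov embedding, using the characterization of $\CC^\eta$ via \eqref{eq:norm}) then converts these increment bounds into $\E[\|\hat X^\eps\|_{\CC^\eta_T}^p] \leq C(p)$ for every $p\geq 1$, proving the first estimate in \eqref{eq:X_bound}.

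Finally, for $X^\eps$ I would combine the bound on $\hat X^\eps$ with the elementary fact that the discrete heat semigroup $S^\eps_t$ is uniformly bounded on $\CC^\eta$ (on the periodic lattice), i.e., $\|S^\eps_t \hat s^\eps_0\|_{\CC^\eta} \leq C \|\hat s^\eps_0\|_{\CC^\eta}$ uniformly in $t\in[0,T]$ and $\eps$. Taking $L^p$-norms and applying the triangle inequality yields the second estimate in \eqref{eq:X_bound}. The main technical obstacle is Step~2: tracking the jump contributions in BDG for càdlàg martingales and obtaining the correct powers of $\eps$ from the heat kernel differences on the torus, so that the final bounds are truly $\eps$-independent; the analogous computations in Section~\ref{Tightness} provide a template.
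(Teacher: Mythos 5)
Your proposal follows essentially the same route as the paper's proof: you first observe that $C_\eps$ is uniformly bounded (a consequence of the $\pm1$ slope constraint and Lemma~\ref{lem:rho_bound}), then apply the c\`{a}dl\`{a}g Burkholder--Davis--Gundy inequality to the martingales $\tau\mapsto\hat X^\eps_\tau(t,x)$ and their spatial/temporal increments, control the resulting heat kernel sums via the Appendix~\ref{sec:heat} estimates, invoke Kolmogorov's continuity criterion to upgrade moment bounds to a bound on $\E\|\hat X^\eps\|_{\CC^\eta_T}^p$, and finally deduce the $X^\eps$ bound by adding $S^\eps_t\hat s^\eps_0$ via the semigroup estimate of Lemma~\ref{lem:heat_S}. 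This matches the paper's argument step for step (including the split of the time increment into a common-integrand and terminal-time piece), and your slope bound $\sqrt\eps|\nablae^\pm\hat s^\eps|\le 1+|\chi|\eps\le 2$ is in fact a slightly more careful rendering of the ``$|\nablae\hat s^\eps|=\eps^{-1/2}$'' identity asserted in the paper.
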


\begin{proof}
We note that the second bound in \eqref{eq:X_bound} follows from the first one and the properties of the heat semigroup provided in Lemma~\ref{lem:heat_S}. Indeed, from the triangle inequality we get
\begin{align}
\|X^\eps\|_{\CC^{\eta}_{T}} \leq \|\hat X^\eps\|_{\CC^{\eta}_{T}} + \|S^\eps \hat s^\eps_0\|_{\CC^{\eta}_{T}} \leq C (1 + \|s^\eps_0\|_{\CC^\eta}),
\end{align}
and the claim follows from the Minkowski inequality.

We now prove the first bound in \eqref{eq:X_bound}. To this end, we will apply the Burkholder-Davis-Gundy inequality (Lemma~\ref{lem:BDG}) to the martingale $\tau \mapsto \hat X^\eps_\tau(t,x)$, for which we need to bound the quadratic covariation and jumps of the latter. Hence, the definition \eqref{eq:X_eps_mild} and the formula for the quadratic variation of $M^\eps$, provided above \eqref{e:Ceps}, yield
\begin{align}
\langle \hat X^\eps_{\bigcdot} (t,x), \hat X^\eps_{\bigcdot} (t,x)\rangle_\tau &= \eps^2 \sum_{y_1, y_2 \in \eps\ZZ} \int_{r = 0}^\tau p^\eps_{t-r}(x-y_1) p^\eps_{t-r}(x-y_2) d \langle M^\eps(y_1), M^\eps(y_2)\rangle_r\\
&= \eps \sum_{y \in \eps\ZZ} \int_{0}^\tau p^\eps_{t-r}(x-y)^2 C_{\!\eps} (\hat s^\eps_r, y) d r,
\end{align}
where $p^\eps$ is the discrete heat kernel, generated by $\Delta_\eps$. Moreover, \eqref{e:Ceps}, Lemma~\ref{lem:rho_bound} and the identity $|\nablae \hat s^\eps| = \eps^{-1/2}$ imply that $C_{\!\eps} (\hat s^\eps_r, y)$ is bounded uniformly in $\eps$, which yields
\begin{equation}\label{eq:X_bound_cov}
\langle \hat X^\eps_{\bigcdot} (t,x), \hat X^\eps_{\bigcdot} (t,x)\rangle_\tau \lesssim \eps \sum_{y \in \eps\ZZ} \int_{0}^\tau p^\eps_{t-r}(x-y)^2 d r.
\end{equation}
Now we turn to jumps of the martingales $\hat X^\eps_\tau (t,x)$. The definition \eqref{eq:X_eps_mild} yields
\begin{align}
\Delta_r \hat X^\eps_{\bigcdot} (t,x) = \eps \sum_{y \in \eps\ZZ} p^\eps_{t-r}(x-y) \Delta_r M^\eps(y),
\end{align}
where $\Delta_r M^\eps(y) := M^\eps_r(y) - M^\eps_{r-}(y)$ is the jump of $M^\eps(y)$ at time $r$ (and likewise for $\hat X^\eps$). Since the martingales $M^\eps_{r}(y)$ have jumps of size $\sqrt \eps$, and $M^\eps_{r}(y)$ and $M^\eps_{r}(y')$, with $y \neq y'$, a.s. do not jump simultaneously, we obtain the simple bound
\begin{align}\label{eq:X_bound_jump}
|\Delta_r \hat X^\eps_{\bigcdot} (t,x)| \leq \eps^{3/2} \sup_{y \in \eps\ZZ} p^\eps_{t-r}(x-y).
\end{align}
Applying now the Burkholder-Davis-Gundy inequality (Lemma~\ref{lem:BDG}) to the martingales $\tau \mapsto \hat X^\eps_\tau(t,x)$, and using \eqref{eq:X_bound_cov} and \eqref{eq:X_bound_jump}, we obtain
\begin{align}\label{eq:bound_1}
\left(\E |\hat  X^\eps(t,x)|^p\right)^{\frac{2}{p}} \lesssim \eps \sum_{y \in \eps\ZZ} \int_0^t p^\eps_{t-r}(x-y)^2 dr + \eps^{3} \sup_{r \in [0,t]} \sup_{y \in \eps\ZZ} p^\eps_{t-r}(x-y)^2.
\end{align}
Using the first bound in \eqref{eq:Est5}, the last term in the expression \eqref{eq:bound_1} can be bounded by $c \eps$, where $c$ is independent of $\eps$, $x$ and $t$. To bound the first term in \eqref{eq:bound_1}, we apply the first estimate in \eqref{eq:Est5} and the fact that the heat kernel sums up to $1$ in the spatial variable:
\begin{align}
\eps \sum_{y \in \eps\ZZ} \int_0^t p^\eps_{t-r}(x-y)^2 dr \lesssim \eps \sum_{y \in \eps\ZZ} \int_0^t \frac{p^\eps_{t-r}(x-y)}{\sqrt{t-r}} dr = \int_0^t \frac{dr}{\sqrt{t-r}} = 2 \sqrt t.
\end{align}
Hence, we have the following bound, where the constant $C$ depends on $p$ and $t$:
\begin{align}
 \left(\E |\hat  X^\eps(t,x)|^p\right)^{\frac{1}{p}} \leq C.\label{eq:bound_11}
\end{align}

Similarly, for two different points $x_1$ and $x_2$, the process $\tau \mapsto \hat X^\eps_\tau(t,x_1) - \hat X^\eps_\tau(t,x_2)$ is a martingale. Applying again the Burkholder-Davis-Gundy inequality (Lemma~\ref{lem:BDG}), similarly to \eqref{eq:bound_1} we obtain
\begin{align}\label{e:I4_bound}
\left(\E |\hat X^\eps(t,x_1) - \hat X^\eps(t,x_2)|^p\right)^{\frac{2}{p}} &\lesssim \eps \sum_{y \in \eps\ZZ} \int_0^t \bigl|p^\eps_{t-r}(x_1-y) - p^\eps_{t-r}(x_2-y)\bigr|^2 d r\\
&\qquad + \eps^{3} \sup_{r \in [0,t]} \sup_{y \in \eps\ZZ} \bigl|p^\eps_{t-r}(x_1-y) - p^\eps_{t-r}(x_2-y)\bigr|^2.\label{eq:bound_2}
\end{align}
Using \eqref{eq:Est2}, we can bound the last term in \eqref{eq:bound_2} by a multiple of $\eps^{1 - 2(\eta + \kappa)} |x_1 - x_2|^{2(\eta + \kappa)}$, for any $\kappa > 0$ sufficiently small. Since $\eta < \frac{1}{2}$, for $\kappa > 0$ small enough we have $1 - 2(\eta + \kappa) \geq 0$, and the last power of $\eps$ can be bounded by $1$. For the first term in \eqref{eq:bound_2} we use the bound \eqref{eq:Est2} to estimate it by a multiple of
\begin{align}
\eps \sum_{y \in \eps\ZZ} &\int_0^t \bigl|p^\eps_{t-r}(x_1-y) - p^\eps_{t-r}(x_2-y)\bigr| \frac{|x_1 - x_2|^{2 \eta + \kappa}}{(t-r)^{(1 + 2 \eta + \kappa) / 2}} d r \\
&\lesssim \eps \sum_{y \in \eps\ZZ} \int_0^t \bigl(p^\eps_{t-r}(x_1-y) + p^\eps_{t-r}(x_2-y)\bigr) \frac{|x_1 - x_2|^{2 \eta + \kappa}}{(t-r)^{(1 + 2 \eta + \kappa) / 2}} d r\\
&\lesssim \int_0^t \frac{|x_1 - x_2|^{2 \eta + \kappa}}{(t-r)^{(1 + 2 \eta + \kappa) / 2}} d r \lesssim |x_1 - x_2|^{2 \eta + \kappa},
\end{align}
which holds for $\eta > 0$ and $\kappa > 0$, such that $2 \eta + \kappa < 1$. Here, we have used the fact that the heat kernel sums up to $1$ in the spatial variable. Hence, we have the following bound, with a constant $C > 0$ depending on $p$ and $t$:
\begin{align}
\left(\E |\hat X^\eps(t,x_1) - \hat X^\eps(t,x_2)|^p\right)^{\frac{1}{p}} \leq C |x_1 - x_2|^{\eta + \kappa}.\label{eq:bound_23}
\end{align}

Now, we turn to the proof of time continuity. For two time points $0 \leq t_1 < t_2$, such that $t_2 - t_2 \leq 1$, we can write
\begin{align}
\hat X^\eps(t_2,x) - \hat X^\eps(t_1,x) &= \eps \sum_{y \in \eps\ZZ} \int_{0}^{t_1} \bigl(p^\eps_{t_2-r}(x-y) - p^\eps_{t_1-r}(x-y)\bigr) d M^\eps_r(y)\\
 & \qquad + \eps \sum_{y \in \eps\ZZ} \int_{t_1}^{t_2} p^\eps_{t_2-r}(x-y) d M^\eps_r(y) =: \CI^\eps_1(t_1) + \CI^\eps_2(t_2),
\end{align}
where the variables $t_1$ and $t_2$ in $\CI^\eps_1$ and $\CI^\eps_2$ refer to the upper bounds of the intervals of integration. The processes $\tau_1 \mapsto \CI^\eps_1(\eps^{-2} 1)$ and $\tau_2 \mapsto \CI^\eps_2(\tau_2)$ are martingales, for $\tau_1 \in [0, t_1]$ and $\tau_2 \in [t_1, t_2]$, and we are going to bound them separately.

Applying the Burkholder-Davis-Gundy inequality (Lemmas~\ref{lem:BDG}) to $\CI^\eps_1$, similarly to \eqref{eq:bound_1}
\begin{align}
 \bigl(\E |\CI^\eps_1(t_1)|^p\bigr)^{\frac{2}{p}} &\lesssim \eps \sum_{y \in \eps\ZZ} \int_0^{t_1} \bigl|p^\eps_{t_1-r}(x-y) - p^\eps_{t_2-r}(x-y)\bigr|^2 d r\\
&\qquad + \eps^{3} \sup_{r \in [0,t_1]} \sup_{y \in \eps\ZZ} \bigl|p^\eps_{t_1-r}(x-y) - p^\eps_{t_2-r}(x-y)\bigr|^2.\label{eq:bound_21}
\end{align}
Using \eqref{eq:Est1}, the last term in \eqref{eq:bound_21} is bounded by $\eps^{1 - 2(\eta + \kappa)} (t_2 - t_1)^{\eta + \kappa}$, where the power of $\eps$ is positive if $\eta < \frac{1}{2}$ and $\kappa > 0$ is sufficiently small. Furthermore, using \eqref{eq:Est1}, the first term in \eqref{eq:bound_21} is bounded by a multiple of
\begin{align}
\eps \sum_{y \in \eps\ZZ} &\int_0^{t_1} \bigl|p^\eps_{t_1-r}(x-y) - p^\eps_{t_2-r}(x-y)\bigr| \frac{(t_2 - t_1)^{\eta + \kappa}}{(t_1-r)^{(1 + 2 \eta + 2\kappa)/2}} d r \\
&\lesssim \eps \sum_{y \in \eps\ZZ} \int_0^{t_1} \bigl( p^\eps_{t_1-r}(x-y) + p^\eps_{t_2-r}(x-y)\bigr) \frac{(t_2 - t_1)^{\eta + \kappa}}{(t_1-r)^{(1 + 2 \eta + 2\kappa)/2}} d r\\
&\lesssim \int_0^{t_1} \frac{(t_2 - t_1)^{\eta + \kappa}}{(t_1-r)^{(1 + 2 \eta + 2\kappa)/2}} d r \lesssim (t_2 - t_1)^{\eta + \kappa},
\end{align}
for $\eta < \frac{1}{2}$ and $\kappa > 0$ sufficiently small. Here, as before we summed up the heat kernels to $1$.

Similarly, we apply the Burkholder-Davis-Gundy inequality (Lemmas~\ref{lem:BDG}) to $\CI^\eps_2$ and obtain
\begin{align}
  \bigl(\E |\CI^\eps_2(t_2)|^p\bigr)^{\frac{2}{p}} \lesssim \eps \sum_{y \in \eps\ZZ} \int_{t_1}^{t_2} p^\eps_{t_2-r}(x-y)^2 d r + \eps^{3} \sup_{r \in [t_1, t_2]} \sup_{y \in \eps\ZZ} p^\eps_{t_2-r}(x-y)^2.\label{eq:bound_22}
\end{align}
The last term in \eqref{eq:bound_22} can be bounded using \eqref{eq:Est5} by a multiple of $\eps$. Applying \eqref{eq:Est5} to the first term in \eqref{eq:bound_22} and using the fact that the heat kernel is summed up to $1$ in the spatial variable, we can bound it by a multiple of
\begin{align}
 \eps \sum_{y \in \eps\ZZ} \int_{t_1}^{t_2} \frac{p^\eps_{t_2-r}(x-y)}{\sqrt{t_2-r}} d r = \int_{t_1}^{t_2} \frac{d r}{\sqrt{t_2-r}} = 2 \sqrt{t_2 - t_1} \leq 2 (t_2 - t_1)^{\eta + \kappa},
\end{align}
where $\eta + \kappa \leq \frac{1}{2}$ and where we have used $t_2 - t_1 \leq 1$.

Combining the derived bounds on $\CI^\eps_1$ and $\CI^\eps_2$ with the Minkowski inequality, we obtain
\begin{align}
\left(\E |\hat X^\eps(t_2,x) - \hat X^\eps(t_1,x)|^p\right)^{\frac{1}{p}} &\leq \bigl(\E |\CI^\eps_1(t_1)|^p\bigr)^{\frac{1}{p}} + \bigl(\E |\CI^\eps_2(t_2)|^p\bigr)^{\frac{1}{p}} \\
&\leq C\bigl( |t_1 - t_2|^{\eta + \kappa} + \eps\bigr) \leq 2 C (\sqrt{|t_2 - t_1|} \vee \eps)^{2(\eta + \kappa)}.\label{eq:bound_3}
\end{align}
The required bound \eqref{eq:X_bound} now follows from \eqref{eq:bound_11}, \eqref{eq:bound_23}, \eqref{eq:bound_3} and the Kolmogorov continuity criterion \cite{Kallenberg}.
\end{proof}

Furthermore, we can derive a bound on the process $Z^\eps$ in a certain space of distributions.

\begin{lemma}\label{lem:Z_bound}
For any $\kappa_\star \in (0, \frac{1}{2})$, $\kappa > 0$ and $p \geq 1$ there is a constant $C = C(\kappa_\star, \kappa, p) > 0$ such that for any $T > 0$ the process $Z^\eps$, defined in \eqref{eq:Z_def}, satisfies the bound
\begin{align}\label{eq:Z_bound}
\bigl(\E \| Z^\eps\|^p_{\CC^{-1/2 + \kappa_\star}_{T, \eps}}\bigr)^{\frac{1}{p}} \leq C \eps^{-1/2 - \kappa_\star - \kappa},
\end{align}
where the norm is defined in Section~\ref{sec:notation}.
\end{lemma}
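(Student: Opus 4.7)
The plan is to deduce \eqref{eq:Z_bound} from moment bounds on the distributional pairings $\llangle Z^\eps_t, \phi^\lambda_x\rrangle_\eps$ at each scale $\lambda \in (0,1]$, combined with a Kolmogorov-type chaining argument over $\lambda$, basepoint $x \in \T$ and time $t \in [0,T]$, using the Besov characterization \eqref{eq:norm_eps}. The starting observation is that for fixed $t$ and $y$, writing $A_\tau(y) := \nabla_\eps^- \hat X^\eps_\tau(t,y)$ and $B_\tau(y) := \nabla_\eps^+ \hat X^\eps_\tau(t,y)$, the process $\tau \mapsto A_\tau(y) B_\tau(y) - \langle A(y), B(y)\rangle_\tau$ is a c\`adl\`ag martingale in $\tau$ starting at $0$ whose value at $\tau = t$ equals $Z^\eps_t(y)$; hence for any test function $\phi^\lambda_x$,
\begin{equation}
 \tau \longmapsto \CM_\tau^{t,\lambda,x} := \eps \sum_{y \in \eps \ZZ} \phi^\lambda_x(y) \bigl[A_\tau(y) B_\tau(y) - \langle A(y), B(y)\rangle_\tau\bigr]
\end{equation}
is itself a martingale with $\CM_t^{t,\lambda,x} = \llangle Z^\eps_t, \phi^\lambda_x\rrangle_\eps$. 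Applying the It\^o product rule decomposes $d\CM^{t,\lambda,x}_\tau$ into the two ``off-diagonal'' stochastic integrals $\eps \sum_y \phi^\lambda_x(y)[A_{\tau-} dB_\tau + B_{\tau-} dA_\tau]$ and the purely discontinuous compensated part $\eps \sum_y \phi^\lambda_x(y) d([A,B] - \langle A, B \rangle)_\tau$, each of which has an explicit quadratic variation in terms of $\nabla_\eps^\pm p^\eps_{t-r}$ and the uniformly bounded coefficient $C_{\!\eps}(\hat s^\eps_r, \bigcdot)$ of the bracket of $M^\eps$ recorded above \eqref{e:Ceps}.

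The first main step is to show, for every integer $p \geq 1$, uniformly in $\lambda \in (0,1]$, $x \in \T$ and $t \in [0,T]$, the bound
\begin{equation}\label{eq:plan_moment}
 \bigl(\E |\llangle Z^\eps_t, \phi^\lambda_x\rrangle_\eps|^{2p}\bigr)^{\frac{1}{2p}} \;\lesssim\; \eps^{-\frac{1}{2} - \kappa_\star - \kappa/2} (\lambda \vee \eps)^{-\frac{1}{2} + \kappa_\star},
\end{equation}
via the Burkholder--Davis--Gundy inequality (Lemma~\ref{lem:BDG}) applied to $\CM^{t,\lambda,x}$. Using the pointwise and $L^2$-type estimates on the discrete heat-kernel derivatives $\nabla_\eps^\pm p^\eps$ from Appendix~\ref{sec:heat}, each of the two predictable quadratic-variation pieces reduces to an integral of the form $\eps^2 \sum_{y,z} \phi^\lambda_x(y_1) \phi^\lambda_x(y_2) \int_0^t |\nabla_\eps^\pm p^\eps_{t-r}(y_1-z)| \, |\nabla_\eps^\pm p^\eps_{t-r}(y_2-z)| \, dr$, which a direct calculation bounds by a multiple of $\eps^{-1}(\lambda \vee \eps)^{-1}$ times the $L^1$-scale of $\phi^\lambda_x$; the jump-part contribution from BDG is subdominant because the jumps of $A_\tau(y)B_\tau(y)$ are of order $\eps \cdot (\nabla_\eps p^\eps_{t-\tau})^2$, and the extra $\eps$-factor absorbs the jump scaling. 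Taking $2p$-th roots of these estimates produces precisely the exponents on the right-hand side of \eqref{eq:plan_moment}, with the small loss $\eps^{-\kappa/2}$ acting as a buffer to be consumed in the chaining argument below.

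The second step is to derive an analogous bound for the increment $\llangle Z^\eps_{t_2} - Z^\eps_{t_1}, \phi^\lambda_x\rrangle_\eps$ with an extra factor $|t_2 - t_1|^{\theta}$ for some $\theta > 0$. Decomposing $A_{t_2}(y) B_{t_2}(y) - A_{t_1}(y) B_{t_1}(y) = (A_{t_2}(y) - A_{t_1}(y)) B_{t_2}(y) + A_{t_1}(y) (B_{t_2}(y) - B_{t_1}(y))$ and splitting the differences $A_{t_2}(y) - A_{t_1}(y)$ (and similarly for $B$) into the ``heat-kernel shift'' term on $[0, t_1]$ and the ``new increment'' term on $(t_1, t_2]$, each piece becomes a stochastic integral which can be treated by BDG exactly as in step one, this time invoking the time-regularity bounds on $\nabla_\eps p^\eps$ from Appendix~\ref{sec:heat} (and Lemma~\ref{lem:heat_S}) to extract the $|t_2 - t_1|^{\theta}$ factor. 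The difference $\fC^\eps_{t_2} - \fC^\eps_{t_1}$, being deterministic up to the random coefficient $C_{\!\eps}(\hat s^\eps_r, \bigcdot)$, is handled analogously. Finally, feeding \eqref{eq:plan_moment} and its time-increment counterpart into a standard Kolmogorov-type argument over dyadic scales $\lambda$, a fine net of basepoints $x$ on $\T$, and a dyadic partition of $[0,T]$, and then invoking \eqref{eq:norm_eps}, yields \eqref{eq:Z_bound} for $p$ large enough; extension to all $p \geq 1$ follows from Jensen. The main technical obstacle is the careful book-keeping in the two-parameter integrals defining the quadratic variations, and in particular tracking how the thresholds $\lambda$ and $\eps$ trade off when summing $|\nabla_\eps p^\eps|^2$ against $(\phi^\lambda_x)^2$; this is the only point where the restriction $\kappa_\star < \frac{1}{2}$ is essential, and where the arbitrarily small slack $\eps^{-\kappa}$ is produced.
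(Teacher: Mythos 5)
There is a genuine gap in the core estimate. After the It\^{o} decomposition, the ``off-diagonal'' pieces are stochastic integrals of the form $\tau \mapsto \eps\sum_y \phi^\lambda_x(y)\int_0^\tau A_{r-}(y)\,dB_r(y)$, where $A_r(y) = \nabla_\eps^- \hat X^\eps_r(t,y)$ and $B_r(y) = \nabla_\eps^+ \hat X^\eps_r(t,y)$. The predictable quadratic variation of this martingale is
\begin{align}
\eps^{3}\!\sum_{y_1,y_2,z}\phi^\lambda_x(y_1)\phi^\lambda_x(y_2)\int_0^\tau A_{r-}(y_1)\,A_{r-}(y_2)\,\nabla_\eps^{+}p^\eps_{t-r}(y_1-z)\,\nabla_\eps^{+}p^\eps_{t-r}(y_2-z)\,C_{\!\eps}(\hat s^\eps_r,z)\,dr,
\end{align}
which contains the \emph{random} factor $A_{r-}(y_1)A_{r-}(y_2)$ --- it is not expressible in terms of the heat-kernel derivatives and $C_{\!\eps}$ alone. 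Your claim that the QV ``reduces to an integral of the form $\eps^2\sum\phi\phi\int|\nabla p||\nabla p|\,dr$'' tacitly deletes this factor, but it is not uniformly bounded in $\eps$: a BDG-type estimate on $A_r(y)$ gives $\|A_r(y)\|_p \lesssim (\sqrt{t-r}\vee\eps)^{-1/2}$, so it genuinely degrades the singularity of the integrand as $r\uparrow t$. A single application of Lemma~\ref{lem:BDG} therefore does not close the argument; you must control the moments of the inner martingale as well, either by a second BDG inside a Minkowski-type bound (tracking the extra $(\sqrt{t-r}\vee\eps)^{-1}$ singularity against the kernel factors), or --- as the paper does --- by invoking the iterated stochastic integral moment bound of Lemma~\ref{lem:integral_bound}, which produces exactly the additional error terms $\SE^{(2)}$ and $\SE^{(3)}$ of the form $\sup$-moments of the first-level integral $\I^\eps_1 F$. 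Only after this second layer does one recover the $\eps^{-1/2-\kappa_\star-\kappa}(\lambda\vee\eps)^{-1/2+\kappa_\star}$ rate.

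Two smaller remarks. First, the $\eps$ bookkeeping in your displayed integral form is also off by a power (you should have $\eps^3$ for the three discrete sums, before the singularity of $A$ is accounted for), which reinforces that the ``explicit QV'' step needs to be redone. Second, your treatment of the compensated discontinuous part is on the right track, and closer to the paper's: for $D^\eps$ (in the paper's notation), the integrand of the defining stochastic integral is deterministic, so a direct BDG bound against the martingale $N^\eps := [M^\eps]-\langle M^\eps\rangle$ (jumps of size $\eps$) does suffice --- the iterated-integral machinery is only essential for the two cross terms.

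Finally, the Kolmogorov chaining structure and the idea of using the slack $\eps^{-\kappa}$ as a buffer are correct and match the paper's strategy, so once the QV gap above is addressed, the rest of the plan is sound.
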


\begin{proof}
The process $Z^\eps$ can be written as $Z^\eps_t(x) = \hat Z^\eps_t(t,x)$, where
\begin{align}\label{eq:Z_hat_def}
\hat Z^\eps_\tau(t, x) := \nablae^- \hat X^\eps_\tau(t, x) \nablae^+ \hat X^\eps_\tau(t, x) - \fC^\eps_\tau(t, x),
\end{align}
and $\fC^\eps$ has been defined in \eqref{eq:C_curly}. The definition of the predictable quadratic covariation \cite[Ch.~I.4]{JS03} implies that $\tau \mapsto \hat Z^\eps_\tau(t, x)$ is a martingale, for $\tau \in [0, t]$. Moreover, we can use the It\^{o} formula to write
\begin{align}\label{eq:Z_Ito}
 	\hat Z^\eps_\tau = \int_{0}^\tau \nablae^{-} \hat X^\eps_{r-} d\, \nablae^+ \hat X^\eps_{r} + \int_{0}^\tau \nablae^{+} \hat X^\eps_{r-} d\, \nablae^- \hat X^\eps_{r} + D^\eps_{\!\tau},
\end{align}
where the process $D^\eps_{\!\tau}(t,x)$ is a martingale for $\tau \in [0, t]$, defined by
\begin{align}\label{e:D_def}
 D^\eps_{\!\tau}(t,x) := \bigl[ \nablae^{-} \hat X^\eps_{\bigcdot}(t, x), \nablae^{+} \hat X^\eps_{\bigcdot}(t, x)\bigr]_\tau - \langle \nablae^{-} \hat X^\eps_{\bigcdot}(t, x), \nablae^{+} \hat X^\eps_{\bigcdot}(t, x)\rangle_\tau.
 \end{align}
 Here, as in Section \ref{sec:notation}, $[\bigcdot, \bigcdot]$ refers to the quadratic covariation \cite[Thm. I.4.52]{JS03}. We will prove \eqref{eq:Z_bound} by bounding each term in \eqref{eq:Z_Ito} separately.

 We start with analysis of the first integral in \eqref{eq:Z_Ito}, which we denote by $\CY^{\eps}_\tau(t,x)$. For a rescaled test function $\phi_z^\lambda$, as in \eqref{eq:norm_eps}, we use the notation of Appendix~\ref{sec:integrals} and write $\llangle \CY^{\eps}_\tau(t), \phi_z^\lambda \rrangle_\eps = \I^\eps_2 F^\eps_{\!\lambda}(\tau)$, where $\I^\eps_2$ refers to the second order stochastic integral with the kernel
\begin{align}\label{eq:kernel_F}
F^\eps_{\!\lambda}(r_1, r_2; y_1, y_2) := \eps \sum_{ x \in \eps\ZZ} \phi^\lambda_z( x) \nablae^{-} p^\eps_{t-r_1}( x - y_1) \nablae^{+} p^\eps_{t-r_2}( x - y_2).
\end{align}
Applying Lemma~\ref{lem:integral_bound}, we obtain the moment bound
\begin{align}
\bigg(\E \Bigl[ \sup_{0 \leq \tau \leq t} \big| \I^\eps_2 F^\eps_{\!\lambda}(\tau) \big|^p\Bigr] \bigg)^{\frac{2}{p}} &\lesssim \SE^{(1)}_{\eps, \lambda} + \SE^{(2)}_{\eps, \lambda} + \SE^{(3)}_{\eps, \lambda},\label{eq:moment1}
\end{align}
 where the terms on the r.h.s. are given by
\begin{subequations}
\begin{align}
\SE^{(1)}_{\eps, \lambda} &:= \eps^{2} \sum_{y_1, y_2 \in \eps\ZZ} \int_{0}^t\! \int_{0}^{r_{1}} F^\eps_{\!\lambda}(r_1, r_2; y_1, y_2)^2 \;  d r_2\, d r_1,\label{eq:E1}\\
\SE^{(2)}_{\eps, \lambda} &:= \eps^{4} \sum_{y_2 \in \eps\ZZ} \int_{0}^t \bigg( \sup_{\substack{r_{1} \in [0, r_2]\\y_{1} \in \eps\ZZ}}  \big| F^\eps_{\!\lambda}(r_1, r_2; y_1, y_2) \big|^p \bigg)^{\frac{2}{p}} d r_2,\label{eq:E2}\\
\SE^{(3)}_{\eps, \lambda} &:= \eps^{3} \bigg( \E \Bigl[ \sup_{\substack{r_{2} \in [0, t]\\y_{2} \in \eps\ZZ}}  \big| (\I^\eps_{1} F^\eps_{\!\lambda})(r_{2}; y_{2}) \big|^p\Bigr] \bigg)^{\frac{2}{p}},\label{eq:E3}
\end{align}
\end{subequations}
because the jump sizes of all martingales are $\sqrt \eps$ and their predictable quadratic covariation \eqref{e:Ceps} is bounded.

Our aim is to bound the three terms appearing on the r.h.s. of \eqref{eq:moment1}. For this, we use the results provided in Appendix~\ref{sec:singular}, and derive the required bounds from measuring ``strength" of singularities of the involved kernels. We start with estimating the first term \eqref{eq:E1}. We may write this term as
\begin{align}\label{eq:double_sum_bound}
\SE^{(1)}_{\eps, \lambda} := \eps^2 \sum_{ x_1,  x_2 \in \eps\ZZ} \phi^\lambda_z( x_1)\phi^\lambda_z( x_2) \CK^{(\eps,1)}_t( x_2 -  x_1),
\end{align}
where the kernel $\CK^{(\eps,1)}_t$ is given by
\begin{align}
\CK^{(\eps,1)}_t( x_2 -  x_1) := \eps^{2} \sum_{y_1, y_2 \in \eps\ZZ} \int_{r_1 = 0}^t\!& \int_{r_2 = 0}^{r_{1}} \nablae^{-} p^\eps_{t-r_1}( x_1 - y_1) \nablae^{+} p^\eps_{t-r_2}( x_1 - y_2)\\
&\times \nablae^{-} p^\eps_{t-r_1}( x_2 - y_1) \nablae^{+} p^\eps_{t-r_2}( x_2 - y_2) \;  d r_2\, d r_1.
\end{align}
From Lemmas~\ref{lem:heat} and \ref{lem:singular} we obtain $\CK^{(\eps,1)} \in \CS_\eps^2$, where we use the notation of Appendix~\ref{sec:singular}. We cannot apply Lemma~\ref{lem:test} directly, because the strength of singularity is too high. To overcome this difficulty, we simply notice that $\eps^{1 + \zeta} \CK^{(\eps,1)} \in \CS_\eps^{1 - \zeta}$, for any $\zeta > 0$. Taking $\zeta = 2(\kappa_\star + \kappa)$, Lemma~\ref{lem:test} yields
\begin{equation}
|\SE^{(1)}_{\eps, \lambda}| \lesssim \eps^{-1 - 2(\kappa_\star + \kappa)} \lambda^{ - 1 + 2(\kappa_\star + \kappa)},\label{eq:E1_bound}
\end{equation}
for $\lambda \geq \eps$, and for any $\kappa_\star > 0$ and $\kappa > 0$.

Now, we will bound the second term \eqref{eq:E2}. From the definition \eqref{eq:kernel_F} we obtain
\begin{align}
\SE^{(2)}_{\eps, \lambda} \leq \eps^{4} \sum_{y_2 \in \eps\ZZ} \int_{r_2=0}^t \bigg( \sup_{\substack{r_{1} \in [0, r_2]\\y_{1} \in \eps\ZZ}} | \nablae^{-} p^\eps_{t-r_1}(y_1) | \bigg)^{2} \bigg(\eps \sum_{ x \in \eps\ZZ} \phi^\lambda_z( x) \nablae^{+} p^\eps_{t-r_2}( x - y_2)\bigg)^2 d r_2.
\end{align}
Using \eqref{eq:Est5}, we can simply bound $| \nablae^{-} p^\eps_{r}(y) | \lesssim \eps^{-2}$, which yields
\begin{align}
\SE^{(2)}_{\eps, \lambda} &\lesssim \sum_{y \in \eps\ZZ} \int_{0}^t \bigg(\eps \sum_{ x \in \eps\ZZ} |\phi^\lambda_z( x)| |\nablae^{+} p^\eps_{t-r}( x - y)|\bigg)^2 d r\\
&\leq \eps^2 \sum_{ x_1,  x_2 \in \eps\ZZ} |\phi^\lambda_z( x_1)| |\phi^\lambda_z( x_2)| \CK^{(\eps,2)}_t( x_2 -  x_1),
\end{align}
with the kernel $\CK^{(\eps,2)}$, given by
\begin{align}
\CK^{(\eps,2)}_t( x_2 -  x_1) := \sum_{y \in \eps\ZZ} \int_{0}^t\! |\nablae^{+} p^\eps_{t-r}( x_1 - y) \nablae^{+} p^\eps_{t-r}( x_2 - y)| \, d r.\label{eq:K2}
\end{align}
Lemmas~\ref{lem:heat} and \ref{lem:singular} yield $\eps \CK^{(\eps,2)} \in \CS_\eps^1$. Using the same trick as in \eqref{eq:E1_bound} to ``improve" singularity, Lemma~\ref{lem:test} yields
\begin{equation}
\SE^{(2)}_{\eps, \lambda} \lesssim \eps^{- 1 - 2(\kappa_\star + \kappa)} \lambda^{ - 1 + 2(\kappa_\star + \kappa)},\label{eq:E2_bound}
\end{equation}
for $\lambda \geq \eps$, and for any $\kappa_\star > 0$ and $\kappa > 0$.

In order to bound the last term \eqref{eq:E3}, we use the definition \eqref{eq:kernel_F} and we bound as before $| \nablae^{+} p^\eps_{r}(y) | \lesssim \eps^{-2}$, which yields
\begin{equation}
 \big| (\I^\eps_{1} F^\eps_{\!\lambda})(r_{2}; y_{2}) \big| \lesssim \eps^{-2} \big| (\I^\eps_{1} \hat F^\eps_{\!\lambda})(r_2; y_2) \big|,
\end{equation}
with a new kernel $\hat F^\eps_{\!\lambda}$, given by
\begin{align}\label{eq:kernel_hatF}
\hat F^\eps_{\!\lambda}(r; y) := \eps \sum_{ x \in \eps\ZZ} |\phi^\lambda_z( x)| |\nablae^{-} p^\eps_{t-r}( x - y)|.
\end{align}
Applying this bound and then Lemma~\ref{lem:integral_bound}, we obtain
\begin{align}
\SE^{(3)}_{\eps, \lambda} \lesssim \eps^{-1} \bigg( \E \Big[ \sup_{\substack{r_2 \in [0,t] \\ y_{2} \in \eps\ZZ}} \big| (\I^\eps_{1} \hat F^\eps_{\!\lambda})(r_2; y_2) \big|^p\Big] \bigg)^{\frac{2}{p}} \lesssim \eps^{-1} \bigl(\SE^{(3,1)}_{\eps, \lambda} + \SE^{(3,2)}_{\eps, \lambda}\bigr),\label{eq:E3_bound_interm}
\end{align}
where the terms on the r.h.s. are given by
\begin{align}
\SE^{(3,1)}_{\eps, \lambda} := \eps \sum_{y \in \eps\ZZ} \int_{0}^t\! \hat F^\eps_{\!\lambda}(r; y)^2\, d r, \qquad \SE^{(3,2)}_{\eps, \lambda} := \eps^{3} \sup_{\substack{r \in [0, t]\\y \in \eps\ZZ}} \big| \hat F^\eps_{\!\lambda}(r; y) \big|^2.\label{eq:hatE2}
\end{align}
The first term $\SE^{(3,1)}_{\eps, \lambda}$ we can write in the following way:
\begin{align}
\SE^{(3,1)}_{\eps, \lambda} = \eps^3 \sum_{ x_1,  x_2 \in \eps\ZZ} |\phi^\lambda_z( x_1)| |\phi^\lambda_z( x_2)| \hat \CK^{\eps}_t( x_2 -  x_1),
\end{align}
where $\hat \CK^{\eps}$ is defined as $\CK^{(\eps,2)}$ in \eqref{eq:K2}, but using the discrete derivative $\nablae^-$ instead of $\nablae^+$. In particular, in the same way as in \eqref{eq:E2_bound} we obtain $\SE^{(3,1)}_{\eps, \lambda} \lesssim \eps^{- 2 (\kappa_\star + \kappa)} \lambda^{ - 1 + 2(\kappa_\star + \kappa)}$, which holds for any $\kappa_\star > 0$ and $\kappa > 0$.

In order to bound the term $\SE^{(3,2)}_{\eps, \lambda}$, we use Lemma~\ref{lem:heat} and get $\eps^{1 + \zeta}\nablae^- p^\eps \in \CS^{1 - \zeta}_\eps$, for any $\zeta > 0$. Using this fact, one can show that $\big| \hat F^\eps_{\!\lambda}(r; y) \big| \lesssim \eps^{-1 - \zeta} \lambda^{-1+\zeta}$, for $\lambda \geq \eps$. Taking $\zeta = 1/2 + \kappa_\star + \kappa$, from \eqref{eq:hatE2} we obtain $\SE^{(3,2)}_{\eps, \lambda} \lesssim \eps^{ - 2(\kappa_\star + \kappa)} \lambda^{ - 1 + 2(\kappa_\star + \kappa)}$. From \eqref{eq:E3_bound_interm} and the above derived bounds we conclude
\begin{equation}
\SE^{(3)}_{\eps, \lambda} \lesssim \eps^{-1 - 2(\kappa_\star + \kappa)} \lambda^{ - 1 + 2(\kappa_\star + \kappa)}.\label{eq:E3_bound}
\end{equation}

Combining \eqref{eq:moment1} with the bounds \eqref{eq:E1_bound}, \eqref{eq:E2_bound} and \eqref{eq:E3_bound}, we obtain
\begin{equation}
\bigg(\E \big| \llangle \CY^{\eps}_t(t), \phi_z^\lambda \rrangle_\eps \big|^p \bigg)^{\frac{2}{p}} \leq \bigg(\E \sup_{0 \leq \tau \leq t} \big| \llangle \CY^{\eps}_\tau(t), \phi_z^\lambda \rrangle_\eps \big|^p \bigg)^{\frac{2}{p}} \lesssim \eps^{-1 - 2(\kappa_\star + \kappa)} \lambda^{ - 1 + 2(\kappa_\star + \kappa)},
\end{equation}
which holds for any $\kappa_\star > 0$ and $\kappa > 0$. In a similar way we can prove the bound
\begin{equation}
\bigg(\E \big| \llangle \CY^{\eps}_t(t) - \CY^{\eps}_s(s), \phi_z^\lambda \rrangle_\eps \big|^p \bigg)^{\frac{2}{p}} \lesssim |t-s|^{\kappa/2} \eps^{-1 - 2(\kappa_\star + \kappa)} \lambda^{ - 1 + 2 \kappa_\star + \kappa},
\end{equation}
and by the Kolmogorov continuity criterion we conclude that the process $(t,x) \mapsto \CY^{\eps}_t(t, x)$ satisfies the bound \eqref{eq:Z_bound}. Obviously, the same result holds true for the second term in \eqref{eq:Z_Ito}.

Now, we turn to the martingale $D^\eps_{\!\tau}(t,x)$ in \eqref{eq:Z_Ito}. As before, we take a rescaled test function $\phi^\lambda_z$ and using the notation of Appendix~\ref{sec:integrals} we write $\llangle D^\eps_{\!\tau}(t), \phi^\lambda_z \rrangle_\eps = (\I^\eps_1 \tilde F^\eps_{\!\lambda})(\tau)$, where the stochastic integral is with respect to the martingale $N^\eps_t(x) := [M^\eps(x)]_t - \langle M^\eps(x)\rangle_t$ and where the kernel $\tilde F^\eps_{\!\lambda}(r; y) := \eps F^\eps_{\!\lambda}(r, r; y, y)$ is defined using \eqref{eq:kernel_F}.

Since the martingale $M^\eps_t(x)$ has bounded total variation, the jump size of $N^\eps_t(x)$ at time $t$, if it happens, is equal to $\Delta_t N^\eps(x) = (\Delta_t M^\eps(x))^2 = \eps$, because $M^\eps(x)$ has jumps of size $\sqrt \eps$ (see \cite[Thm. I.4.52]{JS03}). Moreover, the predictable quadratic covariation of $N^\eps_t(x)$ is proportional to $\eps$. Then Lemma~\ref{lem:integral_bound} yields the bound
\begin{equation}
 \bigg(\E \Bigl[\sup_{0 \leq \tau \leq t} \big| (\I^\eps_1 \tilde F^\eps_{\!\lambda})(\tau) \big|^p\Bigr] \bigg)^{\frac{2}{p}} \lesssim \eps^{2} \sum_{y \in \eps\ZZ} \int_{0}^t\! \tilde F^\eps_{\!\lambda}(r; y)^2 d r + \eps^4 \sup_{\substack{r \in [0, t]\\y \in \eps\ZZ}}  \big| \tilde F^\eps_{\!\lambda}(r; y) \big|^2 =: \tilde \SE^{(1)}_{\eps, \lambda} + \tilde \SE^{(2)}_{\eps, \lambda}.\label{eq:bracket_bound}
\end{equation}
We can write the first term in the following way:
\begin{align}
\tilde \SE^{(1)}_{\eps, \lambda} := \eps^{4} \sum_{ x_1, x_2 \in \eps\ZZ} \phi^\lambda_z( x_1)\phi^\lambda_z( x_2) \tilde \CK^{\eps}_t( x_2 -  x_1),
\end{align}
where the kernel $\tilde \CK^{\eps}_t$ is given by
\begin{align}
\tilde \CK^{\eps}_t( x_2 -  x_1) := \eps \sum_{y \in \eps\ZZ} \int_{0}^t\! \nablae^{-} p^\eps_{t-r}( x_1 - y) \nablae^{+} p^\eps_{t-r}( x_1 - y) \nablae^{-} p^\eps_{t-r}( x_2 - y) \nablae^{+} p^\eps_{t-r}( x_2 - y) \,  d r.
\end{align}
In the same way as we did above, we can ``improve" the singularity by multiplying the kernel by a positive power of $\eps$. Then Lemmas~\ref{lem:heat} and \ref{lem:singular} yield $\eps^{2(1 + \zeta)} \tilde \CK^{\eps} \in \CS_\eps^{3 - 2 \zeta}$, for any $\zeta > 0$. Applying Lemma~\ref{lem:test} with $\zeta = 1 + \kappa_\star + \kappa$, we obtain
\begin{equation}
|\tilde \SE^{(1)}_{\eps, \lambda}| \lesssim \eps^{-1 - 2(\kappa_\star + \kappa)} \lambda^{ - 1 + 2(\kappa_\star + \kappa)}.\label{eq:tildeE1_bound}
\end{equation}
Now, we turn to the second term in \eqref{eq:bracket_bound}. One can show that for any $\zeta > 0$ one has $\eps^{2 + \zeta} |\tilde F^\eps_{\!\lambda}(r; y)| \lesssim \lambda^{-1 + \zeta}$, where $\lambda \geq \eps$. Hence, taking $\zeta = 2(\kappa_\star + \kappa)$, we obtain
\begin{equation}
\tilde \SE^{(2)}_{\eps, \lambda} \lesssim \eps^{1 - 2(\kappa_\star + \kappa)} \lambda^{ - 1 + 2(\kappa_\star + \kappa)}.\label{eq:tildeE2_bound}
\end{equation}

Combining \eqref{eq:bracket_bound} with the bounds \eqref{eq:tildeE1_bound} and \eqref{eq:tildeE2_bound}, we obtain
\begin{align}
 \bigg(\E \Bigl[ \big| \llangle D^\eps_{\!t}(t), \phi^\lambda_z \rrangle_\eps \big|^p\Bigr] \bigg)^{\frac{2}{p}} \lesssim \bigg(\E \Bigl[\sup_{0 \leq \tau \leq t} \big| \I^\eps_1 \tilde F^\eps_{\!\lambda}(\tau) \big|^p\Bigr] \bigg)^{\frac{2}{p}} \lesssim \eps^{-1 - 2(\kappa_\star + \kappa)} \lambda^{ - 1 + 2(\kappa_\star + \kappa)}.
\end{align}
In a similar way we can prove the following bound
\begin{equation}
\bigg(\E \Bigl[\big| \llangle D^\eps_{\!t}(t) - D^\eps_{\!s}(s), \phi_z^\lambda \rrangle_\eps \big|^p\Bigr] \bigg)^{\frac{2}{p}} \lesssim |t-s|^{\kappa/2} \eps^{- 1 - 2(\kappa_\star + \kappa)} \lambda^{ - 1 + 2 \kappa_\star + \kappa}.
\end{equation}
The Kolmogorov continuity criterion implies that the process $(t,x) \mapsto D^{\eps}_t(t, x)$ satisfies the bound \eqref{eq:Z_bound}, and this finishes the proof.
\end{proof}

\subsubsection{Convergence of the processes $X^\eps$}

We can prove convergence of the stopped processes $X^\eps_{t \wedge \sigma_{L, \eps}}$, where the stopping time $\sigma_{L, \eps}$ is defined in \eqref{eq:sigma_L}.

\begin{prop}\label{prop:X_eps_converge}
Under the assumptions of Theorem~\ref{thm:tight_intro}, let us extend the processes $t \mapsto X^{\eps}_{t}$, defined in \eqref{eq:X_eps_mild}, piece-wise linearly in the spatial variable to $\RR$. Let furthermore $X$ be the $1$-periodic solution of \eqref{eq:SHEAdditive} with $A = 0$ and $B_T = 1$, and with the initial state $\mathcal{Z}_0$. Finally, let us define the limit $\sigma_{L} := \lim_{\eps \to 0} \sigma_{L, \eps}$ in probability. Then the process $t \mapsto X^{\eps}_{t \wedge \sigma_{L, \eps}}$ converges weakly in $D([0,\infty), \CC(\RR))$ to $X_{t \wedge \sigma_{L}}$, as $\eps \to 0$.
\end{prop}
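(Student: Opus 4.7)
The plan is to prove the stronger statement that the unstopped process $X^\eps$ converges weakly to $X$ in $D([0,\infty), \CC(\T))$, and then pass to the stopped version by a continuous-mapping argument; since $X^\eps$ satisfies the \emph{linear} mild equation $X^\eps_t = S^\eps_t \hat s^\eps_0 + \hat X^\eps_t$, no non-linear analysis is required for this step. First I would establish tightness: Lemma~\ref{lem:X_bound} together with the hypothesis $\limsup_{\eps \to 0} \E \|\hat s^\eps_0\|_{\CC^\eta} < \infty$ yields $\sup_\eps \E \|X^\eps\|_{\CC^\eta_T}^p < \infty$ for every $p\geq 1$, and the Burkholder--Davis--Gundy computations inside the proof of Lemma~\ref{lem:X_bound} additionally produce the temporal estimate $\|X^\eps_{t_2}(x) - X^\eps_{t_1}(x)\|_{L^p} \lesssim (\sqrt{|t_2-t_1|} \vee \eps)^{2(\eta+\kappa)}$. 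Kolmogorov's continuity criterion combined with the compact embedding $\CC^{\eta}(\T) \hookrightarrow \CC^{\eta'}(\T)$ for any $\eta' < \eta$ then yields tightness of $\{X^\eps\}$ in $C([0,T], \CC^{\eta'}(\T))$ (with the piecewise-linear extension to $\RR$ being $1$-periodic throughout).

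To identify the limit I would set up a martingale problem. For a smooth $1$-periodic test function $\phi$, the process
\begin{align}
N^\eps_t(\phi) := \llangle X^\eps_t, \phi\rrangle_\eps - \llangle \hat s^\eps_0, \phi\rrangle_\eps - \int_0^t \llangle X^\eps_r, \Delta_\eps \phi\rrangle_\eps\, dr = \int_0^t \eps \sum_{y \in \eps\ZZ} \phi(y)\, d M^\eps_r(y)
\end{align}
is a c\`adl\`ag martingale with jumps of size $O(\eps^{3/2})$ and predictable quadratic variation $\eps \int_0^t \sum_{y} \phi(y)^2\, C_\eps(\hat s^\eps_r, y)\, dr$, where $C_\eps$ is given by \eqref{e:Ceps}. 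On the interval $[0, T \wedge \sigma_{L,\eps}]$ the a priori bounds \eqref{eq:bounds_eps} provide uniform control of $\hat s^\eps = X^\eps + u^\eps$ in $\CC^\eta$, and Lemma~\ref{lem:rho_bound} then gives $\rho_\eps \to 1$ and $\eps^3 \lambda_\eps \Delta_\eps \hat s^\eps = O(\eps)$. The remaining term $\eps \rho_\eps \nabla^- \hat s^\eps \nabla^+ \hat s^\eps$ I would handle via the Wick-type decomposition \eqref{eq:wick}: Lemma~\ref{lem:Z_bound} shows $\eps Z^\eps \to 0$ in the negative H\"older norm needed to pair against $\phi^2$, the cross-terms involving $u^\eps$ are tamed by its $\CC^{2\eta}$ regularity (which is better than $\CC^\eta$ precisely when $\eta > \frac{1}{3}$, consistent with the hypothesis of Theorem~\ref{thm:tight_intro}), and the deterministic compensator $\eps \fC^\eps$ can be computed directly from the stochastic-convolution representation using the heat-kernel estimates of Appendix~\ref{sec:heat}. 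The net outcome is $\langle N^\eps(\phi) \rangle_{t \wedge \sigma_{L,\eps}} \to (t \wedge \sigma_L) \|\phi\|_{L^2(\T)}^2$ in probability, and the functional martingale CLT \cite[Thm.~VIII.3.11]{JS03} identifies the limit as a Brownian motion with variance $\|\phi\|_{L^2}^2$. Any subsequential limit of $X^\eps$ thus solves the martingale problem of Definition~\ref{MartingaleProblem} with $A = 0$ and $c = 0$, whose solution is unique by \cite[Thm.~8.1.5]{SV06} and equals the mild solution of \eqref{eq:SHEAdditive} with $A=0$, $B \equiv 1$ started from $\mathcal{Z}_0$.

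Finally, for the stopped processes: joint tightness of $(X^\eps, u^\eps, Z^\eps)$ in the product of their respective regularity spaces -- furnished by Lemmas~\ref{lem:X_bound}, \ref{lem:Z_bound} and the fixed-point analysis of \eqref{eq:u_eps_new} -- yields joint subsequential weak convergence along a common subsequence, and continuity of the hitting-time functional at the limit (the limit process generically does not linger on the boundary $\{\|X\|_{\CC^\eta} = L\} \cup \{\|u\|_{\CC^{2\eta}} = L\} \cup \{\eps^{1/2 + \kappa_\star + \hat\kappa}\|Z\|_{\CC^{-1/2+\kappa_\star}_\eps} = L\}$) gives $\sigma_{L,\eps} \to \sigma_L$ in probability. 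A Skorohod representation then delivers a.s. convergence of the stopped trajectories. The main technical obstacle I anticipate is the careful treatment of the deterministic compensator $\eps \fC^\eps$ in the quadratic-variation analysis above: pointwise bounds $|\nabla^\pm \hat s^\eps| = \eps^{-1/2}$ are much too weak, and an averaging effect -- morally the discrete analogue of the self-averaging argument underpinning Lemma~\ref{lem:KeyEstimate} in the full-line case -- must be extracted from the heat-semigroup smoothing of Appendix~\ref{sec:heat} in order to show that this compensator integrated against $\phi^2$ is asymptotically absorbed into the coefficient of $t \|\phi\|_{L^2}^2$.
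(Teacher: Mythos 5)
Your overall route is the same as the paper's: tightness from Lemma~\ref{lem:X_bound}, then a martingale-problem identification of the limit using the two processes $\mathfrak{M}^\eps_t(\phi)$ and $\mathfrak{N}^\eps_t(\phi)$, with Lemma~\ref{lem:Z_bound} and the Wick decomposition \eqref{eq:wick} controlling the bad parts of $C_\eps$. However, there are two genuine problems with the proposal as written.

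First, your opening claim that ``since $X^\eps$ satisfies the linear mild equation $X^\eps_t = S^\eps_t\hat s^\eps_0 + \hat X^\eps_t$, no non-linear analysis is required for this step'' is misleading and leads to an internally inconsistent plan. The evolution equation for $X^\eps$ is linear, but the driving martingale $M^\eps$ has predictable quadratic variation $\eps^{-1}C_\eps(\hat s^\eps_t,x)$ from \eqref{e:Ceps}, and $C_\eps$ depends on the full nonlinear solution $\hat s^\eps = X^\eps + u^\eps$. You cannot identify the bracket of $\mathfrak{M}^\eps(\phi)$ without controlling $\hat s^\eps$, and the only control available is on $[0,\sigma_{L,\eps}]$ via \eqref{eq:bounds_eps}. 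Indeed you concede this mid-proof (``On the interval $[0,T\wedge\sigma_{L,\eps}]$ the a priori bounds\ldots''), which contradicts the stated plan of ``prove unstopped convergence first, then stop.'' One must run the entire identification on the stopped interval, exactly as the paper does. Relatedly, the proposition \emph{defines} $\sigma_L$ as the limit in probability of $\sigma_{L,\eps}$; your appeal to ``continuity of the hitting-time functional'' to \emph{derive} this is both unnecessary and unsupported --- hitting-time continuity for H\"older-norm level sets of the limit process is a delicate statement you give no argument for.

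Second, the hardest step --- showing that the quadratic variation $\int_0^t\eps\sum_y\phi(y)^2\,C_\eps(\hat s^\eps_r,y)\,dr$ converges to $t\|\phi\|^2_{L^2}$ --- is only gestured at. You correctly isolate the compensator $\eps\fC^\eps$ as the ``main technical obstacle,'' but resolving it is precisely the content of the paper's Lemma~\ref{lem:Z_bound_strong}, which you do not reconstruct. That lemma does \emph{not} follow from ``heat-semigroup smoothing'' in the naive sense: it hinges on the exact cancellation identity $\sum_{x\in\eps\ZZ}\int_0^\infty K^\eps_t(x)\,dt = 0$ from \eqref{eq:K_kernel_identity} (the discrete analogue of Lemma~\ref{lem:KeyEstimate}'s key cancellation), together with a Gronwall bootstrap: writing $\fC^\eps_t$ as a convolution of $K^\eps$ against $C_\eps(\hat s^\eps_r)$ via \eqref{eq:R_bound}, the $K^\eps*1$ part is controlled by \eqref{eq:K_kernel_identity}, and the $K^\eps*(C_\eps-1)$ part is re-absorbed into the quantity $\hat C^\eps_t := \|C_\eps(\hat s^\eps_t)-1\|_{\CC^{-1/2+\kappa_\star}_\eps}$ by the contraction estimate $\eps^2\sum_x\int_0^t|K^\eps_{t-r}(x)|\,dr \leq c_1<1$ from \eqref{eq:K_kernel_bounds}. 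Without this bootstrap you cannot close the estimate, since pointwise bounds on $\nabla^\pm\hat s^\eps$ are (as you note) far too weak, and the smallness of $C_\eps-1$ enters its own upper bound through $\fC^\eps$. So the route you sketch is correct in shape, but the proposal leaves exactly the load-bearing lemma unproved.
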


%
%

Tightness of the processes $X^\eps$ are proved in Lemma~\ref{lem:X_bound}, and in order to identify their limit we need the following bound on the bracket process of the martingales $M^\eps$.

\begin{lemma}\label{lem:Z_bound_strong}
Under the assumptions of Theorem~\ref{thm:tight_intro}, let the initial state satisfy the bound $\| \hat s^\eps_0 \|_{\CC^\eta} \leq L$, for a constant $L > 0$. Then the function $C_\eps$, defined in \eqref{e:Ceps}, can be bounded:
\begin{align}\label{eq:C_bound_strong}
\sup_{0 \leq t \leq \sigma_{L, \eps}} \| C_\eps(\hat s^\eps_{t}) - 2 \|_{\CC^{-1/2 + \kappa_\star}_\eps} \leq C \eps^{\frac{1}{2} - \kappa_\star - \hat \kappa} L + C\eps^{3 \eta - 1} t^{- \eta / 2} L^2,
\end{align}
where the stopping time $\sigma_{L, \eps}$ is defined in \eqref{eq:sigma_L}, using the values $\kappa_\star$, $\hat \kappa$ and $\eta$.
\end{lemma}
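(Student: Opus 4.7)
My approach is to decompose $C_\eps - 1$ using \eqref{e:Ceps} and then further expand the quadratic gradient piece via the Wick-type identity \eqref{eq:wick}, bounding each resulting term in $\CC^{-1/2+\kappa_\star}_\eps$ using the stopping-time bounds \eqref{eq:bounds_eps} together with Lemma~\ref{lem:rho_bound}. Writing
\begin{equation*}
C_\eps(\hat s^\eps, x) - 1 = (\rho_\eps - 1) - \eps \rho_\eps\, \nabla_\eps^- \hat s^\eps(x)\, \nabla_\eps^+ \hat s^\eps(x) + \eps^3 \lambda_\eps\, \Delta_\eps \hat s^\eps(x),
\end{equation*}
the first term is $O(\eps)$ in $L^\infty$ by Lemma~\ref{lem:rho_bound}(\ref{it:rho_bound1}). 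The third is controlled pointwise using $|\lambda_\eps|\lesssim \eps^{-1/2}$ and the crude bound $|\Delta_\eps \hat s^\eps|\lesssim L\eps^{\eta-2}$ coming from $\hat s^\eps = X^\eps + u^\eps$ with $X^\eps\in\CC^\eta$, $u^\eps\in\CC^{2\eta}$, which gives $\lesssim L\eps^{\eta+1/2}$; since $\eta>\tfrac13$, this is absorbed into the first term of the claimed bound via the continuous embedding $L^\infty \hookrightarrow \CC^{-1/2+\kappa_\star}_\eps$.

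For the main term, I substitute the Wick-product expansion \eqref{eq:wick}:
\begin{equation*}
\nabla_\eps^-\hat s^\eps \nabla_\eps^+\hat s^\eps = Z^\eps + \fC^\eps + \nabla_\eps^-(X^\eps+u^\eps)\nabla_\eps^+ u^\eps + \nabla_\eps^- u^\eps\, \nabla_\eps^+ X^\eps.
\end{equation*}
The contribution $\eps\rho_\eps Z^\eps$ produces the leading term: since $\rho_\eps$ is bounded, Lipschitz in $\hat s^\eps$ and $\hat s^\eps$ has positive H\"older regularity, multiplication by $\rho_\eps$ preserves membership in $\CC^{-1/2+\kappa_\star}_\eps$, so the stopped bound on $Z^\eps$ yields $\|\eps\rho_\eps Z^\eps\|_{\CC^{-1/2+\kappa_\star}_\eps}\lesssim L\eps^{1/2-\kappa_\star-\hat\kappa}$. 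For the It\^o-correction piece $\eps\rho_\eps \fC^\eps$, I expand $\fC^\eps$ via its explicit form (see the computation leading to $\fC^\eps_t(t,x) = \eps\sum_y\int_0^t \nabla_\eps^- p^\eps_{t-r}(x-y)\nabla_\eps^+ p^\eps_{t-r}(x-y)C_\eps(\hat s^\eps_r,y)\,dr$ coming from \eqref{eq:C_curly}) and split it into the piece with $C_\eps$ replaced by $1$ plus a remainder. The ``frozen'' piece is handled using the identity \eqref{eq:KeyIneq}, $\int_0^\infty \sum_y \nabla_\eps^+p^\eps_r \nabla_\eps^-p^\eps_r dr=0$, which reduces the bound to the tail $\int_t^\infty$, controllable by the heat-kernel estimates of Appendix~\ref{sec:heat}. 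The remainder is treated either by a short Gronwall bootstrap using the very bound we are proving, or — since we only need an $L^\infty$-type estimate here — by iterating the pointwise bound against the integrable kernel $|\nabla_\eps^-p^\eps_r\nabla_\eps^+p^\eps_r|$.

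For the cross terms $\eps\rho_\eps\bigl[\nabla_\eps^-(X^\eps+u^\eps)\nabla_\eps^+u^\eps + \nabla_\eps^-u^\eps\nabla_\eps^+X^\eps\bigr]$, the crude stopping-time bounds $|\nabla_\eps^\pm X^\eps|\lesssim L\eps^{\eta-1}$ and $|\nabla_\eps^\pm u^\eps|\lesssim L\eps^{2\eta-1}$ give only the pointwise size $L^2 \eps^{3\eta-1}$, missing the factor $t^{-\eta/2}$ in the claim. To recover it, I exploit $u^\eps_0 = 0$ together with a parabolic Schauder estimate: expressing $u^\eps_t$ in the mild form \eqref{eq:u_eps_new} and using the regularity gain of the discrete heat semigroup from Appendix~\ref{sec:heat}, one obtains an improved time-dependent bound of the type $\|\nabla_\eps^\pm u^\eps_t\|_{L^\infty}\lesssim L\eps^{2\eta-1} t^{-\eta/2}$. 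Inserting this into the cross-term estimate produces the announced $C\eps^{3\eta-1} t^{-\eta/2} L^2$ contribution, and combining with the bounds from the previous paragraphs yields \eqref{eq:C_bound_strong}.

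The main obstacle will be the sharp time-dependent Schauder bound on $\nabla_\eps^\pm u^\eps_t$ needed to produce the $t^{-\eta/2}$ factor: one must carefully balance the regularity loss in testing $F_{\!\eps}$ and $\tilde F_{\!\eps}$ against the smoothing of $S^\eps_{t-r}$, using $u^\eps_0=0$ to avoid a contribution from the initial data. A secondary difficulty is the analysis of $\eps\rho_\eps\fC^\eps$, which requires the cancellation provided by \eqref{eq:KeyIneq} together with a non-circular treatment of the $C_\eps$-dependence inside $\fC^\eps$.
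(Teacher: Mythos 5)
Your overall structure — split $C_\eps - 1$ via Lemma~\ref{lem:rho_bound}, treat the $\rho_\eps - 1$ and $\eps^3\lambda_\eps\Delta_\eps\hat s^\eps$ pieces pointwise, handle $\fC^\eps$ via the cancellation identity and a Gronwall/iteration bootstrap — matches the paper in spirit. The treatment of $\fC^\eps$ in particular is essentially the paper's argument, using \eqref{eq:K_kernel_identity} (the $K^\eps$-version of \eqref{eq:KeyIneq}) for the ``frozen'' piece and a contraction estimate for the remainder.

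The genuine gap is in the quadratic gradient term and its time-singular bound. You substitute the paper's displayed Wick identity \eqref{eq:wick}, giving cross terms $\nablae^-(X^\eps+u^\eps)\nablae^+u^\eps + \nablae^-u^\eps\nablae^+X^\eps$. But $Z^\eps + \fC^\eps = \nablae^-\hat X^\eps\nablae^+\hat X^\eps$, not $\nablae^- X^\eps\nablae^+X^\eps$, and $X^\eps = \hat X^\eps + S^\eps_t\hat s^\eps_0$. So that identity, taken literally, drops the cross terms involving $\nablae^\pm S^\eps_t\hat s^\eps_0$ (equivalently, $\nablae^\pm(X^\eps - \hat X^\eps)$). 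The paper's own proof of this lemma does \emph{not} use \eqref{eq:wick} but instead the clean decomposition
\begin{equation*}
\nablae^-\hat s^\eps\nablae^+\hat s^\eps = Z^\eps + \fC^\eps + \bigl(\nablae^-\hat s^\eps\nablae^+\hat s^\eps - \nablae^-\hat X^\eps\nablae^+\hat X^\eps\bigr),
\end{equation*}
and the third term produces cross products of $\nablae^\pm\hat X^\eps$ with $\nablae^\pm\bigl(S^\eps_t\hat s^\eps_0 + u^\eps\bigr)$. The $t^{-\eta/2}$ in \eqref{eq:C_bound_strong} comes \emph{only} from the initial-data piece: $\hat s^\eps_0$ is merely $\CC^\eta$, so $\|S^\eps_t\hat s^\eps_0\|_{\CC^{2\eta}} \lesssim t^{-\eta/2}\|\hat s^\eps_0\|_{\CC^\eta}$ by the semigroup smoothing in Lemma~\ref{lem:heat_S}, and $|\nablae^\pm S^\eps_t\hat s^\eps_0| \lesssim \eps^{2\eta-1}\|S^\eps_t\hat s^\eps_0\|_{\CC^{2\eta}}$ trades $\eps^\eta$ for $t^{-\eta/2}$. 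Because your cross terms never contain $S^\eps_t\hat s^\eps_0$, you have no source for this factor, and the argument you propose to recover it is misdirected: the bound $\|\nablae^\pm u^\eps_t\|_{L^\infty} \lesssim L\eps^{2\eta-1}t^{-\eta/2}$ is \emph{weaker} (for $t<1$) than the stopping-time bound $\|\nablae^\pm u^\eps_t\|_{L^\infty} \lesssim L\eps^{2\eta-1}$ already available, so there is nothing to improve, and it would still not supply the missing contributions from the deterministic initial-data flow. Secondarily, your handling of $\eps\rho_\eps Z^\eps$ needs care: $Z^\eps$ lives in $\CC^{-1/2+\kappa_\star}_\eps$, and multiplying such a discrete distribution by the non-constant function $\rho_\eps(\hat s^\eps,\cdot)$ requires a product estimate (Lemma~\ref{lem:product}) together with a H\"older bound on the composition. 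The paper sidesteps this entirely by writing $\rho_\eps = 1 - \eps/2 + \hat\rho_\eps$ so that the $Z^\eps$-term carries the constant prefactor $\eps$ and the remainder $\eps(\eps/2 - \hat\rho_\eps)\nablae^-\hat s^\eps\nablae^+\hat s^\eps$ is controlled pointwise.
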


\begin{proof}
Lemma~\ref{lem:rho_bound} yields $\rho_\eps(\hat s^\eps, x) = 1 - \eps /2 + \tilde \rho_\eps(\hat s^\eps, x)$, where $|\tilde \rho_\eps(\hat s^\eps, x)| \lesssim \eps^2$.
Using the definition \eqref{e:Ceps}, we can write
\begin{align}
C_{\!\eps} (\hat s^\eps_t) - 2 = 2 \bigl(\rho_\eps(\hat s^\eps_t) - 1 \bigr) + 2 \eps^{3} \lambda_\eps(\hat s^\eps_t) \Deltae \hat s^\eps_t + \eps \bigl(\eps - 2 \tilde \rho_\eps(\hat s^\eps_t)\bigr) \nablae^- \hat s^\eps_t\, \nablae^+ \hat s^\eps_t - 2 \eps \nablae^- \hat s^\eps_t\, \nablae^+ \hat s^\eps_t,
\end{align}
and we denote these four terms by $C_{\!\eps, 1}$, $C_{\!\eps, 2}$, $C_{\!\eps, 3}$ and $C_{\!\eps, 4}$ respectively.
Lemma~\ref{lem:rho_bound} yields a bound on the first term: $|C_{\!\eps, 1}(\hat s^\eps_t, x)| \lesssim \eps$. Furthermore, the bound $|\lambda_\eps(\hat s^\eps_t, x)| \lesssim \eps^{-1/2}$ yields a bound on the second term: $|C_{\!\eps, 2}(\hat s^\eps_t, x)| \lesssim \sqrt \eps \|\hat s^\eps_t\|_{L^\infty} \lesssim \sqrt \eps L$, where we consider $t \in [0, \sigma_{L, \eps}]$. The estimate $|\nablae \hat s^\eps_t| \leq \eps^{\eta - 1}\|\hat s^\eps_t\|_{\CC^{\eta}}$ yields the bound $|C_{\!\eps, 3}(\hat s^\eps_t, x)| \lesssim \eps^{2 \eta} \|\hat s^\eps_t\|^2_{\CC^{\eta}} \lesssim \eps^{2 \eta} L^2$, for $t \in [0, \sigma_{L, \eps}]$.

Now, we turn to the most complicated term $C_{\!\eps, 4}$. Using \eqref{eq:from_u_to_s}, and replacing the product $\nablae^- \hat X^\eps \nablae^+ \hat X^\eps$ by its renormalized version \eqref{eq:Z_def}, we obtain
\begin{equation}
\nablae^- \hat{s}^\eps_t \nablae^+ \hat{s}^\eps_t = Z^\eps_t + \fC^\eps_t + \Bigl(\bigl(\nablae^- \hat{s}^\eps \nablae^+ \hat{s}^\eps\bigr)_t - \nablae^- \hat X^\eps_t \nablae^+ \hat X^\eps_t\Bigr).\label{eq:C4}
 \end{equation}
 On the time interval $t \in [0, \sigma_{L, \eps}]$, by \eqref{eq:sigma_L} the $(-1/2 + \kappa_\star)$-norm of the first term in \eqref{eq:C4} is bounded by $\eps^{-\frac{1}{2} - \kappa_\star - \hat \kappa} L$. For the last term in \eqref{eq:C4}, we use \eqref{eq:from_u_to_s} and bound the absolute value of the last term in \eqref{eq:C4} by a constant times
\begin{align}
\eps^{3 \eta - 2} \bigl(\|\hat X^\eps_t\|_{\CC^\eta} + \|S^\eps_t \hat s_0\|_{\CC^{\eta}}\bigr) \bigl(\|S^\eps_t \hat s_0\|_{\CC^{2\eta}} + \|u^\eps_t\|_{\CC^{2\eta}} \bigr) + \eps^{4 \eta - 2} \|u^\eps_t\|_{\CC^{2\eta}}^2.
\end{align}
Lemma~\ref{lem:heat_S} yields  $\|S^\eps_t \hat s_0\|_{\CC^{\eta}} \lesssim \| \hat s_0\|_{\CC^{\eta}}$ and  $\|S^\eps_t \hat s_0\|_{\CC^{2\eta}} \lesssim t^{(\eta - 2 \eta) / 2} \| \hat s_0\|_{\CC^{\eta}}$. Moreover, for $t \in [0, \sigma_{L, \eps}]$ the norms of all stochastic processes and of $\hat s_0$ are bounded by $L$. Thus, the last term in \eqref{eq:C4} is bounded by a multiple of $\eps^{3 \eta - 2} t^{(\eta - 2 \eta) / 2} L^2$.

Now we will estimate the remaining term $\fC^\eps_t$ in \eqref{eq:C4}. Define $K^\eps_t(x) := \nablae^- p^\eps_t(x) \nablae^+ p^\eps_t(x)$ (see \eqref{eq:K_kernel}) and use the definition \eqref{eq:C_curly} to write
\begin{align}\label{eq:R_bound}
\fC^\eps_t(x) = \eps \sum_{y \in \eps\ZZ} \int_0^t K^\eps_{t-r}(x-y) C_{\!\eps} (\hat s^\eps_r, y) dr,
\end{align}
where we made use of the function $C_{\!\eps}$ in \eqref{e:Ceps}. Furthermore, rewrite
\begin{align}\label{eq:some_bound_30}
 \fC^\eps_t(x) = 2 \eps \sum_{y \in \eps\ZZ} \int_0^t K^\eps_{t-r}(x-y) dr + \eps \sum_{y \in \eps\ZZ} \int_0^t K^\eps_{t-r}(y) \bigl(C_{\!\eps}(\hat s^\eps_r, x - y) - 2 \bigr) dr.
\end{align}
Applying \eqref{eq:K_kernel_identity}, the first term can be bounded by a constant, uniformly in $\eps$.

Let us denote $\hat C^{\eps}_t := \| C_\eps(\hat s^\eps_{t \wedge \sigma_{L, \eps}}) - 2 \|_{\CC^{-1/2 + \kappa_\star}_\eps}$. Then, combining all these bounds, we obtain
\begin{equation}
\hat C^{\eps}_t \leq \gamma_\eps(t) + \eps^2 \sum_{x \in \eps\ZZ} \int_0^t |K^\eps_{t-r}(x)| \hat C^{\eps}_r dr,\label{eq:hatC_bound}
\end{equation}
for the function
\begin{equation}
\gamma_\eps(t) := c \eps + c \eps^{\frac{1}{2} - \kappa_\star - \hat \kappa} L + c \sqrt \eps L + c \eps^{3 \eta - 1} t^{- \eta / 2} L^2,
\end{equation}
where the constant $c > 0$ is independent of $\eps$, $L$ and $t$. Denoting $\hat C^{\eps}_{[0,t]} := \sup_{r \in [0,t]}\hat C^{\eps}_r$, and using the first bound in \eqref{eq:K_kernel_bounds}, from \eqref{eq:hatC_bound} we get
\begin{equation}
\hat C^{\eps}_{[0,t]}  \leq \gamma_\eps(t) + c_1 \hat C^{\eps}_{[0,t]},
\end{equation}
where $c_1 \in (0,1)$. This yields $\hat C^{\eps}_{[0,t]}  \leq \gamma_\eps(t) / (1 - c_1)$, which is the required result \eqref{eq:C_bound_strong}.
\end{proof}

With these results at hand, we are ready to prove Proposition~\ref{prop:X_eps_converge}.

\begin{proof}[Proof of Proposition~\ref{prop:X_eps_converge}]
Tightness of the processes $X^\eps$ is proved in Lemma~\ref{lem:X_bound}, and it is sufficient to prove that the weak limit of every converging subsequence equals to the solution of \eqref{eq:SHEAdditive} with $A = 0$ and $B_T = \sqrt 2$, and with the initial state $\mathcal{Z}_0$.

With a little abuse of notation, let $X^\eps$ be such convergent sequence, with the limit $\mathcal{Z}$. Then \eqref{eq:X_eps_mild} implies that $X^\eps$ is the solution of
\begin{equation}
d X^\eps_t = \Deltae X^\eps_t dt + d M^\eps_t, \qquad X^\eps_0 = \hat s^\eps_0.
\end{equation}
Similarly to \eqref{e:martingales}, we can associate to this equation two martingales
  \begin{subequations}
   \label{e:martingales2}
  \begin{align}
 \mathfrak{M}^{\eps}_t(\phi) &:=X^\eps_t(\phi) - X^\eps_0(\phi)- \int^{t}_{0} X^\eps_r(\Delta_{\eps} \phi) dr ,\label{eq:DefM2}\\
 \mathfrak{N}^{\eps}_t(\phi) &:= \big(\mathfrak{M}^{\eps}_t(\phi)\big)^2 - \int^{t}_{0} \eps \sum_{y\in \eps \ZZ}\phi(y)^2 d \big\langle M^\eps(y), M^\eps(y) \big\rangle_{r}, &&\label{eq:DefN2}
\end{align}
\end{subequations}
where for a function $\phi\in \mathcal{C}^{\infty}_{b}(\RR)\cap L^{2}(\RR)$ we use the shorthand notation $X^\eps_t(\phi) := \llangle X^{\eps}_t, \phi\rrangle_{\eps}$, and where the bracket process of the martingale $M^\eps$ is given in \eqref{e:Ceps}. Using the a priori bounds \eqref{eq:bounds_eps}, which hold on the time interval $t \in [0,  \sigma_{L, \eps}]$, by analogy to Lemma~\ref{lem:IdLemma} we can prove the $L^1$ convergence of \eqref{eq:DefM2} to
\begin{align}
 \mathfrak{M}_t(\phi) :=\mathcal{Z}_t(\phi) - \mathcal{Z}_0(\phi)- \int^{t}_{0} \mathcal{Z}_r( \phi'') dr. 
\end{align}
Moreover, Lemma~\ref{lem:Z_bound_strong} yields the $L^1$ convergence of \eqref{eq:DefN2} to
\begin{align}
\mathfrak{N}_t(\phi) := \big(\mathfrak{M}_t(\phi)\big)^2 - 2 t \|\phi\|_{L^2}^2.
\end{align}
Therefore, the weak limit of $X^\eps$ is the unique solution of the martingale problem associated to \eqref{eq:SHEAdditive} with $A=0$ and $B_T= \sqrt 2$.
\end{proof}

Using Lemma~\ref{lem:Z_bound_strong} we can also bound the function $\fC^\eps_t(x)$, defined in \eqref{eq:C_curly}.

\begin{lemma}\label{lem:C_bound}
For the function $\fC^\eps_t(x)$ (see \eqref{eq:Z_def} and \eqref{eq:C_curly}) and the stopping time $\sigma_{L, \eps}$, defined in \eqref{eq:sigma_L}, the following bound holds:
\begin{align}\label{eq:C_bound}
\sup_{0 \leq t \leq \sigma_{L, \eps}}\| \fC^\eps_t \|_{\CC^{-1/2 + \kappa_\star}_\eps} \leq C \eps^{-\frac{1}{2} - \kappa_\star - \hat \kappa} L + C\eps^{3 \eta - 2} t^{(\eta - 2 \eta) / 2} L^2,
\end{align}
where the constant $C > 0$ is independent of $\eps$ and $t$, and where the values $\kappa_\star$, $\hat \kappa$ and $\eta$ are from the definition of the stopping time $\sigma_{L, \eps}$ in \eqref{eq:sigma_L}.
\end{lemma}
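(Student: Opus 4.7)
The plan is to extract the bound directly from the integral representation \eqref{eq:R_bound} of $\fC^\eps_t$, reusing the decomposition from the proof of Lemma~\ref{lem:Z_bound_strong}. I would split $C_\eps(\hat s^\eps_r, y) = 1 + (C_\eps(\hat s^\eps_r, y) - 1)$ inside \eqref{eq:R_bound}, which yields $\fC^\eps_t = A_t + B_t$ with
$$ A_t(x) = \eps \sum_{y \in \eps\ZZ} \int_0^t K^\eps_{t-r}(x-y)\, dr, \qquad B_t(x) = \eps \sum_{y \in \eps\ZZ} \int_0^t K^\eps_{t-r}(x-y)\, \bigl(C_\eps(\hat s^\eps_r, y) - 1\bigr)\, dr. $$
For $A_t$, translation invariance of the kernel and the cancellation identity \eqref{eq:K_kernel_identity} (already invoked in the proof of Lemma~\ref{lem:Z_bound_strong}) immediately yield $\|A_t\|_{L^\infty} \lesssim 1$ uniformly in $\eps$ and $t$. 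Since any test function $\phi^\lambda_z$ satisfies $\|\phi^\lambda_z\|_{L^1} \lesssim 1$ and $(\lambda \vee \eps)^{-1/2 + \kappa_\star} \geq 1$ for $\lambda, \eps \in (0,1]$, the $L^\infty$ bound embeds continuously into $\CC^{-1/2 + \kappa_\star}_\eps$, and the contribution of $A_t$ is $\mathcal{O}(1)$ in that norm.

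For $B_t$, I would test against $\phi^\lambda_z$ and transfer $K^\eps_{t-r}$ onto the test function via self-adjointness of the discrete convolution:
$$ \llangle B_t, \phi^\lambda_z\rrangle_\eps = \int_0^t \llangle C_\eps(\hat s^\eps_r) - 1,\, K^\eps_{t-r} *_\eps \phi^\lambda_z\rrangle_\eps\, dr. $$
Apply the duality bound with Lemma~\ref{lem:Z_bound_strong} for the first factor (with $r \leq t \leq \sigma_{L,\eps}$) and the dual Hölder norm for $\psi^{s,\lambda}_z := K^\eps_s *_\eps \phi^\lambda_z$. The required estimate, obtained by combining the kernel bounds of Appendix~\ref{sec:heat} with the singular integral estimates of Appendix~\ref{sec:singular}, is of the form
$$ \int_0^t \|\psi^{t-r,\lambda}_z\|_{\CC^{1/2 - \kappa_\star}_\eps}\, dr \;\lesssim\; \eps^{-1}\,(\lambda \vee \eps)^{-1/2 + \kappa_\star}, $$
where the factor $\eps^{-1}$ arises from the time integral $\int_{\eps^2}^t s^{-3/2}\, ds \sim \eps^{-1}$ of the singular kernel $K^\eps_s \sim s^{-3/2}$. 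Inserting Lemma~\ref{lem:Z_bound_strong} and taking the supremum over $\lambda$ and $z$ in the definition \eqref{eq:norm_eps} of $\|\cdot\|_{\CC^{-1/2 + \kappa_\star}_\eps}$ then produces
$$ \|\fC^\eps_t\|_{\CC^{-1/2 + \kappa_\star}_\eps} \;\lesssim\; 1 + \eps^{-1}\sup_{r \leq t}\|C_\eps(\hat s^\eps_r) - 1\|_{\CC^{-1/2 + \kappa_\star}_\eps} \;\lesssim\; \eps^{-\frac12 - \kappa_\star - \hat\kappa} L + \eps^{3\eta - 2} t^{-\eta/2} L^2, $$
which is precisely \eqref{eq:C_bound}. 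The $\eps^{-1}$ loss relative to \eqref{eq:C_bound_strong} matches exactly the prefactor $\eps$ that appeared in $C_{\eps,4}$ when converting $\fC^\eps_t$ into a bound on $C_\eps - 1$ in the proof of Lemma~\ref{lem:Z_bound_strong}, so no Gronwall iteration is needed here.

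The main obstacle I expect is the convolution estimate on $\|K^\eps_s *_\eps \phi^\lambda_z\|_{\CC^{1/2 - \kappa_\star}_\eps}$: the kernel $K^\eps_s$ is singular as $s \downarrow 0$ (cut off at the discretization scale $\eps^2$) and one must carefully compare the three scales $\lambda$, $\sqrt{s}$, $\eps$ and exploit cancellation in the test function so that the naive pointwise product bound $|\nabla^-_\eps \hat X^\eps \nabla^+_\eps \hat X^\eps| \lesssim \eps^{2\eta-2} L^2$ (which would be too large) is not used. This is exactly the type of book-keeping carried out in Appendix~\ref{sec:singular}, so the estimate should follow by a routine but somewhat tedious application of Lemmas~\ref{lem:heat}, \ref{lem:singular} and \ref{lem:test}.
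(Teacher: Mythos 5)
Your overall structure matches the paper's: the decomposition of $\fC^\eps_t$ into the kernel--integral term $A_t$ (controlled by the cancellation identity \eqref{eq:K_kernel_identity}) plus the convolution term $B_t$ involving $C_\eps(\hat s^\eps_r) - 1$, which is then fed into \eqref{eq:C_bound_strong} and the $L^1$-type kernel bound \eqref{eq:K_kernel_bounds}. No Gronwall step is used, and your observation that the extra $\eps^{-1}$ is exactly compensated later is correct.

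However, the intermediate estimate you use for $B_t$ is misformulated. You propose pairing $C_\eps - 1$ (measured in $\CC^{-1/2+\kappa_\star}_\eps$) against $\psi^{s,\lambda}_z := K^\eps_s *_\eps \phi^\lambda_z$ and bounding by a ``dual H\"{o}lder norm'' of $\psi$. There are two issues. First, the norm $\CC^{-1/2+\kappa_\star}_\eps$ (see \eqref{eq:norm_eps}) controls pairings only against compactly supported rescaled test functions $\phi^\lambda_z$; the convolution $K^\eps_s *_\eps \phi^\lambda_z$ is not of this form, so a general duality with a positive-regularity H\"{o}lder norm does not follow directly from the definition. Second, even setting this aside, the scaling in your claimed estimate
$\int_0^t \|\psi^{t-r,\lambda}_z\|_{\CC^{1/2-\kappa_\star}_\eps} dr \lesssim \eps^{-1}(\lambda\vee\eps)^{-1/2+\kappa_\star}$
is off by a factor of $\lambda$: since $\|\phi^\lambda_z\|_{\CC^{1/2-\kappa_\star}} \sim \lambda^{-3/2+\kappa_\star}$ and $\|K^\eps_s\|_{L^1_\eps}$ time-integrates to $\mathcal{O}(\eps^{-1})$, the correct bound for the H\"{o}lder norm is $\eps^{-1}\lambda^{-3/2+\kappa_\star}$, which on its own would not close the argument.

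The cleaner route, which is what the paper uses, avoids any dual-norm computation: simply expand the convolution and use translation invariance of the class of admissible test functions,
\begin{align}
\llangle B_t, \phi^\lambda_z\rrangle_\eps = \int_0^t \eps\sum_{y\in\eps\ZZ} K^\eps_{t-r}(y)\,\llangle C_\eps(\hat s^\eps_r) - 1,\, \phi^\lambda_{z+y}\rrangle_\eps\, dr.
\end{align}
Each pairing on the right is controlled by $\|C_\eps(\hat s^\eps_r) - 1\|_{\CC^{-1/2+\kappa_\star}_\eps}(\lambda\vee\eps)^{-1/2+\kappa_\star}$ directly from the definition of the norm, because $\phi^\lambda_{z+y}$ is still a rescaled, centered test function. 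Taking absolute values and applying \eqref{eq:K_kernel_bounds} gives the $\eps^{-1}$ factor, and inserting \eqref{eq:C_bound_strong} concludes. Your final answer is correct, and the key inputs you identify are the right ones, but the pairing step should be done this way rather than through a dual-space estimate.
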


\begin{proof}
Let as before $K^\eps_t(x) := \nablae^- p^\eps_t(x) \nablae^+ p^\eps_t(x)$. 
Then, using \eqref{eq:R_bound}, we obtain
\begin{align}
\fC^\eps_t (x) = 2 \eps \sum_{y \in \eps\ZZ} \int_0^t K^\eps_{t-r}(x-y) dr + \eps \sum_{y \in \eps\ZZ} \int_0^t K^\eps_{t-r}(y) \bigl( C_{\!\eps}(\hat s^\eps_r, x-y) - 2\bigr) dr.
\end{align}
Applying \eqref{eq:K_kernel_identity}, the first term can be bounded by a constant, uniformly in $\eps$. Hence, we get
\begin{align}
\| \fC^\eps_t \|_{\CC^{-1/2 + \kappa_\star}_\eps} \leq c + \eps \sum_{y \in \eps\ZZ} \int_0^t |K^\eps_{t-r}(y)|\, \| C_{\!\eps}(\hat s^\eps_r) - 2 \|_{\CC^{-1/2 + \kappa_\star}_\eps} dr.
\end{align}
The required bound \eqref{eq:C_bound} now follows from \eqref{eq:C_bound_strong} and \eqref{eq:K_kernel_bounds}.
\end{proof}

\subsubsection{Bounds on the mild solution}

In this section we derive bounds on the r.h.s. of \eqref{eq:u_eps_new}. For this, we denote the three terms on the r.h.s. of \eqref{eq:u_eps_new} by $\CI^{\eps}_{1}(t,x)$, $\CI^{\eps}_{2}(t,x)$ and $\CI^{\eps}_{3}(t,x)$ respectively. 
We start with $\CI^{\eps}_{1}(t,x)$.

%
%

\begin{lemma}\label{lem:term1}
Let $\eta \in (\frac{1}{3}, \frac{1}{2})$ and let $\kappa \in (0, 1 - 2 \eta)$. Then $\CI^{\eps}_{1}(t,x)$ satisfies the bound
\begin{align}\label{eq:term1}
\sup_{0 \leq t \leq T \wedge \sigma_{L, \eps}} \|\CI^{\eps}_{1}(t)\|_{\CC^{2 \eta}} \leq C \bigl(\eps^{\kappa} T^{(1 - \kappa - 2 \eta)/2} + \eps^\eta T^{1 - \eta / 2}\bigr) L,
\end{align}
where the constant $C > 0$ is independent of $\eps$ and $T$.
\end{lemma}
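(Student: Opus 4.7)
My approach is to exploit the smallness $|\hat\rho_\eps(\hat s^\eps, x)| \lesssim \eps$ from Lemma~\ref{lem:rho_bound}(\ref{it:rho_bound1}) combined with the smoothing properties of the discrete heat semigroup (Lemma~\ref{lem:heat_S}), under the a priori bound $\|\hat s^\eps_r\|_{\CC^\eta} \lesssim L$ on $[0,\sigma_{L,\eps}]$ that follows from the decomposition $\hat s^\eps = X^\eps + u^\eps$ and the definition \eqref{eq:sigma_L} of $\sigma_{L,\eps}$. The first thing to notice is that the na\"ive pointwise estimate $|\hat F_\eps(\hat s^\eps_r,x)| \lesssim \eps\,|\Delta_\eps \hat s^\eps_r(x)| \lesssim \eps^{\eta-1} L$ produces, after semigroup smoothing and time integration, only an $\eps^{\eta-1}$-type bound, which is one order of $\eps$ short of what is claimed.

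To gain the missing $\eps$ I would apply the discrete Leibniz rule for the Laplacian to rewrite
\[
\hat\rho_\eps(\hat s^\eps_r)\,\Delta_\eps \hat s^\eps_r = \Delta_\eps\bigl(\hat\rho_\eps(\hat s^\eps_r)\,\hat s^\eps_r\bigr) - \hat s^\eps_r\,\Delta_\eps\hat\rho_\eps(\hat s^\eps_r) - \Xi_\eps\bigl(\hat\rho_\eps(\hat s^\eps_r),\hat s^\eps_r\bigr),
\]
with $\Xi_\eps(f,g) := (\nabla_\eps^+ f)(\nabla_\eps^+ g) + (\nabla_\eps^- f)(\nabla_\eps^- g)$. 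Using the discrete chain rule together with bounds on the derivatives of $\hat\rho_\eps$ in its first argument, extracted from the explicit formula in the proof of Lemma~\ref{lem:rho_bound} (each $\hat s$-derivative brings a factor $\eps^{1/2}$ from the $\eps^{-1/2}$-rescaling and an additional $\eps$ from the prefactor $(1-e^{-\eps})(1-e^{2\eps})$), the first factor satisfies $\|\hat\rho_\eps(\hat s^\eps_r)\,\hat s^\eps_r\|_{\CC^\eta} \lesssim \eps L$ up to an $L$-dependent constant, while the remainder terms admit pointwise bounds of order $\eps^{\eta+1/2} L^2$ and $\eps^{2\eta+1/2} L^2$ respectively.

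I would then handle the leading divergence-form piece by commuting $\Delta_\eps$ past $S^\eps_{t-r}$ and interpreting $\Delta_\eps(\hat\rho_\eps(\hat s^\eps_r)\,\hat s^\eps_r)$ as an element of $\CC^{-1-\kappa}_\eps$ for $\kappa \in (0,\,1-2\eta)$, via the discrete Besov-type inclusion $\|\Delta_\eps g\|_{\CC^{-1-\kappa}_\eps} \lesssim \eps^{\eta+\kappa-1}\,\|g\|_{\CC^\eta}$ obtained by trading $\eps$-powers for spatial regularity in \eqref{eq:norm_eps}. Combined with the smoothing bound $\|S^\eps_{t-r}\,g\|_{\CC^{2\eta}} \lesssim (t-r)^{-(2\eta+1+\kappa)/2}\,\|g\|_{\CC^{-1-\kappa}_\eps}$ from Lemma~\ref{lem:heat_S} and integration in $r \in [0,t]$ (integrable precisely because $\kappa < 1-2\eta$), this term is controlled by $C\,\eps^{\eta+\kappa}\,T^{(1-\kappa-2\eta)/2}\,L$, which implies the first contribution in \eqref{eq:term1}. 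The remainder terms, handled via pointwise bounds and the standard $L^\infty\to\CC^{2\eta}$ smoothing of cost $(t-r)^{-\eta}$, integrate to $\lesssim \eps^{\eta+1/2}\,T^{1-\eta}\,L^2$, which is absorbed into the second claimed contribution $\eps^{\eta}\,T^{1-\eta/2}\,L$.

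The main obstacle is balancing two competing failures. Interpreting $\Delta_\eps \hat s^\eps$ as a distribution in its natural space $\CC^{\eta-2}_\eps$ produces a semigroup-smoothing cost $(t-r)^{-(2+\eta)/2}$ that is not integrable in $r$, while bounding it pointwise sacrifices one order of $\eps$. The discrete Leibniz decomposition is what isolates a piece amenable to an intermediate distributional interpretation in $\CC^{-1-\kappa}_\eps$, which is the sweet spot where both integrability in $r$ and an acceptable $\eps^\kappa$-loss coexist; tracking carefully the additional $\eps^{1/2}$-powers delivered by chain-rule differentiation of $\hat\rho_\eps$ is what allows the two remainder terms to fit within the claimed bound.
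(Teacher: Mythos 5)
Your approach is genuinely different from the paper's, and the comparison is instructive. The paper exploits a much simpler structural fact: Lemma~\ref{lem:rho_bound}(\ref{it:rho_bound1}) gives a \emph{uniform-in-$\hat s$} expansion $\hat\rho_\eps(\hat s^\eps, x) = \pm\frac{\eps}{2} + \tilde\rho_\eps(\hat s^\eps, x)$ with $|\tilde\rho_\eps| \lesssim \eps^2$, so the paper splits $\hat F_{\!\eps} = \pm\frac{\eps}{2}\Deltae \hat s^\eps + \tilde\rho_\eps \Deltae \hat s^\eps$. The constant $\pm\frac{\eps}{2}$ commutes trivially with $\Deltae$, so one summation by parts moves the Laplacian onto the test function, using only $\|\hat s^\eps\|_{L^\infty}$ and yielding $\|\hat F_{\!\eps}^{(1)}\|_{\CC^{-1-\kappa}_\eps} \lesssim \eps^\kappa L$. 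The $O(\eps^2)$ remainder is handled by the plain pointwise bound $|\tilde\rho_\eps\Deltae\hat s^\eps| \lesssim \eps^2 \cdot \eps^{\eta-2}L = \eps^\eta L$. Neither piece requires \emph{any} regularity of $\ff$ beyond Assumption~\ref{a:f}, and no Leibniz rule is invoked. Your framing of the paper's method as a ``na\"ive pointwise estimate'' that loses one order of $\eps$ is therefore a mischaracterization: the pointwise estimate is applied only where the $\eps^2$-prefactor makes it work.

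Your discrete Leibniz decomposition $\hat\rho_\eps\Deltae\hat s^\eps = \Deltae(\hat\rho_\eps\hat s^\eps) - \hat s^\eps\Deltae\hat\rho_\eps(\hat s^\eps) - \Xi_\eps(\hat\rho_\eps(\hat s^\eps),\hat s^\eps)$ is algebraically correct and can be pushed through to the claimed bound, but it is strictly more laborious and rests on a justification that is not available under the paper's hypotheses. The step where you ``extract bounds on the derivatives of $\hat\rho_\eps$ in its first argument'' via the discrete chain rule implicitly assumes $\ff$ is differentiable with bounded derivative. Assumption~\ref{a:f} only asserts the global bound $|\ff(z)-\ff(0)-\fa z|\leq c|z|^\gamma$; this gives \emph{no} local modulus of continuity, let alone differentiability ($\ff$ could oscillate arbitrarily wildly within the allowed envelope). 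Your remainder terms involving $\Deltae\hat\rho_\eps(\hat s^\eps)$ and $\nablae^\pm\hat\rho_\eps(\hat s^\eps)$ must then be bounded directly from the explicit formula for $\rho_\eps$ using that $h$ is Lipschitz and that $\eps|\ff(s\pm 1)-\ff(s)| \lesssim \eps^{1-\gamma/2}(1+L^\gamma)$, rather than from the chain rule. This repair is possible but is not what you wrote. A second, smaller point: the prefactor $(1-e^{-\eps})(1-e^{2\eps})$ is $O(\eps^2)$, not $O(\eps)$ as your parenthetical suggests, so the actual derivative is $O(\eps^{5/2})$ rather than $O(\eps^{3/2})$; this error is in the conservative direction, so your stated remainder bounds are loose but not fatal. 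Finally, your product decomposition yields $L^2$ dependence where the paper obtains a bound linear in $L$, since $\tilde\rho_\eps$ in the paper is bounded by $\eps^2$ uniformly in $\hat s$ rather than depending on $\|\hat s^\eps\|_{\CC^\eta}$; this is cosmetic here (the lemma does not claim $C$ is independent of $L$) but illustrates that the paper's split is genuinely sharper.
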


\begin{proof}
We first derive a bound on the function $\hat F_{\!\eps}$, defined in \eqref{eq:hatF}. Using Lemma~\ref{lem:rho_bound} we can write $\hat \rho_\eps(\hat s^\eps, x) = - \eps / 2 + \tilde \rho_\eps(\hat s^\eps, x)$, for a function $\tilde \rho$ satisfying $|\tilde \rho(\hat s^\eps, x)| \lesssim \eps^2$. This allows to decompose $\hat F_{\!\eps} = \hat F_{\!\eps}^{(1)} + \hat F_{\!\eps}^{(2)}$, where $\hat F^{(1)}_\eps(\hat s^\eps, x) := -\eps \Deltae \hat s^\eps(x) / 2$ and $\hat F^{(2)}_\eps(\hat s^\eps, x) := \tilde \rho_\eps(\hat s^\eps, x) \Deltae \hat s^\eps(x)$.
Using the above estimate on $\tilde \rho$ and the definition of the discrete Laplacian, the second function can be simply bounded by
\begin{equation}
|\hat F^{(2)}_\eps(\hat s^\eps_t, x)| \leq |\tilde \rho_\eps(\hat s^\eps, x)| |\Deltae \hat s^\eps_t(x)| \lesssim \sup_{\substack{y \in \RR :\\ |x-y| \leq \eps}} |\hat s^\eps_t(y) - \hat s^\eps_t(x)| \lesssim \eps^\eta \| \hat s^\eps_t \|_{\CC^\eta}.
\end{equation}
 One can see that the function $\hat F_{\!\eps}^{(1)}$ is not suitable for a uniform bound. On the contrary, it can be considered as a discretization of a distribution, whose convolution with a test function has a good bound. More precisely, for a rescaled and recentered function $\phi^\lambda_z$, defined in Section~\ref{sec:notation}, we write
\begin{align}
\llangle \hat F^{(1)}_\eps(\hat s^\eps_t), \phi^\lambda_z \rrangle_\eps = \eps \sum_{y \in \eps\ZZ} \hat F^{(1)}_\eps(\hat s^\eps_t, y) \phi^\lambda_z(y) = -\frac{\eps^{2}}{2} \sum_{y \in \eps\ZZ} \Deltae \hat s^\eps_t(y) \phi^\lambda_z(y).
\end{align}
Applying summation by parts twice, the last expression can be written in the following way: $-\frac{\eps^{2}}{2} \sum_{y \in \eps\ZZ} \hat s^\eps_t(y) \Deltae \phi^\lambda_z(y)$. Thus, using the fact that $\phi$ is supported in a ball of radius $\lambda > 0$, we obtain
\begin{align}
|\llangle \hat F^{(1)}_\eps(\hat s^\eps_t), \phi^\lambda_z \rrangle_\eps| \lesssim \eps^{2} \lambda^{-3} \|\hat s^\eps_t\|_{L^\infty} \sum_{y \in \eps\ZZ} \1{|y - z| \leq \lambda} \lesssim \eps \lambda^{-2} \|\hat s^\eps_t\|_{L^\infty} \lesssim \eps^\kappa \lambda^{-1 - \kappa} \|\hat s^\eps_t\|_{L^\infty}\;,
\end{align}
for $\kappa \in [0,1]$ and $\lambda \geq \eps$. To estimate the sum in the second bound, we used the fact the number of terms in the sum is proportional to $\lambda/\eps$.

Using the operator norm $\Vert \bigcdot \Vert_{V \rightarrow W}$ for two spaces $V$ and $W$, introduced in Section~\ref{sec:notation}, we get
\begin{equation}
\|\CI^{\eps}_{1}(t)\|_{\CC^{2\eta}} \lesssim \int_0^t \!\!\| S^\eps_{t-r} \|_{\CC_\eps^{-1 - \kappa} \rightarrow \CC^{2\eta}} \| \hat F^{(1)}_\eps(\hat s^\eps_r)\|_{\CC_\eps^{-1 - \kappa}} dr + \int_0^t\!\! \| S^\eps_{t-r} \|_{L^\infty \rightarrow \CC^{2\eta}} \| \hat F^{(2)}_\eps(\hat s^\eps_r)\|_{L^\infty} dr.
\end{equation}
Furthermore, combining the derived bounds on the functions $\hat F_{\!\eps}^{(1)}$ and $\hat F_{\!\eps}^{(2)}$ with Lemma~\ref{lem:heat_S} yields
\begin{equation}
\|\CI^{\eps}_{1}(t)\|_{\CC^{2\eta}} \lesssim \int_0^t \eps^\kappa (t-r)^{- (1 + \kappa + 2\eta) / 2} \|\hat s^\eps_r\|_{L^\infty} dr + \int_0^t \eps^\eta (t-r)^{ - \eta} \| \hat s^\eps_r \|_{\CC^\eta} dr.
\end{equation}
We use the bounds $\| \hat s^\eps_r\|_{L^\infty} \leq \| \hat s^\eps_r\|_{\CC^{\eta}}$ and $\| \hat s^\eps_r\|_{\CC^{\eta}} \leq \| X^\eps_r\|_{\CC^{\eta}} + \| u^\eps_r\|_{\CC^{\eta}}$. Then combining the last two bounds, we conclude
\begin{align}
\|\CI^{\eps}_{1}(t)\|_{\CC^{2\eta}} \lesssim \bigl(\eps^{\kappa} t^{(1 - \kappa - 2 \eta)/2} + \eps^\eta t^{1 - \eta / 2} \bigr) L,
\end{align}
which holds as soon as $\kappa < 1 - 2 \eta$ and $\eta < 1$. From this the required bound \eqref{eq:term1} follows.
\end{proof}

\begin{lemma}\label{lem:term2}
Let the value $\eta$ be as in Theorem~\ref{thm:tight_intro}, and let the value $\kappa_\star$ in \eqref{eq:sigma_L} satisfies $\kappa_\star > \frac{1}{2} - \eta$. Then $\CI^{\eps}_{2}(t,x)$ satisfies the bound
\begin{align}\label{eq:term2}
\sup_{0 \leq t \leq T \wedge \sigma_{L, \eps}} \|\CI^{\eps}_{2}(t)\|_{\CC^{2 \eta}} \leq C T^{\hat \beta} \eps^{\beta} L (1 + L)^2,
\end{align}
for some constant $C > 0$ independent of $\eps$ and $T$, for some value $\hat \beta > 0$, depending on $\eta$, $\eta$, $\kappa_\star$ and $\hat \kappa$, and for $\beta = (\frac{1}{2} - \kappa_\star - \hat \kappa) \wedge (3 \eta - 1) \wedge (1 - \gamma - 2\kappa_\star - \hat \kappa) \wedge (2 \eta - \gamma)$, where $\gamma$ is from Assumption~\ref{a:f}.
\end{lemma}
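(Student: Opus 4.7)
The plan is to start from the definition $\tilde F_{\!\eps}(X^\eps,\hat X^\eps,u^\eps)_t(x)=\eps\,\lambda_\eps(\hat s^\eps_t,x)\,\fC^\eps_t(x)$ and use part~(\ref{it:rho_bound2}) of Lemma~\ref{lem:rho_bound} to split $\lambda_\eps$ into its principal linear part and a remainder:
\begin{equation*}
\tilde F_{\!\eps} \;=\; -\frac{\fa\,\eps}{4}\,\hat s^\eps\,\fC^\eps \;+\; \eps\,\hat\lambda_\eps(\hat s^\eps)\,\fC^\eps,
\end{equation*}
where $|\hat\lambda_\eps(\hat s^\eps,x)|\le c_1\eps^{1-\gamma}(1+\sqrt\eps|\hat s^\eps(x)|)^\gamma$. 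I would then estimate the two contributions to $\CI^{\eps}_{2}$ separately, in each case reducing matters to a Schauder bound from Lemma~\ref{lem:heat_S} with source in the discrete Besov-type space $\CC^{-1/2+\kappa_\star}_\eps$ and target $\CC^{2\eta}$. The semigroup incurs a factor $(t-r)^{-(2\eta+1/2-\kappa_\star)/2}$, which is integrable in $r$ for $\eta<\frac{3}{4}+\frac{\kappa_\star}{2}$ (automatic under our assumptions), producing a power of $T$ that accounts for the $T^{\hat\beta}$ factor in \eqref{eq:term2}.

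For the principal piece, I would invoke the classical product inequality $\|\hat s^\eps\fC^\eps\|_{\CC^{-1/2+\kappa_\star}_\eps}\lesssim\|\hat s^\eps\|_{\CC^{\eta}}\|\fC^\eps\|_{\CC^{-1/2+\kappa_\star}_\eps}$, valid precisely when $\eta+(-\tfrac12+\kappa_\star)>0$, which is the role of the hypothesis $\kappa_\star>\tfrac12-\eta$. Using the a priori bounds \eqref{eq:bounds_eps} on the interval $[0,\sigma_{L,\eps}]$ together with $\hat s^\eps=X^\eps+u^\eps$ one has $\|\hat s^\eps\|_{\CC^\eta}\lesssim L$, and combining with Lemma~\ref{lem:C_bound} yields
\begin{equation*}
\bigl\|\eps\,\hat s^\eps\fC^\eps\bigr\|_{\CC^{-1/2+\kappa_\star}_\eps}\;\lesssim\;L^2\,\eps^{1/2-\kappa_\star-\hat\kappa}\;+\;L^3\,\eps^{3\eta-1}\,t^{-\eta/2},
\end{equation*}
which after the Schauder step contributes the exponents $\tfrac12-\kappa_\star-\hat\kappa$ and $3\eta-1$ appearing in $\beta$.

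For the remainder piece, the uniform bound $\|\hat\lambda_\eps(\hat s^\eps)\|_{L^\infty}\lesssim\eps^{1-\gamma}(1+\sqrt\eps L)^\gamma$ combined with Lemma~\ref{lem:C_bound}'s distributional bound on $\fC^\eps$ and the extra factor $\eps$ produces, via the same Schauder mechanism, a contribution of order $\eps^{1-\gamma-2\kappa_\star-\hat\kappa}$, giving the third exponent in $\beta$. To extract the final exponent $2\eta-\gamma$, I would split the pointwise bound on $\hat\lambda_\eps$ to isolate the factor $|\hat s^\eps|^\gamma$ and use the composition inequality $\||\hat s^\eps|^\gamma\|_{\CC^{\eta\gamma}}\lesssim L^\gamma$ (since $z\mapsto |z|^\gamma$ is $\gamma$-H\"older for $\gamma\in(0,1)$), together with the $L^\infty$ bound on $\fC^\eps$ obtained from \eqref{eq:some_bound_30} and \eqref{eq:K_kernel_identity}; a careful rebalancing of the resulting powers of $\eps$ then yields this last term. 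Summing over the three sources of error, taking the worst exponent and collecting the various powers of $L$ (at most cubic) gives the bound \eqref{eq:term2}.

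The main obstacle is the analysis of the remainder piece $\eps\hat\lambda_\eps(\hat s^\eps)\fC^\eps$: Assumption~\ref{a:f} provides only a pointwise control on $\ff(z)-\fa z$ with no H\"older information, so the usual route of propagating H\"older regularity of $\hat s^\eps$ through $\hat\lambda_\eps$ is not directly available, and standard paraproduct bounds with negative regularity cannot use a purely $L^\infty$ multiplier. The delicate book-keeping therefore consists in combining four distinct mechanisms — the product bound in $\CC^{-1/2+\kappa_\star}_\eps$, the $L^\infty$ bound on $\hat\lambda_\eps$, the H\"older composition estimate for $z\mapsto|z|^\gamma$, and the integrable time singularity from Schauder — to match each of the four exponents entering the minimum defining $\beta$.
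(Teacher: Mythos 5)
Your overall strategy and the decomposition $\tilde F_{\!\eps} = -\tfrac{\eps\fa}{4}\hat s^\eps\fC^\eps + \eps\hat\lambda_\eps(\hat s^\eps)\fC^\eps$ match the paper, and your treatment of the principal piece via the product estimate of Lemma~\ref{lem:product}, the hypothesis $\kappa_\star>\tfrac12-\eta$, and Lemma~\ref{lem:C_bound} is exactly right. The gap is in the remainder piece $\tilde F_{\!\eps}^{(2)} := \eps\hat\lambda_\eps(\hat s^\eps)\fC^\eps$: you propose to combine the $L^\infty$ bound on $\hat\lambda_\eps$ with the \emph{distributional} bound on $\fC^\eps$ from Lemma~\ref{lem:C_bound}, but — as you yourself observe at the end — there is no product estimate that lets an $L^\infty$ multiplier act on $\CC^{-1/2+\kappa_\star}_\eps$, so this route does not close. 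Your attempt to supply the missing exponent $2\eta-\gamma$ through a H\"older composition bound $\||\hat s^\eps|^\gamma\|_{\CC^{\eta\gamma}}\lesssim L^\gamma$ and an $L^\infty$ bound on $\fC^\eps$ from \eqref{eq:some_bound_30}/\eqref{eq:K_kernel_identity} also does not work: those identities bound only the first term of \eqref{eq:some_bound_30} by a constant, while the second term involves $C_\eps(\hat s^\eps_r)-1$, for which only a $\CC^{-1/2+\kappa_\star}_\eps$ bound (Lemma~\ref{lem:Z_bound_strong}) is available, not $L^\infty$.

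The paper avoids these obstacles entirely by bounding $\tilde F_{\!\eps}^{(2)}$ \emph{pointwise} and only then embedding $L^\infty\hookrightarrow\CC^{-1/2+\kappa_\star}_\eps$ for the Schauder step. The key observation is that \eqref{eq:Z_def} gives $\fC^\eps_t = \nablae^-\hat X^\eps_t\,\nablae^+\hat X^\eps_t - Z^\eps_t$, so
\begin{equation}
|\fC^\eps_t(x)|\;\leq\;\|Z^\eps_t\|_{L^\infty} + \eps^{2(\eta-1)}\|\hat X^\eps_t\|^2_{\CC^\eta},
\end{equation}
and the $L^\infty$ norm of $Z^\eps_t$ is traded against its distributional norm by testing \eqref{eq:norm_eps} at scale $\lambda=\eps$, giving $\|Z^\eps_t\|_{L^\infty}\leq \eps^{-1/2-\kappa_\star}\|Z^\eps_t\|_{\CC^{-1/2+\kappa_\star}_\eps}\leq \eps^{-1-2\kappa_\star-\hat\kappa}L$ on $[0,\sigma_{L,\eps}]$. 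Multiplying by $\eps\cdot\eps^{1-\gamma}(1+\sqrt\eps L)^\gamma$ from the pointwise bound on $\hat\lambda_\eps$ directly produces the exponents $1-\gamma-2\kappa_\star-\hat\kappa$ and $2\eta-\gamma$, with no paraproduct or composition inequality needed. This is the one idea missing from your argument; once you have it, the rest of your Schauder mechanism and exponent bookkeeping go through as you describe.
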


\begin{proof}
Using the definition \eqref{eq:F_tilde_def} and Lemma~\ref{lem:rho_bound}, we can write $\tilde F_{\!\eps} = \tilde F_{\!\eps}^{(1)} + \tilde F_{\!\eps}^{(2)}$, where $\tilde F_{\!\eps}^{(1)}(t,x) := -\frac{\eps \fa \hat s^\eps_t(x)}{4} \fC^\eps_t(x)$ and $\tilde F_{\!\eps}^{(2)}(t,x) := \eps \hat \lambda_\eps(\hat s^\eps_t, x) \fC^\eps_t(x)$, where the following bound holds
\begin{equation}\label{eq:lambda_bound}
|\hat \lambda_\eps(\hat s^\eps_t, x) | \lesssim \eps^{1 - \gamma}(1 + \sqrt \eps |\hat s^\eps_t(x)|)^\gamma.
\end{equation}

We start with estimating the function $\tilde F_{\!\eps}^{(1)}$. If we chose $\kappa_\star > \frac{1}{2} - \eta$, then applying Lemmas~\ref{lem:product} and \ref{lem:C_bound} we obtain
\begin{equation}
\| \tilde F_{\!\eps}^{(1)}(t) \|_{\CC^{-1/2 + \kappa_\star}_{\eps}} \lesssim \eps \| \hat s^\eps_t \|_{\CC^\eta} \| \fC^\eps_t \|_{\CC^{-1/2 + \kappa_\star}_{\eps}} \lesssim \eps^{\frac{1}{2} - \kappa_\star - \hat \kappa} L^2 + \eps^{3 \eta - 1} t^{- \eta / 2} L^3,\label{eq:F1}
\end{equation}
which holds for $t \in [0, \sigma_{L, \eps}]$, and where we have estimated $\| \hat s^\eps_t\|_{\CC^{\eta}} \leq \| X^\eps_t\|_{\CC^{\eta}} + \| u^\eps_t\|_{\CC^{\eta}} \leq 2 L$.

Now, we turn to the function $\tilde F_{\!\eps}^{(2)}$. The bound \eqref{eq:lambda_bound} yields
\begin{equation}
|\tilde F_{\!\eps}^{(2)}(t,x)| \leq \eps |\hat \lambda_\eps(\hat s^\eps_t, x)| |\fC^\eps_t(x)| \lesssim \eps^{2 - \gamma}(1 + \sqrt \eps |\hat s^\eps_t(x)|)^\gamma |\fC^\eps_t(x)|.
\end{equation}
Furthermore, from the identity \eqref{eq:Z_def} we obtain
\begin{align}
|\fC^\eps_t(x)| \leq |Z^\eps_t(x)| + |\nablae^- \hat X^\eps_t(x)| |\nablae^+ \hat X^\eps_t(x)| &\leq \| Z^\eps_t \|_{L^\infty} + \eps^{-2} \sup_{\substack{y \in \RR : \\ |x-y| \leq \eps}} |\hat X^\eps_t(x) - \hat X^\eps_t(y)|^2\\
&\leq \| Z^\eps_t \|_{L^\infty} + \eps^{2 (\eta-1)} \| \hat X^\eps_t \|^2_{\CC^\eta},
\end{align}
where in the last line we used the definition of the H\"{o}lder norm. Hence, we have
\begin{equation}
|\tilde F_{\!\eps}^{(2)}(t,x)| \leq \eps^{2 - \gamma}(1 + \sqrt \eps |\hat s^\eps_t(x)|)^\gamma \bigl(\| Z^\eps_t \|_{L^\infty} + \eps^{2 (\eta-1)} \| \hat X^\eps_t \|^2_{\CC^\eta}\bigr).
\end{equation}
On the time interval $t \in [0, \sigma_{L, \eps}]$ we have $\| \hat X^\eps_t \|_{\CC^\eta} \leq L$ and  $|\hat s^\eps_t(x)| \leq \leq \| X^\eps_t\|_{\CC^{\eta}} + \| u^\eps_t\|_{\CC^{\eta}} \leq 2 L$. Moreover, the definition \eqref{eq:norm_eps} yields the bound
\begin{equation}
\| Z^\eps_t \|_{L^\infty} \leq \eps^{-1/2 - \kappa_\star} \| Z^\eps_t \|_{\CC^{-1/2 + \kappa_\star}_{\eps}} \leq  \eps^{-1 - 2\kappa_\star - \hat \kappa} L.
\end{equation}
Hence, for the function $\tilde F_{\!\eps}^{(2)}$ we have the following bound
\begin{equation}\label{eq:F2}
|\tilde F_{\!\eps}^{(2)}(t,x)| \leq  \eps^{2 - \gamma}L (1 + \sqrt \eps L)^\gamma \bigl(\eps^{-1 - 2\kappa_\star - \hat \kappa} + \eps^{2(\eta-1)} L\bigr).
\end{equation}

 Combining the derived bounds \eqref{eq:F1} and \eqref{eq:F2}, we obtain
\begin{align}
\| \tilde F_{\!\eps}(t) \|_{\CC^{-1/2 + \kappa_\star}_{\eps}} &\lesssim \eps^{\frac{1}{2} - \kappa_\star - \hat \kappa} L^2 + \eps^{3 \eta - 1} t^{- \eta / 2} L^3 + \eps^{2 - \gamma}L (1 + \sqrt \eps L)^\gamma \bigl(\eps^{-1 - 2\kappa_\star - \hat \kappa} + \eps^{2(\eta-1)} L\bigr)\\
&\lesssim \eps^\beta t^{- \eta / 2} L (1 + L)^2,
\end{align}
where $\beta = (\frac{1}{2} - \kappa_\star - \hat \kappa) \wedge (3 \eta - 1) \wedge (1 - \gamma - 2\kappa_\star - \hat \kappa) \wedge (2 \eta - \gamma)$. Then Lemma~\ref{lem:heat_S} yields
\begin{align}
\|\CI^{\eps}_{2}(t)\|_{\CC^{2\eta}} &\lesssim \int_0^t \| S^\eps_{t-r} \|_{\CC_\eps^{-1/2 + \kappa_\star} \rightarrow \CC^{2\eta}} \| \tilde F_{\!\eps}(r, \bigcdot)\|_{\CC_\eps^{-1/2 + \kappa_\star}} dr \\
&\lesssim \eps^\beta L (1 + L)^2 \int_0^t (t-r)^{- (1/2 - \kappa_\star + 2\eta) / 2} r^{- \eta / 2} dr.
\end{align}
Since $(1/2 - \kappa_\star + 2\eta) / 2 < 1$ and $-\eta / 2 > -1$, the last integral is of order $t^{\hat \beta}$, for some $\hat \beta > 0$. This is exactly the required result \eqref{eq:term2}.
\end{proof}

To bound $\CI^{\eps}_{3}(t,x)$ we will compare it to
\begin{equation}\label{eq:I_hat}
\hat \CI^{\eps}_{3}(t,x) := - \frac{\fa}{4} \int_0^t (S_{t-r}^\eps \hat s^\eps_r)(x) dr.
\end{equation}

\begin{lemma}\label{lem:term3}
Let the value $\eta$ be as in Theorem~\ref{thm:tight_intro}. Then $\CI^{\eps}_{3}(t,x)$ and $\hat \CI^{\eps}_{3}(t,x)$ satisfy the bounds
\begin{subequations}
\begin{align}
\sup_{0 \leq t \leq T \wedge \sigma_{L, \eps}} \|\CI^{\eps}_{3}(t)\|_{\CC^{2 \eta}} &\leq C T^{\hat \beta}( L + \eps^\beta (1 + L)^3),\label{eq:term3_1}\\
\sup_{0 \leq t \leq T \wedge \sigma_{L, \eps}} \|(\CI^{\eps}_{3} - \hat \CI^{\eps}_{3})(t)\|_{\CC^{2 \eta}} &\leq C T^{\hat \beta} \eps^\beta (1 + L)^3,\label{eq:term3_2}
\end{align}
\end{subequations}
for some constant $C > 0$ independent of $\eps$ and $T$, for some $\hat \beta > 0$, depending on $\eta$ and $\kappa_\star$, and for the value  $\beta = (2 \eta - \gamma) \wedge (3 \eta - 1) \wedge (1/2 - \kappa_\star - \hat \kappa)$, where $\gamma$ is from Assumption~\ref{a:f}.
\end{lemma}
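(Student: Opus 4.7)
The plan is to decompose $F_{\!\eps}$ into its linear Ornstein--Uhlenbeck-type contribution and a remainder consisting of terms that all carry a positive power of $\eps$ on the time interval $[0, T \wedge \sigma_{L,\eps}]$. Using Lemma~\ref{lem:rho_bound}(\ref{it:rho_bound2}) we write $\lambda_\eps(\hat s^\eps_t, x) = -\frac{\fa}{4}\hat s^\eps_t(x) + \hat \lambda_\eps(\hat s^\eps_t, x)$, where $|\hat \lambda_\eps(\hat s^\eps_t, x)| \lesssim \eps^{1-\gamma}(1+\sqrt\eps|\hat s^\eps_t(x)|)^\gamma$. Recalling the Wick-type decomposition \eqref{eq:wick} for $W_t := \nablae^-\hat s^\eps_t \diamond \nablae^+\hat s^\eps_t$, we expand
\begin{align}
F_{\!\eps}(X^\eps, Z^\eps, u^\eps)_t + \tfrac{\fa}{4}\hat s^\eps_t = \tfrac{\fa}{4}\hat s^\eps_t \cdot \eps W_t + \hat\lambda_\eps(\hat s^\eps_t)\,(1 - \eps W_t), \qquad \eps W_t = \eps Z^\eps_t + \eps R^\eps_t,
\end{align}
where $R^\eps_t := \nablae^-(X^\eps+u^\eps)_t\,\nablae^+ u^\eps_t + \nablae^- u^\eps_t\,\nablae^+ X^\eps_t$ collects the ``classical'' cross products. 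The difference $\CI^\eps_3 - \hat \CI^\eps_3$ is then the convolution of the r.h.s.\ above with $S^\eps$.

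The key point is to place each piece in an appropriate discretized function or distribution space and pay for it with a positive power of $\eps$. For the classical piece, since $\nablae X^\eps$ and $\nablae u^\eps$ are uniformly controlled by $\eps^{\eta-1}L$ and $\eps^{2\eta-1}L$ respectively on $[0,\sigma_{L,\eps}]$, we get $\|\eps R^\eps_t\|_{L^\infty} \lesssim \eps^{3\eta-1}L^2$, so $\|\hat s^\eps_t \cdot \eps R^\eps_t\|_{L^\infty} \lesssim \eps^{3\eta-1}L^3$. For $\hat\lambda_\eps$ itself, the bound $(1+\sqrt\eps L)^\gamma \leq 1 + (\sqrt\eps L)^\gamma$ together with the constraint $\gamma < 2\eta$ (automatic from $\gamma < 1/2$ and $\eta > 1/3$) yields $\|\hat\lambda_\eps(\hat s^\eps_t)\|_{L^\infty} \lesssim \eps^{2\eta-\gamma}(1+L)$, and the cross term $\hat\lambda_\eps \cdot \eps R^\eps_t$ is controlled by the product of these bounds (and is strictly better than either separately). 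The only distributional piece is $\hat s^\eps_t \cdot \eps Z^\eps_t$: since the choice $\kappa_\star > \frac12 - \eta$ (imposed already in Lemma~\ref{lem:term2}) gives $\eta + (-\frac12+\kappa_\star) > 0$, Lemma~\ref{lem:product} allows us to multiply and we obtain $\|\hat s^\eps_t \cdot \eps Z^\eps_t\|_{\CC^{-1/2+\kappa_\star}_\eps} \lesssim L \cdot \eps^{1/2-\kappa_\star-\hat\kappa}L$; the residual term $\hat\lambda_\eps \cdot \eps Z^\eps_t$ is strictly smaller because $\hat\lambda_\eps$ carries the additional factor $\eps^{1-\gamma}$.

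Applying the semigroup bounds of Lemma~\ref{lem:heat_S}---namely $\|S^\eps_{t-r}\|_{L^\infty \to \CC^{2\eta}} \lesssim (t-r)^{-\eta}$ and $\|S^\eps_{t-r}\|_{\CC^{-1/2+\kappa_\star}_\eps \to \CC^{2\eta}} \lesssim (t-r)^{-(1/2-\kappa_\star+2\eta)/2}$---and integrating the three resulting singularities, which are all integrable for $\eta < 1$ and $\kappa_\star > 2\eta - 3/2$, yields
\[
\|(\CI^\eps_3 - \hat \CI^\eps_3)(t)\|_{\CC^{2\eta}} \lesssim t^{\hat\beta}\,\eps^\beta\,(1+L)^3
\]
with $\beta = (2\eta-\gamma) \wedge (3\eta-1) \wedge (\tfrac12 - \kappa_\star - \hat\kappa)$ and some $\hat\beta > 0$ depending on $\eta$ and $\kappa_\star$. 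This establishes \eqref{eq:term3_2}. Finally, the bound \eqref{eq:term3_1} follows by the triangle inequality $\|\CI^\eps_3\|_{\CC^{2\eta}} \leq \|\hat\CI^\eps_3\|_{\CC^{2\eta}} + \|\CI^\eps_3 - \hat\CI^\eps_3\|_{\CC^{2\eta}}$ together with the elementary estimate $\|\hat\CI^\eps_3(t)\|_{\CC^{2\eta}} \lesssim \int_0^t (t-r)^{-\eta/2}\|\hat s^\eps_r\|_{\CC^\eta}\,dr \lesssim t^{1-\eta/2}L$ coming again from Lemma~\ref{lem:heat_S} and $\|\hat s^\eps_r\|_{\CC^\eta} \leq \|X^\eps_r\|_{\CC^\eta} + \|u^\eps_r\|_{\CC^{2\eta}} \leq 2L$.

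The main obstacle is the treatment of the renormalized term $\eps Z^\eps$. The a priori control from \eqref{eq:sigma_L} only gives $\|Z^\eps\|_{\CC^{-1/2+\kappa_\star}_\eps} \leq \eps^{-1/2-\kappa_\star-\hat\kappa}L$, which diverges as $\eps \to 0$; the factor $\eps$ in front just barely compensates this divergence, producing a \emph{positive} power $\eps^{1/2-\kappa_\star-\hat\kappa}$ only under the constraint $\kappa_\star + \hat\kappa < 1/2$. The other subtlety is that $Z^\eps$ is genuinely distributional in space, so its product with $\hat s^\eps$ must be defined via the para-product bounds of Lemma~\ref{lem:product}, which is precisely why we must enforce $\kappa_\star > \frac12 - \eta$ and hence, ultimately, why the hypothesis $\eta > \frac13$ is required (together with $\gamma < 2\eta$ needed to absorb the non-linearity $\hat\lambda_\eps$ with a positive power of $\eps$).
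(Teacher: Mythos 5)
Your proof takes the same route as the paper: the same decomposition
$F_{\!\eps}+\tfrac{\fa}{4}\hat s^\eps=\hat\lambda_\eps(1-\eps W)+\tfrac{\eps\fa}{4}\hat s^\eps W$
with $W=\nablae^-\hat s^\eps\diamond\nablae^+\hat s^\eps$, the same further split $W=Z^\eps+R^\eps$ from \eqref{eq:wick}, the same use of Lemma~\ref{lem:product} (requiring $\kappa_\star>\tfrac12-\eta$) to handle the products with $\eps Z^\eps$, and the same application of Lemma~\ref{lem:heat_S} to integrate the semigroup. Your treatment of the triangle-inequality step for \eqref{eq:term3_1} via the elementary bound on $\hat\CI^\eps_3$ is also what the paper implicitly does.

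One small remark worth making: you split $W$ into $Z^\eps+R^\eps$ \emph{inside} the $\hat\lambda_\eps$ term and estimate $\hat\lambda_\eps\cdot\eps R^\eps$ and $\hat\lambda_\eps\cdot\eps Z^\eps$ separately; the paper instead bounds the whole factor $|1-\eps W_t|$ by $1+\eps^{2\eta-1}L^2$ via the crude pointwise product $|\nablae^-\hat s^\eps\,\nablae^+\hat s^\eps|\leq\eps^{2\eta-2}L^2$. Your variant is arguably cleaner, since $W$ does contain the genuinely distributional piece $Z^\eps$ whose $L^\infty$ norm diverges. However, the intermediate claim that ``$\gamma<2\eta$ yields $\|\hat\lambda_\eps(\hat s^\eps_t)\|_{L^\infty}\lesssim\eps^{2\eta-\gamma}(1+L)$'' is not quite how the exponent arises: Lemma~\ref{lem:rho_bound} gives the sharper bound $\|\hat\lambda_\eps(\hat s^\eps_t)\|_{L^\infty}\lesssim\eps^{1-\gamma}(1+L)$, independently of the constraint $\gamma<2\eta$; since $2\eta<1$ your stated bound is a weaker consequence of that, and the $2\eta-\gamma$ in the final $\beta$ actually comes (in the paper's version) from the product of $\eps^{1-\gamma}$ with the $\eps^{2\eta-1}L^2$ factor. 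The confusion is harmless because the resulting $\beta$ is the same, but the logical attribution should be corrected.
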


\begin{proof}
 Before deriving bounds on $\CI^{\eps}_{3}$, we estimate the function inside the integral in the definition of $\CI^{\eps}_{3}$. Using the expression \eqref{eq:F_tilde_def}, we can write this function as
\begin{align}
F_{\!\eps}(X^\eps, Z^\eps, u^\eps) + \frac{\fa \hat s^\eps}{4} = \hat \lambda_\eps(\hat s^\eps) \big( 1 - \eps \nablae^- \hat s^\eps \diamond \nablae^+ \hat s^\eps\big) + \frac{\eps \fa}{4} \hat s^\eps \bigl(\nablae^- \hat s^\eps \diamond \nablae^+ \hat s^\eps\bigr),
\end{align}
and we denote the two terms on the r.h.s. by $F_{\!\eps}^{(1)}$ and $F_{\!\eps}^{(2)}$ respectively. For the first term, the simple bound $|\nablae^\pm \hat s^\eps_t(x)| \leq \eps^{\eta - 1} \| \hat s^\eps_t \|_{\CC^\eta}$ and the bound on the error term $\hat \lambda_\eps$, provided in Lemma~\ref{lem:rho_bound}, yields
\begin{align}
|F_{\!\eps}^{(1)}(t, x)| \lesssim \eps^{1 - \gamma} \bigl(1 + \sqrt \eps \|\hat s^\eps_t\|_{L^\infty}\bigr)^\gamma \big( 1 + \eps^{2 \eta - 1} \|\hat s^\eps_t\|_{\CC^{\eta}}^2\big) \lesssim \eps^{1 - \gamma} \bigl(1 + \sqrt \eps L\bigr)^\gamma \big( 1 + \eps^{2 \eta - 1} L^2\big),
\end{align}
where the value $\gamma$ is from Assumption~\ref{a:f}, and where we consider $t \in [0,\sigma_{L, \eps}]$. Here, we made use of $\| \hat s^\eps \|_{\CC^{\eta}_t} \leq \| X^\eps \|_{\CC^{\eta}_t} + \| u^\eps \|_{\CC^{2\eta}_t}$.

Now, we turn to the function $F_{\!\eps}^{(2)}$. Using the definition of the Wick-type product \eqref{eq:wick} we can write $F_{\!\eps}^{(2)} = F_{\!\eps}^{(3)} + F_{\!\eps}^{(4)}$, where
\begin{align}
F_{\!\eps}^{(3)} := \frac{\eps \fa}{4} \hat s^\eps \left(\nablae^- X^\eps\, \nablae^+ u^\eps + \nablae^- u^\eps\, \nablae^+ \hat s^\eps\right), \qquad F_{\!\eps}^{(4)} := \frac{\eps \fa}{4}\hat s^\eps Z^\eps,
\end{align}
where we use the process $Z^\eps$, defined in \eqref{eq:Z_def}. For the function $F_{\!\eps}^{(3)}$ we have the bound
\begin{align}
|F_{\!\eps}^{(3)}(t,x)| \lesssim \|\hat s^\eps_t\|_{L^\infty} \left( \eps^{3 \eta - 1} \|X^\eps_t\|_{\CC^{\eta}} \|u^\eps_t\|_{\CC^{2 \eta}} + \eps^{4 \eta - 1} \|u^\eps_t\|^2_{\CC^{2 \eta}} \right) \lesssim \eps^{3 \eta - 1} L^3,
\end{align}
on the time interval $t \in [0,\sigma_{L, \eps}]$.
For the term $F_{\!\eps}^{(4)}$, we use Lemma~\ref{lem:product} and obtain
\begin{align}
\| F_{\!\eps}^{(4)}(t) \|_{\CC^{-1/2 + \kappa_\star}_{\eps}} \lesssim \eps \| \hat s^\eps_t \|_{\CC^\eta} \| Z^\eps_t \|_{\CC^{-1/2 + \kappa_\star}_{\eps}} \lesssim \eps^{1/2 - \kappa_\star - \hat \kappa} L^2,
\end{align}
which holds on the time interval $t \in [0,\sigma_{L, \eps}]$.

Combining these bounds on the functions $F_{\!\eps}^{(1)}$ and $F_{\!\eps}^{(2)}$, we conclude
\begin{align}
\| (F_{\!\eps} + \fa \hat s^\eps / 4)(t) \|_{\CC^{-1/2 + \kappa_\star}_{\eps}} \leq \| F_{\!\eps}^{(1)}(t) \|_{\CC^{-1/2 + \kappa_\star}_{\eps}} + \| F_{\!\eps}^{(3)}(t) \|_{\CC^{-1/2 + \kappa_\star}_{\eps}} + \| F_{\!\eps}^{(4)}(t) \|_{\CC^{-1/2 + \kappa_\star}_{\eps}}.
\end{align}
The definition \eqref{eq:norm_eps} yields $\| \cdot \|_{\CC^{-1/2 + \kappa_\star}_{\eps}} \leq \| \cdot \|_{L^\infty}$, which yields
\begin{align}
\| (F_{\!\eps} + \fa \hat s^\eps / 4)(t) \|_{\CC^{-1/2 + \kappa_\star}_{\eps}} \leq \| F_{\!\eps}^{(1)}(t) \|_{L^\infty} + \| F_{\!\eps}^{(3)}(t) \|_{L^\infty} + \| F_{\!\eps}^{(4)}(t) \|_{\CC^{-1/2 + \kappa_\star}_{\eps}} \lesssim \eps^{\beta} ( 1 + L)^3,
\end{align}
where $\beta = (2 \eta - \gamma) \wedge (3 \eta - 1) \wedge (1/2 - \kappa_\star - \hat \kappa)$. Then Lemma~\ref{lem:heat_S} gives
\begin{align}
\|(\CI^{\eps}_{3} - \hat \CI^{\eps}_{3})(t)\|_{\CC^{2\eta}} &\lesssim \int_0^t \| S^\eps_{t-r} \|_{\CC_\eps^{-1/2 + \kappa_\star} \rightarrow \CC^{2\eta}} \| (F_{\!\eps} + \fa \hat s^\eps / 4)(r)\|_{\CC_\eps^{-1/2 + \kappa_\star}} dr \\
&\lesssim \eps^\beta (1 + L)^3 \int_0^t (t-r)^{- (1/2 - \kappa_\star + 2\eta) / 2} dr \lesssim \eps^\beta (1 + L)^3 t^{\hat \beta},
\end{align}
which holds for some $\hat \beta > 0$, because $(1/2 - \kappa_\star + 2\eta) / 2 < 1$. This is the required bound \eqref{eq:term3_2}, and the bound \eqref{eq:term3_1} follows from the triangle inequality.
\end{proof}

\subsection{Proof of the main convergence result}\label{sec:Theorem1.6}

\begin{proof}[Proof of Theorem~\ref{thm:tight_intro}]
Taking into account our restrictions on the values $\gamma$ and $\eta$ in Assumption~\ref{a:f} and Theorem~\ref{thm:tight_intro}, we can choose the values $\kappa_\star$ and $\hat \kappa$ in \eqref{eq:sigma_L} to be such that the powers of $\eps$ in \eqref{eq:term2} and \eqref{eq:term3_1} are strictly positive. Then \eqref{eq:u_eps_new} and Lemmas~\ref{lem:term1}, \ref{lem:term2} and \ref{lem:term3} yield the bound
\begin{align}
\sup_{0 \leq t \leq T \wedge \sigma_{L, \eps}} \| u^\eps_t\|_{\CC^{2 \eta}} \leq C T^{\hat \beta} (1 + L)^3,
\end{align}
for $T \in (0,1]$, for a constant $C$, independent of $\eps$ and $T$, and for some value $\hat \beta > 0$. Taking $T = T_*$ sufficiently small and depending on $L$, we can write
\begin{equation}
\E \|u^\eps\|_{\CC^{2 \eta}_{T_* \wedge \sigma_{L, \eps}}} \leq \hat C,
\end{equation}
for a different proportionality constant $\hat C > 0$. Similarly, the second bound in Lemma~\ref{lem:term3} yields
\begin{align}
\E \|u^\eps - \hat \CI^{\eps}_{3}\|_{\CC^{2 \eta}_{T_* \wedge \sigma_{L, \eps}}} &\leq \hat C \eps^{\beta},
\end{align}
for some $\beta > 0$, where $\hat \CI^{\eps}_{3}$ is defined in \eqref{eq:I_hat}. Iterating this procedure with a new initial data $u^\eps(T_*)$, we obtain these bounds on any time interval $[0, T]$:
\begin{align}\label{eq:bound}
\E \|u^\eps\|_{\CC^{2 \eta}_{T \wedge \sigma_{L, \eps}}} \leq \tilde C, \qquad \E \|u^\eps - \hat \CI^{\eps}_{3}\|_{\CC^{2 \eta}_{T \wedge \sigma_{L, \eps}}} \leq \tilde C \eps^{\kappa},
\end{align}
with a new proportionality constant $\tilde C > 0$.
One can see that in the case when $\sigma_{L, \eps} \geq T > 0$, uniformly in $\eps$, these bounds and Proposition~\ref{prop:X_eps_converge} imply that $\hat{s}^{\eps} := X^\eps + u^\eps$ converges weakly in $D([0, T], \CC(\RR))$ to the solution of \eqref{eq:SHEAdditive} with $A = -\frac{\fa}{4}$ and $B = 1$, and with the initial state $\mathcal{Z}_0$. In order to prove convergence of $\hat{s}^{\eps}$ on $[0,\infty)$, we define a new stopping time
\begin{align}\label{eq:sigma_L_new}
 \sigma^* := \lim_{L \to \infty} \inf \bigl\{t \geq 0 : \limsup_{\eps \to 0} \|u^\eps\|_{\CC^{2 \eta}_{t}} \geq L\bigr\},
\end{align}
which together with the first bound in \eqref{eq:bound} ensures
\begin{align}\label{eq:bound_stopped}
\E \|u^\eps\|_{\CC^{2 \eta}_{T \wedge \sigma^*}} \leq \tilde C.
\end{align}

Then for any $T > 0$ and $\kappa \in (0, \beta)$ we can estimate
\begin{align}\label{eq:final_limit}
 \PP \bigl[ \|u^\eps - \hat \CI^{\eps}_{3}\|_{\CC^{2 \eta}_{T \wedge \sigma^*}} \geq \eps^{\kappa}\bigr] \leq  \PP \bigl[ \|u^\eps - \hat \CI^{\eps}_{3}\|_{\CC^{2 \eta}_{\sigma_{L, \eps}}} \geq \eps^{\kappa}\bigr] +  \PP [ \sigma_{L, \eps} < T \wedge \sigma^*].
\end{align}
The desired result follows if we prove that the limits $\lim_{L \uparrow \infty} \lim_{\eps \downarrow 0}$ of this expression vanish. The first term in \eqref{eq:final_limit} we can bound using the Chebyshev's inequality and the second estimate in \eqref{eq:bound} by
\begin{align}
  \PP \bigl[\|u^\eps - \hat \CI^{\eps}_{3}\|_{\CC^{2 \eta}_{\sigma_{L, \eps}}} \geq \eps^{\kappa}\bigr] \leq \frac{\E \|u^\eps - \hat \CI^{\eps}_{3}\|_{\CC^{2 \eta}_{\sigma_{L, \eps}}}}{\eps^{\kappa}} \leq \tilde C\eps^{\beta - \kappa},
\end{align}
which vanishes as $\eps \to 0$, for every fixed $L$. To bound the second term in \eqref{eq:final_limit}, we make use of the definition \eqref{eq:sigma_L} and obtain
\begin{equation}
\PP \bigl[\sigma_{L, \eps} < T \wedge \sigma^*\bigr] \leq \PP \bigl[\|X^\eps\|_{\CC^{\eta}_{T}} \geq L\bigr] + \PP \bigl[\| Z^\eps\|_{\CC^{-1/2 + \kappa_\star}_{T, \eps}} \geq \eps^{-1/2 - \kappa_\star - \hat \kappa} L\bigr] + \PP \bigl[\|u^\eps\|_{\CC^{2 \eta}_{T \wedge \sigma^*}} \geq L\bigr].\label{eq:final_limit1}
\end{equation}
Lemmas~\ref{lem:X_bound} and \ref{lem:Z_bound} imply that the first two probabilities in \eqref{eq:final_limit1} vanish as $L \to \infty$. The bound \eqref{eq:bound_stopped} guarantees that the last probability in \eqref{eq:final_limit1} vanishes as $L \to \infty$.

Combining these limits together we obtain $\lim_{\eps \to 0}  \PP \bigl[ \|u^\eps - \hat \CI^{\eps}_{3}\|_{\CC^{2 \eta}_{T \wedge \sigma^*}} = 0$, for any $T > 0$. Proposition~\ref{prop:X_eps_converge} implies convergence of $\hat{s}^{\eps} := X^\eps + u^\eps$ weakly in $D([0, T \wedge \sigma^*], \CC(\RR))$ to the solution $\mathcal{Z}$ of \eqref{eq:SHEAdditive} with $A = -\frac{\fa}{4}$ and $B = 1$, and with the initial state $\mathcal{Z}_0$. Since the process $\mathcal{Z}_t$ is defined for $t \in [0, \infty)$, we conclude that the required convergence holds in $D([0, \infty), \CC(\RR))$.
\end{proof}

\appendix

\section{Properties of heat kernels and semigroups}
\label{sec:heat}

In this appendix we provide regularity properties of the continuous and discrete heat semigroups. Moreover, we list bounds on the discrete heat kernel, which are used in the article.

\subsection{Bounds on heat kernels and semigroups}

Let $S^{\eps}_t = e^{t\Delta_\eps}$ be the discrete heat semigroup, generated by the discrete Laplacian $\Delta_\eps$ defined in Section~\ref{sec:notation}. This semigroup has nice regularizing properties when acting on spaces of functions and distributions introduced in Section~\ref{sec:notation}, which we provide below.

\begin{lemma}\label{lem:heat_S}
Let us restrict the domain of $S^{\eps}_t$ to periodic functions/distributions on the circle $\T = \RR / \ZZ$. Then, for any $\alpha \leq 0$ and for any $\gamma > \alpha \vee 0$, there is a constant $C > 0$, independent of $\eps \in (0,1]$ and $t > 0$, such that the following bound holds:
\begin{equation}\label{eq:S_bound}
\| S^{\eps}_t \|_{\CC^{\alpha}_\eps \rightarrow \CC^{\gamma}} \leq C t^{(\alpha - \gamma) / 2}.
\end{equation}
\end{lemma}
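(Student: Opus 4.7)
My plan is to prove this as a standard Schauder-type smoothing estimate for the discrete heat semigroup, by reducing everything to Gaussian-type bounds on the heat kernel $\Phi^\eps_t$ that defines $S^\eps_t$ through discrete convolution on $\eps\ZZ$. On the torus $\T = \RR/\ZZ$ I would work with the periodization of this kernel, whose bounds inherit uniformity in $\eps$ from the Gaussian decay. The key kernel estimates are $|\Phi^\eps_t(x)| \lesssim (\sqrt t \vee \eps)^{-1}$, $|\nablae^\pm \Phi^\eps_t(x)| \lesssim (\sqrt t \vee \eps)^{-2}$ (and analogous bounds for higher differences), together with Gaussian moment estimates of the form $\sum_y |y|^\alpha \Phi^\eps_t(y) \lesssim (\sqrt t \vee \eps)^{\alpha}$, all of which follow from the material in Appendix~\ref{sec:integrals} already used in the paper.

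I would then handle two regimes separately based on the sign of $\alpha$. For $\alpha \in [0,\gamma)$ with $\gamma \in (0,1)$ and $\|\zeta^\eps\|_{\CC^\alpha_\eps} \leq 1$, the pointwise bound $\|S^\eps_t \zeta^\eps\|_{L^\infty} \leq \|\zeta^\eps\|_{L^\infty}$ is immediate from the fact that $\Phi^\eps_t$ is a probability kernel, and for the H\"older seminorm I would expand
\begin{equation*}
(S^\eps_t \zeta^\eps)(x) - (S^\eps_t \zeta^\eps)(x') = \sum_{y \in \eps\ZZ \cap \T} \bigl(\Phi^\eps_t(x-y) - \Phi^\eps_t(x'-y)\bigr)\bigl(\zeta^\eps(y) - \zeta^\eps(x)\bigr),
\end{equation*}
using that the kernel sums to $1$, then combine the discrete H\"older bound on $\zeta^\eps$ with the mean-value bound on the kernel difference and perform the Gaussian sum to produce the factor $|x-x'|^\gamma t^{(\alpha-\gamma)/2}$. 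For $\alpha < 0$ the function $\zeta^\eps$ is a discrete distribution, and I would use the identity $(S^\eps_t \zeta^\eps)(x) = \llangle \zeta^\eps, \Phi^\eps_t(x - \bigcdot)\rrangle_\eps$, viewing the kernel as a rescaled test function at scale $\sqrt t \vee \eps$, so that the definition \eqref{eq:norm_eps} of the $\CC^\alpha_\eps$ norm yields a pointwise bound of order $(\sqrt t \vee \eps)^\alpha \|\zeta^\eps\|_{\CC^\alpha_\eps}$, and similarly for the H\"older seminorm I would test against the difference of two such kernels. For $\gamma \geq 1$ (non-integer) I would use the semigroup property $S^\eps_t = S^\eps_{t/2} \circ S^\eps_{t/2}$ to iterate the sub-$1$ bound together with bounds on higher discrete derivatives of $\Phi^\eps_t$.

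The main technical obstacle is the $\alpha < 0$ case, because the definition of $\CC^\alpha_\eps$ only allows testing against compactly supported test functions with bounded H\"older norm, while $\Phi^\eps_t(x - \bigcdot)$ has Gaussian (rather than compact) support. I would resolve this via a partition of unity, decomposing $\Phi^\eps_t$ into a sum of localized bumps at scale $\sqrt t \vee \eps$, each translated to a lattice of spacing $\sqrt t \vee \eps$ on $\T$. Each piece, after rescaling, is an admissible test function in \eqref{eq:norm_eps} and contributes a factor of order $(\sqrt t \vee \eps)^{\alpha}\|\zeta^\eps\|_{\CC^\alpha_\eps}$, while the Gaussian decay of $\Phi^\eps_t$ ensures the absolute convergence of the resulting sum of these contributions. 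A secondary subtlety is that for $t \lesssim \eps^2$ the effective smoothing scale is $\eps$ rather than $\sqrt t$; the stated bound $t^{(\alpha-\gamma)/2}$ holds without correction because it is already weaker than the sharp bound $(\sqrt t \vee \eps)^{\alpha-\gamma}$ in that regime, so no additional care is required to get uniformity in $\eps$.
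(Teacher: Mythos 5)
Your proposal is correct and takes essentially the same route as the paper, which observes that $p^\eps_t$ can be viewed as a test function rescaled by $\lambda = \sqrt t$ and then appeals to the norm definition \eqref{eq:norm_eps}; the paper simply cites \cite[Prop.~4.17]{MR3774431} and asserts, without proof, that the testing bound \eqref{eq:norm_eps} extends from compactly supported to Schwarz test functions, whereas your partition-of-unity decomposition of $\Phi^\eps_t$ at scale $\sqrt t \vee \eps$ is exactly the argument that makes that assertion rigorous. Two minor remarks: since the statement restricts to $\alpha \leq 0$, the $\alpha \in (0,\gamma)$ branch of your argument is superfluous (only $\alpha = 0$ is reached, and that case follows from the negative-regularity argument or from the elementary $L^\infty$ and mean-value bounds as you note); and both the statement and your proposal tacitly restrict to $\gamma < 1$, since for $\gamma \geq 1$ the piecewise-linear extension of the lattice output cannot lie in $\CC^\gamma$ — in the paper's applications only $\gamma = 2\eta < 1$ is ever used, so this is harmless.
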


\begin{proof}
The bound \eqref{eq:S_bound} can be proved in the same way as a more general result \cite[Prop.~4.17]{MR3774431}. More precisely, the semigroup $S_t^\eps$ is given by convolution with the discrete heat kernel $p^\eps_t(x)$. Furthermore, the bound \eqref{eq:norm_eps} holds also for rescaled Schwarz functions $\phi$. Finally, the kernel $p^\eps_t(x)$ is Schwarz in the $x$ variable, and can be considered as a function rescaled by $\lambda = \sqrt t$. Then \eqref{eq:S_bound} follows from our definitions of the norms.
\end{proof}

The following is an analogue of the classical result \cite[Thm.~2.85]{BCD11} for our $\eps$-dependent norms:

\begin{lemma}\label{lem:product}
Let $\varphi \in \mathcal{C}^\alpha$ and $\psi \in \mathcal{C}^\beta_\eps$, where $\beta < 0 < \alpha < 1$ and $\alpha + \beta > 0$. Then there is a constant $C$, depending on $\alpha$ and $\beta$, such that
\begin{align}
\Vert \varphi \psi \Vert_{\mathcal{C}^\beta_\eps} \leq C \Vert \varphi \Vert_{\mathcal{C}^\alpha} \Vert \psi \Vert_{\mathcal{C}^\beta_\eps}\;.
\end{align}
\end{lemma}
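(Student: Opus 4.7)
The plan is to test $\varphi\psi$ against a rescaled test function $\phi^\lambda_x$ (as in the definition \eqref{eq:norm_eps}), move $\varphi$ onto the test side by duality, and then exploit the $\alpha$-H\"{o}lder regularity of $\varphi$ together with the fact that $\phi^\lambda_x$ is localized to the ball of radius $\lambda$ around $x$. Concretely, I would write
\begin{equation*}
 \llangle \varphi\psi,\phi^\lambda_x\rrangle_\eps = \llangle \psi,\varphi\phi^\lambda_x\rrangle_\eps = \varphi(x)\,\llangle \psi,\phi^\lambda_x\rrangle_\eps + \llangle \psi,(\varphi-\varphi(x))\phi^\lambda_x\rrangle_\eps.
\end{equation*}
The first term is trivially controlled by $\|\varphi\|_{L^\infty}\|\psi\|_{\mathcal{C}^\beta_\eps}(\lambda\vee\eps)^\beta \lesssim \|\varphi\|_{\mathcal{C}^\alpha}\|\psi\|_{\mathcal{C}^\beta_\eps}(\lambda\vee\eps)^\beta$.

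For the second term the key observation is that, on the support of $\phi^\lambda_x$, one has $|\varphi(y)-\varphi(x)|\leq \|\varphi\|_{\mathcal{C}^\alpha}\lambda^\alpha$. Thus, setting $\tilde\phi(z):=(\varphi(x+\lambda z)-\varphi(x))\phi(z)/(\lambda^\alpha\|\varphi\|_{\mathcal{C}^\alpha})$, one has $(\varphi-\varphi(x))\phi^\lambda_x = \lambda^\alpha\|\varphi\|_{\mathcal{C}^\alpha}\tilde\phi^\lambda_x$, where $\tilde\phi$ is supported in the unit ball and a direct computation (splitting $\tilde\phi(z_1)-\tilde\phi(z_2)$ in the usual product way, using Lipschitzness of $\phi$ and H\"{o}lder continuity of $\varphi$) shows $\|\tilde\phi\|_{\mathcal{C}^\alpha}\lesssim 1$ uniformly in $\lambda$. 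Since $\alpha>-\beta$, pairing $\psi$ with an $\alpha$-H\"{o}lder test function yields the same bound as pairing it with a $\lceil-\beta\rceil$-H\"{o}lder one, so $|\llangle \psi,\tilde\phi^\lambda_x\rrangle_\eps|\lesssim \|\psi\|_{\mathcal{C}^\beta_\eps}(\lambda\vee\eps)^\beta$. The second term is therefore bounded by $C\lambda^\alpha\|\varphi\|_{\mathcal{C}^\alpha}\|\psi\|_{\mathcal{C}^\beta_\eps}(\lambda\vee\eps)^\beta$, which, since $\lambda\leq 1$, is itself bounded by $C\|\varphi\|_{\mathcal{C}^\alpha}\|\psi\|_{\mathcal{C}^\beta_\eps}(\lambda\vee\eps)^\beta$. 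Taking the supremum over $\lambda$ and $x$ gives the claim.

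The main obstacle is the equivalence of the $\mathcal{C}^\beta_\eps$ norm under the two natural choices of test-function regularity, since the constructed $\tilde\phi$ is only $\alpha$-H\"{o}lder (not Lipschitz when $\alpha<1$), while the definition \eqref{eq:norm_eps} prescribes $\lceil-\beta\rceil$-H\"{o}lder test functions. This is standard in the continuum Besov setting (cf.\ \cite{BCD11}), and the discrete version follows by the same mollification/telescoping argument: approximate $\tilde\phi$ at dyadic scales $2^{-j}\geq \eps$ by smoothings $\tilde\phi^{(j)}$ whose Lipschitz norms grow like $2^{j(1-\alpha)}$, apply the defining bound \eqref{eq:norm_eps} to each smoothed piece, and sum the resulting geometric series which converges thanks to $\alpha>-\beta$. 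Once this equivalence is in hand, the two-term decomposition above closes the argument immediately.
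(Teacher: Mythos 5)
The paper in fact gives no proof of this lemma --- it is stated as a discrete analogue of \cite[Thm.~2.85]{BCD11} and left without argument --- so your write-up fills a genuine gap. Your testing decomposition $\llangle\varphi\psi,\phi^\lambda_x\rrangle_\eps = \varphi(x)\llangle\psi,\phi^\lambda_x\rrangle_\eps+\llangle\psi,(\varphi-\varphi(x))\phi^\lambda_x\rrangle_\eps$ is a natural fit for the definition \eqref{eq:norm_eps}, which is already phrased as a bound on pairings, and it correctly isolates the only real difficulty: showing that \eqref{eq:norm_eps} also holds when tested against $\alpha$-H\"{o}lder (rather than Lipschitz) bump functions, given $\alpha+\beta>0$. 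Your verification that $\|\tilde\phi\|_{\CC^\alpha}\lesssim 1$ uniformly in $x$ and $\lambda$ is also correct.

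However, your mollification/telescoping sketch as written does not quite close. If you normalize the difference $g_j:=\tilde\phi^{(j)}-\tilde\phi^{(j-1)}$ by its Lipschitz norm $\sim 2^{j(1-\alpha)}$ and apply \eqref{eq:norm_eps} at the original scale $\lambda$, each term contributes $\sim 2^{j(1-\alpha)}\|\psi\|_{\CC^\beta_\eps}(\lambda\vee\eps)^\beta$, and $\sum_j 2^{j(1-\alpha)}$ diverges since $\alpha<1$; the geometric series $\sum_j 2^{-j(\alpha+\beta)}$ you invoke does not appear by this route. The missing ingredient is a further spatial decomposition: each $g_j$ (sup norm $\lesssim 2^{-j\alpha}$, Lipschitz norm $\lesssim 2^{j(1-\alpha)}$) must additionally be split, via a partition of unity, into $O(2^j)$ bump functions supported in balls of radius $\sim\lambda 2^{-j}$. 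After renormalizing each of these into a unit test function and applying \eqref{eq:norm_eps} at scale $\lambda 2^{-j}$, each piece contributes $\sim 2^{-j(1+\alpha)}\|\psi\|_{\CC^\beta_\eps}(\lambda 2^{-j})^\beta$; summing over the $O(2^j)$ pieces and over $1\leq j\leq J$ (with $2^{-J}\approx\eps/\lambda$) gives $\lambda^\beta\sum_j 2^{-j(\alpha+\beta)}$, which converges precisely because $\alpha+\beta>0$. Finally, the residual $\tilde\phi-\tilde\phi^{(J)}$, with sup norm $\lesssim(\eps/\lambda)^\alpha$, is handled by the pointwise bound $|\psi(y)|\lesssim\|\psi\|_{\CC^\beta_\eps}\eps^\beta$ (obtained by testing at scale $\lambda=\eps$) and counting the $O(\lambda/\eps)$ lattice points in the support, yielding $\|\psi\|_{\CC^\beta_\eps}\eps^{\alpha+\beta}\lambda^{-\alpha}\leq\|\psi\|_{\CC^\beta_\eps}\lambda^\beta$. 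Once these steps are inserted, your argument is complete.
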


The heat semigroups $S_t$ and $S_t^\eps$ are given by convolutions with respectively the continuous and discrete heat kernels. The latter ones are the Green's functions of the operators $\partial_t - \partial^2_x$ and $\partial_t - \Deltae$, and are defined as the unique solutions of the equations:
\begin{align}\label{eq:heat_equations}
  \partial_t p_t(x) = \partial^2_x p_t(x), \qquad  \partial_t p^\eps_t(y) = \Deltae p^\eps_t(y), \qquad \text{for all}~ t \geq 0, x \in \RR, y \in \eps \ZZ,
\end{align}
with the respective initial values $p_0(x) = \delta_x$ and $p^\eps_0(y) = \eps^{-1} \1{y = 0}$, where $\delta_x$ is the Dirac delta function. For these kernels we have the following bounds.

 \bl\label{lem:KernelEst}
For any $u>0$, any $v \in (0,\frac{1}{2})$ and any $T > 0$, there exists a constant $C$, depending on $u$ and $v$, and independent of $\eps > 0$ and $T$, such that  the following bounds hold
\begin{subequations}
 \begin{align}
   p^\eps_{t}(x) \leq C\big(\sqrt t \vee \eps\big)^{- 1}, \qquad\qquad \eps \sum_{x \in \eps\ZZ} p^\eps_{t}(x) |x|^{\alpha} e^{u |x|} \leq C(\sqrt t \vee \eps)^{\alpha}, \label{eq:Est5}
 \end{align}
 uniformly in $x \in \R$, $t \in [0,T]$ and $\eps \in (0,1]$, and for any $\alpha \geq 0$. Furthermore, the following bounds hold uniformly in $t \in [0,T]$ and $x, x_1, x_2 \in \R$:
\begin{align}
 |\nabla^{+}_{\eps} p^{\eps}(x)|\leq C(\eps^2 t^{-\frac{3}{2}}\wedge \eps^{-1}), & \qquad |p^\eps_{t}(x_1)- p^\eps_{t}(x_2)| \leq C \big(\sqrt t \vee \eps\big)^{-1 - v} |x_2 - x_1|^{2v}. \label{eq:Est2}
\end{align}
Finally, for any points $x \in \R$ and $0 \leq t_1<t_2 \leq T$ one had the bounds
 \begin{align}
    p^\eps_{t_2}(x) \leq C e^{t_2-t_1} p^\eps_{t_1}(x), \qquad\qquad |p^\eps_{t_1}(x)- p^\eps_{t_2}(x)| \leq C \big(\sqrt t_1 \vee \eps\big)^{-1 - v}(t_2 -t_1)^{v/2}.\label{eq:Est1}
 \end{align}
\end{subequations}
  \el

\begin{proof}
These bounds follow from the respective bounds for the non-rescaled discrete heat kernel $p^0_t(x)$, proved in \cite{Bertini1997}, and the identity
\begin{equation}
p^\eps_t(x) = \frac{1}{\eps} p^0_{\eps^{-2}t}(\eps^{-1}x).
\end{equation}
More precisely, the first bound in \eqref{eq:Est5} follows from \cite[Eq.~4.22]{Bertini1997}, and the second one can be proved similarly to \cite[Eq.~4.14]{Bertini1997}. A proof of the bound \eqref{eq:Est2} can be found in \cite[Eq.~4.39]{Bertini1997}. The bounds in \eqref{eq:Est1} are proved in \cite[Eqs.~A.5, 4.44]{Bertini1997}.
\end{proof}

In Section~\ref{Tightness}, we often use the microscopic heat kernel $\{\mathfrak{p}^{\eps}_{t}\}_{t\in \RR_{\geq 0}}$ as defined in \eqref{eq:ODE}. It is worth noting that $p^{\eps}_{t}$ of \eqref{eq:heat_equations} is related to $\mathfrak{p}^{\eps}_{t}$ via $\mathfrak{p}^{\eps}_{t}(x)=\eps p^{\eps}_{\eps^{-2}\sqrt q t}(\eps^{-1}x)$. In the following result, we write the bounds on $\mathfrak{p}^{\eps}_{t}$ by translating Lemma~\ref{lem:KernelEst} using the relation between $\mathfrak{p}^{\eps}_t$ and $ p^{\eps}_t$.

\bl\label{ppn:KernelEst}
Fix any $u>0$, any $v \in [0,\frac{1}{2})$ and any $T > 0$, there exists a constant $C=C(u,v)>0$ such that  the following bounds hold
\begin{subequations}
 \begin{align}
   \mathfrak{p}^\eps_{t}(x) \leq C\big(\sqrt t \vee 1\big)^{- 1}, \qquad\qquad  \sum_{x \in \ZZ} \mathfrak{p}^\eps_{t}(x) (\eps|x|)^{\alpha} e^{u \eps|x|} \leq C(\sqrt t \vee 1)^{\alpha}, \label{eq:Est52}
 \end{align}
 uniformly in $x \in \ZZ$, $t \in [0,\eps^{-2}T]$ and $\eps \in (0,1]$, and for any $\alpha \geq 0$. Furthermore, the following bounds hold uniformly in $t \in [0,\eps^{-2}T]$ and $x, x_1, x_2 \in \ZZ$:
\begin{align}
 |\nabla^{+} \mathfrak{p}^{\eps}(x)|\leq C( t^{-\frac{3}{2}}\wedge 1), & \qquad |\mathfrak{p}^\eps_{t}(x_1)- \mathfrak{p}^\eps_{t}(x_2)| \leq C \big(\sqrt t \vee 1\big)^{-1 - v} (\eps|x_2 - x_1|)^{2v}. \label{eq:Est22}
\end{align}
Finally, for any points $x \in \ZZ$ and $0 \leq t_1<t_2 \leq \eps^{-2}T$ one had the bounds
 \begin{align}
    \mathfrak{p}^\eps_{t_2}(x) \leq C e^{\eps^{2}(t_2-t_1)} \mathfrak{p}^\eps_{t_1}(x), \quad\p |\mathfrak{p}^\eps_{t_1}(x)- \mathfrak{p}^\eps_{t_2}(x)| \leq C \big(\sqrt t_1 \vee 1\big)^{-1 - v}\big(\eps^2(t_2 -t_1)\big)^{v/2}.\label{eq:Est12}
 \end{align}
\end{subequations}
\el

\subsection{Properties of some discrete kernels}

Let $\nablae^{\pm}$ be the discrete derivatives, defined in Section \ref{sec:notation}, $p^\eps_t$ from \eqref{eq:heat_equations}, and define
\begin{align}\label{eq:K_kernel}
 K^\eps_t(x) := \nablae^- p^\eps_t(x) \nablae^+ p^\eps_t(x),
\end{align}
We collect important properties of this kernel in the following lemma.

\begin{lemma}\label{lem:K_kernel}
There is a constant $c_0 > 0$, such that for every $T > 0$ the following hold
\begin{align}\label{eq:K_kernel_identity}
\Bigl|\sum_{x \in \eps\ZZ} \int_{0}^T K^\eps_t(x) dt\Bigr| \leq \frac{c_0 \eps^{-1}}{\sqrt{T \vee \eps^2}}, \qquad\qquad \sum_{x \in \eps\ZZ} \int_{0}^\infty K^\eps_t(x) dt = 0.
\end{align}
Moreover, for every $T > 0$ and every $a \geq 0$, there exist values $\eps_0 > 0$, $c_1 \in (0,1)$ and $c_2 > 0$ such that the following bounds hold uniformly in $\eps \in (0,\eps_0)$:
\begin{align}\label{eq:K_kernel_bounds}
 \sum_{x \in \eps\ZZ} \int_{0}^T |K^\eps_t(x)| e^{a |x|} dt \leq \frac{c_1}{\eps^2}, \qquad \sum_{x \in \eps\ZZ} \int_{0}^T |K^\eps_t(x)| e^{a |x|} (T-t)^{-\frac{1}{2}} dt \leq \frac{c_2}{\eps^2}.
\end{align}
\end{lemma}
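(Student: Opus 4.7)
The plan rests on a single integration-by-parts identity in time. Setting $S_k(t) := \sum_{x\in\eps\ZZ} p^\eps_t(x)\,p^\eps_t(x+k\eps)$ for $k\in\ZZ$, translation invariance together with $\partial_t p^\eps_t = \Deltae p^\eps_t = \nablae^+\nablae^- p^\eps_t$ yields $\partial_t S_k(t) = (2/\eps^2)(S_{k+1}-2S_k+S_{k-1})$. A direct expansion of the product $\nablae^- p^\eps_t\,\nablae^+ p^\eps_t$ gives
\begin{align*}
  \sum_{x\in\eps\ZZ} K^\eps_t(x) \;=\; -\frac{1}{\eps^2}\bigl(S_2(t)-2S_1(t)+S_0(t)\bigr) \;=\; -\tfrac{1}{2}\,\partial_t S_1(t).
\end{align*}
The initial condition $p^\eps_0(x) = \eps^{-1}\mathbf{1}_{x=0}$ forces $S_1(0) = 0$, so $\int_0^T\sum_x K^\eps_t(x)\,dt = -\tfrac{1}{2}S_1(T)$, and both parts of \eqref{eq:K_kernel_identity} follow: the estimate $S_1(T) \leq \|p^\eps_T\|_\infty\cdot\sum_x p^\eps_T(x+\eps) = \eps^{-1}\|p^\eps_T\|_\infty$ combined with $\|p^\eps_T\|_\infty \leq C(\sqrt T\vee\eps)^{-1}$ from \eqref{eq:Est5} gives the first bound, and $S_1(T)\to 0$ as $T\to\infty$ gives the identity.

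For the first inequality in \eqref{eq:K_kernel_bounds}, I would use the pointwise bound $|K^\eps_t(x)|\leq\tfrac12((\nablae^+ p^\eps_t)^2 + (\nablae^- p^\eps_t)^2)$, which by the parity $p^\eps_t(-x)=p^\eps_t(x)$ reduces matters to controlling $\sum_x (\nablae^+ p^\eps_t(x))^2\,e^{a|x|}$. Summation by parts with the weight $w(x):=e^{a|x|}$, together with the discrete Leibniz rule $\nablae^-(fg)(x) = g(x)\nablae^- f(x) + f(x-\eps)\nablae^- g(x)$, yields
\begin{align*}
 \sum_x (\nablae^+ p^\eps_t(x))^2\,w(x) = -\tfrac{1}{2}\,\partial_t\sum_x p^\eps_t(x)^2\,w(x) - \sum_x p^\eps_t(x)\,\nablae^- p^\eps_t(x)\,\nablae^- w(x).
\end{align*}
Using $|\nablae^- w(x)| \leq C a\,w(x)$ together with Cauchy--Schwarz and Young's inequality, this rearranges to $E(t) \leq -\tfrac{2}{3}\partial_t F(t) + C' a^2 F(t)$, where $E(t):=\sum_x (\nablae^+ p^\eps_t)^2 w$ and $F(t):=\sum_x (p^\eps_t)^2 w$. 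Integrating in $t$, the telescoping part is at most $\tfrac{2}{3} F(0) = \tfrac{2}{3\eps^2}$, while $\int_0^T F(t)\,dt$ is bounded, via $\|p^\eps_t\|_\infty\cdot\sum_x p^\eps_t(x)e^{a|x|}$ and \eqref{eq:Est5}, by a multiple of $\sqrt T/\eps$. Hence the whole expression is at most $\tfrac{2}{3\eps^2} + C''a^2\sqrt T/\eps$, which for $\eps<\eps_0(T,a)$ sufficiently small is dominated by $c_1/\eps^2$ with some fixed $c_1\in(\tfrac{2}{3},1)$.

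For the second inequality in \eqref{eq:K_kernel_bounds}, no strict constant $c_1<1$ is needed, so I would use cruder pointwise bounds. Combining the gradient estimate from \eqref{eq:Est2} with $\sum_x p^\eps_t(x)e^{a|x|} \leq C/\eps$ from \eqref{eq:Est5} produces a pointwise-in-$t$ estimate of the form $\sum_x |K^\eps_t(x)|\,e^{a|x|}\lesssim 1/(\eps(t\vee\eps^2)^{3/2})$. Splitting $\int_0^T(T-t)^{-1/2}\,dt$ at $t=T/2$, the contribution from $[0,T/2]$ is at most a multiple of $T^{-1/2}\int_0^{T/2}(t\vee\eps^2)^{-3/2}dt/\eps\lesssim 1/(\eps^2\sqrt T)$, and from $[T/2,T]$ at most $(T/2)^{-3/2}\cdot 2\sqrt{T/2}/\eps\lesssim 1/(\eps T)$, both of which are dominated by $c_2/\eps^2$ for $\eps<\eps_0\leq\sqrt T$. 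The principal obstacle in the whole lemma is obtaining the strict inequality $c_1<1$ in the first bound: this is exactly what drives the contraction in Lemma~\ref{lem:Z_bound_strong}, and requires the sharp energy identity above rather than a crude pointwise estimate, which would only give $c_1$ of order $1$ without manifest smallness.
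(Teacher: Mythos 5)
Your proof is correct, and your route to the identity \eqref{eq:K_kernel_identity} is genuinely different from the paper's. The paper works in Fourier: writing $K^\eps_t(x) = \eps^{-4}K_{\eps^{-2}t}(\eps^{-1}x)$ and using $\sum_{x\in\ZZ}e^{ikx}=2\pi\delta_k$, the spatial sum collapses to the one--dimensional integral $-\frac{1}{2\pi}\int_{-\pi}^{\pi}e^{ik}e^{-2T(1-\cos k)}\,dk$, which is then estimated directly. Your argument instead locates the exact cancellation in real space: with $S_1(t):=\sum_{x\in\eps\ZZ}p^\eps_t(x)\,p^\eps_t(x+\eps)$, the discrete Laplacian recursion $\partial_t S_k = \tfrac{2}{\eps^2}(S_{k+1}-2S_k+S_{k-1})$ and the expansion of $\nablae^-p^\eps_t\,\nablae^+p^\eps_t$ give $\sum_x K^\eps_t(x)=-\tfrac12\partial_t S_1(t)$, and since $S_1(0)=0$ the time integral telescopes to $-\tfrac12 S_1(T)$, after which \eqref{eq:Est5} gives both the bound and the $T\to\infty$ vanishing. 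Both approaches exploit the same orthogonality, but yours is more elementary and keeps the remainder explicit. For \eqref{eq:K_kernel_bounds} the paper only cites \cite[Lem.~A.3]{Bertini1997} and \cite[Lem.~A.4]{Bertini1997}, so your self-contained energy argument is genuinely additional content; in particular, the weighted summation-by-parts identity $E(t)\leq -\tfrac23\partial_t F(t)+C'a^2F(t)$ with $F(0)=\eps^{-2}$ correctly isolates why $c_1$ can be taken strictly below $1$ (the telescoping part contributes $\tfrac23 F(0)<\eps^{-2}$, the cross term is $O(\eps^{-1}\sqrt T)$ and is absorbed for $\eps<\eps_0(T,a)$), which is indeed what the contraction step in the proof of Lemma~\ref{lem:Z_bound_strong} requires. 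One small slip: the intermediate estimate you state for the second bound, $\sum_x|K^\eps_t(x)|e^{a|x|}\lesssim \eps^{-1}(t\vee\eps^2)^{-3/2}$, is not what the ingredients you invoke actually yield --- combining $\|\nablae^-p^\eps_t\|_\infty\lesssim \eps^2 t^{-3/2}\wedge\eps^{-1}$ from \eqref{eq:Est2} with $\sum_x|\nablae^+p^\eps_t(x)|e^{a|x|}\lesssim\eps^{-2}$ (via the triangle inequality and \eqref{eq:Est5}) gives the tighter $\lesssim(t\vee\eps^2)^{-3/2}$, with no stray $\eps^{-1}$. Your stated bound is still a valid (just lossy) upper estimate for $\eps\leq1$ and still delivers $c_2/\eps^2$ after splitting at $t=T/2$, so the conclusion is unaffected, but the cleaner form is worth recording.
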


\begin{proof}
The second identity in \eqref{eq:K_kernel_identity} follows from the first bound in the limit $T \to +\infty$. In order to prove the first bound in \eqref{eq:K_kernel_identity} we write $K^\eps_t(x) = \eps^{-4} K_{\eps^{-2}t}(\eps^{-1}x)$, where the kernel $K$ equals $K^\eps$ with $\eps = 1$. Furthermore, using \eqref{eq:heat_equations} and \eqref{eq:K_kernel}, we can write the Fourier integral
\begin{align}
 K_t(x) = \frac{1}{(2\pi)^2} \int_{-\pi}^\pi dk \int_{-\pi}^\pi d\ell\, e^{i(k + \ell) x} \bigl(1 - e^{-i k}\bigr) \bigl(e^{i \ell} - 1\bigr) e^{-t (2 - \cos k - \cos \ell)}.
\end{align}
Using the identity $\sum_{x \in \ZZ} e^{i k x} = 2 \pi \delta_{k}$, we conclude that
\begin{align}
 \sum_{x \in \eps\ZZ} \int_{0}^T K^\eps_t(x) dt = \frac{1}{2\pi} \int_{-\pi}^\pi dk\,e^{i k} \bigl(1 - e^{-2 T (1 - \cos k)}\bigr) = - \frac{1}{2\pi} \int_{-\pi}^\pi dk\,e^{i k} e^{-2 T (1 - \cos k)}.
\end{align}
The absolute value of the last expression can be bounded by
\begin{align}
 \frac{1}{2\pi} \int_{-\pi}^\pi e^{-2 T (1 - \cos k)} dk \leq \frac{1}{2\pi} \int_{-\pi}^\pi e^{-T k^2} dk = \frac{1}{\pi \sqrt T} \int_{0}^{\pi \sqrt T} e^{-k^2} dk \leq \frac{c_0}{\sqrt{T \vee 1}},
\end{align}
for some constant $c_0 > 0$, independent of $T$. The required bound \eqref{eq:K_kernel_identity} follows now from the last one after rescaling.

The bounds \eqref{eq:K_kernel_bounds} follow immediately from \cite[Lem.~A.3]{Bertini1997} and \cite[Lem.~A.4]{Bertini1997} respectively.
\end{proof}

%
%

\section{Bounds on iterated stochastic integrals}
\label{sec:integrals}

In this appendix we recall some properties of c\`{a}dl\`{a}g martingales and provide moment bounds for iterated stochastic integrals.

\subsection{Iterated stochastic integrals}

Assume we have a family of square integrable martingales $(M^{\eps, \ell}_t(x))_{t \geq 0}$, parameterized by $\ell \geq 1$ and $x \in \eps\ZZ$, which have the following properties: the martingale $M^{\eps, \ell}_t(x)$ is of bounded total variation and has jumps of size $\eps^{\delta_\ell}$ for $\delta_\ell \geq \frac{1}{2}$; $M^{\eps, \ell}_t(x)$ and $M^{\eps, \ell}_t(y)$ a.s. do not jump together, if $x \neq y$; on every time interval $[0, \eps^2 T]$ the martingale $M^{\eps, \ell}_t(x)$ makes a.s. finitely many jumps. In this case, the quadratic variation can be written as $$\langle M^{\eps, \ell}(x), M^{\eps, \ell}(y)\rangle_t = \eps^{-1} \mathbbm{1}_{x=y} \int_0^t C^{\eps, \ell}_r(x) dr,$$ for some function $C^{\eps, \ell}_r(x)$, adapted to the underlying filtration in the variable $r \geq 0$. Moreover, we assume that the function $C^{\eps, \ell}_r(x)$ is a.s. bounded by a constant $c_{\eps, \ell}$, uniformly in $r$ and $x$.

When working with such martingales, the following two forms of the Burkholder-Davis-Gundy inequality will be used. The first form is standard.

\begin{lemma}\label{lem:BDGone}
For all $p\geq 1$ there exists a constant $C_p>0$ such that for all martingales $M_t$, and for all $T>0$,
\begin{equation}
\mathbb{E} \big[\sup_{t \in [0, T]} |M_t - M_0|^p\big] \leq C_p \mathbb{E} \big[[ M, M]^{p/2}_T\big].
\end{equation}
\end{lemma}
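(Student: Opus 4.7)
The statement is the classical Burkholder--Davis--Gundy (BDG) inequality for c\`adl\`ag martingales with respect to the (optional) quadratic variation $[M,M]$. The most economical ``proof'' here is simply a citation: this is, e.g., \cite[Ch.~IV]{Protter} or \cite[Thm.~I.4.48]{JS03}. Since the paper uses this lemma only as a black box and does not require any discretization-dependent constant tracking, the plan is to state the result and refer to one of these standard references rather than reproduce the argument.

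If I had to sketch a proof I would split into cases. For $p\geq 2$, I would apply It\^o's formula for c\`adl\`ag semimartingales to $x\mapsto |x|^{p}$ evaluated at $M_{t}-M_{0}$. This yields, after localizing by the stopping times $\tau_{n}=\inf\{t:|M_{t}-M_{0}|\geq n\}$ and taking expectations, a bound
\begin{equation}
\E\bigl[|M_{t\wedge\tau_{n}}-M_{0}|^{p}\bigr]\leq C_{p}\,\E\bigl[[M,M]_{t\wedge\tau_{n}}^{p/2}\bigr]+\text{lower order},
\end{equation}
with the ``lower order'' terms being absorbable via Young's inequality. Then Doob's $L^{p}$ maximal inequality upgrades the fixed-time bound on $|M_{t\wedge\tau_{n}}-M_{0}|$ to a bound on $\sup_{t\leq T}|M_{t}-M_{0}|$. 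Monotone convergence as $n\to\infty$ concludes.

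For $p\in[1,2)$ the argument is more subtle and is the main obstacle in a from-scratch proof. The standard route is Davis' decomposition: write $M-M_{0}=N+A$ where $N$ is a martingale with jumps bounded by $2\sup_{s\leq\bigcdot}|\Delta M_{s}|$ and $A$ is a predictable process of integrable variation controlled by $[M,M]^{1/2}$. One then proves the BDG inequality for $N$ by the case $p\geq 2$ after re-localization, and handles $A$ directly using the bound on its total variation. Alternatively one uses a good-$\lambda$ inequality of the form $\PP(\sup_{t\leq T}|M_{t}-M_{0}|>\beta\lambda,\,[M,M]_{T}^{1/2}\leq\delta\lambda)\leq c(\beta,\delta)\PP(\sup_{t\leq T}|M_{t}-M_{0}|>\lambda)$ and integrates against $p\lambda^{p-1}\,d\lambda$.

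In either case, the constant $C_{p}$ depends only on $p$ and not on $T$, the filtration, or the martingale, which is all that is needed in the applications to the moment estimates of Section~\ref{Tightness}.
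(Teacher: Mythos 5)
The paper itself offers no proof of this lemma---it is introduced as ``The first form is standard'' and left as a black-box reference to the classical Burkholder--Davis--Gundy inequality for c\`adl\`ag martingales---so your citation-based treatment matches the paper exactly, and your optional sketch (It\^o's formula plus Doob's maximal inequality for $p\geq 2$; Davis decomposition or good-$\lambda$ for $p\in[1,2)$) is the standard route and is correct.
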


The second form of the inequality is adapted to the setting described.

\begin{lemma}\label{lem:BDG}
Let $M^\eps$ be a martingale with the just described properties. Then, for any $p \geq 1$, there exists a constant $C=C(p)$, such that for every $T>0$ one has
\begin{equation}\label{eq:BDG}
\E \bigg[  \sup_{0 \leq t \leq T} |M^\eps(t)|^p \bigg] \leq C \bigg(\E \Big[  \langle M^\eps, M^\eps \rangle_T^{\frac{p}{2}} \Big]  + \E \bigg[  \sup_{0 \leq t \leq T}  |\Delta_t M^\eps  |^p  \bigg]   \bigg)\;,
\end{equation}
where $\Delta_t M^\eps := M^\eps(t) - M^\eps({t-})$ denotes the jump of $M^\eps$ at time $t$.
\end{lemma}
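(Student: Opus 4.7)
The plan is a two-step reduction. First, I apply the standard BDG inequality (Lemma~\ref{lem:BDGone}) to obtain $\E[\sup_{t \leq T}|M^\eps(t)|^p] \leq C_p \E[[M^\eps, M^\eps]_T^{p/2}]$, which reduces the problem to showing
\[
\E\bigl[[M^\eps, M^\eps]_T^{p/2}\bigr] \leq C_p\bigl(\E[\langle M^\eps, M^\eps\rangle_T^{p/2}] + \E[\sup_{t\leq T}|\Delta_t M^\eps|^p]\bigr).
\]
The key structural fact is that $N_t := [M^\eps, M^\eps]_t - \langle M^\eps, M^\eps\rangle_t$ is a local martingale (by definition of the predictable covariation), and since the compensator $\langle M^\eps, M^\eps\rangle_t = \eps^{-1}\int_0^t C^\eps_r dr$ is absolutely continuous, the jumps of $N$ coincide with those of $[M^\eps, M^\eps]$, namely $\Delta_s N = (\Delta_s M^\eps)^2$. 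Moreover, because $M^\eps$ has bounded total variation and only finitely many jumps on $[0, T]$ almost surely, it is purely discontinuous, so $[M^\eps, M^\eps]_T = \sum_{s\leq T}(\Delta_s M^\eps)^2$.

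For $p \geq 2$ I would exploit convexity of $x \mapsto x^{p/2}$ via $(a+b)^{p/2} \leq 2^{p/2-1}(a^{p/2}+b^{p/2})$ to reduce the problem to bounding $\E|N_T|^{p/2}$. Applying Lemma~\ref{lem:BDGone} to the (localized) martingale $N$ gives $\E|N_T|^{p/2} \leq C_p\E[[N,N]_T^{p/4}]$, and the elementary estimate
\[
[N,N]_T = \sum_{s \leq T}(\Delta_s M^\eps)^4 \leq \bigl(\sup_{s \leq T}|\Delta_s M^\eps|\bigr)^2 [M^\eps, M^\eps]_T
\]
combined with Cauchy-Schwarz yields
\[
\E\bigl[[N,N]_T^{p/4}\bigr] \leq \bigl(\E[\sup_{s \leq T}|\Delta_s M^\eps|^p]\bigr)^{1/2}\bigl(\E[[M^\eps, M^\eps]_T^{p/2}]\bigr)^{1/2}.
\]
A Young inequality then absorbs half of $\E[[M^\eps, M^\eps]_T^{p/2}]$ into the left-hand side, completing the argument in this regime.

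For $p \in [1, 2)$ the absorption trick above fails because $p/2 < 1$. Here I would instead invoke the Lenglart-Rebolledo domination inequality applied to the non-decreasing adapted process $[M^\eps, M^\eps]$ with continuous predictable compensator $\langle M^\eps, M^\eps\rangle$: for any exponent $q \in (0, 1]$,
\[
\E\bigl[[M^\eps, M^\eps]_T^q\bigr] \leq c_q \bigl(\E[\langle M^\eps, M^\eps\rangle_T^q] + \E[\sup_{s \leq T}(\Delta_s [M^\eps, M^\eps])^q]\bigr),
\]
and taking $q = p/2$ together with $\Delta_s[M^\eps, M^\eps] = (\Delta_s M^\eps)^2$ yields the desired bound. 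The main obstacle is the need to split the argument into these two regimes and to identify the right domination principle for each; the $p \in [1,2)$ branch is the less standard of the two, and I would verify carefully that Lenglart's inequality applies in the stated form, which it does thanks to the continuity of the compensator built into our setup.
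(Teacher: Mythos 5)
Your argument is correct, and it differs from the paper's. The paper proves Lemma~\ref{lem:BDG} in one line by appealing to a discrete-time version of this BDG-type inequality (Theorem~2.11 of Hall--Heyde) and a discrete-time approximation of $M^\eps$; the verification that the approximation passes to the limit in this setting is left implicit. You instead work entirely in continuous time: you first reduce via Lemma~\ref{lem:BDGone} to bounding $\E\bigl[[M^\eps,M^\eps]_T^{p/2}\bigr]$, then split on $p \geq 2$ versus $p \in [1,2)$, using the compensating-martingale decomposition $[M^\eps,M^\eps] = N + \langle M^\eps, M^\eps\rangle$ plus BDG, Cauchy--Schwarz, Young, and absorption in the first regime, and a Lenglart-type domination in the second. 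Both branches go through; the only refinements worth recording are (i) the absorption step for $p \geq 2$ needs the a~priori finiteness $\E\bigl[[M^\eps,M^\eps]_T^{p/2}\bigr] < \infty$, which you supply by the localization you mention, and (ii) in the $p \in [1,2)$ branch the jump correction you include in Lenglart's inequality is in fact superfluous: because $\langle M^\eps, M^\eps \rangle$ is the \emph{predictable} compensator of $[M^\eps, M^\eps]$, the plain Lenglart domination $\E\bigl[[M^\eps,M^\eps]_T^{q}\bigr] \leq \frac{2-q}{1-q}\,\E\bigl[\langle M^\eps, M^\eps\rangle_T^{q}\bigr]$ already holds for $q = p/2 \in [\tfrac12,1)$. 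Your stated form (a Lenglart--L\'epingle--Pratelli inequality, valid for all $q>0$) is correct, just stronger-looking than necessary here. The paper's route has the advantage of outsourcing the two-regime bookkeeping to a textbook reference; yours makes the mechanism explicit and is self-contained modulo two classical inequalities.
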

\begin{proof}
This formula can be proved by discrete time approximations of the martingales $M^\eps$, and using an analogous formula \cite[Thm.~2.11]{HaHe} in discrete time.
\end{proof}

For a continuous function $F \colon [0,\infty)^n \times \eps\ZZ^n \to \R$, we define iterated stochastic integrals $(\I^\eps_n F)(t)$ as follows: for $n=1$ we set $(\I^\eps_1F)(t) := \eps \sum_{y \in \eps\ZZ} \int_{0}^t F(r,y) \, d M^{\eps,1}_r(y) $; for $n \geq 2$ we define recursively
\begin{align}\label{eq:integral_def}
(\I^\eps_{n} F)(t) := \eps \sum_{y \in \eps\ZZ} \int_{0}^t (\I^\eps_{n-1} F^{(r,y)}) (r-) \, d M^{\eps, n}_{r}(y),
\end{align}
where $r-$ is the left limit at $r$, and the function $F^{(r,y)} \colon [0,\infty)^{n-1} \times (\eps\ZZ)^{n-1}$ is defined as
\begin{equation*}
F^{(r,y)}(r_1, \ldots ,r_{n-1}; y_1, \ldots , y_{n-1}) := F(r_1, \ldots ,r_{n-1}, r; y_1 , \ldots, y_{n-1}, y).
\end{equation*}
To make notation shorter, in what follows we write $(\I^\eps_{\ell} F)(r_{\ell+1}, \ldots, r_n; y_{\ell+1}, \ldots, y_n)$ for the iterated stochastic integral, taken with respect to the variables $r_{1}, \ldots ,r_{\ell}$ and $y_{1}, \ldots, y_\ell $ and evaluated at time $t= r_{\ell+1}$, where $r_{\ell+1}, \ldots, r_n$ and $y_{\ell+1}, \ldots, y_n$ are treated as free parameters. With this definition, the process $t \mapsto (\I^\eps_{n} F)(t)$ is a martingale, and moreover we have the following moment bounds for it, which is a modification of \cite[Lem.~4.1]{MW15}.

\begin{lemma}\label{lem:integral_bound}
Let $n \geq 1$ and let $F \colon [0,\infty)^n \times (\eps\ZZ)^n \to \R$ be continuous and deterministic. Then for any $p \geq 1$, there exists a constant $C = C(n,p)$ such that
\begin{equation}\label{eq:integral_bound}
\bigg(\E \Bigl[ \sup_{0 \leq t \leq T} \big| (\I^\eps_{n}F)(t) \big|^p\Bigr] \bigg)^{2 / p}  \leq C \bigg(\prod_{\ell=1}^n c_{\eps, \ell}\bigg) \eps^{n}\!\! \sum_{\substack{\mathbf{y} \in (\eps\ZZ)^n}}\int_{r_n=0}^T \!\ldots \! \int_{r_1=0}^{r_{2}} F(\mathbf{r}; \mathbf{y})^2 d\mathbf{r} + \SE^\eps_{n}(T),
\end{equation}
where we use the shorthand notation $ \mathbf{r} = (r_1, \ldots, r_n)$ and $ d\mathbf{r} = dr_{1}\cdots d{r_n}$, and where the error term $\SE^\eps_{n}$ is given by
\begin{align}
\SE^\eps_{n}(T) &:= C  \sum_{\ell=1}^{n} \eps^{n - \ell + 2(1 + \delta_{\ell})} \bigg(\prod_{k=\ell + 1}^{n} c_{\eps, k}\bigg) \\
&\qquad \times \sum_{\mathbf{y}_{\ell+1,n} \in (\eps\ZZ)^{n - \ell}} \int_{r_n=0}^T \!\ldots\! \int_{r_{\ell + 1}=0}^{r_{\ell+2}} \bigg( \E \Bigl[ \sup_{\substack{r_{\ell} \in [0, r_{\ell+1}]\\y_{\ell} \in \eps\ZZ}}  \big| (\I^\eps_{\ell-1} F)(\mathbf{r}_{\ell, n}; \mathbf{y}_{\ell, n}) \big|^p\Bigr] \bigg)^{2 / p}\!\! d\mathbf{r}_{\ell+1,n},\label{eq:integral_error}
\end{align}
where $\mathbf{r}_{\ell, n} = (r_\ell, \ldots, r_n)$, $\mathbf{y}_{\ell,n}  =(y_\ell, \ldots , y_n)$,  $d\mathbf{r}_{\ell+1,n} = dr_{\ell+1}\cdots d{r_n}$ and $r_{n+1} =T$.
\end{lemma}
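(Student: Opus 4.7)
I would prove Lemma~\ref{lem:integral_bound} by induction on $n$, at each step peeling off the outermost stochastic integral and applying the Burkholder-Davis-Gundy inequality provided by Lemma~\ref{lem:BDG}. For the base case $n=1$, the predictable bracket of $t\mapsto(\I_1^\eps F)(t)$ equals $\eps\sum_y\int_0^T F(r,y)^2 C_r^{\eps,1}(y)\,dr$, which is bounded by $c_{\eps,1}\eps\sum_y\int_0^T F(r,y)^2 dr$, while each jump is bounded by $\eps^{1+\delta_1}\sup_{r,y}|F(r,y)|$ because the hypotheses force at most one $M^{\eps,1}(y)$ to jump at any given time and each jump has size $\eps^{\delta_1}$. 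Plugging these into \eqref{eq:BDG} and raising the result to the $2/p$ power yields \eqref{eq:integral_bound} for $n=1$, with the jump term furnishing the $\ell=1$ contribution to $\SE_1^\eps(T)$.

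For the inductive step I would apply Lemma~\ref{lem:BDG} to the outer integral in the recursion \eqref{eq:integral_def}. The predictable covariation formula for $M^{\eps,n}$ gives $\langle\I_n^\eps F\rangle_T \leq c_{\eps,n}\eps\sum_{y_n}\int_0^T(\I_{n-1}^\eps F)(r_n;y_n)^2\,dr_n$, while $|\Delta_t\I_n^\eps F|\leq \eps^{1+\delta_n}\sup_{r,y}|(\I_{n-1}^\eps F)(r;y)|$ by the same single-site jump argument. Taking $L^p$-moments, raising to the $2/p$ power, and invoking Minkowski's inequality in $L^{p/2}$ (the range $p<2$ being absorbed by monotonicity of $L^p$-norms on a probability space) converts the bracket contribution into the quantity $c_{\eps,n}\eps\sum_{y_n}\int_0^T \bigl(\E|(\I_{n-1}^\eps F)(r_n;y_n)|^p\bigr)^{2/p} dr_n$. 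I would then insert the inductive bound for $(\I_{n-1}^\eps F)$: its main $(n-1)$-fold integral combines with the outer $\eps c_{\eps,n}\sum_{y_n}\int dr_n$ to produce exactly the main $n$-fold integral of \eqref{eq:integral_bound}, while each of the $n-1$ error terms appearing in $\SE_{n-1}^\eps$ picks up one additional factor of $\eps$ and one factor $c_{\eps,n}$, reindexing precisely into the $\ell=1,\dots,n-1$ terms of $\SE_n^\eps(T)$. The jump contribution at the outer level, $\eps^{2(1+\delta_n)}\bigl(\E\sup|(\I_{n-1}^\eps F)|^p\bigr)^{2/p}$, supplies the missing $\ell=n$ term.

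The main obstacle is the bookkeeping: I must verify that at each recursion step the power of $\eps$ and the product of covariance constants combine to yield exactly $\eps^{n-\ell+2(1+\delta_\ell)}\prod_{k=\ell+1}^n c_{\eps,k}$. The count works because each of the $n-\ell$ bracket steps above level $\ell$ contributes one extra factor of $\eps$ and one factor $c_{\eps,k}$, whereas the jump step at level $\ell$ supplies $\eps^{2(1+\delta_\ell)}$ and no $c_{\eps,\ell}$. One must also confirm that the supremum $\sup_{r_\ell\in[0,r_{\ell+1}],\,y_\ell\in\eps\ZZ}$ in the error term arises correctly; this is automatic because Lemma~\ref{lem:BDG} always produces a sup over the current time of integration, and at level $\ell$ that time is precisely $r_{\ell+1}$, which becomes the upper limit of the next integration. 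The continuity and deterministic nature of $F$, together with the a.s. finiteness of the number of jumps on bounded intervals, ensures that all suprema and iterated integrals are well defined throughout the argument.
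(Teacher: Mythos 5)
Your proposal matches the paper's proof: both proceed by induction on $n$, applying the Burkholder--Davis--Gundy inequality of Lemma~\ref{lem:BDG} at each level with the same bounds on the predictable covariation (via $c_{\eps,\ell}$) and on the jump size (via the single-site jump property and jump size $\eps^{\delta_\ell}$), and then inserting the inductive bound for $\I^\eps_{n-1}$. The bookkeeping of powers of $\eps$ and constants $c_{\eps,k}$ you describe is exactly the accounting carried out in the paper.
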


\begin{proof}
We first prove the bound \eqref{eq:integral_bound} in the case $n=1$. In order to apply the Burkholder-Davis-Gundy inequality \eqref{eq:BDG}, we need to bound the quadratic variation and the jump size of the martingale $(\I_1F)(t)$. Since the jump of the martingale $M^{\eps, 1}$ equals $\eps^{\delta_1}$, a jump of $(\I_1F)(t)$ is bounded by $\sup_{r \in [0, T]} \sup_{y \in \eps\ZZ} \eps^{1 + \delta_1}  | F(r, y) |$. For the predictable covariation we have
\begin{align}
 \big\langle \I_1 F, \I_1 F \big\rangle_T &= \eps^2\!\! \sum_{y, y' \in \eps\ZZ} \int_{r=0}^T F(r,y) F(r, y')  d \langle  M^{\eps,1}(y) , M^{\eps,1}(y') \rangle_r\\
 &= \eps\!\! \sum_{y \in \eps\ZZ} \int_{0}^T F(r,y)^2 C^{\eps,1}_r(y) d r \leq c_{\eps, 1} \eps\!\! \sum_{y \in \eps\ZZ} \int_{0}^T F(r,y)^2 d r.
\end{align}
Hence, the Burkholder-Davis-Gundy inequality \eqref{eq:BDG} yields for every $p \geq 1$ the bound
\begin{align*}
\Big(\E  \Bigl[ \sup_{0 \leq t \leq T} \big| (\I_1 F) (r) \big|^p\Bigr] \Big)^{2 / p} \lesssim c_{\eps, 1} \eps\!\! \sum_{y \in \eps\ZZ} \int_{0}^T F(r,y)^2 d r + \eps^{2(1 + \delta_1)}\!\! \sup_{r \in [0, T]} \sup_{y \in \eps\ZZ} | F(r, y) |^2,
\end{align*}
where the proportionality constant depends on $p$. This is exactly the required bound \eqref{eq:integral_bound}.

Now, we will proceed by induction. To this end, we assume that \eqref{eq:integral_bound} holds for $n-1$, and will prove it for $n \geq 2$. As before, we prove the martingale $(\I_n F)(t)$ using the Burkholder-Davis-Gundy inequality, for which we need to bound the jumps and quadratic covariation of $(\I_n F)(t)$. Using the recursive definition \eqref{eq:integral_def} and properties of the martingales $M^{\eps, n}$, the jump of $(\I_n F)(t)$ can be bounded by
\begin{align}
\eps^{1 + \delta_n}\!\! \sup_{r \in [0, t]} \sup_{y \in \eps\ZZ} \bigl|(\I^\eps_{n-1} F^{(r,y)}) (r-)\bigr|.
\end{align}
Furthermore, the quadratic covariation can be written as
\begin{align}
 \big\langle \I_n F, \I_n F \big\rangle_T &= \eps^2\!\! \sum_{y, y' \in \eps\ZZ} \int_{r=0}^T (\I^\eps_{n-1} F^{(r,y)}) (r-) (\I^\eps_{n-1} F^{(r,y')}) (r-)  d \langle  M^{\eps,n}(y) , M^{\eps,n}(y') \rangle_r\\
 &= \eps\!\! \sum_{y \in \eps\ZZ} \int_{0}^T (\I^\eps_{n-1} F^{(r,y)}) (r-)^2 C^{\eps,n}_r(y) d r \leq c_{\eps, n} \eps\!\! \sum_{y \in \eps\ZZ} \int_{0}^T (\I^\eps_{n-1} F^{(r,y)}) (r-)^2 d r.
\end{align}
Combining these bounds with the Burkholder-Davis-Gundy inequality \eqref{eq:BDG}, we obtain
\begin{align}
\bigg(\E \Bigl[ \sup_{0 \leq t \leq T} \big| (\I^\eps_{n}F)(t) \big|^p\Bigr] \bigg)^{2 / p} &\lesssim c_{\eps, n} \eps\!\! \sum_{y \in \eps\ZZ} \int_{0}^T \!\!\Big(\E \Bigl[ (\I^\eps_{n-1} F^{(r,y)}) (r-)^p\Bigr]\Big)^{2/p} d r \\
&\qquad + \eps^{2(1 + \delta_n)} \bigg(\E \Bigl[ \sup_{\substack{r \in [0, t] \\ y \in \eps\ZZ}} \bigl|(\I^\eps_{n-1} F^{(r,y)}) (r-)\bigr|^p\Bigr]\bigg)^{2 / p},
\end{align}
which holds for every $p \geq 1$ with a proportionality constant depending on $p$. Applying now the induction hypothesis to the stochastic integrals $(\I^\eps_{n-1} F^{(r,y)})(r-)$ on the r.h.s., we arrive at \eqref{eq:integral_bound}.
\end{proof}

\section{Bounds on singular kernels}
\label{sec:singular}

In this appendix, we provide some bounds on singular kernels, which are used in the proof of Lemma~\ref{lem:Z_bound}. For this, we follow the idea of \cite[Sec.~10.3]{Hai13} (or rather of \cite[Sec.~6 and 7.1]{MR3785597}, since all the kernels are discrete), and introduce a ``strength" of singularities of kernels.

More precisely, we consider a kernel $P^\eps : \RR \times (\eps \ZZ) \to \RR$, and for $\alpha > 0$ we will write $P^\eps \in \CS^\alpha_\eps$ if $P^\eps$ can be written as $P^\eps = K^\eps + R^\eps$, where the kernels $K^\eps$ and $R^\eps$ have the following properties:
\begin{enumerate}
\item The kernel $K^\eps$ is supported in a ball centered at the origin, and satisfies the bound
\begin{equation}\label{eq:kernel_bound}
|K^\eps(t,x)| \leq C (\sqrt{|t|} \vee |x| \vee \eps)^{-\alpha},
\end{equation}
uniformly in $t \in \RR$, $x \in \eps \ZZ$ and $\eps \in (0,1]$, where the constant $C$ is independent of $t$, $x$ and $\eps$.
\item For every fixed $t$, the kernel $R^\eps(t,x)$ decays at infinity faster than any polynomial in the variable $x$, i.e. for every $n \in \NN$ one has $\lim_{|x| \to \infty} R^\eps(t,x) / |x|^n = 0$.
\item Moreover, $R^\eps(t,x)$ is bounded uniformly in $\eps$ and is integrable over $\RR \times (\eps \ZZ)$.
\end{enumerate}

\noindent The value of $\alpha$ measures the ``strength" of singularity of the kernel $P^\eps$. Moreover, the following lemma shows that such singular kernels preserve these properties under multiplication and convolution.

\begin{lemma}\label{lem:singular}
Let $P^\eps \in \CS^\alpha_\eps$ and $\bar P^\eps \in \CS^{\bar \alpha}_\eps$ be two singular kernels, for $\alpha, \bar \alpha > 0$. Then one has
\begin{enumerate}
\item the product $P^\eps \bar P^\eps$ is a singular kernel and $P^\eps \bar P^\eps \in \CS^{\alpha + \bar \alpha}_\eps$;
\item if $\alpha \vee \bar \alpha < 3$ and $\alpha + \bar \alpha > 3$, then the space-time convolution of these kernels satisfies $P^\eps * \bar P^\eps \in \CS^{\alpha + \bar \alpha - 3}_\eps$.
\end{enumerate}
\end{lemma}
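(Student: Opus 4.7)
My plan is to prove both claims by decomposing $P^\eps = K^\eps + R^\eps$ and $\bar P^\eps = \bar K^\eps + \bar R^\eps$ according to the definition of $\CS^\alpha_\eps$, expanding $P^\eps \bar P^\eps$ (resp.\ $P^\eps * \bar P^\eps$) as a sum of four terms, and showing that the $K \cdot \bar K$ (resp.\ $K * \bar K$) piece produces the claimed singular bound while the three terms involving at least one remainder can be absorbed either into the singular part or into the new remainder class. Throughout I shall write $\|z\|_\eps := \sqrt{|t|} \vee |x| \vee \eps$ for a space-time point $z = (t,x) \in \RR \times (\eps\ZZ)$, which is the natural parabolic quasi-distance compatible with the pointwise bound \eqref{eq:kernel_bound}.

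For part (1), I would expand
\begin{equation*}
P^\eps \bar P^\eps = K^\eps \bar K^\eps + K^\eps \bar R^\eps + R^\eps \bar K^\eps + R^\eps \bar R^\eps.
\end{equation*}
The first product is compactly supported in the intersection of the supports of $K^\eps$ and $\bar K^\eps$ and inherits the pointwise estimate $|K^\eps \bar K^\eps(z)| \leq C \|z\|_\eps^{-(\alpha+\bar\alpha)}$, placing it in the singular part. For the three cross-terms, uniform boundedness of $R^\eps$ (or $\bar R^\eps$) reduces the pointwise bound to $|K^\eps|$ (or $|\bar K^\eps|$), which is compactly supported; rapid decay in $x$ is preserved via the rapid decay of $R^\eps$/$\bar R^\eps$ and the compact support of $K^\eps$/$\bar K^\eps$, so these terms fit into the remainder class. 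The mixed terms $K^\eps \bar R^\eps$ and $R^\eps \bar K^\eps$ can alternatively be absorbed into the $K$-type singular piece, since on a compact set the bound $\|z\|_\eps^{-\alpha}$ implies $\|z\|_\eps^{-(\alpha+\bar\alpha)}$ up to a constant.

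For part (2), write
\begin{equation*}
(P^\eps * \bar P^\eps)(z) = \int_\RR \sum_{y \in \eps\ZZ} \eps\, P^\eps(z - w) \bar P^\eps(w)\, dw, \qquad w = (s,y),
\end{equation*}
and decompose into four terms as before. The heart of the argument is the estimate on $K^\eps * \bar K^\eps$, for which I partition the integration region into $\mathcal{A}_1 := \{\|w\|_\eps \leq \|z\|_\eps/2\}$, $\mathcal{A}_2 := \{\|z-w\|_\eps \leq \|z\|_\eps/2\}$, and the complement $\mathcal{A}_3$ where both quantities exceed $\|z\|_\eps/2$. On $\mathcal{A}_1$ the bound $|K^\eps(z-w)| \leq C\|z\|_\eps^{-\alpha}$ applies, and I estimate the integral of $|\bar K^\eps|$ over the parabolic ball $\{\|w\|_\eps \leq \|z\|_\eps\}$ by a dyadic decomposition into shells $\{2^k \eps \leq \|w\|_\eps \leq 2^{k+1}\eps\}$: each shell has parabolic volume $O((2^k\eps)^3)$ (including the base case $k=0$, which captures the discreteness of the lattice) and contributes $O((2^k\eps)^{3-\bar\alpha})$ thanks to the uniform bound $(2^k\eps)^{-\bar\alpha}$ on $|\bar K^\eps|$. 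The condition $\bar\alpha < 3$ renders the resulting geometric sum dominated by the largest scale, yielding $C\|z\|_\eps^{3-\bar\alpha}$ and hence a total contribution of order $\|z\|_\eps^{-(\alpha+\bar\alpha-3)}$. The region $\mathcal{A}_2$ is treated by symmetry, and on $\mathcal{A}_3$ the direct pointwise bound $|K^\eps(z-w)\bar K^\eps(w)| \leq C\|z\|_\eps^{-(\alpha+\bar\alpha)}$ combined with a parabolic volume $O(\|z\|_\eps^3)$ of the integration region inside a fixed compact set produces the same order. Compact support of $K^\eps * \bar K^\eps$ is inherited from the operands, and the hypothesis $\alpha + \bar\alpha > 3$ ensures the resulting exponent is positive. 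The three convolutions involving at least one remainder are handled by Young's inequality: the hypotheses $\alpha, \bar\alpha < 3$ yield, by a dyadic sum as above, the uniform-in-$\eps$ bounds $\|K^\eps\|_{L^1}, \|\bar K^\eps\|_{L^1} \leq C$, which combined with the $L^\infty$ and $L^1$ bounds on $R^\eps, \bar R^\eps$ give bounded convolutions; rapid decay in $x$ of the output follows from compact support of one factor and rapid decay of the other.

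The main obstacle I anticipate is the careful bookkeeping of the dyadic sums in part (2), and in particular verifying that the scale-$\eps$ contribution is of the same order as larger shells (and not larger), so that the bounds remain uniform as $\eps \to 0$. This is precisely where the floor $\eps$ in the definition of $\|\cdot\|_\eps$ intervenes: near the origin the kernels are bounded by $\eps^{-\alpha}$ rather than blowing up pointwise, and the smallest-scale shell contains only $O(1)$ lattice points in space crossed with an $O(\eps^2)$ time window, contributing $\eps^3 \cdot \eps^{-\bar\alpha} = \eps^{3-\bar\alpha}$, which is both the base case and the smallest term in the geometric series so long as $\bar\alpha < 3$. The same bookkeeping, with the additional care that the $L^1$ bound on $K^\eps$ fails when $\alpha \geq 3$, is what makes the two-sided assumption $\alpha \vee \bar\alpha < 3$ enter exactly where it does.
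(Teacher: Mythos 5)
The paper itself does not prove this lemma from scratch — it merely cites an external reference (Lemma~7.3 of \cite{MR3785597}) — so you are attempting a self-contained reconstruction. Your overall architecture (split $P^\eps = K^\eps + R^\eps$, $\bar P^\eps = \bar K^\eps + \bar R^\eps$ and treat the four pieces) is the right one, and the dyadic machinery you set up is correct in spirit. However there are two issues, one cosmetic and one genuine.

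The cosmetic one is in part~(1). You first assert that the mixed terms $K^\eps \bar R^\eps$ and $R^\eps \bar K^\eps$ ``fit into the remainder class,'' then note they can ``alternatively'' be absorbed into the singular part. Only the second option works: a remainder must be bounded uniformly in $\eps$, while $|K^\eps \bar R^\eps| \lesssim |K^\eps| \lesssim \eps^{-\alpha}$, which blows up. Since you do ultimately place them in the singular part (where the bound $\|z\|_\eps^{-\alpha} \lesssim \|z\|_\eps^{-(\alpha+\bar\alpha)}$ on the bounded support is correct), this is a misstatement rather than a gap.

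The genuine gap is in your treatment of $\mathcal{A}_3$ in part~(2). You claim that $\mathcal{A}_3$ intersected with the compact support has parabolic volume $O(\|z\|_\eps^3)$, and multiply by the pointwise bound $\|z\|_\eps^{-(\alpha+\bar\alpha)}$. But $\mathcal{A}_3$ is the compact support with two small balls of parabolic radius $\sim \|z\|_\eps$ removed, so its volume is $O(1)$, not $O(\|z\|_\eps^3)$. The crude estimate therefore yields only $\|z\|_\eps^{-(\alpha+\bar\alpha)}$, which for $\|z\|_\eps$ close to $\eps$ is far worse than the target $\|z\|_\eps^{-(\alpha+\bar\alpha-3)}$. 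The fix is the same dyadic idea you used on $\mathcal{A}_1$ but now decomposing $\mathcal{A}_3$ itself: on $\mathcal{A}_3$, if $\|w\|_\eps \geq \|z-w\|_\eps$ (say), then $|K^\eps(z-w)\bar K^\eps(w)| \lesssim \|z-w\|_\eps^{-(\alpha+\bar\alpha)}$, and you sum over shells $\|z-w\|_\eps \sim 2^k\eps$ with $2^k\eps \gtrsim \|z\|_\eps$; the contribution of shell $k$ is $(2^k\eps)^{3-\alpha-\bar\alpha}$, and it is here that the hypothesis $\alpha+\bar\alpha > 3$ is crucial — it makes the exponent negative, so the geometric series is dominated by the smallest shell, giving $\|z\|_\eps^{3-\alpha-\bar\alpha}$. (The symmetric case $\|z-w\|_\eps \geq \|w\|_\eps$ is identical.) Thus $\alpha+\bar\alpha>3$ is used twice, both on $\mathcal{A}_3$ for convergence of the dyadic sum and implicitly to make the final exponent positive; your writeup only uses it for the latter, which is a sign that a convergence check was skipped.
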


\begin{proof}
These results is a simple version of \cite[Lem.~7.3]{MR3785597}.
\end{proof}

The discrete heat kernel $p^\eps$, defined in \eqref{eq:heat_equations}, is a singular kernel in the sense introduced above. More precisely, the following result holds:

\begin{lemma}\label{lem:heat}
One has $p^\eps \in \CS^1_\eps$ and $\nablae^\pm p^\eps \in \CS^2_\eps$, where $\nablae^\pm$ are the discrete derivatives, defined in Section~\ref{sec:notation}.
\end{lemma}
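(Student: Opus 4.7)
The plan is to prove both claims by producing an explicit decomposition $p^\eps = K^\eps + R^\eps$ (respectively $\nablae^\pm p^\eps = \tilde K^\eps + \tilde R^\eps$) using a smooth spatio-temporal cutoff near the origin, and then verifying each of the three defining conditions of the class $\CS^\alpha_\eps$ from the pointwise estimates in Lemma~\ref{lem:KernelEst} together with standard sub-Gaussian decay of the discrete heat kernel.

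First I would upgrade the pointwise bounds of Lemma~\ref{lem:KernelEst} to the sub-Gaussian bounds
\begin{equation*}
 p^\eps_t(x) \leq C (\sqrt t \vee \eps)^{-1} e^{-c x^2/(t \vee \eps^2)}, \qquad
 |\nablae^\pm p^\eps_t(x)| \leq C (\sqrt t \vee \eps)^{-2} e^{-c x^2/(t \vee \eps^2)},
\end{equation*}
valid for all $t > 0$, $x \in \eps\ZZ$, and uniformly in $\eps \in (0,1]$, with $p^\eps_t \equiv 0$ extended for $t<0$. These Gaussian-type bounds follow from a standard Fourier representation of $p^\eps_t$ on $\eps\ZZ$ together with Bernstein-type large deviation estimates for the underlying continuous-time symmetric random walk (of jump rate $\theta_2/\eps^2$ whose generator is $\Delta_\eps$); the estimate for the discrete derivative is then obtained by writing $\nablae^\pm p^\eps_t(x)$ either as a Fourier integral with an extra factor $\eps^{-1}(e^{\pm i\eps k}-1)$ or, in the regime $\sqrt{t}\lesssim \eps$, directly from the bound $|\nablae^\pm p^\eps_t(x)| \leq \eps^{-1}$ via the second estimate in \eqref{eq:Est2}.

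Next I would choose a smooth cutoff $\chi:\RR^2\to[0,1]$, supported in the ball of radius $2$ and equal to $1$ on the ball of radius $1$, and set $K^\eps(t,x):=\chi(t,x)p^\eps_t(x)$ and $R^\eps(t,x):=(1-\chi(t,x))p^\eps_t(x)$. The local part $K^\eps$ is supported in a ball around the origin, so I only need the pointwise estimate $p^\eps_t(x)\leq C(\sqrt{|t|}\vee|x|\vee\eps)^{-1}$. When $|x|\leq \sqrt t$ this is immediate from the first bound in \eqref{eq:Est5}; when $|x|>\sqrt t \geq \eps$ the sub-Gaussian factor absorbs the loss since $y\mapsto y e^{-cy^2}$ is bounded, giving $(\sqrt t)^{-1}e^{-cx^2/t}\lesssim |x|^{-1}$; the case $\sqrt t\leq \eps$ is treated analogously by replacing $\sqrt t$ with $\eps$. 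Combining these yields the required bound \eqref{eq:kernel_bound} with $\alpha=1$, and exactly the same argument, using instead the sub-Gaussian bound on $\nablae^\pm p^\eps_t$, yields the $\alpha=2$ bound for $\tilde K^\eps := \chi\,\nablae^\pm p^\eps$.

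For the remainder $R^\eps$ (and $\tilde R^\eps$), it is supported in $\{\sqrt{|t|}\vee|x|\geq 1\}$, where the sub-Gaussian bound immediately gives uniform boundedness in $\eps$, polynomial decay in $|x|$ of every order for each fixed $t$ (indeed super-polynomial once $t$ is bounded above), and integrability on $\RR\times \eps\ZZ$ after combining with a further time cutoff at any fixed horizon $T>0$ (which is all that is required in the applications of Lemma~\ref{lem:Z_bound}, since the kernels always appear inside $\int_0^t$ with $t$ bounded). The main technical obstacle is the first step: obtaining a clean sub-Gaussian bound for $\nablae^\pm p^\eps_t$ with the sharp prefactor $(\sqrt t \vee \eps)^{-2}$ in the crossover regime $t \asymp \eps^2$, since here neither the ``continuous'' bound $\eps^2 t^{-3/2}$ nor the ``lattice'' bound $\eps^{-1}$ of \eqref{eq:Est2} is individually tight; once this bound is in hand, the rest of the argument is a routine verification against the definition of $\CS^\alpha_\eps$.
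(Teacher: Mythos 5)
The paper's own proof of this lemma is a one-line citation to \cite[Lem.~5.4]{MR3785597}, so your self-contained argument is a genuine expansion, though it proceeds exactly as one would expect the cited result to be proved: sub-Gaussian bounds on the discrete heat kernel and its discrete gradient (via the Fourier/local-CLT representation of the continuous-time simple random walk generated by $\Delta_\eps$), a smooth spatio-temporal cutoff to produce the decomposition $K^\eps + R^\eps$, and then verification of \eqref{eq:kernel_bound} by absorbing the off-diagonal loss into the boundedness of $y \mapsto y^\alpha e^{-cy^2}$. That verification, in all three regimes $|x| \leq \sqrt t$, $|x| > \sqrt t \geq \eps$ and $\sqrt t \leq \eps$, is correct. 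Two remarks. First, the crossover regime $t\asymp\eps^2$ that you flag as the main obstacle is actually no harder than the rest: the trivial estimate $|\nablae^\pm p^\eps_t(x)|\leq 2\eps^{-1}\|p^\eps_t\|_{L^\infty}\leq 2\eps^{-2}$ already gives the prefactor $(\sqrt t\vee\eps)^{-2}$ there, and the Gaussian factor $e^{-cx^2/\eps^2}$ on the scale $|x|\gtrsim\eps$ is just a large-deviation bound for the walk run over an $O(1)$ number of jumps, so no sharp interpolation between $\eps^2 t^{-3/2}$ and $\eps^{-1}$ is needed (and, incidentally, in the regime $t\asymp\eps^2$ the lattice bound in \eqref{eq:Est2} cannot in fact be $\eps^{-1}$, since at $t=0$ one has $|\nablae^+ p^\eps_0(0)|=\eps^{-2}$; your $(\sqrt t\vee\eps)^{-2}$ is the right order). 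Second, you correctly observe that $R^\eps$ is not integrable over all of $\RR\times(\eps\ZZ)$: since $\eps\sum_{x\in\eps\ZZ}p^\eps_t(x)=1$ for every $t$, the time integral over $[4,\infty)$ diverges. This is a genuine gap in the stated definition of $\CS^\alpha_\eps$, not in your argument; in all uses inside Lemma~\ref{lem:Z_bound} the kernels appear only on bounded time intervals, so the definition should be (and in \cite{MR3785597} implicitly is) applied to time-truncated kernels. Your proof is otherwise complete and correct.
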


\begin{proof}
This results follow from \cite[Lem.~5.4]{MR3785597}.
\end{proof}

The following lemma shows how such singular kernels behave under convolutions with scaled test functions.

\begin{lemma}\label{lem:test}
Let $\phi_z^\lambda$ be a scaled test function, as in \eqref{eq:norm_eps}, and let $P^\eps \in \CS^\alpha_\eps$, for some $\alpha \in (0,1)$. Then the following bound holds:
\begin{equation}
\eps^2 \sum_{ x_1,  x_2 \in \eps\ZZ} \bigl|\phi^\lambda_z( x_1) \phi^\lambda_z( x_2) P^{\eps}(t,  x_2 -  x_1)\bigr| \leq C (\lambda \vee \eps)^{-\alpha},
\end{equation}
uniformly in $\lambda \in (0,1]$, $t \in \RR$, $z \in \eps \ZZ$ and $\eps \in (0,1]$.
\end{lemma}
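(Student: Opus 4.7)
The plan is to split $P^\eps = K^\eps + R^\eps$ as in the definition of $\CS^\alpha_\eps$ and bound the two contributions separately. Throughout I will use that $\phi^\lambda_z$ is supported in the ball $\{|y - z| \leq \lambda\}$ and satisfies $|\phi^\lambda_z(y)| \leq C \lambda^{-1}$, with $C$ depending only on $\|\phi\|_{L^\infty}$.

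The $R^\eps$ contribution is the easy one: since $R^\eps$ is bounded uniformly in $\eps$, and a Riemann-sum estimate gives $\eps \sum_{x \in \eps\ZZ} |\phi^\lambda_z(x)| \leq C$ whenever $\lambda \geq \eps$ (and $\leq C$ also for $\lambda < \eps$ after using $z \in \eps\ZZ$ and noting that then only $O(1)$ lattice points lie in the support), the double sum is bounded by an absolute constant. Since $\lambda, \eps \in (0,1]$, the factor $(\lambda \vee \eps)^{-\alpha} \geq 1$, so this contribution is acceptable.

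For the main $K^\eps$ contribution, I use the pointwise bound $|K^\eps(t, x_2-x_1)| \leq C(|x_2-x_1| \vee \eps)^{-\alpha}$ (dropping the harmless $\sqrt{|t|}$), together with $|\phi^\lambda_z(x_i)| \leq C\lambda^{-1}\mathbf{1}_{|x_i - z|\leq\lambda}$. Substituting, the sum is bounded by
\begin{equation*}
C \lambda^{-2} \eps^2 \!\!\!\sum_{\substack{x_1,x_2 \in \eps\ZZ\\ |x_i - z| \leq \lambda}}\!\!\!\bigl(|x_2 - x_1| \vee \eps\bigr)^{-\alpha}.
\end{equation*}
Making the change of variables $y = x_2 - x_1$, $w = x_1$, the number of admissible $w$ for fixed $y$ is at most $C(\lambda/\eps \vee 1)$, so (in the regime $\lambda \geq \eps$) this reduces to a one-dimensional estimate
\begin{equation*}
C \lambda^{-1} \eps \!\!\!\!\sum_{\substack{y \in \eps\ZZ \\ |y| \leq 2\lambda}}\!\!\! (|y| \vee \eps)^{-\alpha} \;\lesssim\; \lambda^{-1} \int_0^{2\lambda}\!\!(y \vee \eps)^{-\alpha}\,dy \;\lesssim\; \lambda^{-1}\bigl(\eps^{1-\alpha}\eps + \lambda^{1-\alpha}\bigr) \;\lesssim\; \lambda^{-\alpha},
\end{equation*}
where in the last step I used $\alpha \in (0,1)$ so that $\int_\eps^{2\lambda} y^{-\alpha}\,dy \lesssim \lambda^{1-\alpha}$.

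For $\lambda < \eps$ the convention $\lambda \vee \eps = \eps$ on the right-hand side means the bound is $\eps^{-\alpha}$; since $z \in \eps\ZZ$, the support of $\phi^\lambda_z$ intersected with $\eps\ZZ$ contains only $z$, so the sum collapses to $\eps^2 |\phi^\lambda_z(z)|^2 |K^\eps(t,0)|$, which matches $\eps^{-\alpha}$ up to constants once one absorbs the $\lambda^{-2}$ appropriately using that the lemma is only meaningfully applied for $\lambda \geq \eps$ in our usage. There is no real obstacle here — the proof is a standard Riemann-sum estimate — and the only thing to keep track of is the matching of powers, which the constraint $\alpha \in (0,1)$ makes integrable.
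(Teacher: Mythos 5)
Your argument for the singular part matches the paper's proof: bound $|\phi^\lambda_z|$ by $\lambda^{-1}$ on its $\lambda$-ball support, apply the pointwise estimate $(|x|\vee\eps)^{-\alpha}$, change variables to $x := x_2 - x_1$, and estimate the resulting one-dimensional sum by a Riemann integral using $\alpha<1$. Your explicit treatment of the remainder $R^\eps$ (which the paper leaves implicit) and your remark that the regime $\lambda<\eps$ needs care (the paper likewise only invokes the lemma with $\lambda\geq\eps$) are welcome clarifications; the only slip is a typo in the Riemann integral, where $\eps^{1-\alpha}\eps$ should read $\eps^{1-\alpha}$, which does not affect the conclusion since $\eps^{1-\alpha}\leq\lambda^{1-\alpha}$ when $\lambda\geq\eps$.
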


\begin{proof}
Combining the bound \eqref{eq:kernel_bound} with the definition of the rescaled function $\phi^\lambda_z$, we obtain
\begin{align}
\eps^2 \sum_{ x_1,  x_2 \in \eps\ZZ} \bigl|\phi^\lambda_z( x_1) \phi^\lambda_z( x_2) P^{\eps}(t,  x_2 -  x_1)\bigr| \lesssim \eps^2 \sum_{ x_1,  x_2 \in \eps\ZZ} \bigl|\phi^\lambda_z( x_1) \phi^\lambda_z( x_2)\bigr| (| x_2 -  x_1| \vee \eps)^{-\alpha}\\
\lesssim \eps^2 \lambda^{-2} \sum_{\substack{ x_1,  x_2 \in \eps\ZZ:\\ | x_1| \leq \lambda, | x_2| \leq \lambda}} (| x_2 -  x_1| \vee \eps)^{-\alpha} \lesssim \eps \lambda^{-1} \sum_{\substack{ x \in \eps\ZZ:\\ | x| \leq 2 \lambda}} (| x| \vee \eps)^{-\alpha},
\end{align}
where in the last bound, we simply changed the variables of summation $ x :=  x_2 - x_1$. If $\alpha < 1$, then the last sum is bounded by a multiple of $(\lambda \vee \eps)^{-\alpha}$, uniformly in $\lambda$ and $\eps$.
\end{proof}

\section{Proofs of Lemma~\ref{Lem1} and Corollary~\ref{Cor2}}
\label{sec:stationary}

\begin{proof}[Proof of Lemma~\ref{Lem1}]
For the duration of this proof, we will drop the subscript $0$ and write $s$ and $\hat{s}^{\eps}$ in place of $s_0$ or $\hat{s}_0^{\eps}$ (recall, these are only functions of space since they are initial data).

 \smallskip
\noindent {\bf Part 1:} Substituting $q=e^{-\eps}$  into \eqref{eq:stationary}, we show by direct computation that $\hat{s}^{\eps}(0)$ converges to a standard Gaussian random variable (centered with variance one). This can be done, for instance by observing that
 $$
 \mathbb{P}\big(\hat{s}^{\eps}(0) \leq  w\big) = \mathbb{P}\big(s(0)\leq  \eps^{-\frac{1}{2}}w+ \log_{q}\alpha\big) = \sum_{n=-\infty}^{N}\frac{\alpha^{-2n} q^{n(2n-1)}(1+\alpha^{-1}q^{2n})}{(-\alpha^{-1}, -q\alpha, q;q)_{\infty}},
 $$
 where $N= \big\lfloor\frac{1}{2}\eps^{-\frac{1}{2}} w + \frac{1}{2}\log_q\alpha\big\rfloor$. The last sum can be approximated by the Gaussian integral. By stationarity in the spatial coordinate, it follows that $\hat{s}^{\eps}(x)$ likewise has a Gaussian limit (for each $x$). In the same manner, we may  show that $\hat{s}^{\eps}(x)$ satisfies \eqref{eq:NSofS1} for any $x$.
 %

We turn to showing \eqref{eq:NSofS2}. Thanks to the spatial stationarity, it suffices to show \eqref{eq:NSofS2} for $x_1= x$ and $x_2= 0$ where $x <0$. If $x\leq -1$, then by the triangle inequality, the stationarity of $\hat{s}^{\eps}(x)$ and the bound \eqref{eq:NSofS1}, there exists a positive absolute constant $C=C(k)$ such that
$$
\|\hat{s}^{\eps}(x) - \hat{s}^{\eps}(0)\|_{2k}\leq \|\hat{s}^{\eps}(x)\|_{2k}+ \|\hat{s}^{\eps}(0)\|_{2k}\leq 2C.
$$
This proves \eqref{eq:NSofS2} when $x<-1$. It only remains to show that \eqref{eq:NSofS2} holds when $x\in (-1,0)$.

Define $M:\ZZ\to \RR$ such that $M(0)=0$ and
  \begin{align}
  M(x)= \begin{cases}
  \sum_{i=1}^{x}\big(s(i-1) - \mathbb{E}[s(i-1)\big|s(i)]\big),  & x>0,\\
  \sum_{i=-x}^{-1}\big(s(i) - \mathbb{E}[s(i)\big|s(i+1)]\big), & x\leq -1.
  \end{cases}
  \end{align}
Using $M$ we may write a difference equation for $s(x)$, with $x\in \ZZ$:
 \begin{align}\label{eq:dd}
 \nabla^{-} s(x) = s(x)-\mathbb{E}[s(x-1)\big|s(x)]- \nabla^{-} M(x) =-\frac{q^{s(x)}-\alpha}{q^{s(x)}+\alpha}- \nabla^{-} M(x).
 \end{align}
 For any $x\in \eps \ZZ$, define $M^{\eps}(x):= \sqrt{\eps}M(\eps^{-1}x)$ and extend to all $x\in \RR \backslash \eps \ZZ$ by linear interpolation. Thanks to \eqref{eq:dd},
 \begin{align}\label{eq:ddAprox}
 \nabla^{-}_{\eps} \hat{s}^{\eps}(x) = -\sqrt{\eps}\frac{q^{\eps^{-\frac{1}{2}}\hat{s}^{\eps}(x)}-1}{q^{\eps^{-\frac{1}{2}}\hat{s}^{\eps}(x)}+1} - \nabla^{-}_{\eps} M^{\eps}(x).
 \end{align}
Taylor expanding with respect to $\eps$,
 \begin{equation}\label{eq:mExpand}
\sqrt{\eps}\frac{q^{\eps^{-\frac{1}{2}}\hat{s}^{\eps}(x)}-1}{q^{\eps^{-\frac{1}{2}}\hat{s}^{\eps}(x)}+1} = -\frac{1}{2}\eps \hat{s}^{\eps}(x) +\eps^{\frac{3}{2}} \mathcal{B}^{\eps}(x),
\end{equation}
  where $|\mathcal{B}^{\eps}(x)|<(1+|\hat{s}^{\eps}(0)|)^2$ for all $|x|\leq 1$. Plugging \eqref{eq:mExpand} into the r.h.s. of \eqref{eq:ddAprox} and solving the discrete difference equation \eqref{eq:ddAprox} yields
  \begin{equation}\label{eq:sExp}
  \hat{s}^{\eps}(x)= \big(1-\frac{\eps}{2}\big)^{-\eps^{-1}x}\hat{s}^{\eps}(0)+ \big(1-\frac{\eps}{2}\big)^{-\eps^{-1}x} \sum^{-1 }_{\ell =\eps^{-1}x} \Big(1-\frac{\eps}{2}\Big)^{\ell}\Big(\nabla^{-}_{\eps}M^{\eps}(\eps\ell) +\eps^{\frac{3}{2}}\mathcal{B}^{\eps}(\eps \ell)\Big).
\end{equation}
It is easy to check that this indeed solves  \eqref{eq:dd} for all $x\in\eps\ZZ_{<0}$.



We now claim that for any $x\in (-1,0)$ and any $u>0$ 
  \begin{align}
  \mathbb{E}\Big[\exp\big(u |\hat{s}^{\eps}(x)- \hat{s}^{\eps}(0)|\big)\Big]\leq 2 \exp\Big(\frac{u^2}{2}\int^{|x|}_{0} e^{y- |x|}dy\Big). \label{eq:CentMomBd}
  \end{align}
Before verifying this, let us see how to complete the proof of \eqref{eq:NSofS2}. Fix any $\beta \in (0,\frac{1}{4})$ and $u>0$. Note that the integral inside $\exp$ on the r.h.s. of \eqref{eq:CentMomBd} is bounded above by $C^{\prime}\min\big\{|x|^{4\beta}, 1\big\}$ for some constant $C^{\prime}= C^{\prime}(\beta)>0$. Combining this with Markov's inequality, we find that
  \begin{equation}
  \big\|\hat{s}^{\eps}(x) -\hat{s}^{\eps}(0)\big\|_{2k}\leq  C^{\prime} |x|^{2\beta},
  \end{equation}
   which completes the proof of \eqref{eq:NSofS2}.

We must now prove \eqref{eq:CentMomBd}. It suffices to do this for $x\in \eps \ZZ_{<0}\cap (-1,0)$ since $|\hat{s}^{\eps}(x)- \hat{s}^{\eps}([x]_{\eps})|\leq \sqrt{\eps}$ where $[x]_{\eps}$ is an element of $\eps \ZZ_{<0}$ nearest to $x$. Let $x= \eps^{-1}x$ where $x\in \eps \ZZ_{<0}\cap (-1,0)$. $\nabla^-_{\eps}M^{\eps}(x)$ is a centered Bernoulli random variable scaled by $\sqrt{\eps}$. Hence, by using Hoeffding's inequality, we find that
$$\mathbb{E}\Big[\exp\big(u \nabla^{-}_{\eps}M(x)\big)\big| s(x+1), s(x+2)\ldots \Big]\leq \exp(\eps u^2 / 2).$$
 In a similar way, we successively bound
 $$\mathbb{E}\Big[\exp\big(u\big(1-\frac{\eps}{2}\big)^{\ell-\eps^{-1}x}\nabla^{-}_{\eps}M(\eps \ell)\big)\big| s( \ell+1),s(\ell+2), \ldots\Big] \leq\exp\Big(\frac{\eps u^2}{2}\big(1-\frac{\eps}{2}\big)^{2(\ell-\eps^{-1}x)}\Big),$$
 for all $\ell=\eps^{-1}x+1, \ldots,  -1$. Combining these bounds with the bound \eqref{eq:NSofS1} for $\hat{s}^{\eps}(0)$, we see via \eqref{eq:sExp} that
  \begin{align}\label{eq:HoeffBd}
  \mathbb{E}\Big[\exp\Big(u \big(\hat{s}^{\eps}(x)- \hat{s}^{\eps}(0)\big)\Big)\Big]\leq \exp\Big(\frac{u^2}{2}\eps\sum_{\ell=\eps^{-1}x}^{-1} \big(1-\frac{\eps}{2}\big)^{2(\ell-\eps^{-1}x)}\Big).
    \end{align}
  Summing the above inequality with $u$ and $-u$ we find that $\textrm{l.h.s. \eqref{eq:CentMomBd}}\leq 2 \times\textrm{r.h.s. \eqref{eq:HoeffBd}}$. Recalling that  $(1-\eps/2)\leq e^{-\eps/2}$, we may upper bound the sum on the r.h.s. of \eqref{eq:HoeffBd} by the integral in the r.h.s. of \eqref{eq:CentMomBd}, completing the proof of \eqref{eq:CentMomBd}.

 \smallskip
\noindent {\bf Part 2:}    Owing to the Part 1 of this lemma and the Kolmogorov-Centsov criterion of tightness (see \cite[Theorem~2.23]{Kallenberg}), $\{\hat{s}^{\eps}\}_{\eps}$ is a tight sequence in $\mathcal{C}(\RR)$. Let us assume that we have  some subsequential limit  $\eps_k\to 0$ along which 
 $\{\hat{s}^{\eps_k}(x)\}_{x\in \RR}$ converges weakly to some limit $\{\mathcal{Z}_0(x)\}_{x\in \RR}$ in $\mathcal{C}(\RR)$. We have already shown that for any $x$, marginally $\mathcal{Z}_0(x)$ must be standard Gaussian. Thus, it suffices to show that $\mathcal{Z}_0$ satisfies \eqref{eq:OU}. By uniqueness of that solution, this will then show that all subsequential limits are the same, and hence we have convergence as desired.

So, we seek to prove that $\mathcal{Z}_0$ satisfies \eqref{eq:OU}. Let us abuse notation and write $\eps$ in place of $\eps_k$. Using similar argument to that used in proving  \eqref{eq:CentMomBd}, there exists $C>0$ such that
 \begin{align}\label{eq:Mconc}
 \mathbb{E}\Big[\exp\big(u|M^{\eps}(x_1) - M^{\eps}(x_2)|\big)\Big]\leq \exp\big(Cu^2|x_1-x_2|\big),
\end{align}
for any $x_1, x_2\in \RR$ and $u>0$. This shows the tightness of $\{M^{\eps}\}_{\eps}$ in $\mathcal{C}(\RR)$. Moreover, the weak convergence of $\hat{s}^{\eps}$, \eqref{eq:dd} and the tightness of $\{M^{\eps}\}_{\eps}$ imply that $M^{\eps}$ also weakly converges to some limiting spatial process $W$ taking values in $ \mathcal{C}(\RR)$. Notice that 
$\eps^{-\frac{1}{2}}(q^{\eps^{-1/2}\hat{s}^{\eps}(x)}-1)/(q^{\eps^{-1/2}\hat{s}^{\eps}(x)}+1)\Rightarrow -\frac{1}{2}\mathcal{Z}_0(x)$ in $\mathcal{C}(\RR)$. Combining this with \eqref{eq:ddAprox}  yields
 \begin{align}
 d\mathcal{Z}_0(x) = -\frac{1}{2}\mathcal{Z}_0(x)dx+ dW(x).
 \end{align}
 Now, it suffices to show that $W$ is a two sided Brownian motion up to some scaling. To prove  this, for any $\psi\in L^2(\RR)$, we define
 \begin{align}
 \mathcal{M}^{\eps}_{\psi}:= \sum_{x\in \ZZ} \psi(\eps x)\nabla^{-}_{\eps}M^{\eps}(\eps x).
\end{align}
Owing to the decay of $\psi$, the weak convergence of $M^{\eps}$ to $W$ and the moment bound of $M^{\eps}$ from \eqref{eq:CentMomBd}, we see that $\mathcal{M}^{\eps}_{\psi}$ converges weakly to $\int_{\RR} \psi(x)dW(x)$. Furthermore, we have
\begin{align}
\mathbb{E}\big[(\mathcal{M}^{\eps}_{\psi})^2\big] &= \sum_{x,y \in \ZZ} \psi(\eps x) \psi(\eps y)\mathbb{E}[\nabla^{-}_{\eps}M^{\eps}(\eps x)\nabla^{-}_{\eps}M^{\eps}(\eps y)]\\& = \sum_{x\in \ZZ} \psi(\eps x) \psi(\eps x)\mathbb{E}[(\nabla^{-}_{\eps}M^{\eps}(x))^2], \label{eq:qV0}
\end{align}
where the last line follows by noting that $\mathbb{E}[\nabla^{-}_{\eps}M^{\eps}(\eps x)\nabla^{-}_{\eps}M^{\eps}(\eps y)]=0$ if $x\neq y$. Furthermore, $\mathbb{E}[(\nabla^{-}_{\eps}M^{\eps}(x))^2] $ is equal to $\mathbb{E}[q^{\eps^{-1/2}\hat{s}^{\eps}(\eps x)}/(q^{\eps^{-1/2}\hat{s}^{\eps}(\eps x)}+1)^2]$ which converges to $1/4$ as $\eps\to 0$. Combining this with the decays of $\psi$ shows that the r.h.s. of \eqref{eq:qV0} converges to $\frac{1}{4}\int_{R}\psi^2(x)dx$ via dominated convergence theorem. Thus, for any $\psi\in L^2(\RR)$, we have $\mathbb{E}[(\int \psi(x)dW(x))^2]=\frac{1}{4}\int_\RR \psi^2(x)dx$. Owing to this and $W(0)=0$, $W$ must be $1/2$ times a two sided Brownian motion. This shows that $\mathcal{Z}_0$ satisfies \eqref{eq:OU} as desired.
\end{proof}


\begin{proof}[Proof of Corollary~\ref{Cor2}]

  The first part of Lemma~\ref{Lem1} shows that the initial data $\{\hat{s}^{\eps}_0\}$ satisfies \eqref{eq:NSofS1} and \eqref{eq:NSofS2} whereas the second part of the lemma shows that the initial data converges weakly to the stationary solution of the spatial Ornstein-Uhlenbeck process \eqref{eq:OU}. Combining both parts of Lemma~\ref{Lem1} with Theorem~\ref{Cor1} completes the proof of this corollary.
\end{proof}

\bibliographystyle{alpha}
\bibliography{Reference}

\end{document}